\newcommand*{\longhookrightarrow}{\ensuremath{\lhook\joinrel\relbar\joinrel\rightarrow}}
\newcommand\restr[2]{{
  \left.\kern-\nulldelimiterspace 
  #1 
  \vphantom{\big|} 
  \right|_{#2} 
  }}
\def\paragraph{\@startsection{paragraph}{4}%
  \z@\z@{-\fontdimen2\font}%
  {\normalfont\bfseries}}
\newcommand\A{\mathbb A}
\newcommand\C{\mathbb C}
\newcommand\F{\mathbb F}
\newcommand\G{\mathbb G}
\newcommand\N{\mathbb N}
\newcommand\PP{\mathbb P}
\newcommand\R{\mathbb R}
\newcommand\Z{\mathbb Z}
\newcommand\CA{{\mathcal A}}
\newcommand\CB{{\mathcal B}}
\newcommand\CM{{\mathcal M}}
\newcommand\CU{{\mathcal U}}
\newcommand\CZ{{\mathcal Z}}
\newcommand\scrB{{\mathscr B}}
\newcommand\scrC{{\mathscr C}}
\newcommand\scrD{{\mathscr D}}
\newcommand\scrF{{\mathscr F}}
\newcommand\scrG{{\mathscr G}}
\newcommand\scrH{{\mathscr H}}
\newcommand\scrL{{\mathscr L}}
\newcommand\scrM{{\mathscr M}}
\newcommand\scrN{{\mathscr N}}
\newcommand\scrP{{\mathscr P}}
\newcommand\scrS{{\mathscr S}}
\newcommand\scrU{{\mathscr U}}
\newcommand\scrV{{\mathscr V}}
\newcommand\scrW{{\mathscr W}}
\newtheorem{theorem}{Theorem}[section]
\newtheorem{lemma}[theorem]{Lemma}
\newtheorem{proposition}[theorem]{Proposition}
\newtheorem{corollary}[theorem]{Corollary}
\newtheorem{algorithm}[theorem]{Algorithm}
\newtheorem{main}[theorem]{Main Theorem}
\newtheorem{definition}{Definition}
\newtheorem{problem}{Problem}
\newtheorem*{claim}{Claim}
\theoremstyle{definition}
\theoremstyle{remark}
\newtheorem{remark}[theorem]{Remark}
\newtheorem{problema-cl}[theorem]{Problema Cl\'asico}
\newtheorem{open-problem}[theorem]{Problema Abierto}
\newtheorem{problema-dist}[theorem]{Problema Distinguido}
\newtheorem{example}[theorem]{Example}
\newenvironment{proof-claim}[1][Proof of the Claim]{\noindent\textbf{#1.} }{\
\rule{0.5em}{0.5em}\medskip}
\numberwithin{equation}{section}
\DeclareMathOperator\codim{\rm codim}
\DeclareMathOperator\Diag{\rm Diag}
\DeclareMathOperator{\rank}{\rm rank}
\DeclareMathOperator{\Res}{\rm Res}
\DeclareMathOperator\Tr{\rm Tr}
\DeclareMathOperator\Spec{\rm Spec}
\newcommand\lestricto{\subsetneq}
\newcommand\isomorf{\cong}
\begin{document}

\title[Promenade through CTS's I]{A promenade through Correct Test Sequences I: \\
Degree of constructible sets, B\'ezout's Inequality and density  }

\author{Luis M. Pardo}
\address{Depto. de Matem\'aticas, Estad\'istica y Computaci\'on. Facultad de Ciencias.
Universidad de Cantabria. Avda. Los Castros s/n. E-39071 Santander, Spain.}
\email{luis.m.pardo@gmail.com, luis.pardo@unican.es}

\author{Daniel Sebasti\'an}
\address{Depto. de Matem\'aticas, Estad\'istica y Computaci\'on. Facultad de Ciencias.
Universidad de Cantabria. Avda. Los Castros s/n. E-39071 Santander, Spain.}
\email{daniel.sebastian@unican.es}
\thanks{Daniel Sebasti\'an San Mart\'{\i}n was partially supported by the Program of Pre-doctoral Grants ``Concepci\'on Arenal'' of the University of Cantabria.}
\thanks{Authors also wish to thank  Antonio Fern\'andez Gonz\'alez and Toraya Fern\'andez Ruiz for their careful reading of preliminary versions of parts of this manuscript.}

\keywords{Degree of constructible and locally closed affine sets,  B\'ezout's inequality, correct test sequences, probability and existence, equality test of lists of functions, bounded error probability polynomial time algorithms.}

\maketitle


\begin{abstract}
In \cite{HeintzSchnorr}, Heintz and Schnorr introduced the notion of \emph{correct test sequence} and since then it has been widely used to design probabilistic algorithms for \emph{Polynomial Equality Test}. The aim of this manuscript is to study the \emph{foundations} and \emph{generalizations} of this notion.  We show that correct test sequences are almost omnipresent and appear in many different forms in the mathematical literature: As \emph{identity sequences} for \emph{Function Identity Test}, as \emph{norming sets} in the field of Banach algebras or as  \emph{samples} in the context of Reproducing Kernel Hilbert Spaces. As main outcome, we generalize the main statement of \cite{HeintzSchnorr} proving that short correct test sequences for constructible sets of lists of polynomials do exist and are densely distributed in any constructible set of accurate dimension and degree. The main tool used to prove this result is the theory of degree of constructible sets, which we introduce and develop in this manuscript, generalizing the results of \cite{Heintz83} and proving two Bezout's Inequalities for two different notions of degree. We present a ${\bf BPP}_K$ algorithm to exhibit the \emph{power} of correct test sequences, this algorithm decides whether a list of polynomials is a secant sequence by just evaluating the input list at some well-suited points. We show the differences between correct test sequences and Demillo-Lipton-Schwartz-Zippel probabilistic tests and we reformulate, prove and generalize two well-known results of the \emph{Polynomial Method}: We prove \emph{Dvir's exponential lower bounds for Kakeya sets} from lower bounds for the length of correct test sequences and  generalize \emph{Alon's Combinatorial Nullstellensatz}.
\end{abstract}

\section{Introduction}
This manuscript is a first step of a long term project to understand the meaning of \emph{correct test sequences (CTS's)}. This notion was introduced in \cite{HeintzSchnorr} and it has been extensively used to design probabilistic algorithms for \emph{Polynomial Equality Tests}, when polynomials are given by arithmetic programs or black-boxes that evaluate them (see, for instance, \cite{Pardo-survey}, \cite{Krick-Pardo96}, \cite{Lickteig}, \cite{Koiran}, \cite{Hardness} and references in all of them).

In this manuscript we do not focus on the applications of the notion in complexity terms. \emph{We focus on the foundations and generalizations of the notion as is:} Seeking for foundational results (as B\'ezout's Inequality for constructible sets), trying to see how they underlie other notions in the existing mathematical literature  and introducing mathematical generalizations and (as in the original \cite{HeintzSchnorr} manuscript) proving not only the existence but also probabilistic results. We finally exhibit a strange algorithm in ${\bf BPP}_K$ to decide whether a list of polynomial equations is a secant sequence. The main feature of this algorithm is not its complexity but the fact that it decides the generic co-dimension simply by evaluating the input list of polynomials at some well-suited points.

Correct test sequences appear in the mathematical literature in many different forms. You may see equivalent notions in different fields: From the notion of finite length \emph{norming set} in a Banach algebra to the notion of \emph{sample} in Reproducing Kernel Hilbert Spaces. Also, many correct test sequences satisfy the main ingredient of the ``Polynomial Method'' and, in particular, they are implicitly used in the lower bound for \emph{Kakeya} sets in \cite{Dvir}. We discuss some of these characterizations in our manuscript.

Correct test sequences have been seen a as concurrent technique to \emph{DeMillo-Lipton-Schwartz-Zippel probabilistic tests} (cf.  \cite{DeMilloLipton}, \cite{Zippel} and \cite{Schwartz}). But both approaches have subtle differences. Some of these features are the following ones:
\begin{itemize}
\item Correct test sequences are by definition ``correct'' for the whole class of inputs under consideration: The error probability of their answers is always zero, independent of the input instance.
\item DeMillo-Lipton-Schwartz-Zippel probabilistic tests are based on an easy-to-construct sampling set, which is itself a correct test sequence. In such large class, each input instance has some accurate sample where it does not vanish, but different inputs require different accurate sample point. Bounding the error probability of a random guess in the sampling set, we get the probabilistic algorithm.
\item Correct test sequences have a length of the same asymptotic order than the Krull dimension of the input set under consideration, whereas DeMillo-Lipton-Schwartz-Zippel probabilistic tests require a sample set of size exponential in the number of variables to get a zero error probability.
\item There is no known efficient deterministic procedure to generate correct test sequences and, hence, their existence (and generation) is based on a statement that proves that they are highly dense in probability terms.
\item Under some simple hypothesis, we even see that correct test sequences are highly dense inside  the (exponentially big)  DeMillo-Lipton-Schwartz-Zippel sampling set (see Corollary \ref{Zippel-Schwartz-repleto-CTS:corol}).
\end{itemize}

These subtle differences require of further explanations and this is the motivation of the research trend we initiate with this manuscript, whose main outcomes will serve as foundations for future research.  The manuscript is twofold and based on two main frameworks:

\begin{itemize}
\item To our knowledge,  there is no \emph{B\'ezout's Inequality for constructible sets} in the mathematical literature. Constructible sets are natural geometric objects that arise naturally as projections of algebraic varieties (see \cite{Chevalley}, for instance) and, hence, they are natural objects of Elimination Theory or Computational Algebraic Geometry. Nevertheless, the notion of the degree of these geometric objects has not been correctly stated in the mathematical literature.  The study was initiated in \cite{Heintz83}, who introduced a first notion of degree of a constructible set: This author defined the degree of a constructible set in some affine space $C\subseteq \A^n(K)$ as the degree of its Zariski closure (which we denote by $\deg_z(C)$ below). Unfortunately, Heintz's  notion of degree does not satisfy B\'ezout's Inequality  (see Example \ref{croix-de-berny-def:ej}, for instance). In \cite{Heintz85} the author already observed that his results in \cite{Heintz83} only hold for locally closed subsets of some affine space. We devote Part 1 (Sections \ref{Bezout-inequality:sec}, \ref{upper-intersection-several:sec} and Appendix \ref{degree-projection-constructible:sec}) of the present manuscript to this topic. We introduce two different notions of degree of a constructible set that satisfy the paradigms of affine degree stated in \cite{Heintz83}: $\deg_\pi(C)$ and $\deg_{\rm lci}(C)$. We prove that both of them satisfy a B\'ezout's Inequality and we study several other relevant properties of both notions.
\item We devote the rest of the manuscript to \emph{correct test sequences}. We first observe different equivalent forms of the notion of correct test sequence in different contexts (in Section \ref{CTS:sec}). We emphasize the fact that correct test sequences are not naturally defined to distinguish between zero and non-zero functions, but to detect by sample evaluation whether an input $f\in \Omega$ belongs or not to some ``discriminant'' subset $\Sigma\subseteq \Omega$ of co-dimension at least one. We observe that correct test sequences also satisfy a \emph{curse of dimensionality}: The Krull dimension of the restricted input set $\Omega$ is a lower bound for the length of any correct test sequence for $\Omega$ with respect to $\{0\}$. Once some good degree notion for constructible sets has been established, we have the tools to provide statements that claim that correct test sequences do exist in any constructible set of accurate dimension and degree (and not only in sets of ``grid'' shape as established in the existing literature). This is done in Section \ref{probability-CTS-zero-dimensional:sec}. In this Section we not only establish an existential  outcome, but also a probabilistic statement that shows that correct test sequences of asymptotical optimal length are densely distributed in any locally closed set of accurate dimension and degree: We exhibit probability estimates for success. Next, we emphasize that detecting dimension gaps is one of the underlying ingredients in applications of correct test sequences. In Section \ref{suite-secante:sec},  we present a ${\bf BPP}_K$ algorithm to detect secant sequences which do not manipulate the input list of polynomials in any form. It simply evaluates lists of polynomials at some well-suited sample points. In a final Appendix \ref{Kakeyaetal-CTS:sec} we compare correct test sequences to two celebrated combinatorial results, basic for  \emph{The Polynomial Method} in Combinatorics: \emph{Dvir's exponential lower bounds for Kakeya sets} and \emph{Alon's Combinatorial Nullstellensatz}.

\end{itemize}

In forthcoming subsections we exhibit in more detail our main outcomes.

\subsection{Main outcomes of Part 1: Degree, B\'ezout's Inequality and variations for constructible sets (Sections \ref{Bezout-inequality:sec}, \ref{upper-intersection-several:sec} and Appendix \ref{degree-projection-constructible:sec})}

We first need to fix some notations that are going to be used in the sequel. Let $\kappa$ be a field and let $K$ be its algebraic closure. For every positive integer $m\in \N$, we denote by $\A^m(\kappa)$ the $m-$dimensional affine space over $\kappa$. A \emph{constructible set} is a subset $C\subseteq \A^m(K)$ which can be obtained as the projection of an algebraic variety of some affine space $\A^M(K)$ of higher dimension $M\geq m$ (cf. \cite{Chevalley}, for instance). A technical difficulty is the lack of a consistent theory of degree of constructible sets that satisfy (at least) a B\'ezout's Inequality. As soon noticed in \cite{Heintz85}, B\'ezout's Inequality in \cite{Heintz83} is only valid for locally closed sets. In Example \ref{croix-de-berny-def:ej} we exhibit a constructible set $C$ and  an algebraic variety $V$ such that $\deg_z(C\cap V)$ is not bounded by the product $\deg_z(C)\deg_z(V)$. Hence, $\deg_z$ does not satisfy either B\'ezout's Inequality or some other statements in \cite{Heintz83}, which we revise here.

According to the \emph{paradigm} stated in   \cite{Heintz83}, a notion of degree of constructible sets must be a quantity (acting as a volume) which satisfies the following properties:
\begin{itemize}
\item It is always finite and agrees with the usual notion in the case of algebraic varieties and locally closed sets.
\item It is sub-additive.
\item It has a good behaviour with respect to intersections with linear varieties and with respect to Cartesian products.
\item It satisfies a B\'ezout's Inequality.
\item It is controlled under projections and images by linear mappings.
\item It satisfies some variation of Proposition 2.3 in \cite{HeintzSchnorr}.
\end{itemize}

From these properties, only Proposition 2.3 of \cite{HeintzSchnorr} requires of some additional explanations.

Observe that a literal application of B\'ezout's Inequality puts the co-dimension at the exponent of bounds for the degree of several geometric objects. Namely, given $f_1,\ldots, f_m\in K[X_1,\ldots, X_n]$ such that $V:=V_\A(f_1,\ldots, f_m)$ has dimension $n-m$, and let $W\subseteq \A^n$ be an algebraic variety. B\'ezout's Inequality yields the following upper bound:
$$\deg(W\cap V_\A(f_1,\ldots, f_m))\leq \deg(W)\prod_{i=1}^m \deg(f_i) \leq \deg(W)\left( \max\{\deg(f_1), \ldots, \deg(f_m)\}\right)^{\rm codim(V)}.$$
Nevertheless, Proposition 2.3 in  \cite{HeintzSchnorr} replaces co-dimension of $V$ by dimension of $W$, emphasizing the role of dimension of $W$ in this kind of upper bounds. Namely, Proposition 2.3 in \cite{HeintzSchnorr} yields:
$$\deg(W\cap V_\A(f_1,\ldots, f_m))\leq  \deg(W)\left( \max\{\deg(f_1), \ldots, \deg(f_m)\}\right)^{\rm dim(W)}.$$
The trade-off between these two upper bounds is the key ingredient to prove the existence and high probability of optimal length correct test sequences.

Having in mind these requirements, we introduce two notions of degree for constructible sets. One is the notion of degree of a constructible set $C\subseteq\A^n$ based on the fact that they may be presented as projections $C=\pi(W)$ of locally closed sets $W\subseteq \A^m$, where $m\geq n$. We denote this notion of degree by $\deg_\pi(C)$ (see Definition \ref{grado-como-proyeccion:def} for details). Another one is based on the fact that constructible sets $C\subseteq \A^n$ may be presented as finite unions of locally closed irreducible sets (i.e. open sets in irreducible varieties for the Zariski topology). We denote by $\deg_{\rm lci}(C)$ this notion of degree (see Definition \ref{grado-constructibles:def} for details). We see how different properties apply accurately to either $\deg_\pi$ or $\deg_{\rm lci}$ and we have not taken any decision on which is best choice. Let us resume some of the properties we prove in this manuscript.

In Example \ref{La-Croix-de-Berny-grados-varios:ej}, we  observe that they are different notions by exhibiting a constructible set $C\subseteq \A^2 (\C)$ such that:
$$\deg_z(C)< \deg_\pi(C) < \deg_{\rm lci}(C).$$
We discarded $\deg_z$ as notion of degree since it does not satisfy B\'ezout's Inequality. We concentrate our study in $\deg_\pi$ and $\deg_{\rm lci}$. We then prove that both notions of degree satisfy the expected properties: They are sub-additive (cf. Proposition \ref{grado-constructibles-subaditivo:prop}), they generalize the notion of degree for  locally closed sets of \cite{Heintz83} (cf. Proposition \ref{generaliza-grado-localmente-cerrados:prop}) and they behave well with intersections of linear affine varieties and Cartesian products (see Proposition \ref{varias-propiedades-basicas-grado-construtibles:prop}). Finally, we prove  that both notions satisfy a \emph{B\'ezout's Inequality for constructible sets} (see Theorem \ref{Desigualdad-Bezout-constructibles:teor}). This statement simply says that  given two constructible subsets $C, D\subseteq \A^n (K)$, the following inequalities hold:
$$\deg_\pi(C\cap D) \leq \deg_\pi(C)\deg_\pi(D) , \;\; \deg_{\rm lci}(C\cap D) \leq \deg_{\rm lci}(C)\deg_{\rm lci}(D).$$

Nevertheless, we have not been able to clarify the complete role of both notions with respect to the remaining two properties: \emph{Proposition 2.3 of \cite{HeintzSchnorr}} and \emph{degree of images under linear mappings}. It is immediate to extend the first of those results for the intersection of a constructible set with several locally closed sets for both  $\deg_{\rm lci}$ (see Proposition \ref{grado-constructible-cerrados-multiple:prop}) and $\deg_\pi$ (under the globally equi-dimensional hypothesis, see Proposition \ref{grado-interseccion-equidimensional-lc:corol}).

In Section \ref{upper-intersection-several:sec} we prove that $\deg_{\rm lci}$ satisfies a similar estimate to Proposition 2.3 of \cite{HeintzSchnorr}  for constructible sets of any kind (cf. Theorem \ref{cotas-grado-varias-intersecciones:teor}) exchanging co-dimension by dimension in the exponent of the upper bound. We reproduce here that statement for helping readability:

\begin{main}[{\bf Bounds for the $LCI$-degree of the intersection of several constructible sets}]\label{cotas-grado-varias-intersecciones:main}
Let $C_1, \ldots, C_s\subseteq \A^n$ be a sequence of constructible sets. Let $r:=\dim(C_1)$ be the dimension of the constructible set $C_1$. The following inequalities hold:

\begin{enumerate}
\item  \emph{First upper bound:}
$$\deg_\pi\left(\bigcap_{i=1}^s C_i\right)\leq\deg_{\rm lci}\left( \bigcap_{i=1}^s C_i\right)\leq {{s+r-1}\choose {r}}\deg_{\rm lci}(C_1) \left( \max\{\deg_{\rm lci}(C_j)\; :\; 2\leq j \leq s\}\right)^{r}.$$
\item \emph{Second upper bound:}
$$\deg_\pi\left(\bigcap_{i=1}^s C_i\right)\leq\deg_{\rm lci}\left(\bigcap_{i=1}^s C_i\right)\leq \deg_{\rm lci}(C_1) \left(1+ \sum_{i=1}^s \deg_{\rm lci}(C_i) \right)^{\dim(C_1)}.$$
\item \emph{Upper bound in terms of the average degree:} Given a family of constructible sets $C_1,\ldots,C_s\subseteq \A^n$, we define its average $E-$degree as:
    $$\deg_{\rm av}^{(E)}(C_1,\ldots, C_s):= {{1}\over{s}}\sum_{i=1}^s \deg_{\rm lci}(C_i).$$
    Then, we also have:
$$\deg_\pi\left(\bigcap_{i=1}^s C_i\right)\leq \deg_{\rm lci}\left(\bigcap_{i=1}^s C_i\right)\leq \deg_{\rm lci}(C_1) s^{\dim(C_1)}\left(\deg_{\rm av}^{(E)}(C_1,\ldots, C_s)\right)^{\dim(C_1)}.$$
\end{enumerate}

\end{main}

We have not been able to establish a similar result with $\deg_\pi$ on the right hand side of any of the inequalities.

On the other hand, we also consider the transformation of the degree by taking images by linear mappings (Lemma 2 of \cite{Heintz83}). We consider a constructible subset $C\subseteq \A^n(K)$ and a linear mapping $\ell:\A^n(K) \longrightarrow \A^m(K)$. In Example \ref{La-Croix-de-Berny-grados-varios:ej} we prove that there are algebraic varieties $W\subseteq \A^3(\C)$ such that $\deg_{\rm lci}(\ell(W))> \deg_{\rm lci}(W)=\deg(W)$. Hence, Lemma 2 of \cite{Heintz83} is false for constructible sets and $\deg_{\rm lci}$. It may increase under linear images.  In Proposition \ref{constructibles-grado-imagenes-por-lineal:prop}  we prove that $\deg_z(\ell(C))\leq \deg_{\rm lci}(C)$, whereas in Claim $iii)$ we exhibit an example of a constructible set that proves that $\deg_{\rm lci}(\ell(C))$ is not bounded by $\deg_{\rm lci}(C)$.

 As expected, we prove in Proposition \ref{pi-degree-propiedades:prop} that $\deg_\pi$ has a good behaviour with respect to linear transformations. Namely, we prove that the following holds:
 $$\deg_z(\ell(C))\leq \deg_\pi(\ell(C))\leq \deg_\pi(C)\leq \deg_{\rm lci}(C),$$
 for any constructible set and any linear transformation $\ell$. This yields to the notion of \emph{defect} of the image $\ell(C)$ of some constructible set given as the following difference:
 $$\deg_{\rm lci}(\ell(C))-\deg_z(\ell(C)).$$
This manuscript would be somehow uncomplete if we do not try to show some upper bounds for this defect. This is done in our Appendix \ref{degree-projection-constructible:sec}. There, in Subsection \ref{deffective-projection:subsec} we state some upper bound for the defect in Lemma \ref{cota-defecto-lema:lema}. In Remark \ref{optimalidad-defecto-ejemplo:rk}, we also exhibit examples such that the bound of Lemma \ref{cota-defecto-lema:lema} is optimal. Nevertheless, we hope to have sharper upper bounds for the defect in forthcoming works.

Additionally, we claimed that one of the paradigmatic requirements of the degree must be some kind of control of the $LCI$-degree of $\ell(C)$. In Subsection \ref{extrinsic-bound-projection:subsec} we also exhibit an extrinsic (not sharp, in our opinion) upper bound for $\deg_{\rm lci}(\ell(C))$. Just for giving some idea of what we mean, we reproduce here the following statement, which is Theorem \ref{extrinisic-bound-Nullstellensatz:teor} of Subsection \ref{extrinsic-bound-projection:subsec}, that stated an upper bound for $\deg_{\rm lci}(\ell(C))$. We prove that $\deg_{\rm lci}(\ell(C))$ is bounded (and, hence, ``controlled'') in terms of some syntactical quantities depending only on $C$:

\begin{main}\label{extrinisic-bound-Nullstellensatz:main}
Assume $\kappa=  K$ is an algebraically closed field. Let $V\subseteq \A^n(K)$ be an irreducible algebraic variety of dimension $r$.  Let $m$ be an integer such that $m\leq n$. Assume that there are polynomials $g_1, \ldots, g_s\in K[X_1,\ldots, X_n]$ of degrees $d_i:=\deg(g_i)$, $1\leq i \leq r$, such that $V=V_\A(g_1,\ldots, g_s)$ and the following inequalities hold:
$$d_1\geq d_2 \geq \ldots \geq d_s.$$
Let us consider the following quantity:
$$N:=N(d_1, \ldots, d_s, n, r):=\left\{\begin{matrix} \prod_{i=1}^s d_i & {\hbox {\rm if $s \leq n-m$}}\\
2d_s\left(\prod_{i=1}^{n-m-1}d_i\right)-1 & {\hbox {\rm if $s > n-m$}} \end{matrix}\right.$$

Let us define the following quantities:
$$\widetilde{N}:={{N+(n-m)}\choose{n-m}},$$
$$M:=\sum_{i=1}^s {{N-d_i+(n-m)}\choose{(n-m)}},$$
and
$$\scrN':=\min\{N, M+1\}.$$

Let $\pi: \A^n(K) \longrightarrow \A^m (K)$ be the canonical projection that forgets the last $n-m$ variables and let $W:=\overline{\pi(V)}^z$ be the Zariski closure of $\pi(V)$ in $\A^m(K)$. Then, we have:

$$\deg_{\rm lci}(\pi(V)) \leq \deg(V)\left(2d_1\right)^{\dim (W)} \left(\scrN'\right)^{\dim(W)+1}.$$
\end{main}

\subsection{Main outcomes of Part 2: Correct Test Sequences, generalizations, relations to other notions, density estimates and immediate applications (Sections \ref{CTS:sec}, \ref{probability-CTS-zero-dimensional:sec}, \ref{suite-secante:sec} and Appendix  \ref{Kakeyaetal-CTS:sec}) }

In the remaining sections of the manuscript we deal with correct test sequences (CTS's). We begin in Section \ref{CTS:sec}  by establishing the notion of correct test sequence in Definition \ref{CTS-funciones-definicion:def}. We consider a set $X$ and a class of functions $\scrF(X)\subseteq K^X$ with values in a field $K$. Given $\Omega\subseteq \scrF(X)^m$ a distinguished class of functions in $\scrF(X)^m$ (i.e. maps from $X$ to $K^m$) and a subset $\Sigma \lestricto \Omega$, a \emph{correct test sequence of length $L$ for $\Omega$ with discriminant $\Sigma$} is a finite set of $L$ elements ${\bf Q}:=\{x_1,\ldots, x_L\} \subseteq X$ such that the following formula holds:
\begin{equation}
\forall f\in \Omega, \;\; f(x_1)=\cdots = f(x_L)=0\in K^m \Longrightarrow f\in \Sigma.
\end{equation}
In the case $\Sigma=\{0\}\lestricto \Omega$, we say that ${\bf Q}$ is a correct test sequence for $\Omega$.
The section is structured as an expository section where we show several equivalent notions of correct test sequences, depending on the context: As identity sequences (when viewed in $\Omega-\Omega$ and $\Sigma=\{0\}$),  as finite \emph{norming sets} in the case $K$ is a field with some absolute value (see Proposition \ref{cuestores-y-metrica:prop}), in the ring of continuous functions $\scrC(X)$ on a topological space (see Proposition \ref{CTS-funciones-continuas:prop}) or in Reproducing Kernel Hilbert Spaces (see Proposition \ref{CTS-RKHS:prop}).

In Subsection \ref{CTS-polynomials:subsec} we focus on multivariate polynomials as the main subject of these pages. For an algebraically closed field $K$ and a positive integer $d\in\N$, we consider the vector space $P_d^K:=P_d^K(X_1,\ldots, X_n)$ of all polynomials in $K[X_1,\ldots, X_n]$ of degree at most $d$. For a list of degrees $(d):=(d_1,\ldots, d_m)\in \N^m$, we consider the class $\scrP_{(d)}^K$ of all lists $f:=(f_1,\ldots, f_m)$ of $m$ polynomials such that $f_i\in P_{d_i}^K$ for every $i$, $1\leq i \leq m$. Thus, we discuss correct test sequences ${\bf Q}$ associated to constructible subsets $\Omega\subseteq \scrP_{(d)}$ with respect to some discriminant subset $\Sigma\lestricto \Omega$. Our main remark in this subsection is a kind of \emph{curse of dimensionality of correct test sequences} (cf. Proposition \ref{codimension-longitud-cuestores:prop}): If ${\bf Q}$ is a correct test sequence of length $L$ for a constructible set $\Omega\subseteq \scrP_{(d)}^K$ with respect to $\Sigma$, then:
$$L\geq \dim(\Omega)-\dim(\Sigma)= \codim_\Omega(\Sigma).$$
This lower bound establishes the limits to have correct test sequences: An \emph{optimal length correct test sequence for $\Omega$} will be a correct test sequence of length in $O(\dim(\Omega))$.

Then, we address the question of whether correct test sequences of optimal length exist in Section \ref{probability-CTS-zero-dimensional:sec}. Our main outcome first proves that correct test sequences of optimal length exist not only for parametrized constructible sets (as in \cite{HeintzSchnorr}) but for any constructible set. Secondly, we prove that length $L$ correct test sequences do exist not only in \emph{grid-like sets} (of the form $(Q_1\times \cdots \times Q_n)^L\subseteq (\A^n)^L$, where $Q_i\subseteq K$ is a finite set): Correct test sequences may be found in any constructible set of accurate dimension and degree. Last but not least, we prove that correct test sequences of optimal length are highly dense in probability terms in any set of the form $C^L$, where $C$ is locally closed and $L$ is in $O(\dim(\Omega))$.

All these properties are stated in Theorem \ref{teorema-principal-densidad-questores:teor}. Given $C\subseteq \A^n$ a locally closed set, $L\in \N$ a positive integer and $\Omega$ and $\Sigma$ as above, we denote by $R(\Omega, \Sigma, C, L)$ the constructible set of all sequences ${\bf Q}\in C^L$ of length $L$ which are correct test sequences for $\Omega$ with respect to $\Sigma$. Theorem \ref{teorema-principal-densidad-questores:teor} proves that, under some technical hypothesis on the degree and the dimension of $C$, for $L\geq 6\dim(\Omega)$, the set $R(\Omega, \Sigma, C, L)$ is highly dense in $C^L$.  We discuss the particular case when $C$ is a complete intersection variety in several Corollaries. We reproduce here Corollary \ref{densidad-questores-variedades-ic:corol} which exhibits the high density of correct test sequences in the case $C$ is a complete intersection variety of accurate dimension and degree:

\begin{main}\label{densidad-questores-variedades-ic:corol-main} Let $m,n \in\N$ be two positive integers, with $m\leq n$, and let $(d):=(d_1,\ldots, d_m)$ be a list of degrees and $d:=\max\{d_1,\ldots, d_m\}$.  Let $\Sigma\subseteq\Omega$ be two constructible subsets of $\scrP_{(d)}(X_1,\ldots, X_n)$ such that $\Sigma$ has co-dimension at least 1 in $\Omega$.  Assume that $\Omega\setminus \Sigma$ satisfies the following property:
  \begin{equation}\label{dimension-hipotesis-cts-corol:eqn}
  \forall f:=(f_1,\ldots, f_m)\in \Omega \setminus \Sigma, \; \dim(V_\A(f_1,\ldots, f_m))= n-m,
\end{equation}
Let $C:=V_\A(h_1,\ldots, h_r)\subseteq \A^n$ be a complete intersection algebraic variety of co-dimension $r\geq (n-m)+ m/2 + 1/2$ such that $\deg(C)\geq \delta^r$, where $\delta:=\min\{\deg(h_1),\ldots, \deg(h_r)\}$.
Let $L\in \N$ be a positive integer and assume that the following properties hold:
\begin{enumerate}
\item $L\geq 6\dim(\Omega)$,
\item $\log(\delta) \geq \max\{2(1+\log(d+1)), {{2\log(\deg_{\rm lci}(\Omega))}\over{\dim(\Omega)}}\}$,
\item $\max\{ \deg(h_1), \ldots, \deg(h_r)\}\leq(1+ {{1}\over{n-m}})\delta$,
\end{enumerate}
where $\log$ stands for the natural logarithm. Let $R:=R(\Omega, \Sigma, C, L)$ be the constructible set of all sequences ${\bf Q}\in C^L$ of length $L$ which are correct test sequences for $\Omega$ with respect to $\Sigma$. Then, there is a non-empty Zariski open subset $\G(C)$, in the space $\G(n, n-r)$ of all linear affine varieties of co-dimension $n-r$,  such that for every $A\in \G(C)$ the probability that a randomly chosen list ${\bf Q}\in (C\cap A)^L$ is in $R$ satisfies:
$${\rm Prob}_{(C\cap A)^L}[R]\geq 1- {{1}\over{\deg_{\rm lci}(\Omega)e^{\dim(\Omega) + (m-1)L}}},
$$
where $(A\cap C)^L$ is endowed with its uniform probability distribution.
\end{main}

Note that if we introduce in $C$ a probability distribution based on a Poincar\'e-like formula for algebraically closed fields (see Section 2 of \cite{BP07} for the complex case) involving $\sharp(C\cap A)$ with $A\in \G(n,n-r)$, the previous statement simply claims that the measure of $R(\Omega, \Sigma, C, L)$ in $C^L$ is close to $1$.

 Trying to answering some of the features that differ between DeMillo-Lipton-Schwartz-Zippel test and correct test sequences, we also prove that correct test sequences of optimal length are highly dense inside the sampling set of the test designed by these authors in \cite{DeMilloLipton}, \cite{Zippel} and \cite{Schwartz}. This is done in Corollary \ref{Zippel-Schwartz-repleto-CTS:corol}.

In Section \ref{suite-secante:sec} we exhibit an immediate application of our approach to correct test sequences: A ${\bf BBP}_K$ algorithm for the \emph{``Suite S\'ecante'' Problem}. Our goal is to design an algorithm that solves the following problem:
\begin{problem}[{\bf ``Suite S\'ecante'' Problem}]
Given $f\in \scrP_{(d)}$ decide whether $f$ is a ``Suite S\'ecante''. Namely, decide whether the following property holds:\\
The algebraic variety $V_\A(f_1,\ldots, f_m)\subseteq \A^n$ is a non-empty variety of dimension $n-m$.
\end{problem}
But we do not want to use any of the standard techniques in Computational Algebraic Geometry. In this field, the input is usually a list of polynomials and the goal is to determine, among other things, the dimension of the affine set of their common zeros. This is done by manipulating the input list in different forms (computing a Gr\"obner basis of the ideal, using variations of Bertini's Theorems to approximate the zero set by a complete intersection variety, etc.). \emph{Our method exhibits the ``strange'' power of correct test sequences: Our algorithm answers to this problem in polynomial time doing no manipulation of the input list of polynomials.} \emph{It just evaluates the input list of polynomial equations $f$  at some well-suited points, i.e., it computes $f(x_1),\ldots, f(x_L)\in K^m$} for some $x_1, \ldots, x_L$.

We restrict to the case of \emph{fields which are well-suited for CTS's guessing}. We say that a field $K$ is well-suited for CTS's guessing in zero-dimensional varieties if it satisfies the following property: \\
 For every $R\in \N$ and every positive integer $n\in \N$, there is a zero-dimensional variety $V_{R}\subseteq \A^n(K)$ of degree $R^n$ given by polynomial equations of degree at most $R$, and such that the following task may be performed with at most $O(n \log_2(R))$ arithmetic operations:
$${\hbox {\bf guess at random}}\;\; x\in V_R.$$

Thus, we prove Theorem \ref{suite-secante-BPP:teor}, which we reproduce here in order to improve accessibility for the reader:

\begin{main}\label{suite-secante-BPP:main}
If $K$ is a field well-suited for CTS's guessing, the problem ``Suite S\'ecante'' with restricted inputs is in ${\bf BPP}_K$. Namely, let $(d)$ be a degree list and let $\Omega\subseteq \scrP_{(d)}$ be a constructible subset of  lists of polynomials. Let $U\subseteq \scrP_{(d)}$ be the Zariski open subset described in Theorem \ref{variedad-incidencia-propiedades:teor} of lists that are ``Suites S\'ecantes''. Assume that $\Omega \setminus \left(\Omega\cap U\right)$ has dimension at most $\dim(\Omega)-1$. Then, there is an algorithm in ${\bf BPP}_K$ that solves ``Suite S\'ecante'' Problem with inputs in $\Omega$, i.e.:\\
\emph{Given as input $f\in \Omega$ and the data $\dim(\Omega)$ and $\deg_{\rm lci}(\Omega)$, the algorithm decides whether $f$ is a ``Suite S\'ecante'' or not.}\\
The running time of the algorithm in terms of arithmetic operations is at most:
$$O\left( \dim(\Omega)n\left((T +\log(\dim(\Omega))+ \log(d) \right) + Tn\log(\deg_{\rm lci}(\Omega)) \right).$$
where $T$ is the  maximum number of arithmetic operations required to evaluate at one point any list $f:=(f_1,\ldots, f_m)\in\Omega$ and $d:=\max\{ d_1,\ldots, d_m\}$. The error probability is bounded by:
$${{1}\over{\deg_{\rm lci}(\Omega)e^{6m\dim(\Omega)}}}.$$
\end{main}

In the dense input case (i.e. $\Omega=\scrP_{(d)}^K$) the total number of arithmetic operations is of order:
$$O(nN_{(d)}^2),$$
where $N_{(d)}=\dim_K(\scrP_{(d)})$ and the error probability is bounded by:
$${{1}\over{e^{6mN_{(d)}}}}.$$

We finally devote Appendix \ref{Kakeyaetal-CTS:sec} to reformulate two well-known statements of the \emph{Polynomial Method} in terms of correct test sequences:
\begin{itemize}
\item In Subsection \ref{Kakeya:subsec}  we discuss Dvir exponential lower bounds for Kakeya sets (cf. \cite{Dvir}) in terms of correct test sequences. We show that Kakeya sets are correct test sequences for certain constructible sets and, hence, Dvir's lower bound is a consequence of the \emph{curse of dimensionality} discussed above (in Corollary \ref{Dvir-CTS-curse:corol}). We also prove that most correct test sequences are not Kakeya sets when the degree of the involved polynomials is bounded by $q^{1-\varepsilon}-1$ for small $\varepsilon>0$ (see Corollary \ref{CTS-not-Kakeya:corol}).
\item In Subsection \ref{CTS-Alon-combinatorio:subsec} we also see that Alon's Combinatorial Nullstellensatz (cf. \cite{Alon}) is also related to our treatment of  correct test sequences. In particular, we show how duality in zero-dimensional reduced algebras is related to Alon's main outcome in \cite{Alon}.
\end{itemize}

\section[B\'ezout's Inequality]{B\'ezout's Inequality for constructible sets}
\label{Bezout-inequality:sec}
In the early  80's of the last century, three authors have independently arrived to three different proofs of some B\'ezout's Inequalities. They took different approaches of the notion of degree of an algebraic variety, and they used different techniques to achieve their outcomes, although the underlyign notion has some coincidences. These three authors were W. Vogel
(cf. \cite{Vogel}), J. Heintz (cf. \cite{Heintz83}) and W. Fulton (cf. \cite{Fulton}). Neither Fulton nor Vogel dealt with constructible sets, whereas Heintz developed \cite{Heintz83} an affine
version of the notion which was systematically called \emph{degree of constructible sets}.
Nevertheless, as already observed in \cite{Heintz85}, Heintz's statements do not hold for constructible sets and they only hold for locally closed subsets of some affine space. The question that remained open was to introduce a right notion of degree of constructible sets and prove that this notion satisfies a B\'ezout's Inequality. In this section we close that gap.

\subsection{Closed, locally closed and constructible sets in Zariski's topopology}
As in the introduction, $K$ is an algebraically closed  field and $K[X_1,\ldots, X_n]$ is the ring of polynomials in the set of variables $\{X_1,\ldots, X_n\}$ with coefficients in $K$. We shall denote by $\A^n(K)$ (or $\A^n$ when no confusion may arise) the affine space of dimension $n$ over $K$.
Given a finite set of polynomials $\{f_1,\ldots, f_s\}\subseteq K[X_1,\ldots, X_n]$,  we denote by $V_\A(f_1,\ldots, f_s)\subseteq \A^n(K)$ the algebraic variety of the common zeros in $\A^n(K)$ of these polynomials. Namely,
$$V_\A(f_1, \ldots, f_s):=\{ x\in \A^n(K) \; : \; f_1(x)=f_2(x)=\cdots = f_s(x)= 0 \}.$$
If ${\frak a}\subseteq K[X_1,\ldots, X_n]$ is an ideal, we also denote by $V_\A({\frak a})\subseteq \A^n(K)$ the set of common zeros of all polynomials in ${\frak a}$. Obviously, if ${\frak a}$ is the ideal generated by the finite set $\{f_1,\ldots, f_s\}$ (i.e. ${\frak a}=(f_1,\ldots, f_s)$), we then have $V_\A({\frak a})=V_\A(f_1, \ldots, f_s)$.

There is a unique topology in $\A^n(K)$ whose closed sets are affine algebraic varieties, which is usually known as the \emph{Zariski topology} on $\A^n(K)$. For each subset
$S\subseteq \A^n$ we denote by $\overline{S}^z$ the closure of $S$ with respect to the Zariski topology in $\A^n$. For every subset $X\subseteq \A^n$, the topology induced on $X$ by the Zariski topology  of $\A^n$ will be called the Zariski topology of $X$. We then use the terms  \emph{Zariski open in $X$}
or \emph{Zariski closed in $X$} to describe open and closed sets in $X$ with respect to its Zariski topology.

\begin{definition}[{\bf Locally closed and constructible sets in Zariski's topology of $\A^n$}]
The intersection of an open and a closed subsets of the Zariski topology of $\A^n$ will be called \emph{locally closed} subset. A locally closed subset $V\subseteq \A^n$ is called \emph{irreducible} if  $V$ is an open subset in a closed irreducible variety of $\A^n$. Finite unions of locally closed sets are called  \emph{constructible sets} (in the most classical tradition of Elimination Theory, see \cite{Chevalley}).
\end{definition}

Some authors prefer to use the term \emph{quasi-projective variety} instead of locally closed (cf. \cite{Shafarevich}, for instance). As the context of this  manuscript is essentially affine, we avoid to discuss the projective terminology, except if required.

The classical Chevalley Theorem explains the need of using constructible sets in Elimination Theory: They are  the projections of closed sets.

\begin{theorem}[\cite{Chevalley}]
A subset $C\subseteq \A^n(K)$ is constructible if and only if there is some $m\in \N$ and some algebraic variety $V\subseteq \A^{n+m}(K)$ such that $C:=\pi(V)$, where $\pi:\A^{n+m}(K)\longrightarrow \A^n(K)$ is the canonical projection.
\end{theorem}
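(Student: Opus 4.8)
The equivalence has two halves. The implication ``if $C=\pi(V)$ for an algebraic variety $V$ then $C$ is constructible'' is the substantial one --- it is Chevalley's elimination theorem --- whereas the converse is elementary; I sketch the converse first. \emph{Constructible sets are projections of varieties.} Write $C=\bigcup_{i=1}^{s}L_{i}$ as a finite union of locally closed sets, and each $L_{i}$ as a finite union of \emph{basic} locally closed pieces $D(h)\cap Z$, where $Z$ is an algebraic variety and $D(h):=\{x:h(x)\neq 0\}$ (possible since the $D(h)$ form a basis of the Zariski topology and $K[X_{1},\ldots,X_{n}]$ is Noetherian). Because $\pi$ commutes with finite unions and a finite union of varieties is again a variety, it suffices to realise each basic piece as a projection of a variety. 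This is the Rabinowitsch trick: if $Z=V_{\A}(f_{1},\ldots,f_{r})\subseteq\A^{n}$, then
$$\widetilde{Z}:=V_{\A}\bigl(f_{1},\ldots,f_{r},\;uh-1\bigr)\subseteq\A^{n+1}$$
projects onto $D(h)\cap Z$ under the map forgetting the new coordinate $u$. To amalgamate the $s$ basic pieces $D(h_{i})\cap Z_{i}$ into a single variety, place them inside a common $\A^{n+s}$ by sending the auxiliary coordinate of the $i$-th piece to the $i$-th new slot and the remaining new coordinates to $0$; the union of the resulting varieties is a variety $V\subseteq\A^{n+s}$ with $\pi(V)=C$.

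\emph{Projections of varieties are constructible.} Factor $\pi:\A^{n+m}\to\A^{n}$ as a composition of $m$ one-variable coordinate projections; by induction on $m$ it suffices to prove that the image of a \emph{constructible} set $C\subseteq\A^{N+1}$ under the projection $p:\A^{N+1}\to\A^{N}$ forgetting the last variable $T$ is constructible. Since $p$ commutes with finite unions, a locally closed set is a finite union of basic pieces $D(h)\cap Z$, and $D(h)\cap Z$ splits along the irreducible components of $Z$, we may assume $C=D(h)\cap Z$ with $Z\subseteq\A^{N+1}$ irreducible and $h\notin I(Z)$ (if $h\in I(Z)$ the image is empty). We now argue by Noetherian induction on the closed set $Z$. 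Put $Y:=\overline{p(C)}^{z}=\overline{p(Z)}^{z}$, which is irreducible and closed in $\A^{N}$, and reduce everything to the \textbf{Key Lemma}: there is a polynomial $b\notin I(Y)$ such that $D(b)\cap Y\subseteq p(C)$. Granting it, set $U:=D(b)\cap Y$ (nonempty, as $b\notin I(Y)$); then $Z_{0}:=Z\cap p^{-1}(Y\setminus U)$ is closed in $\A^{N+1}$ and properly contained in $Z$ (some point of $Z$ lies over $U$, since $U$ meets the dense subset $p(Z)$ of $Y$), and one checks
$$p(C)=U\;\cup\;p\bigl(D(h)\cap Z_{0}\bigr).$$
By the Noetherian induction hypothesis applied to $Z_{0}\subsetneq Z$ the right-hand term is constructible, hence so is $p(C)$; the base case $Z=\emptyset$ is trivial and the reducible case is treated componentwise.

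\textbf{The Key Lemma is the main obstacle}: it is exactly the statement that a dominant morphism of irreducible affine varieties has image containing a nonempty Zariski-open set, specialised to the coordinate projection $p|_{Z}\colon Z\to Y$. One proof is algebraic: $p|_{Z}$ corresponds to an injection $K[Y]\hookrightarrow K[Z]$ of coordinate rings, $K[Z]$ is a finitely generated $K[Y]$-algebra, and by generic freeness there is $0\neq b\in K[Y]$ with $K[Z]_{b}$ a nonzero free $K[Y]_{b}$-module, which forces $D(b)\subseteq p(Z)$; inverting in addition a suitable $T$-coefficient of $h$ upgrades this to $D(b')\cap Y\subseteq p(C)$. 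The proof more in keeping with the elimination-theoretic spirit of this paper is via resultants: with $\mathfrak{p}:=I(Z)\subseteq K[X_{1},\ldots,X_{N}][T]$ and $\mathfrak{q}:=\mathfrak{p}\cap K[X_{1},\ldots,X_{N}]=I(Y)$, either $\mathfrak{p}$ contains no polynomial of positive $T$-degree --- in which case $Z$ is a cylinder over $Y$ and the claim is immediate from $h\notin I(Z)$ --- or one picks $g\in\mathfrak{p}$ of minimal positive $T$-degree with leading coefficient $a(X)\notin\mathfrak{q}$ and uses the resultants $\Res_{T}(g,f_{j})$ and $\Res_{T}(g,h)$ (for a finite generating set $f_{j}$ of $\mathfrak{p}$) to locate a nonzero $b\in K[Y]$ on whose non-vanishing locus the specialised univariate system $\mathfrak{p}(y,\cdot)$ has a common root $t$ with $h(y,t)\neq 0$, i.e. a point of $C$ lying over $y$. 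Combining the two halves establishes the equivalence.

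The only genuinely delicate point is thus the Key Lemma, and more precisely the verification in the resultant argument that on the chosen open set the specialised fibre is non-empty \emph{and} not contained in the zero locus of $h$; the generic-freeness route sidesteps the bookkeeping at the cost of invoking a non-constructive commutative-algebra input. Everything else --- the Rabinowitsch trick, the amalgamation into a single affine space, the reduction to one variable, and the Noetherian induction on $Z$ --- is formal.
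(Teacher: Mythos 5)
The paper does not actually prove this statement: it is quoted verbatim from Chevalley's lecture notes and used as a black box, so there is no internal proof to measure yours against. What you have written is the standard self-contained elimination-theoretic proof, and its architecture is sound. The easy direction (Rabinowitsch trick plus amalgamation of the $s$ basic pieces into a single variety in $\A^{n+s}$) is correct, as is the reduction of the hard direction to one-variable projections, the Noetherian induction on $Z$, the identity $p(C)=U\cup p(D(h)\cap Z_0)$, and the identification of the Key Lemma (a dominant morphism of irreducible affine varieties has image containing a nonempty open set) as the sole nontrivial input. The generic-freeness proof of the Key Lemma is valid; note that the awkward step of ``inverting a suitable $T$-coefficient of $h$'' can be avoided entirely by applying the lemma directly to the dominant map $D(h)\cap Z\to Y$, since $D(h)\cap Z$ is itself affine with coordinate ring $K[Z]_h$ and is dense in $Z$, so its image is still dense in $Y$.

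Two imprecisions are worth fixing. First, in the resultant variant the dichotomy ``either $\mathfrak{p}$ contains no polynomial of positive $T$-degree'' is literally vacuous: any nonzero ideal of $K[X_1,\ldots,X_N][T]$ contains $Tf$ for each of its elements $f$. What you mean is that either no element of $\mathfrak{p}$ has positive $T$-degree \emph{with leading coefficient outside} $\mathfrak{q}$ (equivalently, the image of $T$ in $K[Z]$ is transcendental over $K(Y)$, the genuine cylinder case), or such an element $g$ exists and the resultant argument applies. Second, the resultant route as written does not actually verify that on the chosen open set the specialised fibre meets $D(h)$; you flag this yourself, but it is the one place where the sketch would need real work to close, which is another reason to prefer the generic-freeness (or the $K[Z]_h$) formulation as the official proof.
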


As affine Zariski topology is Noetherian, locally closed sets admit a minimal decomposition as finite union of locally closed irreducible sets. Uniqueness up to permutation of these minimal decompositions of locally closed sets allow us to use the term \emph{locally closed irreducible components}. For a locally closed subset $V\subseteq \A^n$, its locally closed irreducible components are uniquely determined because they are in bijection with the irreducible components of its Zariski closure $\overline{V}^z$. This fact is easy to prove since every locally closed irreducible set is dense in its Zariski closure. We see later that this uniqueness cannot be immediately extended  to constructible sets.

For every locally closed subset $V\subseteq \A^n$ we denote by $K[V]$
the ring of polynomial  functions defined on $V$.  A polynomial map between two locally closed sets $\varphi:V\longrightarrow W$  is called \emph{dominant} if the image $\varphi(V)$ is Zariski dense in $W$ (which is equivalent to the fact that $K[W]$ is embedded as subring of $K[V]$). If $V$ is irreducible, $K[V]$ is an integral domain and we denote by $K(V)$ the field of rational functions defined on $V$ (i.e. the field of fractions of $K[V]$).

The \emph{dimension} of a constructible subset $C\subseteq \A^n$ will be its Krull dimension as topological space. Namely, the length of the longest chain of locally closed irreducible subsets of $\A^n$ included in $C$. Constructible sets also admit a decomposition into irreducible components (cf. \cite{Chevalley}, for instance). As we shall see in Example \ref{La-Croix-de-Berny:ej}, irreducible components of constructible sets are not always closed irreducible subsets.  We have the following decomposition:

\begin{lemma}\label{descomposicion-union-irreducibles-distintos:lema} Let $C\subseteq \A^n(K)$ be a constructible subset. Then, there is a finite set $\scrC:=\{ U_1\cap V_1, \ldots, U_s\cap V_s\}$ of locally closed irreducible sets such that the following properties hold:
\begin{equation}\label{descomposicion-union-irreducibles-distintos0:eqn}
C:=(U_1\cap V_1)\cup \cdots \cup (U_s\cap V_s),
\end{equation}
and
\begin{enumerate}
\item $V_i$ is an irreducible algebraic variety in $\A^n$,
\item $U_i$ is the maximum of the Zariski open subsets $O_i\subseteq \A^n$  such that $O_i\cap V_i\subseteq C$,
\item $V_i\not= V_j$.

\end{enumerate}
\end{lemma}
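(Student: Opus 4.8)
The plan is to build the decomposition in three moves: reduce $C$ to a finite union of locally closed irreducible pieces, merge the pieces that share the same underlying irreducible variety, and then inflate the open factor of each surviving piece to the largest one compatible with staying inside $C$.

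First I would use that $C$ is constructible to write $C=\bigcup_{k=1}^{t}(O_k\cap W_k)$ with finitely many $O_k$ Zariski open and $W_k$ Zariski closed in $\A^n$. Since the affine Zariski topology is Noetherian, each $W_k$ decomposes into finitely many irreducible components $W_{k,1},\dots,W_{k,r_k}$, and distributing intersections gives $C=\bigcup_{k,j}(O_k\cap W_{k,j})$, a finite union of locally closed sets whose closed factors are irreducible. After deleting the empty pieces and re-indexing, every remaining $O_k\cap W_{k,j}$ is a nonempty open subset of the irreducible variety $W_{k,j}$ (hence dense in it, so $\overline{O_k\cap W_{k,j}}^z=W_{k,j}$).

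Second, let $V_1,\dots,V_s$ be the \emph{distinct} varieties occurring among the surviving $W_{k,j}$. Using the identity $(O\cap V)\cup(O'\cap V)=(O\cup O')\cap V$ I would gather, for each $i$, all pieces with closed factor $V_i$ into a single piece $O_i'\cap V_i$, obtaining $C=\bigcup_{i=1}^{s}(O_i'\cap V_i)$ with the $V_i$ irreducible and pairwise distinct and each $O_i'\cap V_i$ nonempty; this secures properties (i) and (iii). Third, for each $i$ I set
$$U_i:=\bigcup\{\,O : O \text{ is Zariski open in } \A^n \text{ and } O\cap V_i\subseteq C\,\}.$$
As an arbitrary union of open sets, $U_i$ is Zariski open; it satisfies $U_i\cap V_i=\bigcup_O(O\cap V_i)\subseteq C$; and by its very construction it contains every open $O$ with $O\cap V_i\subseteq C$, so it is the maximum such open set, which is exactly property (ii). Since $O_i'\cap V_i\subseteq C$ forces $O_i'\subseteq U_i$, we get
$$C=\bigcup_{i=1}^{s}(O_i'\cap V_i)\subseteq\bigcup_{i=1}^{s}(U_i\cap V_i)\subseteq C,$$
whence $C=\bigcup_{i=1}^{s}(U_i\cap V_i)$, and each $U_i\cap V_i\supseteq O_i'\cap V_i$ is a nonempty locally closed irreducible set. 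The required family is then $\scrC:=\{U_1\cap V_1,\dots,U_s\cap V_s\}$.

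I do not expect a serious obstacle here: the argument is entirely formal, relying only on Noetherianity (finiteness of irreducible decompositions, hence of $s$) and on the stability of open sets under arbitrary unions. The single point that deserves a second look is the last chain of inclusions — a priori, replacing $O_i'$ by the maximal $U_i$ might enlarge the union past $C$ — but it cannot, precisely because $U_i\cap V_i\subseteq C$ is built into the definition of $U_i$.
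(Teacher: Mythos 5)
Your proof is correct and follows essentially the same route as the paper's: decompose into locally closed irreducible pieces via the irreducible components of the closed factors, merge pieces sharing the same variety using $(O\cap V)\cup(O'\cap V)=(O\cup O')\cap V$, and take the union of all admissible open sets to get the maximal $U_i$. Your explicit verification that passing from $O_i'$ to $U_i$ does not enlarge the union beyond $C$ is a point the paper leaves implicit, but the argument is the same.
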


\begin{proof}As $C$ is a finite union of locally closed subsets $C=W_1\cup \cdots \cup W_s$, just taking the irreducible components of each $W_i$ we conclude that $C$ is a finite union of locally closed irreducible sets. As non-empty open subsets of irreducible varieties are dense in the Zariski topology, just  rearranging these locally closed irreducible sets, we may assume a decomposition as the following one:
\begin{equation}\label{descomposicion-union-irreducibles-distintos:eqn}
C:=(O_1\cap V_1)\cup \cdots \cup (O_r\cap V_r),
\end{equation}
where for each  $i$, $1\leq i \leq r$, $O_i\subseteq \A^n$ is a Zariski open subset and $V_i\subseteq  \A^n$ is a locally closed irreducible subset such that $V_i\not= V_j$ for each $i\not = j$. If there were some  $i$  and $j$  such that $V_i=V_j$, we just replace $(O_i\cap V_i) \cup (O_j\cap V_j)$ by $(O_i\cup O_j)\cap V_i$. Thus, we may assume that condition  $iii)$ holds. Finally, it suffices to take the maximum of the Zariski open subsets $U_i\subseteq \A^n$ such that $U_i\cap V_i\subseteq C$ to finish the proof. The existence of this maximum is guaranteed by the fact that the union of open sets is open in any topology. \end{proof}

\begin{example}[{\bf La Croix de Berny}]\label{La-Croix-de-Berny:ej}
We first observe that the decomposition of constructible sets into irreducible locally closed subsets is not unique. In addition, we see that even if polynomial images of algebraic varieties are always constructible sets (Chevalley's Theorem, cf. \cite{Chevalley}), they are not always locally closed. Let us consider the following cubic hyper-surface of $\A^3(\C)$:
\begin{equation}\label{hiper-superficie-Croix-Berny:eqn}
W:= \{ (x,y,z) \in \A(\C)^3\; : \; zxy + (x^2+y^2-1)=0\}.
\end{equation}
Note that this hyper-surface is irreducible since the polynomial $h(X,Y,Z)= Z XY + (X^2+Y^2-1)\in \C[X,Y][Z]$ is a primitive polynomial (as $gcd(XY, X^2+Y^2-1)=1$ in $\C[X,Y]$) of degree $1$. The degree of this hyper-surface is $3$ according to the notion of degree in \cite{Heintz83}.\\
Let $\pi: \A^3(\C)\longrightarrow \A^2(\C)$ be the projection that forgets the coordinate $z$ (i.e.
$\pi(x,y,z):= (x,y)$, for all $(x,y,z)\in \A^3(\C)$). We then observe that  $\pi(W)$ is the following constructible set:
$$C:=\pi(W)=\{ (x,y)\in \A^2(\C)\; :\; xy \not= 0\} \cup\{ (0,1), (0,-1), (1,0), (-1,0)\}.$$
This is a decomposition of $C$ into five locally closed irreducible sets which satisfies the properties of the precedent Lemma \ref{descomposicion-union-irreducibles-distintos:lema}:
$$C:= C_1 \cup \{(1,0)\}\cup \{(-1,0)\} \cup \{(0,1)\} \cup \{(0,-1)\},$$
where $C_1\subseteq \A^2(\C)$ is the following Zariski open subset:
$$C_1:=\{ (x,y)\in \A^2(\C)\; :\; xy\not=0\}.$$
We also have another decomposition of $C=\pi(W)$ as the union of two locally closed irreducible  subsets:
$$C:= C_ 1 \cup\{ (x,y)\in \A^2(\C) \; :\; x^2+y^2-1=0\}.$$
This happens because $C$ is not locally closed. Moreover, the irreducible components of the Zariski closure $\overline{C}^{z} $ of $C$, which is the irreducible variety $\A^2(\C)$, are not in bijection with the Zariski closures of the irreducible sets occuring in such decompositions. In particular,
 the irreducible components of $C$ as Noetherian topological space (in the sense of Definition 9, Proposition 9 and pages 22-23 of  \cite{Chevalley}) are not always locally closed sets.
\end{example}

Before to proceed, let us see a basic problem with dimension in constructible sets:

\begin{example}[{\bf Local versus global dimension in constructible sets}]
We reconsider the example of {La Croix de Berny} introduced in Example \ref{La-Croix-de-Berny:ej} above.
There, we considered the following constructible set:
$$C:=\{ (x,y)\in \A^2(\C)\; :\; xy \not= 0\} \cup\{ (0,1), (0,-1), (1,0), (-1,0)\}.$$
Let $C_x$ be the local germ of $C$ at the point $x=(1,0)$ with respect to the Zariski topology (the arguments also hold for the Euclidean topology in $\A^2(\C)$). It is obvious that the dimension of $C$ at $x$ is 2: Any neighborhood of $x$ contains a piece of dimension $2$ of $C$. In particular, $C$ has \emph{local} and \emph{global} (at any point $z\in C$)  dimension equal to $2$, however, it cannot  be represented as a finite union of locally closed irreducible varieties of dimension $2$. If  $C$ were the union of several locally closed irreducible subsets of dimension $2$, then $C$ would be a Zariski open subset of $\A^2(\C)$, which is not the case as we have seen in Example \ref{La-Croix-de-Berny:ej}.
\end{example}

Uniqueness of locally closed irreducible ``components'' is only granted for highest dimension components.

\begin{proposition}[{\bf Components of higher dimension of a constructible set}] \label{componentes-alta-dimension:def}
With the same notations and assumptions as above, if $C\subseteq \A^n$ is a constructible set, then $C$ admits a decomposition satisfying $i), ii)$ and $iii)$  of Lemma
 \ref{descomposicion-union-irreducibles-distintos:lema} above:
$$C:=(U_1\cap V_1)\cup \cdots \cup (U_s\cap V_s),$$
such that there exists $r\leq s$, verifying:
$$\dim(C)=\dim(V_i) \Longleftrightarrow 1\leq i\leq r.$$
The varieties  $V_1, \ldots, V_r$ are uniquely determined and they are the irreducible components of higher dimension of the Zariski closure of $C$. The varieties $V_{r+1}, \ldots, V_s$ are called embedded components of $C$.
\end{proposition}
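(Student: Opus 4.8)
The plan is to start from the decomposition provided by Lemma~\ref{descomposicion-union-irreducibles-distintos:lema} and then extract the portion of it that is forced by $C$ alone. Write $C=(U_1\cap V_1)\cup\cdots\cup(U_s\cap V_s)$ as in that lemma (so that $i)$, $ii)$ and $iii)$ hold), discarding any empty piece; removing empty pieces alters neither $C$ nor properties $i)$--$iii)$ of the surviving ones, and it guarantees that each $U_i\cap V_i$ is a non-empty Zariski open subset of the irreducible variety $V_i$, hence dense in $V_i$ and of dimension $\dim(V_i)$. The first observation is that this forces
$$\overline{C}^z=\overline{(U_1\cap V_1)}^z\cup\cdots\cup\overline{(U_s\cap V_s)}^z=V_1\cup\cdots\cup V_s,$$
and consequently $\dim(C)=\max_i\dim(U_i\cap V_i)=\max_i\dim(V_i)=\dim(\overline{C}^z)$. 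Relabelling the pieces (which preserves $i)$--$iii)$), we may assume $\dim(V_i)=\dim(C)$ precisely for $1\le i\le r$, with $r\ge 1$ whenever $C\ne\emptyset$; this yields the asserted decomposition together with the equivalence $\dim(C)=\dim(V_i)\Longleftrightarrow 1\le i\le r$.

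It then remains to identify $V_1,\ldots,V_r$ intrinsically. From $\overline{C}^z=V_1\cup\cdots\cup V_s$ one reads off that the irreducible components of $\overline{C}^z$ are exactly the maximal elements of $\{V_1,\ldots,V_s\}$ for inclusion: an irreducible component $Z$ satisfies $Z=\bigcup_i(Z\cap V_i)$, so by irreducibility $Z\subseteq V_i$ for some $i$, and maximality of $Z$ among irreducible closed subsets gives $Z=V_i$; conversely any $V_j$ not contained in another $V_i$ must be a component. Next, for $1\le i\le r$ the variety $V_i$ is a highest-dimensional irreducible component of $\overline{C}^z$: if $V_i\subseteq V_j$ with $V_j$ irreducible then $\dim(V_j)\ge\dim(V_i)=\dim(C)=\dim(\overline{C}^z)$ forces $V_i=V_j$, so $V_i$ is maximal, and its dimension is the top one. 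Conversely, every irreducible component $Z$ of $\overline{C}^z$ with $\dim(Z)=\dim(\overline{C}^z)$ occurs among the $V_i$: from $Z\subseteq\bigcup_iV_i$ and irreducibility we get $Z\subseteq V_i$ for some $i$, maximality of $Z$ gives $Z=V_i$, and $\dim(V_i)=\dim(Z)=\dim(C)$ places it with $i\le r$. Hence $\{V_1,\ldots,V_r\}$ is exactly the set of highest-dimensional irreducible components of $\overline{C}^z$, an object attached to $C$ alone; in particular it is independent of the chosen decomposition, which is the claimed uniqueness.

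The only genuinely delicate point is the first one: that taking Zariski closures converts the (non-canonical) decomposition of Lemma~\ref{descomposicion-union-irreducibles-distintos:lema} into the genuine irreducible decomposition of $\overline{C}^z$. This rests entirely on the density of a non-empty open subset of an irreducible variety, after which everything else is bookkeeping with irreducible components. I expect no further sharpening to be possible: the La Croix de Berny example (Example~\ref{La-Croix-de-Berny:ej}) already shows that the lower-dimensional pieces $V_{r+1},\ldots,V_s$ (the embedded components) need not be canonical, so the statement is best possible.
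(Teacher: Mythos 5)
Your proof is correct. The paper states this Proposition without any proof, treating it as an immediate consequence of Lemma \ref{descomposicion-union-irreducibles-distintos:lema}, and your argument — non-empty open pieces of irreducible varieties are dense, so $\overline{C}^z=V_1\cup\cdots\cup V_s$, after which the top-dimensional $V_i$ are matched bijectively with the top-dimensional irreducible components of $\overline{C}^z$ — is precisely the verification the authors leave implicit, with the density observation correctly identified as the only non-bookkeeping step.
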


\begin{definition}[{\bf Globally equi-dimensional constructible sets}]
\label{descomposicion-equidimensional-constructibles:def}
Let $C\subseteq \A^n$ be a constructible subset. We say that $C$ is \emph{globally equi-dimensional} if there is a decomposition of $C$ as finite union of locally closed irreducible subsets:
$$C:=(U_1\cap V_1)\cup \cdots \cup (U_s\cap V_s),$$
that satisfies the conditions of Lemma  \ref{descomposicion-union-irreducibles-distintos:lema} and such that the dimension also verifies:
$$\dim(C)=\dim(U_i\cap V_i) = \dim (V_i), \; 1\leq i \leq s.$$
\end{definition}

As we have seen, the constructible set described in Example \ref{La-Croix-de-Berny:ej} is not globally equi-dimensional, although it is locally equi-dimensional and it is the projection of a hyper-surface. In particular, locally closed subsets such that their Zariski closure is equi-dimensional are globally equi-dimensional.

\begin{remark}[{\bf Decompositions into equal dimension globally equi-dimensional pieces}]\label{decomposition-equal-dimension:rk}
Given a decomposition of a constructible set $C=W_1\cap\cdots \cap W_s\subseteq \A^n(K)$, with $W_i=U_i\cap V_i$,  as in Lemma \ref{descomposicion-union-irreducibles-distintos:lema}, we may also decompose $C$ as a finite union of globally equi-dimensional constructible sets:
$$C:=C_r\cup \dots \cup C_0,$$
where $r=\dim(C)$ and
$$C_k:= \bigcup_{\dim(W_i)=k} W_i.$$
We call such decomposition a \emph{equal dimension decomposition of $C$}.
As we have seen in Example \ref{La-Croix-de-Berny:ej}, equal dimension decompositions are not unique. The example described there admits several equal dimension decompositions:
$$C:=\{(x,y)\in \A^2(\C)\; : \; xy\not=0\}\cup \{ (\pm 1, 0), (0, \pm 1)\},$$
and
$$C:=\{(x,y)\in \A^2(\C)\; : \; xy\not=0\}\cup \{(x,y)\in \A^2(\C)\; : \; x^2+y^2-1=0\}.$$
\end{remark}

\subsection{Degree of locally closed sets according to \cite{Heintz83} and their B\'ezout's Inequality}
\label{Bezout-Inequality-locally-closed:subsec}

We now resume some of the statements of \cite{Heintz83} which hold for locally closed sets. Let $V\subseteq \A^n(K)$ be a locally closed irreducible subset of (Krull) dimension $r$. Let
 $\A^{nr}:=\A^{nr}(K)=\scrM_{r\times n} (K)$ be the space of $r\times n$ matrices with coordinates in $K$. Let  $\A^r:=\A^r(K)$ be  the affine space of dimension $r$ over $K$.  As in \cite{Heintz83}, we consider the following polynomial mapping:
 \begin{equation}\label{morfismo-dominante-grado:eqn}
\begin{matrix}
\Phi: & \A^{nr}\times V & \longrightarrow & \A^{nr}\times \A^r\\
& (M, x) & \longmapsto & (M, Mx)\end{matrix}
\end{equation}

The following statement was proved in \cite{Heintz83} (cf. also \cite{Res-Geom}):

\begin{proposition}\label{existencia-grado:prop} With the previous notations, we have:
\begin{enumerate}
\item The morphism $\Phi$ is a dominant morphism and the following is a finite and separable field extension:
$$\Phi^*: K(\A^{rn}\times \A^r)\longhookrightarrow K(\A^{rn}\times V).$$
\item For every point  $(M,b)\in \A^{rn}\times \A^r$, if the fiber $\Phi^{-1}(\{(M,b)\})$ is finite, then we have:
$$\sharp\left(\Phi^{-1}(\{(M,b)\})\right)\leq [K(\A^{rn}\times V)\; :\; K(\A^{rn}\times \A^r)].$$
 \item There is a Zariski open subset ${\mathcal U}\subseteq \A^{rn}\times \A^r$ such that for every $(M,b)\in {\mathcal U},$  the fiber $\Phi^{-1}(\{(M,b)\})$ is finite and satisfies:
$$\sharp\left(\Phi^{-1}(\{(M,b)\})\right)= [K(\A^{rn}\times V)\; :\; K(\A^{rn}\times \A^r)].$$
\end{enumerate}
\end{proposition}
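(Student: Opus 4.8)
The plan is to reduce the statement to a generic fiber-dimension argument plus the theory of separating transcendence bases, applied to the specific morphism $\Phi$. First I would establish (i). Since $V\subseteq\A^n(K)$ is locally closed irreducible of dimension $r$, we may replace $V$ by its Zariski closure $\overline V^z$ for the purpose of computing function fields, because $V$ is dense in $\overline V^z$ and hence $K(V)=K(\overline V^z)$; likewise $\A^{nr}\times V$ is dense in $\A^{nr}\times\overline V^z$. So assume $V$ is an irreducible affine variety. The key point is that a generic $r\times n$ matrix $M$, viewed over the field $K(\A^{nr})=K(m_{ij})$, induces a linear projection whose restriction to $V_{K(m_{ij})}$ is a finite (dominant) morphism onto $\A^r_{K(m_{ij})}$: this is the standard Noether-normalization-by-generic-projection fact, whose generic coordinates $m_{ij}$ form exactly a separating transcendence basis of $K(V)$ over a suitable rational subfield when $V$ has dimension $r$. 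Concretely, $\Phi^*$ sends the coordinate functions $T_1,\dots,T_r$ on the $\A^r$-factor to $\sum_j m_{1j}X_j,\dots,\sum_j m_{rj}X_j$, and together with the $m_{ij}$ these $r+nr$ functions are algebraically independent in $K(\A^{nr}\times V)$ (dimension count: $\dim(\A^{nr}\times V)=nr+r$), so $\Phi$ is dominant. For finiteness of the extension one checks that each remaining coordinate $X_k$ is algebraic over $K(\A^{nr}\times\A^r)$ via $\Phi^*$ — this is where one invokes that the generic linear forms separate points of $V$ well enough, i.e. the classical "generic projection is finite" lemma. Separability is the subtle ingredient and I address it below.

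For (ii): this is a completely general fact about dominant morphisms of irreducible varieties. Let $\Phi\colon\scrX\to\scrY$ be dominant with $\scrX,\scrY$ irreducible of the same dimension (so $\Phi$ is generically finite), and let $K(\scrY)\hookrightarrow K(\scrX)$ be the induced extension of degree $\delta:=[K(\scrX):K(\scrY)]$. The claim is that every \emph{finite} fiber has at most $\delta$ points. I would prove this by the standard argument: pick a function $u\in K(\scrX)$ that is a primitive element of the separable part (or simply a generator if the extension is separable, which is our case by (i)); $u$ satisfies a monic polynomial $P(u)=0$ of degree $\le\delta$ with coefficients pulled back from $K(\scrY)$; after shrinking to the open locus where $u$ is regular and separates the fiber, the number of points of a finite fiber over $y$ is bounded by the number of roots of $P_y$, hence by $\delta$. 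One must handle the points where $u$ fails to separate or is not regular — but on a finite fiber over a point where the relevant denominators do not vanish, distinct preimages give distinct values of a well-chosen coordinate, so the bound persists; this is the routine part.

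For (iii): this is the generic-smoothness / generic-étale statement. Because $\Phi$ is dominant and $\Phi^*$ is a \emph{finite separable} extension by (i), there is a nonempty Zariski open $\scrV\subseteq\scrX$ over which $\Phi$ is étale, and hence $\mathcal U:=\scrY\setminus\overline{\Phi(\scrX\setminus\scrV)}^z$ (intersected with the locus over which all fibers are finite — itself open by upper semicontinuity of fiber dimension, Chevalley) is a nonempty open subset of $\scrY$ over which $\sharp(\Phi^{-1}(y))=\deg\Phi=\delta$ exactly. Concretely one can take $\mathcal U$ to be the complement of the branch locus together with the image of the non-finite locus; the key inputs are (a) generic étaleness for separable dominant morphisms (char-free, using separability), and (b) the Chevalley/upper-semicontinuity theorem on fiber dimensions. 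I would remark that over positive characteristic this is precisely why part (i) insisted on \emph{separability} of $\Phi^*$: without it, the generic fiber cardinality would be the separable degree, not the full degree, and (iii) would fail.

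The main obstacle I anticipate is part (i), specifically the \emph{separability} of the field extension $\Phi^*\colon K(\A^{rn}\times\A^r)\hookrightarrow K(\A^{rn}\times V)$ in arbitrary characteristic. The clean way around it is to observe that the extension $K(\A^{rn})\hookrightarrow K(\A^{rn}\times V)=K(\A^{rn})(V)$ together with the generic linear forms $\ell_i=\sum_j m_{ij}X_j$ realizes $\{m_{ij}\}\cup\{\ell_i\}$ as a \emph{separating} transcendence basis of $K(\A^{rn}\times V)$ over $K$: the partial derivatives $\partial\ell_i/\partial X_j=m_{ij}$ form a generic matrix, so the Jacobian criterion for separability is satisfied at the generic point, forcing the function-field extension to be separably generated. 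This is exactly the content established in \cite{Heintz83} (and recalled in \cite{Res-Geom}), so I would cite it rather than reprove it, and spend the bulk of the write-up on making the bookkeeping in (ii) and the open-set construction in (iii) precise.
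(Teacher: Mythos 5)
The paper does not actually prove this Proposition: it imports it verbatim from \cite{Heintz83} (cf.\ also \cite{Res-Geom}), so the only thing to compare your outline with is the classical argument, which you essentially reproduce. Your treatment of (i) (algebraic independence of the $m_{ij}$ and the generic linear forms by a transcendence-degree count, finiteness of the extension from equality of transcendence degrees, separability via the differential/Jacobian criterion, deferred to \cite{Heintz83} exactly as the paper does) and of (iii) (generic \'etaleness of a dominant generically finite separable morphism, combined with upper semicontinuity of fibre dimension) is the standard route and is sound.

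The genuine soft spot is (ii). You assert that the bound $\sharp\left(\Phi^{-1}(\{(M,b)\})\right)\leq [K(\A^{rn}\times V):K(\A^{rn}\times \A^r)]$ for \emph{every} finite fibre is ``a completely general fact about dominant morphisms of irreducible varieties'' of equal dimension, and your argument only establishes it ``after shrinking to the open locus where $u$ is regular and separates the fibre''. Stated in that generality the claim is false: the normalization $\A^1\rightarrow C$ of a nodal cubic $C$ is a finite dominant birational morphism of irreducible curves, so $\delta=1$, yet the fibre over the node has two points. The step you dismiss as routine --- the points $(M,b)$ at which the denominators, or the leading coefficient of the cleared-out minimal polynomial of $u$, vanish --- is exactly where the content of (ii) lies, and it cannot be handled without using that the target $\A^{rn}\times\A^r$ is \emph{normal} (indeed smooth). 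The standard repairs both route through normality: either one arranges for the separating primitive element $u$ to be \emph{integral} over $K[\A^{rn}\times\A^r]$, so that integral closedness of the polynomial ring forces its minimal polynomial to be monic with polynomial coefficients, whence it specializes at every $(M,b)$ to a monic polynomial of degree at most $\delta$ whose roots bound the fibre; or one invokes Zariski's Main Theorem to factor $\Phi$, near a given finite fibre, through a finite morphism onto the normal target and then runs the monic-minimal-polynomial argument there. As written, your proof of (ii) has a gap at precisely the non-generic points that the statement is about.
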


Let us now consider $GL(n, r)\subseteq \A^{r(n+1)}$ the Zariski subset of all pairs $(M,b)\in \A^{r(n+1)}$ such that $\rank(M)=r$. Namely, given a pair $(M,b)\in \A^{r(n+1)}$ we consider the linear affine variety $\G(M,b)$ given by the following identity:
$$\G(M,b):=\{ x\in \A^n\; : \; Mx^t-b=0\},$$
where $x^t$ is the transpose of $x=(x_1,\ldots, x_n)$. Then, $GL(n,r)$ is the set of all pairs $(M,b)$ such that the co-dimension satisfies $\codim(\G(M,b))=r$. We then consider the ``Grassmannian'' $\G(n,r)$ of all linear affine varieties in $\A^n$ of co-dimension $r$ and we have the following onto mapping:
$$\begin{matrix}
\scrG: & \A^{rn}\times \A^r & \longrightarrow & \G(n,r)\\
& (M, b) & \longmapsto & \G(M,b).\end{matrix}$$
Note that \emph{the Zariski topology in $\G(n,r)$} is defined as
the final topology induced in $\G(n,r)$ by $\scrG$ and the Zariski topology in $GL(n,r)$. Let $V\subseteq \A^n(K)$ be a locally closed irreducible subset of dimension $r$ and let us introduce the class:
$$\G(V):=\{ A \in \G(r,n)\; :\; \sharp(V\cap A)< \infty\}.$$
The following properties immediately follow from \cite{Heintz83}:

\begin{proposition}\label{interpretacion-geometrica-grado:prop}
With the same notations and assumptions, we have:
\begin{enumerate}
\item The class  $\G(V)$ is non-empty and contains an open subset of  $\G(n,r)$.
\item The maximum $\max\{ \sharp\left(A \cap V\right)\; : \; A \in \G(V)\}$ is finite.
\item There is a Zariski open subset $\CU\subseteq \G(n,r)$ such that for all $A \in \CU$ we have:
$$\sharp\left(A\cap V \right)= \max\{\sharp\left(T\cap V\right)\; : \; T\in \G(V)\}.$$
\end{enumerate}
\end{proposition}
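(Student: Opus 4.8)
The plan is to deduce all three assertions from Proposition~\ref{existencia-grado:prop} through a single dictionary relating the fibers of the morphism $\Phi$ of \eqref{morfismo-dominante-grado:eqn} to the intersections of $V$ with linear affine varieties. Namely, for $(M,b)\in\A^{rn}\times\A^r$ the second projection $\A^{rn}\times V\longrightarrow V$ restricts to a bijection from the fiber $\Phi^{-1}(\{(M,b)\})$ onto $V\cap\G(M,b)$; hence $\Phi^{-1}(\{(M,b)\})$ is finite if and only if $V\cap\G(M,b)$ is finite, and in that case $\sharp\left(\Phi^{-1}(\{(M,b)\})\right)=\sharp\left(V\cap\G(M,b)\right)$. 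Moreover, if $(M,b)\in GL(n,r)$ then $\rank(M)=r$, so $\G(M,b)$ has co-dimension exactly $r$, i.e. $\scrG(M,b)=\G(M,b)\in\G(n,r)$, and $\G(M,b)$ lies in $\G(V)$ exactly when the fiber over $(M,b)$ is finite.

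For assertion $i)$ I would take the Zariski open set $\CU\subseteq\A^{rn}\times\A^r$ furnished by Proposition~\ref{existencia-grado:prop}, item $iii)$, and intersect it with $GL(n,r)$. Both are non-empty Zariski open subsets of the irreducible space $\A^{r(n+1)}$, so $\CU\cap GL(n,r)$ is non-empty and open; by the dictionary every $(M,b)$ in it satisfies $\G(M,b)\in\G(V)$, whence $\scrG\left(\CU\cap GL(n,r)\right)\subseteq\G(V)$. To see that this image is Zariski open in $\G(n,r)$, note that membership in it is invariant under the left action $g\cdot(M,b):=(gM,gb)$ of $GL_r$ on $GL(n,r)$, which leaves $\G(M,b)$ and hence $\scrG(M,b)$ unchanged, and that the fibers of the restriction of $\scrG$ to $GL(n,r)$ are precisely the $GL_r$-orbits; therefore $\scrG^{-1}\left(\scrG\left(\CU\cap GL(n,r)\right)\right)\cap GL(n,r)$ equals the $GL_r$-saturation of $\CU\cap GL(n,r)$, a union of translates of an open set and so open, which by definition of the final topology on $\G(n,r)$ means that $\scrG\left(\CU\cap GL(n,r)\right)$ is open. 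This produces the non-empty Zariski open subset of $\G(n,r)$ contained in $\G(V)$ required by $i)$.

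For assertion $ii)$, set $\delta:=[K(\A^{rn}\times V):K(\A^{rn}\times\A^r)]$, which is finite by Proposition~\ref{existencia-grado:prop}, item $i)$. Given $A\in\G(V)$, choose any $(M,b)$ with $\scrG(M,b)=A$; then $\rank(M)=r$ and $\Phi^{-1}(\{(M,b)\})$ is bijective to $V\cap A$, hence finite, so Proposition~\ref{existencia-grado:prop}, item $ii)$, gives $\sharp(V\cap A)\leq\delta$. Since $\G(V)\neq\emptyset$ by $i)$, the maximum in the statement exists and is $\leq\delta$. For assertion $iii)$, take $\CU':=\scrG\left(\CU\cap GL(n,r)\right)$, the non-empty Zariski open subset of $\G(n,r)$ constructed in $i)$: for $A\in\CU'$, picking $(M,b)\in\CU\cap GL(n,r)$ with $\scrG(M,b)=A$, Proposition~\ref{existencia-grado:prop}, item $iii)$, yields $\sharp(A\cap V)=\sharp\left(\Phi^{-1}(\{(M,b)\})\right)=\delta$. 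As $\CU'\subseteq\G(V)$ is non-empty, $\delta$ is attained on $\G(V)$, so by $ii)$ we conclude $\delta=\max\{\sharp(T\cap V):T\in\G(V)\}$, and hence $\sharp(A\cap V)$ equals this maximum for every $A\in\CU'$.

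The only genuinely delicate point, in my opinion, is the topological bookkeeping for $\G(n,r)$: since its Zariski topology is the final topology along the restriction of $\scrG$ to $GL(n,r)$, openness of the sets we produce must be checked by passing to $GL_r$-saturations inside $GL(n,r)$, and one must consistently intersect with $GL(n,r)$ so that the linear varieties obtained have co-dimension exactly $r$ and therefore belong to $\G(n,r)$. Once this is in place, all three items follow by a direct translation of Proposition~\ref{existencia-grado:prop} through the fiber dictionary.
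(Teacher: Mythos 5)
Your proof is correct and proceeds exactly along the route the paper intends: the paper offers no argument for Proposition \ref{interpretacion-geometrica-grado:prop} beyond citing \cite{Heintz83} and the preceding Proposition \ref{existencia-grado:prop}, and your fiber dictionary $\Phi^{-1}(\{(M,b)\})\cong V\cap\G(M,b)$ together with the $GL_r$-saturation argument for openness in the final topology on $\G(n,r)$ supplies precisely the missing details. The only point left implicit is that the open set $\CU$ of Proposition \ref{existencia-grado:prop} $iii)$ is non-empty, which follows from the dominance and finiteness of the field extension in item $i)$ of that proposition.
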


\begin{definition}[{\bf Degree  of a locally closed subset}]\label{grado-localmente-cerrados:def}
Let $V\subseteq \A^n(K)$ be a locally closed irreducible subset. We define the degree of $V$ as the following quantity:
$$\deg(V):=\max\{ \sharp\left( A \cap V\right) \; : \; A \in \G(n,r), \; \sharp(A\cap V)< \infty\}.$$
Let $W\subseteq \A^n(K)$ be any locally closed subset and let  $C_1,\ldots, C_s$ its locally closed irreducible components. The degree  of $W$ is defined as:
$$\deg(W):=\sum_{i=1}^s \deg(C_i).$$
\end{definition}

Some immediate properties are resumed on the following Proposition:

\begin{proposition}\label{propiedades-basicas-grado-lc:prop}
 With the same notations and assumptions as above, we have:
\begin{enumerate}
\item For every locally closed subset $V\subseteq \A^n$, its degree agrees with the degree of its Zariski closure, i.e.
$$\deg(V)=\deg(\overline{V}^z).$$
 \item The degree of a finite set $C\subseteq \A^n$  equals its cardinal:
 $$\deg(C)= \sharp(C).$$
\item The number of irreducible components of a locally closed set is bounded by its degree.
\item The degree of any linear affine variety is $1$.
\item The degree of locally closed sets is invariant by linear or affine isomorphisms.
\item  For every non-constant $f\in K[X_1,\ldots, X_n]$, the degree of the hyper-surface $V_\A(f)$ is at most the degree of the polynomial $\deg(f)$ and $\deg(V_\A(f))=\deg(f)$ if and only if $f$ is square-free.
\end{enumerate}
\end{proposition}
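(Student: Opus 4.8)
The plan is to verify the six items by reducing each to the definition of degree and to the structural properties of the morphism $\Phi$ established in Proposition \ref{existencia-grado:prop} together with the geometric interpretation of Proposition \ref{interpretacion-geometrica-grado:prop}. The organizing principle throughout is that for a locally closed irreducible $V$, a non-empty Zariski open subset of linear affine spaces $A\in\G(n,r)$ (with $r=\dim V$) achieves the generic cardinality $\sharp(A\cap V)$, and that this generic value is exactly the field-extension degree $[K(\A^{rn}\times V):K(\A^{rn}\times\A^r)]$; and that the same holds with $V$ replaced by its Zariski closure $\overline V^z$, since $V$ is dense in $\overline V^z$.

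First I would prove $i)$, which drives everything else. If $V$ is locally closed irreducible with closure $\overline V^z$, then $V$ and $\overline V^z$ have the same dimension $r$, and the rational function fields $K(\A^{rn}\times V)$ and $K(\A^{rn}\times\overline V^z)$ coincide (a dense open subset has the same function field as the ambient irreducible variety), so by Proposition \ref{existencia-grado:prop}(iii) the generic fiber cardinality of $\Phi$ is the same for both; since a linear section $A\cap\overline V^z$ that is finite must, for $A$ in a suitable dense open set, be contained in $V$ (the finitely many points cannot all escape into the boundary, which has strictly smaller dimension, so a generic translate avoids it), the two maxima agree, giving $\deg(V)=\deg(\overline V^z)$. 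For a reducible locally closed $W$, both sides are defined componentwise and the bijection between locally closed irreducible components of $W$ and irreducible components of $\overline W^z$ (noted in the text before Lemma \ref{descomposicion-union-irreducibles-distintos:lema}) makes the two sums equal. Then $ii)$ is immediate: a finite set has dimension $0$, each point is its own irreducible component of degree $1$ (the empty linear variety $\A^n$ meets it in one point, or more precisely $\G(n,0)=\{\A^n\}$ and $\sharp(\{pt\}\cap\A^n)=1$), so $\deg(C)=\sharp(C)$. Item $iii)$ follows because each irreducible component contributes at least $1$ to the degree. Item $iv)$ follows from $i)$ reduced to irreducible case, since a linear affine variety $L$ of dimension $r$ is already closed and irreducible and a generic $A\in\G(n,r)$ meets it in exactly one point (two affine-linear subspaces of complementary dimension in general position intersect in a single point).

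For $v)$, an invertible affine map $T$ carries locally closed irreducible sets to locally closed irreducible sets of the same dimension and carries linear affine varieties of co-dimension $r$ bijectively to linear affine varieties of co-dimension $r$; hence it induces a bijection $\G(n,r)\to\G(n,r)$ matching $A\cap V$ with $T(A)\cap T(V)$ point-for-point, so the maxima coincide. Finally $vi)$: write $f=\prod f_i^{e_i}$ with $f_i$ the distinct irreducible factors; then $V_\A(f)=\bigcup V_\A(f_i)$, each $V_\A(f_i)$ is an irreducible hypersurface whose degree equals $\deg(f_i)$ (the classical computation: a generic line meets $V_\A(f_i)$ in exactly $\deg(f_i)$ points by restricting $f_i$ to the line and counting roots, using that $f_i$ is irreducible hence not identically a power and not vanishing on the generic line), so $\deg(V_\A(f))=\sum_i\deg(f_i)\le\sum_i e_i\deg(f_i)=\deg(f)$, with equality exactly when all $e_i=1$, i.e. when $f$ is square-free. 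The main obstacle, and the one point deserving genuine care, is the claim in $i)$ that a generic linear section of $\overline V^z$ that happens to be finite already lies inside $V$: this requires observing that $\overline V^z\setminus V$ is a locally closed set of dimension at most $r-1$, so the set of $A\in\G(n,r)$ meeting it is a proper (indeed lower-dimensional) subset of $\G(n,r)$, and intersecting this with the dense open set from Proposition \ref{interpretacion-geometrica-grado:prop}(iii) still leaves a non-empty open set on which $\sharp(A\cap V)=\sharp(A\cap\overline V^z)$ attains the common maximum.
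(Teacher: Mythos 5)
Your proof is correct, and it follows exactly the route the paper intends: the paper states this Proposition without proof, presenting the six items as immediate consequences of Propositions \ref{existencia-grado:prop} and \ref{interpretacion-geometrica-grado:prop} and of \cite{Heintz83}, which is precisely what you verify. Your one point of "genuine care" in item $i)$ — that $\overline{V}^z\setminus V$ has dimension at most $r-1$, so a generic $A\in\G(n,r)$ misses it and the two generic section cardinalities agree (equivalently, that the degree is the field-extension degree of Proposition \ref{existencia-grado:prop}, which depends only on the common function field of $V$ and $\overline{V}^z$) — is exactly the right justification, and the remaining items are handled by the standard arguments.
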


As in \cite {Heintz83}, the following Lemmata also hold:

\begin{lemma}\label{interseccion-con-lineal-no aumenta-grado:lema}
Let $V\subseteq \A^n(K)$ be a locally closed subset and  $A\subseteq \A^n(K)$ be a linear affine variety. Then, we have:
$$\deg(V\cap A)\leq \deg(V).$$
\end{lemma}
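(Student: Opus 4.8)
The plan is to reduce to the case where the linear affine variety is a single hyperplane and $V$ is irreducible, and then to translate the computation of $\deg(V\cap A)$ into a computation of $\deg(V)$ by a generic slicing argument. First I would write $A=H_1\cap\cdots\cap H_k$ as an intersection of hyperplanes; since the intersection of a locally closed set with a closed set is again locally closed, an induction on $k$ reduces the claim to the single-hyperplane inequality $\deg(V\cap H)\le\deg(V)$. Next I would decompose $V$ into its locally closed irreducible components $V=V_1\cup\cdots\cup V_m$. Using that $\overline{V\cap H}^z=\bigcup_i\overline{V_i\cap H}^z$, that the irreducible components of a finite union of varieties occur among those of the pieces, and Proposition \ref{propiedades-basicas-grado-lc:prop}(i), one obtains $\deg(V\cap H)\le\sum_i\deg(V_i\cap H)$, while $\deg(V)=\sum_i\deg(V_i)$ by Definition \ref{grado-localmente-cerrados:def}; so it suffices to prove the inequality when $V$ is locally closed irreducible of some dimension $r$.

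For that case I would first dispose of the trivialities: if $V\subseteq H$ then $V\cap H=V$; and if $r=0$ or $V\cap H=\emptyset$ the inequality is immediate because $V\cap H\subseteq V$ and $\deg(V)=\sharp(V)$ in that range. Otherwise $r\ge 1$, $V\not\subseteq H$ and $V\cap H\neq\emptyset$; writing $V=\overline{V}^z\cap U$ with $U$ Zariski open, $V\cap H=(\overline{V}^z\cap H)\cap U$ is Zariski open in $\overline{V}^z\cap H$, and Krull's principal ideal theorem applied to the irreducible variety $\overline{V}^z\not\subseteq H$ shows that $\overline{V}^z\cap H$ — hence $V\cap H$ — is pure of dimension $r-1$, with irreducible components $Z_1,\dots,Z_t$ each of dimension $r-1$. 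Then Proposition \ref{interpretacion-geometrica-grado:prop}, applied to each $Z_j$, together with a genericity argument forcing the finite sets $Z_j\cap B$ to be pairwise disjoint, produces a non-empty Zariski open $\CU_1\subseteq\G(n,r-1)$ such that $\sharp\big((V\cap H)\cap B\big)=\sum_{j}\deg(Z_j)=\deg(V\cap H)$ for every $B\in\CU_1$. On the other hand $\CU_2:=\{B\in\G(n,r-1):B\not\subseteq H\}$ is non-empty and Zariski open (a linear affine variety of dimension $n-r+1$ is generically not contained in a hyperplane, and for $r=1$ one simply has $B=\A^n$). Since $\G(n,r-1)$ is irreducible — it is the $\scrG$-image of the non-empty Zariski open subset $GL(n,r-1)$ of an affine space — I can pick $B\in\CU_1\cap\CU_2$ and set $L:=B\cap H$. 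Because $B\not\subseteq H$, $L$ is a linear affine variety of codimension exactly $r$, and $V\cap L=(V\cap H)\cap B$ is finite and non-empty; hence Definition \ref{grado-localmente-cerrados:def} gives $\deg(V)\ge\sharp(V\cap L)=\deg(V\cap H)$, which finishes the argument.

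The step I expect to be the real obstacle is the last one. The degree of $V\cap H$ is computed from codimension-$(r-1)$ linear slices of $\A^n$, whereas $\deg(V)$ is computed from codimension-$r$ slices, so the two genericity regimes look a priori unrelated. The bridge is that a codimension-$(r-1)$ slice $B$ not contained in $H$ cuts out a codimension-$r$ slice $B\cap H$; making this precise requires knowing both that $V\cap H$ is pure-dimensional (so that generic codimension-$(r-1)$ slices genuinely meet every component $Z_j$ and register its degree) and that $\G(n,r-1)$ is irreducible (so that the open set of slices computing $\deg(V\cap H)$ and the open set of slices not lying in $H$ actually intersect). Lining up these two facts is the crux; the rest is bookkeeping.
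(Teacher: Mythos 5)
Your argument is correct and complete; the paper itself states this lemma without proof, quoting it from \cite{Heintz83}, and your slicing argument (reduce to $V$ locally closed irreducible and $A=H$ a single hyperplane, note $V\cap H$ is pure of dimension $r-1$ by Krull, realize $\deg(V\cap H)$ by a generic $B\in\G(n,r-1)$ chosen, via irreducibility of $\G(n,r-1)$, to also satisfy $B\not\subseteq H$, and then observe that $B\cap H\in\G(n,r)$ witnesses $\deg(V)\ge\sharp(V\cap H\cap B)=\deg(V\cap H)$) is essentially the classical proof from that reference. The delicate points — pairwise disjointness of the sets $Z_j\cap B$, non-emptiness of $(V\cap H)\cap B$ guaranteeing $B\cap H\neq\emptyset$ and hence $\codim(B\cap H)=r$ — are all handled correctly.
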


\begin{lemma}\label{abierto-zariski-sucesion-regular:proposition} Let $V\subseteq \A^n$  be an equi-dimensional  locally closed set of dimension $r$. Then, we have:
$$\deg(V)=\max\{ \sharp\left( A\cap V\right) \; : \; A\in \G(n,r), \; \sharp(A\cap V)< \infty\}.$$
Moreover, there is a non-empty Zariski open subset ${\CU}\subseteq \A^{nr+r}$ such that for all  $(M,b)\in {\mathcal U}$ the following holds:
$$\sharp\left( \G(M,b)\cap V\right)=\deg(V),$$
where, as above,  $\G(M,b):=\{ x\in \A^n \; : \; Mx^t=b\}$.
\end{lemma}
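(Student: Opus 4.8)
The plan is to reduce to the irreducible case already settled in Proposition~\ref{interpretacion-geometrica-grado:prop} and Definition~\ref{grado-localmente-cerrados:def} by decomposing $V$ into its locally closed irreducible components and then controlling how a generic codimension-$r$ linear section distributes among them.

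First I would write $V=C_1\cup\cdots\cup C_s$ for the (essentially unique) decomposition of $V$ into locally closed irreducible components; since $V$ is equi-dimensional of dimension $r$, each $\overline{C_i}^z$ is an $r$-dimensional irreducible component of $\overline{V}^z$, so $\dim C_i=r$, and by Definition~\ref{grado-localmente-cerrados:def} one has $\deg(V)=\sum_{i=1}^s\deg(C_i)$. For each $i$, Proposition~\ref{interpretacion-geometrica-grado:prop} (combined with Definition~\ref{grado-localmente-cerrados:def}, with the surjection $\scrG:\A^{rn}\times\A^r\longrightarrow\G(n,r)$, and with the fact that $GL(n,r)$ is a non-empty Zariski open subset of $\A^{r(n+1)}=\A^{nr+r}$) yields a non-empty Zariski open $\mathcal{U}_i\subseteq\A^{nr+r}$, contained in $GL(n,r)$, such that for every $(M,b)\in\mathcal{U}_i$ the set $\G(M,b)\cap C_i$ is finite with $\sharp(\G(M,b)\cap C_i)=\deg(C_i)$.

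Next I would arrange that a generic section meets distinct components disjointly. For $i\neq j$ the closures $\overline{C_i}^z$ and $\overline{C_j}^z$ are distinct irreducible components of $\overline{V}^z$, so $Z_{ij}:=\overline{C_i}^z\cap\overline{C_j}^z$ has dimension at most $r-1$, and hence so does $Z:=\bigcup_{i\neq j}Z_{ij}$. The key geometric point, and the only step requiring real care, is that the locus of $(M,b)\in GL(n,r)$ with $\G(M,b)\cap Z\neq\emptyset$ lies in a proper Zariski closed subset: forming the incidence variety $\{((M,b),z)\in GL(n,r)\times Z : z\in\G(M,b)\}$ and projecting to the first factor, the fibre over a fixed $z$ is the set of full-rank $M$ (with $b=Mz^{t}$ then determined), of dimension $rn$, so the incidence variety has dimension at most $(r-1)+rn<rn+r=\dim GL(n,r)$; thus its image is constructible of dimension strictly less than $\dim GL(n,r)$, and since $GL(n,r)$ is irreducible (open in affine space) the complement of the Zariski closure of that image is a non-empty Zariski open $\mathcal{U}_0\subseteq\A^{nr+r}$. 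For $(M,b)\in\mathcal{U}_0$ one then has $\G(M,b)\cap C_i\cap C_j\subseteq\G(M,b)\cap Z=\emptyset$ for all $i\neq j$.

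Finally I would set $\mathcal{U}:=\mathcal{U}_0\cap\mathcal{U}_1\cap\cdots\cap\mathcal{U}_s$, a finite intersection of non-empty Zariski opens inside the irreducible variety $GL(n,r)\subseteq\A^{nr+r}$, hence itself non-empty and Zariski open. For $(M,b)\in\mathcal{U}$ the intersection $\G(M,b)\cap V=\bigsqcup_{i=1}^s\bigl(\G(M,b)\cap C_i\bigr)$ is a disjoint union of finite sets, so $\sharp(\G(M,b)\cap V)=\sum_{i=1}^s\sharp(\G(M,b)\cap C_i)=\sum_{i=1}^s\deg(C_i)=\deg(V)$; this is the ``moreover'' clause. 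For the displayed equality, for any $A\in\G(n,r)$ with $A\cap V$ finite each $A\cap C_i\subseteq A\cap V$ is finite, so $\sharp(A\cap C_i)\leq\deg(C_i)$ by Definition~\ref{grado-localmente-cerrados:def}, whence $\sharp(A\cap V)\leq\sum_{i=1}^s\sharp(A\cap C_i)\leq\sum_{i=1}^s\deg(C_i)=\deg(V)$; since this bound is attained by the sections $\G(M,b)$ with $(M,b)\in\mathcal{U}$ (which lie in $\G(n,r)$ via $\scrG$ and meet $V$ finitely), the maximum over such $A$ equals $\deg(V)$, as claimed.
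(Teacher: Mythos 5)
Your proof is correct. The paper itself omits the argument (deferring to \cite{Heintz83}), and your proposal supplies exactly the expected derivation: reduce to the locally closed irreducible case via Proposition \ref{interpretacion-geometrica-grado:prop} and Definition \ref{grado-localmente-cerrados:def}, and use the incidence-variety dimension count to show that a generic codimension-$r$ affine section avoids the (at most $(r-1)$-dimensional) pairwise intersections of the components, so that the counts add up. All the steps check out, including the additivity of $\deg$ over the equal-dimensional components and the attainment of the maximum on the non-empty open set $\mathcal{U}\subseteq GL(n,r)$.
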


As in Remark 2 (2) of \cite{Heintz83}, the degree of the Zariski closure is preserved by taking images by linear transformations:
\begin{proposition}\label{grado-preservado-por-transformaciones-lineales:prop} Let $\ell: \A^n (K)\longrightarrow \A^m(K)$ be a linear mapping and  $V\subseteq \A^n(K)$ a locally closed subset. Then, we have:
$$\deg(\overline{\ell(V)}^z) \leq \deg(V).$$
Moreover, if $\ell(V)$ is locally closed we obtain:
$$\deg(\ell(V)) \leq \deg(V).$$
\end{proposition}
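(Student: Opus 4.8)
The statement I want to prove is that for a linear map $\ell:\A^n(K)\to\A^m(K)$ and a locally closed set $V\subseteq\A^n(K)$ we have $\deg(\overline{\ell(V)}^z)\le\deg(V)$, with equality replaced by $\le$ for $\ell(V)$ itself when it is locally closed. Since degree of a locally closed set is the sum of the degrees of its locally closed irreducible components, and the image of a union is the union of images, I would first reduce to the case where $V$ is irreducible; the Zariski closure of a finite union is the union of the closures, so $\deg(\overline{\ell(V)}^z)\le\sum_i\deg(\overline{\ell(V_i)}^z)\le\sum_i\deg(V_i)=\deg(V)$ once the irreducible case is done. So assume $V$ is irreducible of dimension $r$, write $W:=\overline{\ell(V)}^z$, which is then an irreducible algebraic variety; set $r':=\dim W\le r$. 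By Proposition \ref{propiedades-basicas-grado-lc:prop}(i) we may replace $V$ by $\overline V^z$ and $W$ is unchanged (since $\ell$ is continuous, $\ell(\overline V^z)\subseteq\overline{\ell(V)}^z$ and conversely $\ell(V)\subseteq\ell(\overline V^z)$ forces $\overline{\ell(\overline V^z)}^z=W$), so I may as well assume $V$ is a closed irreducible variety.

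**Main argument.** The idea is to compute $\deg W$ by intersecting with a generic linear affine variety of complementary co-dimension and pulling that section back to $V$. Fix a generic $B\in\G(m,r')$ so that $\sharp(W\cap B)=\deg(W)$ by Definition \ref{grado-localmente-cerrados:def} and Lemma \ref{abierto-zariski-sucesion-regular:proposition}. The preimage $\ell^{-1}(B)$ is a linear affine variety in $\A^n$ of co-dimension at most $r'$ (it is a linear subspace translate cut out by $r'$ affine equations pulled back through $\ell$). Now $V\cap\ell^{-1}(B)$ maps onto a subset of $W\cap B$ under $\ell$, and $\ell(V\cap\ell^{-1}(B))$ is Zariski dense in (a union of components of) ... — here I need to be careful: what I actually want is to choose the linear section of $V$ so that $\ell$ restricted to it is finite onto $W\cap B$. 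The clean way: extend $B$ to a full-dimensional generic flag. Take a generic linear affine variety $A\in\G(n,r)$ inside $\ell^{-1}(B)$ (possible since $\codim\ell^{-1}(B)\le r'\le r$, so there is room to cut down further by $r-\codim\ell^{-1}(B)$ more generic hyperplanes). For generic such $A$, on one hand $\sharp(V\cap A)=\deg(V)$ by Lemma \ref{abierto-zariski-sucesion-regular:proposition}, and on the other hand $\ell(V\cap A)\subseteq W\cap B$. The map $V\cap A\to W\cap B$ is then a map from a finite set onto a subset of $W\cap B$; to conclude $\deg(W)=\sharp(W\cap B)\le\sharp(V\cap A)=\deg(V)$ I must show this map is \emph{surjective} onto $W\cap B$ for generic choices, i.e. that every point of the generic section $W\cap B$ lies in the image. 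This is where the genericity of the flag does the work: for a generic $B$, $W\cap B$ consists of $\deg(W)$ smooth points of $W$ in general position, each the image $\ell(v)$ of some $v\in V$ (since $\ell(V)$ is dense in $W$, its image contains a dense open subset of $W$ by Chevalley, and a generic section avoids the bad locus), and a generic $A\subseteq\ell^{-1}(B)$ through appropriate fibers will hit each such $v$.

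**The delicate point.** The real obstacle is the surjectivity/fiber-transversality step: I need that for a generic $B$, every one of the $\deg(W)$ points of $W\cap B$ is actually attained by $\ell|_V$, and that I can choose $A$ inside $\ell^{-1}(B)$ hitting the fibers over all of them simultaneously. The constructible-set subtlety (the image $\ell(V)$ need not be closed, as the \emph{La Croix de Berny} example shows) is exactly the source of the possible strict inequality: points of $W\cap B\setminus\ell(V)$ are "lost", which is why we only get $\le$. To handle it rigorously I would invoke Chevalley's theorem to write $\ell(V)=W_0\cup(\text{lower-dimensional constructible pieces})$ with $W_0$ a dense open subset of $W$, choose $B$ generic enough that $W\cap B\subseteq W_0$ (possible since the complement $W\setminus W_0$ has dimension $<r'$, so a generic $B\in\G(m,r')$ misses it by Lemma \ref{interseccion-con-lineal-no aumenta-grado:lema}-type dimension counting), and then over each point $w\in W\cap B$ pick a preimage $v_w\in V$; the set $\{v_w\}$ is finite, and a generic $A\in\G(n,r)$ containing a generic point of each fiber $\ell^{-1}(\ell^{-1}... )$ — more precisely, since $\codim_{\A^n}\ell^{-1}(B)\le r'$ and $\dim(V\cap\ell^{-1}(B))\ge r-r'=\dim V-\dim W$ while each fiber of $\ell|_V$ over a point of $W$ has dimension $\ge r-r'$, generic linear sections of $V\cap\ell^{-1}(B)$ of the right co-dimension meet every fiber — gives $\ell(V\cap A)\supseteq W\cap B$. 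Combined with $\ell(V\cap A)\subseteq W\cap B$ and $\sharp(V\cap A)\le\deg(V)$ for generic $A$, this yields $\deg(W)=\sharp(W\cap B)=\sharp(\ell(V\cap A))\le\sharp(V\cap A)\le\deg(V)$. The final clause (for $\ell(V)$ locally closed, $\deg(\ell(V))\le\deg(V)$) is then immediate from Proposition \ref{propiedades-basicas-grado-lc:prop}(i), since a locally closed set has the same degree as its Zariski closure $W$.
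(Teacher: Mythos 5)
Your argument is correct, but note that the paper itself offers no proof of this Proposition: it is imported verbatim from Remark 2(2) of \cite{Heintz83}. What you have written is therefore best read as a self-contained reconstruction of the classical argument, and it is the right one: intersect $W:=\overline{\ell(V)}^z$ with a generic $B\in\G(m,r')$ realizing $\deg(W)$, pull back, and cut $\ell^{-1}(B)$ down by $r-r'$ further generic hyperplanes to reach an $A\in\G(n,r)$ with $V\cap A$ finite and $\ell(V\cap A)=W\cap B$, whence $\deg(W)=\sharp(W\cap B)\leq\sharp(V\cap A)\leq\deg(V)$ by Definition \ref{grado-localmente-cerrados:def}. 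Two small remarks. First, your intermediate claim that $\sharp(V\cap A)=\deg(V)$ for the constrained $A\subseteq\ell^{-1}(B)$ is not justified by Lemma \ref{abierto-zariski-sucesion-regular:proposition}, which only applies to $A$ generic in the full Grassmannian; fortunately your final chain only uses the inequality $\sharp(V\cap A)\leq\deg(V)$, which holds for any $A$ of co-dimension $r$ with finite intersection, so nothing is lost. Second, you correctly isolate the surjectivity of $V\cap A\to W\cap B$ as the delicate step and resolve it the right way: choose $B$ generic enough that $W\cap B$ lies in the Chevalley-open dense subset $W_0\subseteq\ell(V)$ \emph{and} in the locus over which the fibers of $\restr{\ell}{V}$ have the generic dimension $r-r'$; then $V\cap\ell^{-1}(B)$ is a finite disjoint union of $(r-r')$-dimensional fibers, and a generic affine section of co-dimension $r-r'$ inside $\ell^{-1}(B)\isomorf\A^{n-r'}$ (apply Proposition \ref{interpretacion-geometrica-grado:prop} inside that affine space, one fiber at a time, and intersect the finitely many open conditions) meets each of them. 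This is exactly where the example of \emph{La Croix de Berny} would otherwise break the argument, and your observation that the lost points of $W\cap B\setminus\ell(V)$ are what force the inequality to be $\leq$ rather than $=$ is the correct reading of the phenomenon.
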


The heart of the new outcomes in \cite{Heintz83} is the fact that the degree of locally closed sets  perfectly behaves  with respect to Cartesian products:

\begin{theorem}\label{grado-producto-cartesiano:prop}
Let $V\subseteq \A^n$ and $W\subseteq \A^m$ be two locally closed sets. Let $V\times W\subseteq\A^{n+m}$ be its Cartesian product, which is also locally closed. Then, we have:
$$\deg(V\times W) = \deg(V) \deg(W).$$
\end{theorem}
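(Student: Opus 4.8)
The plan is to reduce to the case of irreducible closed sets and then prove the two inequalities separately, with $\deg(V\times W)\ge\deg(V)\deg(W)$ being elementary and the reverse inequality carrying the real content. For the reductions: since $\deg(V)=\deg(\overline{V}^z)$ for every locally closed $V$ (Proposition~\ref{propiedades-basicas-grado-lc:prop}) and $\overline{V\times W}^z=\overline{V}^z\times\overline{W}^z$, I may assume $V,W$ are Zariski closed; and if $V=V_1\cup\cdots\cup V_a$, $W=W_1\cup\cdots\cup W_b$ are the decompositions into irreducible components, then the sets $V_i\times W_j$ are irreducible (the ground field $K$ is algebraically closed), no one of them is contained in another, and so they are exactly the irreducible components of $V\times W$; by the additivity built into Definition~\ref{grado-localmente-cerrados:def} it then suffices to treat $V,W$ irreducible, which I assume henceforth, with $r:=\dim(V)$, $s:=\dim(W)$ and $\dim(V\times W)=r+s$. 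The lower bound is then immediate: by Lemma~\ref{abierto-zariski-sucesion-regular:proposition} there are linear affine varieties $A\subseteq\A^n$ of codimension $r$ with $\sharp(V\cap A)=\deg(V)$ and $B\subseteq\A^m$ of codimension $s$ with $\sharp(W\cap B)=\deg(W)$; then $A\times B\subseteq\A^{n+m}$ has codimension $r+s$ and $(V\times W)\cap(A\times B)=(V\cap A)\times(W\cap B)$ is finite of cardinality $\deg(V)\deg(W)$, so $\deg(V\times W)\ge\deg(V)\deg(W)$ by the definition of the degree as a maximum over finite linear sections.

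For the upper bound I would use the interpretation of the degree as a generic fibre cardinality. By Propositions~\ref{existencia-grado:prop} and~\ref{interpretacion-geometrica-grado:prop} applied to the irreducible set $V\times W$ of dimension $r+s$, $\deg(V\times W)$ is the degree of the finite separable field extension attached to the map $\Phi$ of~\eqref{morfismo-dominante-grado:eqn} for $V\times W$, and moreover this number bounds the cardinality of \emph{every} finite fibre of $\Phi$; so it suffices to bound $\sharp\big((V\times W)\cap A\big)$ for a generic linear affine variety $A\subseteq\A^{n+m}$ of codimension $r+s$. Parametrising $A$ by a generic affine--linear isomorphism $\psi=(\psi^x,\psi^y)\colon\A^{r+s}\longrightarrow A\subseteq\A^n\times\A^m$, one has $(V\times W)\cap A\cong(\psi^x)^{-1}(V)\cap(\psi^y)^{-1}(W)\subseteq\A^{r+s}$, the intersection of the pull-backs of $V$ and of $W$ under two generic affine--linear maps $\A^{r+s}\longrightarrow\A^n$ and $\A^{r+s}\longrightarrow\A^m$. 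Each pull-back has degree at most $\deg(V)$, resp. $\deg(W)$ — it is a generic linear section of $V$ (resp. $W$) up to a cylinder, by Lemma~\ref{interseccion-con-lineal-no aumenta-grado:lema} together with Proposition~\ref{grado-preservado-por-transformaciones-lineales:prop} — and the task is to show their intersection has at most $\deg(V)\deg(W)$ points.

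The way I would attack that ``product/B\'ezout'' estimate is through the tensor-product structure $K[V\times W]=K[V]\otimes_K K[W]$ of the coordinate ring. Choosing a Noether normalisation of $K[V]$ by $r$ generic linear forms and of $K[W]$ by $s$ generic linear forms, one checks that $K(V\times W)$ is finite of degree \emph{exactly} $\deg(V)\deg(W)$ over the subfield $F_0$ generated by this ``block'' system of $r+s$ parameters: indeed $K(V)\otimes_K K(W)$ is a domain (because $V\times W$ is irreducible) which, after inverting the nonzero elements of the fraction field of the tensor product of the two polynomial subrings, is free of rank $\deg(V)\deg(W)$ over $F_0$, and a finite-dimensional domain algebra over a field is a field. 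The remaining step — and the genuinely delicate one — is to pass from this block system of parameters to the generic (``mixed'') one coming from $A$ without increasing the field degree, i.e. to establish $[K(V\times W):F]\le[K(V\times W):F_0]=\deg(V)\deg(W)$ for the mixed parameter field $F$.

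The main obstacle is precisely this last passage. A generic codimension-$(r+s)$ linear section of $\A^{n+m}$ is \emph{not} of product type $A_1\times A_2$, and the change of variables from the mixed linear forms to the block ones is not a linear substitution in the parameters alone — it involves the equations defining $V$ and $W$, so it is only ``triangular up to lower-order terms''. Degeneration arguments only give the inequality in the easy direction (specialising a generic linear section to a block one can only decrease the number of points of a zero-dimensional intersection), so the hard inequality demands genuine control of the generic fibre, for instance a careful genericity analysis on the parameter space showing that the ``bad'' locus there does not meet the block stratum, or equivalently a bihomogeneous-B\'ezout type estimate. This is exactly the difficulty that makes the behaviour of the degree under Cartesian products the keystone of the whole theory, as in \cite{Heintz83}.
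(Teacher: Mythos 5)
Your reduction to $V,W$ irreducible and Zariski closed is correct, and so is the lower bound: $A\times B$ is a linear affine variety of codimension $r+s=\dim(V\times W)$ meeting $V\times W$ in the finite set $(V\cap A)\times(W\cap B)$ of cardinality $\deg(V)\deg(W)$, so Definition~\ref{grado-localmente-cerrados:def} gives $\deg(V\times W)\geq\deg(V)\deg(W)$. But the upper bound, which is the entire content of the theorem, is not proved, and you say as much yourself. The gap is exactly where you locate it, and it cannot be closed with the tools you invoke: every specialization or semicontinuity statement available from Proposition~\ref{existencia-grado:prop} runs in one direction only. Part $ii)$ there says that every \emph{finite} fibre of $\Phi$ has at most $[K(\A^{(r+s)(n+m)}\times (V\times W)):K(\A^{(r+s)(n+m)}\times \A^{r+s})]$ points, so exhibiting the block fibre with $\deg(V)\deg(W)$ points only re-proves the lower bound; under specialization of the matrix of linear forms the points of a finite fibre can only collide or escape, so the count can only drop, and nothing bounds the generic (mixed) fibre from above by the cardinality of the special (block) one. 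Consequently your correct computation $[K(V\times W):F_0]=\deg(V)\deg(W)$ for the block parameter field $F_0$ does not transfer to the mixed parameter field $F$, and the inequality $[K(V\times W):F]\leq [K(V\times W):F_0]$ you need \emph{is} the theorem.

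Two further cautions. First, the intermediate claim that each pull-back $(\psi^x)^{-1}(V)$ has degree at most $\deg(V)$ is itself an instance of the statement being proved: up to an affine change of coordinates $(\psi^x)^{-1}(V)$ is a cylinder over a linear section of $V$, and bounding the degree of a cylinder by the degree of its base is the case $W=\A^k$ of the theorem. Second, no form of B\'ezout's Inequality may be used to control the intersection of the two pull-backs, because in this paper (following \cite{Heintz83}) Theorem~\ref{Bezout-localmente-cerrados:teor} is \emph{derived from} Theorem~\ref{grado-producto-cartesiano:prop} via the diagonal trick, so any such appeal is circular. Note that the manuscript itself does not reprove this theorem; it imports it from \cite{Heintz83}, where the upper bound is a substantive result established by a separate argument, and that is precisely the piece your proposal is missing.
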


The previous results allow us to exhibit  the \emph{B\'ezout's Inequality for locally closed sets} of \cite{Heintz83}. The original proof is correct and its main ingredient is to consider $V\cap W$ as the projection of $(V\times W)\cap \Delta^{(2n)}$, where $\Delta^{(2n)} \subseteq \A^n\times \A^n$ is the diagonal subvariety in the Cartesian product space. As the image $V\cap W$ is locally closed, using Proposition \ref{grado-preservado-por-transformaciones-lineales:prop} combined with Lemma \ref{interseccion-con-lineal-no aumenta-grado:lema} and Theorem \ref{grado-producto-cartesiano:prop}, the following statement naturally follows:

\begin{theorem}[{\bf B\'ezout's Inequality for locally closed sets, \cite{Heintz83}}]\label{Bezout-localmente-cerrados:teor}
Let $V, W\subseteq \A^n(K)$ be two locally closed subsets. Then, we have:
$$\deg(V\cap W)\leq \deg(V)\deg(W).$$

\end{theorem}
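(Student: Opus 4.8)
The plan is to follow exactly the strategy sketched in the paragraph preceding the statement, namely to realize the intersection $V\cap W$ as a linear image of a Cartesian product intersected with a diagonal. First I would introduce the diagonal subvariety $\Delta^{(2n)}\subseteq \A^n(K)\times \A^n(K)$, defined by the equations $X_i=Y_i$ for $1\leq i\leq n$ in coordinates $(X_1,\ldots,X_n,Y_1,\ldots,Y_n)$; this is a linear affine variety of $\A^{2n}(K)$, hence locally closed, and in fact of degree $1$ by part (iv) of Proposition \ref{propiedades-basicas-grado-lc:prop}. I would also fix the linear projection $\ell:\A^{2n}(K)\longrightarrow \A^n(K)$ given by $\ell(x,y)=x$.

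Next I would verify the set-theoretic identity
$$\ell\left((V\times W)\cap \Delta^{(2n)}\right)=V\cap W,$$
which is immediate: a point $(x,y)$ lies in $(V\times W)\cap\Delta^{(2n)}$ precisely when $x\in V$, $y\in W$ and $x=y$, i.e. when $x=y\in V\cap W$, and applying $\ell$ gives exactly $V\cap W$. Since $V\times W$ is locally closed by Theorem \ref{grado-producto-cartesiano:prop} and $\Delta^{(2n)}$ is closed, the intersection $(V\times W)\cap\Delta^{(2n)}$ is locally closed; moreover its image $V\cap W$ is the intersection of two locally closed sets, hence locally closed, so Proposition \ref{grado-preservado-por-transformaciones-lineales:prop} applies in its second (stronger) form. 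Then I would chain the inequalities:
$$\deg(V\cap W)=\deg\left(\ell\left((V\times W)\cap\Delta^{(2n)}\right)\right)\leq \deg\left((V\times W)\cap\Delta^{(2n)}\right)\leq \deg(V\times W)=\deg(V)\deg(W),$$
where the first inequality is Proposition \ref{grado-preservado-por-transformaciones-lineales:prop}, the second is Lemma \ref{interseccion-con-lineal-no aumenta-grado:lema} applied with the linear affine variety $A=\Delta^{(2n)}$ (note $\Delta^{(2n)}$ is linear, which is what that lemma requires), and the final equality is Theorem \ref{grado-producto-cartesiano:prop}. This closes the proof.

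I do not anticipate a genuine obstacle here, since every ingredient has been assembled in the preceding subsection; the only point that needs a little care is making sure that $V\cap W$ really is locally closed so that the strong form ``$\deg(\ell(V))\leq\deg(V)$'' of Proposition \ref{grado-preservado-por-transformaciones-lineales:prop} is legitimate rather than only the weaker statement about the Zariski closure — but this is clear because finite intersections of locally closed sets are locally closed. A secondary cosmetic point is that Lemma \ref{interseccion-con-lineal-no aumenta-grado:lema} is stated for a locally closed set intersected with a linear affine variety, and $\Delta^{(2n)}$ is indeed a linear affine variety, so there is no gap. One could alternatively avoid Proposition \ref{grado-preservado-por-transformaciones-lineales:prop} altogether by noting that $\ell$ restricts to an isomorphism of varieties from $(V\times W)\cap\Delta^{(2n)}$ onto $V\cap W$ and invoking the linear-isomorphism invariance in part (v) of Proposition \ref{propiedades-basicas-grado-lc:prop}, but the projection argument above is the cleanest and matches the narrative already given in the text.
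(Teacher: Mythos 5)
Your proposal is correct and follows exactly the argument the paper itself gives (in the paragraph preceding the statement): realize $V\cap W$ as the linear image of $(V\times W)\cap\Delta^{(2n)}$ and chain Proposition \ref{grado-preservado-por-transformaciones-lineales:prop}, Lemma \ref{interseccion-con-lineal-no aumenta-grado:lema}, and Theorem \ref{grado-producto-cartesiano:prop}. Your added care about $V\cap W$ being locally closed, so that the strong form of the linear-image bound applies, is exactly the point the paper relies on as well.
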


\subsection{Two notions of degree and two B\'ezout's Inequalities for constructible sets}\label{Bezout-Inequality-constrictibles:subsec}

A first notion of degree for constructible sets was introduced in \cite{Heintz83}: The ``degree'' of a constructible set $C\subseteq \A^n(K)$ was there defined as the degree of its Zariski closure. Let us call it the \emph{$Z-$degree of $C$}, which we denote by $\deg_z(C):=\deg(\overline{C}^z)$. That notion of degree does not satisfy a B\'ezout's Inequality  for constructible sets. The following example shows that fact.

\begin{example}[{\bf $Z-$degree does not satisfy a B\'ezout's Inequality for constructible sets}]\label{croix-de-berny-def:ej}
Let us consider again the Example \ref{La-Croix-de-Berny:ej} above. We had the cubic irreducible hyper-surface:
$$ W:= \{ (x,y,z) \in \A(\C)^3\; : \; zxy + (x^2+y^2-1)=0\}$$
and its image $C:=\pi(W)$ under the canonical projection $\pi: \A^3(\C) \longrightarrow \A^2(\C)$. We observed that $\overline{C}^z=\A^2(\C)$ and, hence, the degree of its Zariski closure satisfies $\deg_z(C)=1$. We may also consider the pair of lines $V:=\{ (x,y)\in \A^2(\C) \; : xy=0\}$, which is a degree $2$ algebraic variety. We consider  its intersection $C\cap V$ and we see that $C\cap V$ are the following four points in the plane:
$$C\cap V=  \{ (0,1), (0,-1), (1,0), (-1,0)\},$$
Therefore, $\deg_z(C\cap V)=\deg(C\cap V)=4$. Nevertheless, we have that:
$$4\not\leq \deg_z(C)  \deg_z(V) = 2.$$
\end{example}

This Example basically means that statements in \cite{Heintz83} concerning constructible sets are \emph{either wrong or incomplete while using $Z-$degree of a constructible set as notion of degree}. This was already observed in \cite{Heintz85}, where the author restricted the correctness of all statements in \cite{Heintz83} to the case of locally closed sets  (as we did in  Subsection \ref{Bezout-Inequality-locally-closed:subsec} above). We have not found in the literature a notion of degree of constructible sets that generalizes the notion of degree of locally closed sets and also satisfies a B\'ezout's Inequality. This pushed us to find a good notion of degree of constructible sets that satisfies a B\'ezout's Inequality and this lead us to the following two notions of degree of a constructible set: The minimal degree of a presentation as union of locally closed irreducible sets (\emph{$LCI$-degree}) and the minimal degree of a presentation as projection of a locally closed set (\emph{$\pi-$degree}).

\begin{definition}[{\bf $LCI$-degree of a constructible set}]\label{grado-constructibles:def} Let $C\subseteq \A^n$ be a constructible subset. Let $\scrC:=\{ U_1\cap V_1,\ldots, U_r\cap V_r\}$ be a decomposition of $C$  as finite union of locally closed irreducible sets that satisfies Lemma \ref{descomposicion-union-irreducibles-distintos:lema}. We define the degree of $C$ relative to this decomposition as:
 $$\deg(C,\scrC):=\sum_{i=1}^r \deg(V_i).$$
We finally define the $LCI$-degree of $C$ as the minimum of all the degrees of all decompositions of this kind:
 $$\deg_{\rm lci}(C):=\min\{ \deg(C,\scrC)\; :\; {\hbox {\rm $\scrC$ is a decomposition of $C$ that satisfies Lemma \ref{descomposicion-union-irreducibles-distintos:lema}}}\}.$$
We say that a decomposition $\scrC:=\{ U_1\cap V_1,\ldots, U_r\cap V_r\}$ of a constructible set $C\subseteq \A^n$  is a minimum $LCI$-degree decomposition of $C$ if $\scrC$ satisfies  Lemma \ref{descomposicion-union-irreducibles-distintos:lema} and also:
$$\deg_{\rm lci}(C)=\sum_{i=1}^r \deg(V_i).$$
 \end{definition}

\begin{definition}[{\bf $\pi-$degree of a constructible set}] \label{grado-como-proyeccion:def} As above, let $C\subseteq \A^n$ be a constructible set and we consider the class of all locally closed subsets that project onto $C$. Namely,
for every $m\in \N$, $m\geq n$, we define:
$$\Pi_m(C):=\{ V \subseteq \A^m\; :  {\hbox {\rm $V$ is locally closed and  $\pi(V)=C$}}\},$$
where $\pi: \A^m\longrightarrow \A^n$ is the canonical projection. We then define:
$$\Pi(C):=\bigcup_{m\geq n} \Pi_m(C),$$
and we define the projection degree (also $\pi-$degree) of $C$ as the following minimum:
$$\deg_\pi(C):=\min \{ \deg_z(V)\; : \; V\in \Pi(C)\}.$$
\end{definition}

We obviously observe, taking $m=n$, that if $C$ is locally closed, then using the ``projection'' $\pi:=Id_n:\A^n \longrightarrow \A^n$ we obtain:
\begin{equation}\label{deg-pi-deg-z:eqn}
\deg_\pi(C)\leq \deg(C)=\deg_z(C).
\end{equation}

The three notions of degree are sub-additive functions:

 \begin{proposition}\label{grado-constructibles-subaditivo:prop} The three notions $\deg_z$, $\deg_\pi$ and $\deg_{\rm lci}$ are sub-additive functions. Namely, given $C,C'\subseteq \A^n$ two constructible subsets. We have:
 $$\deg_z(C\cup C')\leq \deg_z(C) + \deg_z(C'),$$
  $$\deg_\pi(C\cup C')\leq \deg_\pi(C) + \deg_\pi(C')$$
and,
 $$\deg_{\rm lci}(C\cup C')\leq \deg_{\rm lci}(C) + \deg_{\rm lci}(C').$$
 \end{proposition}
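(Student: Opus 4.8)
The plan is to handle each of the three notions of degree separately, since each reduces to a different simple structural fact about the relevant type of presentation. For $\deg_z$, the key observation is that $\overline{C\cup C'}^z = \overline{C}^z \cup \overline{C'}^z$ (closure commutes with finite unions), so if $\overline{C}^z$ has irreducible components $W_1,\ldots,W_p$ and $\overline{C'}^z$ has irreducible components $W_1',\ldots,W_q'$, then $\overline{C\cup C'}^z$ is covered by the $W_i$'s and $W_j'$'s; its irreducible components form a subset of this list (one discards any member contained in another). By Definition \ref{grado-localmente-cerrados:def} the degree of a locally closed set is the sum of the degrees of its irreducible components, and discarding components only decreases this sum, so $\deg_z(C\cup C') \le \deg_z(\overline{C}^z) + \deg_z(\overline{C'}^z)\cdot$ — more precisely $\le \deg_z(C) + \deg_z(C')$. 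Since the degree of a sub-variety is at most the degree of an ambient variety containing it (this follows from Lemma \ref{interseccion-con-lineal-no aumenta-grado:lema} applied to generic linear sections, or directly from the component-sum definition), the inequality follows.

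For $\deg_{\rm lci}$, take a minimum $LCI$-degree decomposition $\scrC = \{U_1\cap V_1,\ldots,U_r\cap V_r\}$ of $C$ with $\sum_{i=1}^r \deg(V_i) = \deg_{\rm lci}(C)$, and likewise a minimum decomposition $\scrC' = \{U_1'\cap V_1',\ldots,U_s'\cap V_s'\}$ of $C'$ with $\sum_{j=1}^s \deg(V_j') = \deg_{\rm lci}(C')$. Then the concatenated family $\scrC\cup\scrC'$ presents $C\cup C'$ as a finite union of locally closed irreducible sets; after running the normalization procedure of Lemma \ref{descomposicion-union-irreducibles-distintos:lema} (merging any pieces with equal closures $V_i = V_j'$ into a single piece over the same irreducible variety, then enlarging each open set to its maximum), one obtains a bona fide decomposition $\scrC''$ of $C\cup C'$ whose underlying irreducible varieties form a \emph{sub}-multiset of $\{V_1,\ldots,V_r,V_1',\ldots,V_s'\}$ (merging removes duplicates, never adds new varieties). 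Hence $\deg_{\rm lci}(C\cup C')\le \deg(C\cup C',\scrC'') \le \sum_i \deg(V_i) + \sum_j \deg(V_j') = \deg_{\rm lci}(C) + \deg_{\rm lci}(C')$.

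For $\deg_\pi$, choose $V\in\Pi(C)$ with $V\subseteq\A^{m}$ locally closed, $\pi_m(V) = C$, and $\deg_z(V) = \deg_\pi(C)$; similarly choose $V'\subseteq\A^{m'}$ with $\pi_{m'}(V') = C'$ and $\deg_z(V') = \deg_\pi(C')$. Embedding both into a common $\A^{M}$ with $M=\max\{m,m'\}$ via the standard inclusions $\A^{m}\hookrightarrow\A^{M}$ (padding with zero coordinates, which preserves being locally closed and does not change $\deg_z$, by Proposition \ref{propiedades-basicas-grado-lc:prop}), and composing with the resulting projections so that both $V$ and $V'$ project onto their respective sets inside $\A^{n}$, the union $V\cup V'\subseteq\A^{M}$ is locally closed and projects onto $C\cup C'$, so $V\cup V'\in\Pi(C\cup C')$. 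Then $\deg_\pi(C\cup C')\le \deg_z(V\cup V') \le \deg_z(V) + \deg_z(V') = \deg_\pi(C) + \deg_\pi(C')$, where the middle step is the already-established sub-additivity of $\deg_z$.

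The only mildly delicate point — and the one I would write out most carefully — is the $\deg_\pi$ case: one must check that after the zero-padding and re-composition of projections, a single locally closed set in $\A^{M}$ still surjects onto $C\cup C'\subseteq\A^{n}$ under one canonical projection. This is handled by noting that $\pi_m$ and $\pi_{m'}$ both factor through $\pi_M:\A^{M}\to\A^{n}$ once $V$ and $V'$ are placed in the first-$m$ (resp. first-$m'$) coordinates with the extra coordinates set to zero; the fibers are unchanged and the images in $\A^{n}$ are exactly $C$ and $C'$. Everything else is bookkeeping with the definitions already in place.
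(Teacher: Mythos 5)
Your treatments of $\deg_z$ and $\deg_{\rm lci}$ are correct and essentially identical to the paper's: closure commutes with finite unions for the first, and for the third one concatenates two minimum $LCI$-degree decompositions and runs the normalization of Lemma \ref{descomposicion-union-irreducibles-distintos:lema}, which can only merge (never add) underlying irreducible varieties. For $\deg_\pi$ the paper uses the same padding idea, with $V'':=V'\times\A^{m_1-m_2}$ where you pad with zeros; both preserve the $Z$-degree, so that choice is immaterial.

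The genuine gap is in the $\deg_\pi$ case, and it is not the point you flagged as delicate. You assert that $V\cup V'\subseteq\A^{M}$ is locally closed; this is false in general, since a union of two locally closed sets is only constructible. Concretely, take $C=\{(x,y)\in\A^2 : x\neq 0\}$ with $V=C$ and the identity projection, and $C'=\{(0,\pm 1)\}$ with $V'=C'$: then $V\cup V'$ is precisely the kind of non-locally-closed set exhibited in Example \ref{La-Croix-de-Berny-grados-varios:ej}. Since $\Pi(C\cup C')$ consists only of \emph{locally closed} sets by Definition \ref{grado-como-proyeccion:def}, you cannot deduce $\deg_\pi(C\cup C')\leq\deg_z(V\cup V')$ from such a union. (The paper's own proof writes $\deg_z(V\cup V'')$ with the same unjustified step, so you are in good company.) The repair is cheap: add one extra coordinate, among those forgotten by the canonical projection to $\A^n$, and set $W:=(V\times\{0\})\cup(V'\times\{1\})\subseteq\A^{M+1}$. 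The two Zariski closures $\overline{V}^z\times\{0\}$ and $\overline{V'}^z\times\{1\}$ are disjoint closed sets, each open in their union, and each piece of $W$ is open in its own closure, so $W$ is open in $\overline{W}^z$ and hence locally closed; moreover $\deg_z(W)=\deg_z(V)+\deg_z(V')$ by additivity over the (now disjoint) components and Theorem \ref{grado-producto-cartesiano:prop}, and $\pi(W)=C\cup C'$. With $W$ in place of $V\cup V'$ your argument closes.
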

 \begin{proof} As the closure of a finite union is the union of the closures, the first inequality obviously holds and $\deg_z$ is sub-additive. As for the second inequality, it also follows almost immediately. We proof it for completeness. Assume $V\subseteq \A^{m_1}$  and $V'\subseteq \A^{m_2}$ are two locally closed subsets such that $m_1\geq m_2\geq n$ and $C:=\pi_1(V)$, and $C':=\pi_2(V')$, where
 $\pi_1:\A^{m_1}\longrightarrow \A^n$ and $\pi_2: \A^{m_2}\longrightarrow \A^n$ are the two canonical projections. Assume also that $\deg_\pi(C)=\deg_z(V)$ and $\deg_\pi(C')= \deg_z(V')$. As $m_1\geq m_2$, we may also define $V'':=V'\times \A^{m_1-m_2}$ and by Theorem  \ref{grado-producto-cartesiano:prop} we know that $\deg(V'')= \deg(V')$. It is also clear that $\pi_1(V'')=\pi_2(V')=C'$ and that $\pi_1(V \cup V'')= C\cup C'$. Hence, we have:
 $$\deg_\pi(C\cup C') \leq\deg_z(V\cup V'')\leq \deg_z(V) + \deg_z(V'') = \deg_\pi(C) + \deg_\pi(C').$$
 The third inequality is an immediate consequence of Definition \ref{grado-constructibles:def}. From two minimum $LCI$-degree decompositions of  $C$ and $C'$ we may reconstruct a decomposition of $C\cup C'$ satisfying the properties described in Lemma \ref{descomposicion-union-irreducibles-distintos:lema}. The degree of such decomposition is obviously greater than the degree of the union $C\cup C'$.
 \end{proof}

\begin{example}[{\bf Three different ``degrees''}]\label{La-Croix-de-Berny-grados-varios:ej} The three notions $\deg_z$, $\deg_\pi$ and $\deg_{\rm lci}$ are different. In order to simplify the arguments, we slightly modify the constructible set exhibited in Example \ref{La-Croix-de-Berny:ej}. We first consider the following quadratic hyper-surface:
\begin{equation}\label{semi-croix-de-berny-hiper-superficie:eqn}
W':=\{(x,y,z)\in \A^3(\C) \; : \; xz+ (y^2-1)=0\},
\end{equation}
and the constructible subset  $C :=\pi(W')\subseteq \A^2(\C)$, where $\pi:\A^3(\C)\longrightarrow \A^2(\C)$ is the canonical projection that forgets the variable $z$. Since $C$ is Zariski dense in $\A^2(\C)$, we obviously have that $\deg_z(C)=1$.\\
As $C$ is the projection of $W'$ and $\deg(W')=2$, we have $\deg_\pi(C)\leq 2$. Moreover, $\deg_\pi(C)\not=1$ since, otherwise, $C$ would be the projection of a non-empty Zariski open subset of a linear affine variety, which cannot be possible because $C$ is not an open subset of $\A^2(\C)$.\\
Additionally, we claim that $\deg_{\rm lci}(C)=3$. We have the following decomposition of $C$:
\begin{equation}\label{semi-croix-de-Berny-decomposition:eqn}
C:= \{ (x,y)\in \A^2(\C)\; :\;  x\not=0\} \cup \{ (x,y)\in \A^2(\C)\; :\; x^2+y^2-1=0\}.
\end{equation}
Hence, we conclude that $\deg_{\rm lci}(C)\leq 3$.  Let us see now that $2<\deg_{\rm lci}(C)$. For if $\deg_{\rm lci}(C)=2$, as the open set $C_1=\A^2(\C)\setminus V_\A(X)$ is maximal ($C$ is not locally closed), then $C$ would decompose as a union of at most two locally closed irreducible sets:
$$C= \{ (x,y)\in \A^2(\C)\; :\;  x\not=0\} \cup B,$$
where $B$ is locally closed irreducible of degree  $1$ (namely, $B$ is a Zariski open subset of a linear affine subvariety of $\A^2(\C)$). We may now consider the following intersection:
$$C\cap \{(x,y)\in \A^2(\C)\; : \;  x=0\}=B\cap\{(x,y)\in \A^2(\C)\; : \;  x=0\}.$$
Since $C_1\cap \{(x,y)\in \A^2(\C)\; : \;  x=0\}=\emptyset$, $C\cap \{(x,y)\in \A^2(\C)\; : \;  x=0\} = \{ (0,1), (0,-1)\}$ and B\'ezout's Inequality for locally closed subsets holds, we would conclude:
$$2=\sharp\left(B\cap\{(x,y)\in \A^2(\C)\; : \; x=0\}\right)\leq \deg(B)\deg\left(\{(x,y)\in \A^2(\C)\; : \;  x=0\}\right)= 1\cdot 1= 1,$$
which is false. Thus, we have proven that $\deg(B)\geq 2$ and, hence,  $3\geq \deg_{\rm lci}(C)= 1 + \deg(B)\geq 3$ as wanted. Thus, there are constructible sets $C\subseteq \A^2 (\C)$ such that the three notions of degree above take different values:
$$1= \deg_z(C) < 2 = \deg_\pi(C) < \deg_{\rm lci}(C)=3.$$
\end{example}

There is no uniqueness in the embedded components of a decomposition of constructible sets  that minimize $LCI$-degree:
\begin{remark}[{\bf Non-uniqueness of embedded components minimizing $LCI-$degree}]\label{no-unicidad-de-la-descomposicion-minimal:rk}
 As in Example \ref{La-Croix-de-Berny-grados-varios:ej} above, we have a constructible set $C\subseteq \A^2(\C)$, dense in $\A^2(\C)$ for the Zariski topology and of $LCI$-degree $3$, which is given by the following decomposition as finite union of locally closed irreducible subsets that minimize the degree:
$$C=\{ (x,y) \in \C^2\; :\; x\not=0\}\cup\{ (x,y)\in \C^2\; : \; x^2+ y^2-1=0\}.$$
For every $a\in \C$, we may consider the polynomial $h_a (X,Y) :=X^2+Y^2 + aXY -1$. Then, we see that there are infinitely many decompositions of $C$ as finite union of locally closed sets that minimize the $LCI$-degree as in the following equality:
$$C=\{ (x,y) \in \C^2\; :\; x\not=0\}\cup\{ (x,y)\in \C^2\; : \; h_a(x,y)=0\}.$$
This is because the ideals $(X, X^2+Y^2-1)$ and $(X, h_a (X,Y) )$  are the same ideal in $\C[X,Y]$ for every $a\in \C$.
\end{remark}

Our $LCI$-degree generalizes the notion of degree for locally closed sets introduced in \cite{Heintz83}. We resume this fact in the next Proposition of straightforward proof:

\begin{proposition}\label{generaliza-gradoLCI-localmente-cerrados:prop} Let $C\subseteq \A^n(K)$ be  a constructible set which is either locally closed set or globally equi-dimensional. Then, $\deg_z(C)$ and $\deg_{\rm lci}(C)$ agree. Namely,
 $$\deg_z(C)=\deg(\overline{C}^z)=\deg_{\rm lci}(C).$$
\end{proposition}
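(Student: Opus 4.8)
The plan is to reduce the statement to the definition of $\deg_{\rm lci}$ together with the properties of $\deg$ for locally closed sets already recalled in Subsection~\ref{Bezout-Inequality-locally-closed:subsec}. Two cases must be considered: $C$ locally closed and $C$ globally equi-dimensional. In both cases the heart of the matter is to produce, for an arbitrary decomposition $\scrC=\{U_1\cap V_1,\ldots,U_r\cap V_r\}$ of $C$ satisfying Lemma~\ref{descomposicion-union-irreducibles-distintos:lema}, the two inequalities $\deg_{\rm lci}(C)\le\deg_z(C)$ and $\deg_z(C)\le\sum_{i=1}^r\deg(V_i)$; taking the minimum over all such $\scrC$ then forces equality with $\deg_z(C)=\deg(\overline{C}^z)$.

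First I would treat $C$ locally closed. Then $C$ has uniquely determined locally closed irreducible components $W_1,\ldots,W_s$, each $W_j$ being dense in an irreducible component $\overline{W_j}^z$ of $\overline{C}^z$, and the $\overline{W_j}^z$ are exactly the irreducible components of $\overline{C}^z$. Writing $W_j=O_j\cap\overline{W_j}^z$ for a suitable Zariski open $O_j$ and enlarging each $O_j$ to the maximal open whose intersection with $\overline{W_j}^z$ lies in $C$ (as in Lemma~\ref{descomposicion-union-irreducibles-distintos:lema}), this canonical decomposition satisfies the conditions of that Lemma, so by Definition~\ref{grado-constructibles:def} we get $\deg_{\rm lci}(C)\le\sum_j\deg(\overline{W_j}^z)=\deg(\overline{C}^z)=\deg_z(C)$, where the middle equality is Definition~\ref{grado-localmente-cerrados:def} applied to $\overline{C}^z$. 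For the reverse inequality, let $\scrC=\{U_i\cap V_i\}_{i=1}^r$ be any decomposition satisfying Lemma~\ref{descomposicion-union-irreducibles-distintos:lema}; since each $V_i$ is irreducible and $U_i\cap V_i$ is a non-empty (hence dense) open subset of $V_i$, taking Zariski closures in $\overline{C}^z=\overline{\bigcup_i(U_i\cap V_i)}^z=\bigcup_i V_i$ shows that every irreducible component of $\overline{C}^z$ is one of the $V_i$, so $\deg_z(C)=\deg(\overline{C}^z)=\sum_{V_i \text{ a component}}\deg(V_i)\le\sum_{i=1}^r\deg(V_i)=\deg(C,\scrC)$, using Proposition~\ref{propiedades-basicas-grado-lc:prop} for the additivity of degree over irreducible components. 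Minimizing over $\scrC$ gives $\deg_z(C)\le\deg_{\rm lci}(C)$, completing the locally closed case.

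Next I would treat the globally equi-dimensional case. By Definition~\ref{descomposicion-equidimensional-constructibles:def} there is a decomposition $\scrC=\{U_i\cap V_i\}_{i=1}^s$ satisfying Lemma~\ref{descomposicion-union-irreducibles-distintos:lema} with $\dim(V_i)=\dim(C)=:r$ for all $i$; then $\overline{C}^z=\bigcup_i V_i$ is equi-dimensional of dimension $r$, its irreducible components are precisely the maximal ones among the $V_i$, and again by condition~$iii)$ of Lemma~\ref{descomposicion-union-irreducibles-distintos:lema} the $V_i$ are pairwise distinct; since all have the same dimension $r$, none can be contained in another, so in fact all $V_i$ are irreducible components of $\overline{C}^z$. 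Hence this particular $\scrC$ has $\deg(C,\scrC)=\sum_i\deg(V_i)=\deg(\overline{C}^z)=\deg_z(C)$, giving $\deg_{\rm lci}(C)\le\deg_z(C)$. The reverse inequality $\deg_z(C)\le\deg(C,\scrC')$ for an arbitrary $\scrC'$ is proved exactly as in the locally closed case (every irreducible component of $\overline{C}^z$ occurs among the varieties of $\scrC'$). I do not expect a serious obstacle here; the only subtlety — and the step I would be most careful about — is the bookkeeping in the reverse inequality, namely checking that in an \emph{arbitrary} Lemma~\ref{descomposicion-union-irreducibles-distintos:lema}-decomposition each irreducible component of $\overline{C}^z$ really appears as some $V_i$ (and with multiplicity one, by condition~$iii)$), so that $\deg(\overline{C}^z)$ is a \emph{sub}-sum of $\sum_i\deg(V_i)$ rather than something one must compare more delicately. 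This is where the density of non-empty opens in irreducible varieties and the Noetherianity of the Zariski topology are used.
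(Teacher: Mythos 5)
The paper states this Proposition ``of straightforward proof'' and gives no argument, so there is nothing to diverge from; your proof supplies exactly the expected details. Both directions check out: the canonical component decomposition (with opens enlarged to be maximal) witnesses $\deg_{\rm lci}(C)\leq \deg_z(C)$ in each case, and the observation that $\overline{C}^z=\bigcup_i V_i$ forces every irreducible component of $\overline{C}^z$ to appear (exactly once, by condition $iii)$ of Lemma \ref{descomposicion-union-irreducibles-distintos:lema}) among the $V_i$ of an arbitrary admissible decomposition gives the reverse inequality, which is also consistent with the general bound $\deg_z\leq\deg_{\rm lci}$ of Proposition \ref{relations-between-three-degrees:prop}.
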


\begin{remark}[{\bf LCI-degree and equal dimension decomposition}]\label{LCI-equal-dimension-components:rk}
We may also consider an equal dimensional decomposition of a constructible set $C\subseteq \A^n(K)$ of dimension $r$ as in Remark  \ref{decomposition-equal-dimension:rk}.
As in that Remark, let $\scrD:=\{ C_r,\ldots, C_0\}$ be a decomposition of $C$  as finite union of globally equi-dimensional constructible sets. We  consider the following quantity associated to this decomposition:
 $$\deg_z(C,\scrD):=\sum_{i=0}^r \deg_z(C_i).$$
As $\deg_{\rm lci}$ is sub-additive and because of Proposition \ref{generaliza-gradoLCI-localmente-cerrados:prop}, we have:
$$\deg_{\rm lci}(C)\leq \deg_z(C,\scrD).$$
Moreover, taking  a decomposition of $C$ into locally closed irreducible sets that minimize the degree:
$$C:=W_1\cup \cdots \cup W_s,$$
such that
$$\deg_{\rm lci}(C):=\sum_{i=1}^s \deg(W_i),$$
and reorganizing this decomposition into equal degree components, we obtain a decomposition $\scrW:=\{D_r,\ldots, D_0\}$ such that:
$$\deg_{\rm lci}(C):= \sum_{i=0}^r \deg_{\rm lci}(D_i)=\sum_{i=0}^r \deg_z(D_i).$$
Hence, the $LCI-$degree could have been defined in terms of equal dimension components as follows:
 $$\deg_{\rm lci}(C):=\min\{ \deg_z(C,\scrD)\; :\; {\hbox {\rm $\scrC$ is a globally equi-dimensional decomp. of $C$ as in Rk. \ref{decomposition-equal-dimension:rk}}}\}.$$
\end{remark}

\begin{proposition}\label{relations-between-three-degrees:prop}
For every constructible subset $C\subseteq \A^n(K)$ of dimension $r$, the following inequalities hold:
$$\deg_z(C) \leq \deg_\pi(C) \leq \deg_{\rm lci}(C).$$
Moreover, given an equal dimension decomposition of $C$ into globally equi-dimensional constructible sets
$C:=C_r\cup\cdots \cup C_0$,
we have:
$$\deg_{\rm lci}(C)\leq (\dim(C) + 1) \max \{ \deg_z(C_i)\; : \; 0\leq i \leq r\}.$$
\end{proposition}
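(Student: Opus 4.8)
The plan is to derive all three inequalities from results already established in this subsection: the sub-additivity of $\deg_\pi$ and $\deg_{\rm lci}$ (Proposition \ref{grado-constructibles-subaditivo:prop}), the behaviour of the degree of a Zariski closure under linear maps (Proposition \ref{grado-preservado-por-transformaciones-lineales:prop}), the identity $\deg(V)=\deg(\overline{V}^z)$ for locally closed $V$ (Proposition \ref{propiedades-basicas-grado-lc:prop}), and the coincidence $\deg_z=\deg_{\rm lci}$ for globally equi-dimensional constructible sets (Proposition \ref{generaliza-gradoLCI-localmente-cerrados:prop}). None of the steps is deep; the only point requiring attention is that the infimum defining $\deg_\pi$ is attained (it ranges over a non-empty set of positive integers, hence has a minimum realized by some $V\in\Pi(C)$).

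For the inequality $\deg_z(C)\leq\deg_\pi(C)$, I would choose $V\in\Pi(C)$ with $\deg_\pi(C)=\deg_z(V)$ and $\pi(V)=C$, where $\pi:\A^m\to\A^n$ is the canonical projection. Since $V$ is locally closed and $\pi$ is a linear map, Proposition \ref{grado-preservado-por-transformaciones-lineales:prop} gives $\deg(\overline{\pi(V)}^z)\leq\deg(V)$, and Proposition \ref{propiedades-basicas-grado-lc:prop} gives $\deg(V)=\deg(\overline{V}^z)=\deg_z(V)$. As $\pi(V)=C$, the left-hand side equals $\deg(\overline{C}^z)=\deg_z(C)$, whence $\deg_z(C)\leq\deg_\pi(C)$.

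For $\deg_\pi(C)\leq\deg_{\rm lci}(C)$, I would take a minimum $LCI$-degree decomposition $C=\bigcup_{i=1}^r(U_i\cap V_i)$ as in Lemma \ref{descomposicion-union-irreducibles-distintos:lema}, so that $\deg_{\rm lci}(C)=\sum_{i=1}^r\deg(V_i)$. Each $U_i\cap V_i$ is locally closed, so by (\ref{deg-pi-deg-z:eqn}) we have $\deg_\pi(U_i\cap V_i)\leq\deg_z(U_i\cap V_i)$; and since $U_i\cap V_i$ is a non-empty open, hence dense, subset of the irreducible variety $V_i$, its Zariski closure is $V_i$, so $\deg_z(U_i\cap V_i)=\deg(V_i)$. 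Sub-additivity of $\deg_\pi$ (Proposition \ref{grado-constructibles-subaditivo:prop}) then yields $\deg_\pi(C)\leq\sum_{i=1}^r\deg_\pi(U_i\cap V_i)\leq\sum_{i=1}^r\deg(V_i)=\deg_{\rm lci}(C)$.

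Finally, for the ``moreover'' bound, I would use sub-additivity of $\deg_{\rm lci}$ to get $\deg_{\rm lci}(C)\leq\sum_{i=0}^r\deg_{\rm lci}(C_i)$; since each $C_i$ is globally equi-dimensional, Proposition \ref{generaliza-gradoLCI-localmente-cerrados:prop} lets me replace $\deg_{\rm lci}(C_i)$ by $\deg_z(C_i)$ (empty strata contributing $0$). Bounding each of the $r+1=\dim(C)+1$ summands by the maximum gives $\deg_{\rm lci}(C)\leq(\dim(C)+1)\max\{\deg_z(C_i):0\leq i\leq r\}$. I do not foresee a genuine obstacle; the proof is essentially an assembly of the cited facts, the only delicate bookkeeping being the attainment of the minimum in the definition of $\deg_\pi$ and the treatment of empty pieces $C_i$ in the last step.
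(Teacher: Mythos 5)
Your proof is correct and follows essentially the same route as the paper's: the first inequality via Proposition \ref{grado-preservado-por-transformaciones-lineales:prop} applied to a minimizing $V\in\Pi(C)$, the second via sub-additivity of $\deg_\pi$ together with Inequality (\ref{deg-pi-deg-z:eqn}) applied to a minimum $LCI$-degree decomposition, and the last via Remark \ref{LCI-equal-dimension-components:rk} and Proposition \ref{generaliza-gradoLCI-localmente-cerrados:prop}. Your extra attention to the attainment of the minimum defining $\deg_\pi$ and to empty strata is harmless bookkeeping the paper leaves implicit.
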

\begin{proof}
Let $V\in \Pi_m(C)$ be a locally closed subset of $\A^m$ such that $\pi(V) :=C$, where $\pi:\A^m\longrightarrow \A^n $ is the canonical projection, and $\deg_\pi(C)=\deg_{z}(V)$. Because of Proposition \ref{grado-preservado-por-transformaciones-lineales:prop}, we have:
$$\deg_z(C) = \deg(\overline{\pi(V)}^z)\leq \deg(V) = \deg_\pi(C),$$
which proves the first inequality. As for the second inequality, assume a decomposition of $C$ as finite union of locally closed irreducible subsets:
$$C:=W_1\cup \cdots \cup W_r,$$
such that
$$\deg_{\rm lci}(C)=\sum_{i=1}^r \deg(W_i).$$
As $\deg_\pi$ is sub-additive, we obtain:
$$\deg_\pi(C) \leq \sum_{i=1}^r \deg_\pi(W_i).$$
Thus, as $W_i$ is locally closed, from Inequality \ref{deg-pi-deg-z:eqn}, we conclude $\deg_\pi(W_i)\leq \deg(W_i)$ and the second inequality holds. The last inequality easily follows from Remark \ref{LCI-equal-dimension-components:rk} above.
\end{proof}

\begin{remark} In general, it is not true  that $\deg_{\rm lci}(C) \leq (\dim(C)+1)\deg_z(C)$. Just consider the following example which extends the constructible set introduced in Example \ref{La-Croix-de-Berny-grados-varios:ej}:
$$C':=\{ (x,y)\in \A^2(\C) \; : \; x\not=0\}\cup \{ (0, \pm 1)\} \cup \{(0, \pm 2)\}.$$
One may prove that $\deg_{\rm lci}(C')= 5$ with similar arguments to those used in Example \ref{La-Croix-de-Berny-grados-varios:ej}. Hence, this example proves that
$$5 = \deg_{\rm lci}(C') \not\leq (\dim(C')+1) \deg_z(C') = 3 \cdot 1=3.$$
\end{remark}

Also our $\pi$-degree generalizes the notion of degree for locally closed sets (and globally equi-dimensional constructible sets):

\begin{corollary}\label{generaliza-grado-localmente-cerrados:prop}
Let $C\subseteq \A^n(K)$ be a constructible set. If $C$ is either locally closed or globally equi-dimensional, we have:
 $$\deg_z(C)=\deg(\overline{C}^z)=\deg_{\pi} (C)=\deg_{\rm lci}(C).$$
Moreover, if $C\subseteq \A^n (K)$ is a globally equi-dimensional constructible set of dimension $r$, then there is a non-empty Zariski open subset $\G(C)\subseteq \G(n,r)$ such that the following holds for every $A\in \G(C)$:
$$\sharp\left(A\cap C\right) =\deg_z(C) = \deg_\pi(F)=\deg_{\rm lci}(C).$$
\end{corollary}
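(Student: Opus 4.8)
The plan is to reduce everything to the already-established case of locally closed sets by exploiting the inequalities of Proposition \ref{relations-between-three-degrees:prop} together with Proposition \ref{generaliza-gradoLCI-localmente-cerrados:prop}. The chain of inequalities $\deg_z(C)\leq\deg_\pi(C)\leq\deg_{\rm lci}(C)$ is already available, so to get equality everywhere it suffices to prove the single inequality $\deg_{\rm lci}(C)\leq\deg_z(C)$ under each of the two hypotheses. If $C$ is locally closed this is exactly Proposition \ref{generaliza-gradoLCI-localmente-cerrados:prop}, so nothing new is needed there. If $C$ is globally equi-dimensional, I would take a decomposition $C=(U_1\cap V_1)\cup\cdots\cup(U_s\cap V_s)$ witnessing global equi-dimensionality (Definition \ref{descomposicion-equidimensional-constructibles:def}), so that $\dim(V_i)=\dim(C)=:r$ for all $i$, and observe that the $V_i$ are then precisely the irreducible components of $\overline{C}^z$, which is therefore equi-dimensional of dimension $r$. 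Hence $\deg_z(C)=\deg(\overline{C}^z)=\sum_{i=1}^s\deg(V_i)$, and this same sum is an admissible value of $\deg(C,\scrC)$ in Definition \ref{grado-constructibles:def}, so $\deg_{\rm lci}(C)\leq\sum_{i=1}^s\deg(V_i)=\deg_z(C)$. Combined with Proposition \ref{relations-between-three-degrees:prop} this squeezes all four quantities together and gives the first assertion.

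**The geometric counting statement.** For the ``Moreover'' part, let $C\subseteq\A^n(K)$ be globally equi-dimensional of dimension $r$ and write $\overline{C}^z=V_1\cup\cdots\cup V_s$ for its irreducible components, each of dimension $r$. I would apply Lemma \ref{abierto-zariski-sucesion-regular:proposition} to the equi-dimensional locally closed set $\overline{C}^z$: there is a non-empty Zariski open $\CU_0\subseteq\A^{nr+r}$ such that $\sharp(\G(M,b)\cap\overline{C}^z)=\deg(\overline{C}^z)$ for $(M,b)\in\CU_0$, and moreover for a generic linear affine variety $A$ of codimension $r$ the intersection $A\cap\overline{C}^z$ is finite and meets each component $V_i$ in exactly $\deg(V_i)$ points. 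The key remaining point is that for generic $A$ these intersection points all lie in $C$ itself, not merely in its closure. Since $C$ is globally equi-dimensional, each component $V_i$ contributes a non-empty Zariski open dense subset $U_i\cap V_i\subseteq C$; the ``bad'' locus $V_i\setminus(U_i\cap V_i)$ is a proper closed subset of $V_i$, hence of dimension $<r$. A generic codimension-$r$ linear variety misses any fixed subvariety of dimension $<r$ (this is again Lemma \ref{interseccion-con-lineal-no aumenta-grado:lema} applied to a lower-dimensional set, or a direct dimension count via the incidence variety), so there is a non-empty Zariski open $\CU_i$ with $A\cap(V_i\setminus(U_i\cap V_i))=\emptyset$ for $A\in\CU_i$. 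Intersecting the finitely many open conditions, I set $\G(C)$ to be the image under $\scrG$ of $\CU_0\cap\CU_1\cap\cdots\cap\CU_s$ inside $\G(n,r)$; for $A\in\G(C)$ we get $A\cap C=A\cap\overline{C}^z$ and $\sharp(A\cap C)=\deg(\overline{C}^z)$, which by the first part equals $\deg_z(C)=\deg_\pi(C)=\deg_{\rm lci}(C)$. (I note the statement as printed writes $\deg_\pi(F)$; this is evidently a typo for $\deg_\pi(C)$.)

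**Main obstacle.** The routine algebra — matching the sum $\sum\deg(V_i)$ against the three degree notions — is immediate from the definitions once the decomposition is in hand. The one step that requires genuine care is verifying that a generic codimension-$r$ linear section of a globally equi-dimensional constructible set lands entirely in the set and not in the closure-minus-set locus; this is where the global (as opposed to merely local) equi-dimensionality hypothesis is essential, and it is exactly the phenomenon that fails for ``La Croix de Berny'' in Example \ref{La-Croix-de-Berny:ej}, where the genuinely lower-dimensional ``arc'' $\{x^2+y^2-1=0\}$ is forced into any decomposition as an embedded component. So the crux is the dimension count showing that a generic affine subspace of codimension equal to $\dim(C)$ avoids every subvariety of dimension strictly less than $\dim(C)$, together with the bookkeeping that intersecting finitely many non-empty Zariski open conditions in $\G(n,r)$ stays non-empty.
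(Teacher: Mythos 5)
Your proof is correct and follows essentially the same route as the paper's (which simply squeezes the chain of Proposition \ref{relations-between-three-degrees:prop} against Proposition \ref{generaliza-gradoLCI-localmente-cerrados:prop}, and invokes the generic-linear-section results for the counting claim); you are merely supplying the details the paper leaves implicit, including the correct observation that a generic codimension-$r$ affine subspace avoids the lower-dimensional loci $V_i\setminus(U_i\cap V_i)$. The only small slip is that Lemma \ref{interseccion-con-lineal-no aumenta-grado:lema} does not by itself give that avoidance, but the direct dimension count you offer as the alternative does, so the argument stands.
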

\begin{proof}
Combining Propositions \ref{relations-between-three-degrees:prop} and Proposition \ref{generaliza-gradoLCI-localmente-cerrados:prop} the equality immediately follows.
The second claim also follows immediately from the definitions and Proposition \ref{interpretacion-geometrica-grado:prop}.
\end{proof}

In the general case, all what we have for intersections with linear affine varieties is the following statement:

\begin{corollary}\label{intereseccion-lineales-no-equi-dimensional:corol}
Let $C\subseteq \A^n (K)$ be a constructible set of dimension $r$ and let $V_{1}, \ldots, V_{s}$ be the irreducible components of higher dimension of the Zariski closure of $C$ (as in Proposition \ref{componentes-alta-dimension:def}). Then, there is  a non-empty Zariski open subset $\G(C)\subseteq \G(n,r)$ such that the following holds for every $A\in \G(C)$:
$$\sharp\left(A\cap C\right) = \sum_{i=1}^{s} \deg(V_i) \leq \deg_z(C) \leq \deg_{\pi}(C) \leq \deg_{\rm lci} (C).$$
\end{corollary}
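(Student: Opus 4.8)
The plan is to reduce the statement to the equi-dimensional case already handled in Corollary \ref{generaliza-grado-localmente-cerrados:prop} by isolating the highest-dimensional part of $C$. First I would take a decomposition $C = (U_1\cap V_1)\cup\cdots\cup(U_s\cap V_s)\cup(U_{s+1}\cap V_{s+1})\cup\cdots\cup(U_t\cap V_t)$ as in Lemma \ref{descomposicion-union-irreducibles-distintos:lema} ordered so that $\dim(V_i)=r$ for $1\le i\le s$ and $\dim(V_i)<r$ for $i>s$; by Proposition \ref{componentes-alta-dimension:def} the varieties $V_1,\ldots,V_s$ are exactly the irreducible components of highest dimension of $\overline{C}^z$ and are uniquely determined. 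Write $C' := \bigcup_{i=1}^s (U_i\cap V_i)$, which is a globally equi-dimensional constructible set of dimension $r$, and $C'' := \bigcup_{i>s}(U_i\cap V_i)$, which has $\dim(C'')\le r-1$. Thus $C = C'\cup C''$ and $C'' \subseteq \overline{C''}^z$, a variety of dimension at most $r-1$.

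Next I would choose the open set $\G(C)$ to be the intersection of several open subsets of $\G(n,r)$: one from applying Corollary \ref{generaliza-grado-localmente-cerrados:prop} to the globally equi-dimensional set $C'$, so that $\sharp(A\cap C') = \sum_{i=1}^s\deg(V_i)$ for all $A$ in that open set; and one guaranteeing that a generic affine variety of codimension $r$ misses the lower-dimensional piece entirely, i.e. $A\cap\overline{C''}^z = \emptyset$. The latter holds on a non-empty Zariski open subset of $\G(n,r)$ because the incidence variety $\{(x,A) : x\in\overline{C''}^z,\ x\in A\}$ has dimension at most $(r-1) + (\dim\G(n,r) - r) < \dim\G(n,r)$, so its projection to $\G(n,r)$ is contained in a proper closed subset (this is the standard dimension-count argument underlying Proposition \ref{interpretacion-geometrica-grado:prop}, applied with the codimension $r$ exceeding $\dim\overline{C''}^z$). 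On the (still non-empty) intersection $\G(C)$ of these finitely many open sets, $A\cap C'' = \emptyset$ and hence $A\cap C = A\cap C'$, giving $\sharp(A\cap C) = \sum_{i=1}^s\deg(V_i)$.

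Finally, the chain of inequalities $\sum_{i=1}^s\deg(V_i)\le \deg_z(C)\le\deg_\pi(C)\le\deg_{\rm lci}(C)$ is assembled from results already in hand: the first inequality holds because $V_1,\ldots,V_s$ are among the irreducible components of $\overline{C}^z$, so their degrees sum to at most $\deg(\overline{C}^z)=\deg_z(C)$ by Definition \ref{grado-localmente-cerrados:def}; and the remaining two inequalities are exactly Proposition \ref{relations-between-three-degrees:prop}. I expect the only real point requiring care to be the genericity statement that a codimension-$r$ linear affine variety avoids $\overline{C''}^z$ — one must make sure the dimension count is set up correctly (using $\dim\overline{C''}^z\le r-1$ strictly below $r$) and that the resulting open subset of $\G(n,r)$ is genuinely non-empty, which is where the already-established Proposition \ref{interpretacion-geometrica-grado:prop} and Lemma \ref{abierto-zariski-sucesion-regular:proposition} do the work after pulling everything back along $\scrG:\A^{rn}\times\A^r\to\G(n,r)$.
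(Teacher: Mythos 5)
Your proposal is correct, and it is the argument the authors evidently intend: the paper states this corollary without proof, as an immediate consequence of Corollary \ref{generaliza-grado-localmente-cerrados:prop} applied to the top-dimensional part $C'$ (whose Zariski closure is $V_1\cup\cdots\cup V_s$, so $\deg_z(C')=\sum_{i=1}^s\deg(V_i)$), of the standard genericity fact that a codimension-$r$ affine linear variety avoids the lower-dimensional remainder $\overline{C''}^z$, and of Proposition \ref{relations-between-three-degrees:prop} for the chain of inequalities. Your dimension count for the incidence variety, carried out in the parameter space $\A^{rn}\times\A^r$ before pushing forward along $\scrG$, is exactly the point that needs care and you handle it correctly.
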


The following statements correct Remark 2 (2) and explains Lemma 2 of \cite{Heintz83} about degree of linear images of constructible sets. Again, we begin with the $LCI$-degree:

\begin{proposition}\label{constructibles-grado-imagenes-por-lineal:prop} Let $C\subseteq \A^n$ be a constructible set and $\ell: \A^n \longrightarrow \A^m$ be a linear map. We have:

\begin{enumerate}
\item The degree of the Zariski closure of the image is bounded as follows:
$$\deg_z(\ell(C))=\deg\left( \overline{\ell(C)}^z\right)\leq \deg_{\rm lci}(C).$$
\item If $\ell(C)$ is locally closed or globally equi-dimensional, we also have:
$$\deg_z(\ell(C))=\deg_{\rm lci}\left( \ell(C)\right)\leq \deg_{\rm lci}(C).$$
\item In general, it is not true that given a linear affine variety $A\subseteq \A^m$ the following holds:
    $$\sharp(A\cap \ell(C))\leq \deg_z(C).$$
\item In general, it is not true that $\deg_{\rm lci}(\ell(C)) \leq \deg_{\rm lci}(C)$.
\end{enumerate}
\end{proposition}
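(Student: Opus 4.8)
\textbf{Proof proposal for Proposition \ref{constructibles-grado-imagenes-por-lineal:prop}.}

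The plan is to treat the four claims in order, since (i) and (ii) are positive statements that follow by reduction to the locally closed case, while (iii) and (iv) are negative statements to be settled by explicit examples (the second one being already announced in Example \ref{La-Croix-de-Berny-grados-varios:ej}). For (i), I would start from a minimum $LCI$-degree decomposition $C = \bigcup_{i=1}^r (U_i\cap V_i)$ so that $\deg_{\rm lci}(C)=\sum_{i=1}^r\deg(V_i)$. Then $\ell(C)=\bigcup_{i=1}^r \ell(U_i\cap V_i)$, hence $\overline{\ell(C)}^z = \bigcup_{i=1}^r \overline{\ell(U_i\cap V_i)}^z$. Since Zariski closure of a finite union is the union of closures, sub-additivity of $\deg_z$ (Proposition \ref{grado-constructibles-subaditivo:prop}) gives $\deg_z(\ell(C))\le \sum_{i=1}^r \deg_z(\ell(U_i\cap V_i)) = \sum_{i=1}^r \deg\bigl(\overline{\ell(U_i\cap V_i)}^z\bigr)$. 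Now each $U_i\cap V_i$ is locally closed, so Proposition \ref{grado-preservado-por-transformaciones-lineales:prop} applies to each term and yields $\deg\bigl(\overline{\ell(U_i\cap V_i)}^z\bigr)\le \deg(U_i\cap V_i)=\deg(V_i)$, where the last equality is Proposition \ref{propiedades-basicas-grado-lc:prop}(i). Summing over $i$ gives $\deg_z(\ell(C))\le \sum_{i=1}^r\deg(V_i)=\deg_{\rm lci}(C)$, which is claim (i).

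For (ii), if $\ell(C)$ happens to be locally closed or globally equi-dimensional, then Proposition \ref{generaliza-gradoLCI-localmente-cerrados:prop} (resp. \ref{generaliza-grado-localmente-cerrados:prop}) tells us $\deg_{\rm lci}(\ell(C))=\deg_z(\ell(C))$, and the inequality of (i) upgrades immediately to $\deg_{\rm lci}(\ell(C))\le \deg_{\rm lci}(C)$. Claims (iii) and (iv) are handled by counterexamples. For (iii), the natural candidate is the "Croix de Berny"–type construction: take $C$ to be the projection $C=\pi(W')$ where $W'\subseteq\A^3(\C)$ is a hypersurface of small degree (as in Example \ref{La-Croix-de-Berny-grados-varios:ej}), so $\deg_z(C)=1$, and then exhibit a further linear map (or even a line) $A$ whose intersection with $\ell(C)$ picks up more than one point — concretely, the set $C\cap\{x=0\}$ in that example has two points while $\deg_z(C)=1$, so taking $\ell=\mathrm{Id}$ and $A=\{x=0\}$ already shows $\sharp(A\cap\ell(C))=2\not\le 1=\deg_z(C)$. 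For (iv), I would invoke the example promised in the introduction: a variety $W\subseteq\A^3(\C)$ with $\deg_{\rm lci}(W)=\deg(W)$ whose linear image $\ell(W)$ is a non-locally-closed constructible set with strictly larger $LCI$-degree; the La Croix de Berny hypersurface $W=V_\A(ZXY+X^2+Y^2-1)$ projecting to $C$ with $\deg_{\rm lci}(C)>3=\deg(W)$ is exactly of this type (one computes $\deg_{\rm lci}(\pi(W))$ by the same Bézout-obstruction argument used in Example \ref{La-Croix-de-Berny-grados-varios:ej}, adapted to the four base points instead of two).

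The only step with any real content is (i), and even there the work is just plumbing: the single non-formal ingredient is Proposition \ref{grado-preservado-por-transformaciones-lineales:prop} applied \emph{componentwise}, which is legitimate precisely because the pieces $U_i\cap V_i$ of an $LCI$-decomposition are locally closed — this is exactly why the bound is stated with $\deg_{\rm lci}$ on the right and not $\deg_z$, and it is also the conceptual reason claim (iv) must fail (one cannot re-decompose the image into pieces of the same total degree). The main obstacle, then, is not a mathematical difficulty but a bookkeeping one: making sure the counterexample for (iv) is written out with enough detail that the strict inequality $\deg_{\rm lci}(\ell(C))>\deg_{\rm lci}(C)$ is actually verified, which requires re-running the lower-bound argument of Example \ref{La-Croix-de-Berny-grados-varios:ej} in the slightly larger setting. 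I would therefore keep (iii) and (iv) brief, referring back to the worked examples, and devote the visible prose to the clean chain of inequalities for (i) and (ii).
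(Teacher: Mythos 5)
Your treatment of claims (i), (ii) and (iii) is correct and coincides with the paper's proof: (i) is the componentwise application of Proposition \ref{grado-preservado-por-transformaciones-lineales:prop} to a minimum $LCI$-degree decomposition together with sub-additivity, (ii) follows from Proposition \ref{generaliza-gradoLCI-localmente-cerrados:prop}, and (iii) is exactly the paper's counterexample with $\ell=\mathrm{Id}$, $A=\{x=0\}$ and the set of Example \ref{La-Croix-de-Berny-grados-varios:ej}.

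Claim (iv), however, contains a genuine error: the counterexample you chose does not work. You take the cubic $W=V_\A(ZXY+X^2+Y^2-1)$ of degree $3$ and assert $\deg_{\rm lci}(\pi(W))>3$. But $\pi(W)=\{(x,y):xy\neq 0\}\cup\{(0,\pm1),(\pm1,0)\}$ admits the decomposition
$$\pi(W)=\{(x,y)\in\A^2(\C):xy\neq 0\}\cup\{(x,y)\in\A^2(\C):x^2+y^2-1=0\}$$
(the full circle lies inside $\pi(W)$, since its four intersection points with the axes are precisely the four exceptional points), and this decomposition has total degree $1+2=3$. Hence $\deg_{\rm lci}(\pi(W))\leq 3=\deg(W)$, and re-running the B\'ezout obstruction only gives the matching lower bound $3$, not $4$: the four exceptional points can be absorbed by a single conic of degree $2$, or by two lines such as $x+y=\pm 1$ each entirely contained in $\pi(W)$. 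So for the cubic the inequality $\deg_{\rm lci}(\ell(C))\leq\deg_{\rm lci}(C)$ is \emph{not} violated. The example announced in the introduction and used by the paper is the quadric $W'=V_\A(XZ+Y^2-1)$ of Example \ref{La-Croix-de-Berny-grados-varios:ej}: there $\deg(W')=2$, while its projection $C=\{x\neq 0\}\cup\{(0,\pm 1)\}$ was shown in that Example to satisfy $\deg_{\rm lci}(C)=3$, giving $3\not\leq 2$. The source must have degree strictly smaller than the $LCI$-degree of the image, which is why the degree-$2$ hypersurface with only two exceptional points is the right choice and the degree-$3$ one is not. Replacing your example by $W'$ repairs the proof; everything else stands.
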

\begin{proof}
The first Claim is an immediate consequence of Proposition \ref{grado-preservado-por-transformaciones-lineales:prop} combined with the sub-additivity of the $LCI$-degree (Proposition \ref{grado-constructibles-subaditivo:prop} above). \\
Claim $ii)$ follows from the fact that $\deg_{\rm lci}(C)= \deg_z(C)$ for every subset $C \subseteq \A^n$ that is either locally closed or globally equi-dimensional.\\
For Claim $iii)$, just consider the constructible set $C\subseteq \A^2(\C)$ described in  Example \ref{La-Croix-de-Berny-grados-varios:ej} above and the Identity $Id : C \longrightarrow \A^2(\C)$ as linear functional. Let $A:=\{(x,y)\in \A^2\; :\; x=0\}$ be the line in the plane. Then, we have that $A\cap Id(C)$ is made of two points, whereas $\deg_z(C)=1$. Hence,
$$2=\sharp(A\cap Id(C)) \not\leq \deg_z(C) = 1.$$
For  Claim $iv)$, we use the same constructible subset $C\subseteq \A^2(\C)$ introduced in Example  \ref{La-Croix-de-Berny-grados-varios:ej} above.  There, we proved that $C$ is a constructible set such that $\deg_{\rm lci}(C)=3$ and, at the same time, it is the projection of an algebraic variety $W'\subseteq \A^3(\C)$ of degree $2$. Hence, we have that the following holds and proves Claim $iv)$:
$$3= \deg_{\rm lci}( \pi(W')) \not\leq \deg_{\rm lci}(W')= \deg (W') =2.$$
\end{proof}

On the other hand, $\deg_\pi$ is well-behaved for linear images of constructible sets. This is resumed in the following statement:

\begin{proposition}\label{pi-degree-propiedades:prop}
Let $C\subseteq \A^n$ be a constructible subset. For every $m\geq n$, let us consider the class
$\scrL_m(C)$ of all pairs $(V,\ell)$, where $V\subseteq \A^m$ is a locally closed set, $\ell:\A^m\longrightarrow \A^n$ is a linear mapping and $\ell(V)=C$. Finally, let us consider the class:
$$\scrL(C):=\bigcup_{m\geq n}\scrL_m(C).$$
Then, we have:
\begin{enumerate}
\item The $\pi-$degree satisfies:
$$\deg_\pi(C)= \min\{ \deg_z(V)\; :\; \exists \ell:\A^m\longrightarrow \A^n, \; (V,\ell)\in \scrL(C)\}.$$
\item For every linear function $\lambda:\A^n \longrightarrow \A^r$, we have:
$$\deg_z(\lambda(C))\leq \deg_\pi(\lambda(C)) \leq \deg_\pi(C) \leq \deg_{\rm lci} (C).$$
\end{enumerate}
\end{proposition}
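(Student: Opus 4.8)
The plan is to reduce both parts to the behaviour of the $Z$-degree under linear images (Proposition \ref{grado-preservado-por-transformaciones-lineales:prop}) together with the inequalities already proved in Proposition \ref{relations-between-three-degrees:prop}, the bridge being a graph construction that turns an arbitrary linear map into a canonical projection. For part $(i)$ one inequality is immediate: the canonical projection $\pi\colon\A^m\to\A^n$ used in the definition of $\deg_\pi$ is itself a linear map, so every $V\in\Pi_m(C)$ produces a pair $(V,\pi)\in\scrL_m(C)$ with the same value $\deg_z(V)$, whence $\min\{\deg_z(V):(V,\ell)\in\scrL(C)\}\le\deg_\pi(C)$. For the reverse inequality I would take an arbitrary $(V,\ell)\in\scrL_m(C)$ and pass to the graph $\Gamma:=\{(\ell(x),x):x\in V\}\subseteq\A^n\times\A^m=\A^{n+m}$, which is the image of $V$ under the injective linear map $\phi\colon\A^m\to\A^{n+m}$, $\phi(x)=(\ell(x),x)$. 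Since $\phi$ is a homeomorphism onto the Zariski-closed linear subvariety $\phi(\A^m)$, the set $\Gamma$ is locally closed in $\A^{n+m}$; Proposition \ref{grado-preservado-por-transformaciones-lineales:prop} applied to $\phi$ gives $\deg_z(\Gamma)=\deg(\overline{\phi(V)}^z)\le\deg(V)=\deg_z(V)$; and the canonical projection $\A^{n+m}\to\A^n$ that forgets the last $m$ coordinates sends $\Gamma$ onto $\ell(V)=C$, so $\Gamma\in\Pi(C)$ and therefore $\deg_\pi(C)\le\deg_z(\Gamma)\le\deg_z(V)$. Minimizing over $(V,\ell)\in\scrL(C)$ yields the equality in $(i)$.

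For part $(ii)$, I would first note that $\lambda(C)$ is again constructible by Chevalley's Theorem, so Proposition \ref{relations-between-three-degrees:prop} applied to $\lambda(C)$ and to $C$ already supplies the two outer inequalities $\deg_z(\lambda(C))\le\deg_\pi(\lambda(C))$ and $\deg_\pi(C)\le\deg_{\rm lci}(C)$. The only new content is the middle inequality $\deg_\pi(\lambda(C))\le\deg_\pi(C)$, which I would deduce from part $(i)$: choose $V\in\Pi_m(C)$ with $\deg_z(V)=\deg_\pi(C)$ and with canonical projection $\pi\colon\A^m\to\A^n$ satisfying $\pi(V)=C$; then $\lambda\circ\pi\colon\A^m\to\A^r$ is linear and $(\lambda\circ\pi)(V)=\lambda(\pi(V))=\lambda(C)$, so $(V,\lambda\circ\pi)\in\scrL(\lambda(C))$, and the characterization in part $(i)$ applied to $\lambda(C)$ gives $\deg_\pi(\lambda(C))\le\deg_z(V)=\deg_\pi(C)$.

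The bulk of the argument is bookkeeping with the definitions of $\deg_\pi$, $\Pi(C)$ and $\scrL(C)$; the one spot that genuinely needs care is the graph step, where one must check that $\Gamma$, being locally closed inside the Zariski-closed linear subvariety $\phi(\A^m)$, is locally closed in the full space $\A^{n+m}$ (it is the intersection of a closed subset of $\A^{n+m}$ with an open subset of $\A^{n+m}$), and that its $Z$-degree does not exceed $\deg_z(V)$ — for which Proposition \ref{grado-preservado-por-transformaciones-lineales:prop} is exactly the tool. No part of the statement seems to require more than this.
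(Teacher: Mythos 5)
Your proof is correct and follows essentially the same route as the paper: both arguments reduce an arbitrary pair $(V,\ell)$ to a canonical projection by passing to the graph of $\restr{\ell}{V}$, and both deduce part $(ii)$ by composing with $\lambda$ and invoking part $(i)$. The only (immaterial) difference is how the graph's degree is bounded: you realize the graph as the image of $V$ under the closed linear embedding $x\mapsto(\ell(x),x)$ and apply Proposition \ref{grado-preservado-por-transformaciones-lineales:prop}, whereas the paper writes it as $(V\times\A^n)\cap A$ for a linear affine variety $A$ and applies B\'ezout's Inequality for locally closed sets together with Theorem \ref{grado-producto-cartesiano:prop}; both yield $\deg_z(\Gamma)\leq\deg_z(V)$.
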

\begin{proof}
\leavevmode

\begin{itemize}
\item \emph{Proof of Claim $i)$:} Let $(V,\ell)\in \scrL(C)$ be one of such pairs. We may consider $G\subseteq \A^{m+n}$ the graph of the restriction $\restr{\ell}{V}$. We observe that $G$ is also a locally closed set since:
    $$G:=\{ (z, y)\in \A^m\times \A^n\; :\; z\in V, \; \ell(z)-y=0\}.$$
    Clearly, $G$ is the intersection of $V\times \A^n$ with the linear affine variety:
    $$A:=\A^m\times \{ (z,y)\in \A^{m+n}\; : \; \ell(z)-y=0\}.$$
 Hence, because of the B\'ezout's Inequality for locally closed sets (Theorem \ref{Bezout-localmente-cerrados:teor} above) we know that:
 $$\deg_z(G)=\deg(G)\leq \deg(V\times \A^n) \deg(A)= \deg(V)\cdot 1.$$
 Moreover, let $\pi: \A^{m+n}\longrightarrow \A^n$ be the canonical projection given by:
$$\pi(z,y):= y, \; \forall (z,y)\in \A^{m+n}.$$
We obviously have that $C:=\pi(G)$ and, hence, we conclude:
$$\deg_\pi(C) \leq \deg_z(V), \; \forall (V,\ell)\in \scrL(C).$$
The other inequality is obvious since projections are particular instances of linear mappings. Therefore, the first Claim is proved. \\
\item \emph{Proof of Claim $ii)$:} If $\lambda:\A^n \longrightarrow \A^r$ is a linear mapping, then we have:
    $$\scrL(\lambda(C))\supseteq \{ (V, \lambda\circ \ell)\; : \; (V,\ell)\in \scrL(C)\}.$$
    Hence, as:
    $$\deg_\pi(\lambda(C))=\min\{ \deg_{z}(V)\; : \; \exists \rho, \; (V,\rho)\in \scrL(\lambda(C))\},$$
    we immediately conclude:
    $$\deg_\pi(\lambda(C)) \leq\min\{\deg_{z}(V)\; :\; \exists \ell, \; (V,\ell)\in \scrL(C), \; (V, \lambda\circ \ell)\in \scrL(\lambda(C))\} = \deg_\pi(C),$$
    and Claim $ii)$ follows.
\end{itemize}
\end{proof}

In what respect to other properties, both the $LCI$-degree and the $\pi-$degree behave as expected for the natural operations between constructible sets:

\begin{proposition}\label{varias-propiedades-basicas-grado-construtibles:prop} Let $C\subseteq \A^n$ and $D\subseteq \A^m$ be two constructible sets.
\begin{enumerate}
\item For every linear affine variety $A\subseteq \A^n$ the following two inequalities hold:
$$\deg_{\rm lci}(A\cap C)\leq \deg_{\rm lci}(C), \;\;  \deg_\pi(A\cap C)\leq \deg_\pi(C).$$
\item The degrees of the Cartesian product satisfy the following inequalities:
$$\deg_{\rm lci}(C\times D) \leq \deg_{\rm lci}(C)\deg_{\rm lci}(D), \; \; \deg_\pi(C\times D) \leq \deg_\pi(C)\deg_\pi(D).$$
\item For every Zariski open subset $U\subseteq \A^n$, we have:
$$\deg_{\rm lci}(C\cap U) \leq \deg_{\rm lci}(C), \;\; \deg_\pi(C\cap U) \leq \deg_\pi(C).$$
\end{enumerate}
\end{proposition}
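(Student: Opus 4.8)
The plan is to reduce all three items to two templates, one per notion of degree. For $\deg_{\rm lci}$ I will start from a minimum $LCI$-degree decomposition of the ambient set, $C=\bigcup_{i=1}^{r}(U_i\cap V_i)$ with $\deg_{\rm lci}(C)=\sum_{i}\deg(V_i)$ as in Lemma~\ref{descomposicion-union-irreducibles-distintos:lema} (so the $V_i$ are irreducible algebraic varieties), apply the operation at hand termwise, re-decompose each resulting locally closed piece into locally closed irreducible sets, and bound the total degree of the new decomposition. Any decomposition of a constructible set into locally closed irreducible sets can be normalized to the form of Lemma~\ref{descomposicion-union-irreducibles-distintos:lema} by the cleanup used in its proof (merge two pieces sharing the same irreducible closure $Z$ via $(O_p\cap Z)\cup(O_q\cap Z)=(O_p\cup O_q)\cap Z$, enlarge each open part to the maximal admissible one, discard empty pieces), and this normalization never increases the associated degree sum; hence every such bound passes to $\deg_{\rm lci}$ of the transformed set. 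For $\deg_\pi$ I will use Proposition~\ref{pi-degree-propiedades:prop}(i): it suffices to exhibit, for the transformed set, some pair $(W,\lambda)$ with $W$ locally closed, $\lambda$ linear, $\lambda(W)$ equal to that set, and $\deg_z(W)$ at most the target bound; I obtain $W$ from a pair $(V,\ell)\in\scrL(C)$ realizing $\deg_\pi(C)=\deg_z(V)=\deg(V)$ (Proposition~\ref{propiedades-basicas-grado-lc:prop}(i)) by a preimage or a product construction.

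\emph{Items $(i)$ and $(iii)$.} These are parallel: in both one intersects $C$ with a subset $S\subseteq\A^n$ that is either a linear affine variety (item $(i)$) or Zariski open (item $(iii)$). For $\deg_{\rm lci}$, write $C\cap S=\bigcup_i\bigl(U_i\cap(V_i\cap S)\bigr)$. If $S$ is open, $V_i\cap S$ is an open subset of the irreducible $V_i$, so this is already a union of locally closed irreducible sets with the same underlying varieties $V_i$, of total degree $\le\sum_i\deg(V_i)=\deg_{\rm lci}(C)$. If $S=A$ is linear affine, $V_i\cap A$ is a closed variety with irreducible components $Z_{i,1},\dots,Z_{i,k_i}$, and Lemma~\ref{interseccion-con-lineal-no aumenta-grado:lema} gives $\sum_{l}\deg(Z_{i,l})=\deg(V_i\cap A)\le\deg(V_i)$; intersecting each $Z_{i,l}$ with $U_i$ again exhibits $C\cap A$ as a union of locally closed irreducible sets of total degree $\le\sum_i\deg(V_i)=\deg_{\rm lci}(C)$. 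The template then finishes both cases. For $\deg_\pi$, take $(V,\ell)\in\scrL(C)$ realizing $\deg_\pi(C)$ and set $W:=V\cap\ell^{-1}(S)$; then $W$ is locally closed and $\ell(W)=\ell(V)\cap S=C\cap S$, so $(W,\ell)\in\scrL(C\cap S)$. If $S=A$ is linear affine, $\ell^{-1}(A)$ is a linear affine variety, so $\deg_z(W)=\deg(W)\le\deg(V)=\deg_\pi(C)$ by Lemma~\ref{interseccion-con-lineal-no aumenta-grado:lema}. If $S$ is open, $\ell^{-1}(S)$ is open, and every irreducible component of $\overline{W}^z$ is the closure of a nonempty open subset of an irreducible component of $\overline{V}^z$, hence is itself such a component, so $\deg_z(W)\le\deg_z(V)=\deg_\pi(C)$. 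In both cases $\deg_\pi(C\cap S)\le\deg_\pi(C)$.

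\emph{Item $(ii)$.} For $\deg_{\rm lci}$, choose minimum $LCI$-degree decompositions $C=\bigcup_i(U_i\cap V_i)$ and $D=\bigcup_j(U'_j\cap V'_j)$. Then $C\times D=\bigcup_{i,j}(U_i\times U'_j)\cap(V_i\times V'_j)$, where $U_i\times U'_j$ is open in $\A^{n+m}$ and $V_i\times V'_j$ is an irreducible variety (the product of irreducible varieties over the algebraically closed field $K$ is irreducible), so each term is locally closed irreducible; by Theorem~\ref{grado-producto-cartesiano:prop}, $\deg(V_i\times V'_j)=\deg(V_i)\deg(V'_j)$, and the total degree of this decomposition is $\bigl(\sum_i\deg(V_i)\bigr)\bigl(\sum_j\deg(V'_j)\bigr)=\deg_{\rm lci}(C)\deg_{\rm lci}(D)$, so the template applies. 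For $\deg_\pi$, take $(V,\ell_1)\in\scrL(C)$ and $(V',\ell_2)\in\scrL(D)$ realizing the $\pi$-degrees; then $V\times V'$ is locally closed, $\ell_1\times\ell_2$ is linear with $(\ell_1\times\ell_2)(V\times V')=C\times D$, and Theorem~\ref{grado-producto-cartesiano:prop} gives $\deg_z(V\times V')=\deg(V)\deg(V')=\deg_\pi(C)\deg_\pi(D)$; hence $(V\times V',\ell_1\times\ell_2)\in\scrL(C\times D)$ yields $\deg_\pi(C\times D)\le\deg_\pi(C)\deg_\pi(D)$.

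\textbf{Main obstacle.} None of the steps is deep; the only real care is in the $\deg_{\rm lci}$ bookkeeping, namely verifying that the ad hoc decompositions produced above can always be normalized to the form of Lemma~\ref{descomposicion-union-irreducibles-distintos:lema} without increasing their degree sum, so that the bounds genuinely control $\deg_{\rm lci}$ of the transformed sets; it is cleanest to isolate this normalization fact once at the outset and cite it in each item. For $\deg_\pi$, the only point to watch is that the preimage and product constructions yield linear maps that are not coordinate projections, so one must invoke Proposition~\ref{pi-degree-propiedades:prop}(i) to remain within the framework that computes $\deg_\pi$.
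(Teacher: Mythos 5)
Your proof is correct and follows essentially the same route as the paper: for $\deg_{\rm lci}$ you spell out the sub-additivity argument the paper only sketches (termwise intersection/product of a minimum $LCI$-degree decomposition, plus Lemma \ref{interseccion-con-lineal-no aumenta-grado:lema} and Theorem \ref{grado-producto-cartesiano:prop}), and for $\deg_\pi$ you use the same preimage and product constructions, merely routing them through the linear-map characterization of Proposition \ref{pi-degree-propiedades:prop}(i) where the paper works directly with canonical projections from Definition \ref{grado-como-proyeccion:def}. The normalization fact you isolate at the outset is exactly the combination of sub-additivity with $\deg_{\rm lci}(W)=\deg(W)$ for locally closed irreducible $W$, so nothing is missing.
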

\begin{proof} We prove every inequality first for $\deg_{\rm lci}$ and then for $\deg_\pi$:
\begin{itemize}
\item \emph{Inequalities for $\deg_{\rm lci}$:} Claims $i)$ and $ii)$ easily follow from the sub-additivity of $\deg_{\rm lci}$  and the corresponding properties for locally closed sets. As for Claim $iii)$, suppose $C$ is decomposed as a finite union of locally closed irreducible sets:
$$C=W_1\cup \cdots \cup W_s,$$
such that $\deg_{\rm lci}(C)= \sum_{i=1}^s \deg(W_i)$. As $U$ is Zariski open, then for every $i$, $1\leq i \leq s$, we have just two options: Either $U\cap W_i=\emptyset$ or $U\cap W_i\not=\emptyset$, in which case $U\cap W_i$ is locally closed and Zariski dense in $\overline{W_i}^z$. Hence, we would have:
$$\deg_{\rm lci}(C\cap U) \leq \sum_{i=1}^s \deg(U\cap W_i) = \sum_{i=1}^s \deg(W_i)= \deg_{\rm lci}(C).$$
\item \emph{Inequalities for $\deg_\pi$:} For Claim $i)$, assume there is a locally closed subset $V\subseteq \A^m$ such that $\pi(V) := C$ and $\deg_\pi(C) = \deg_{z}(V)$, where $\pi:\A^m \longrightarrow \A^n$ is the canonical projection. We thus consider the linear affine subvariety $B:=\pi^{-1}(A)\subseteq \A^m$. We have that $\pi(V\cap B)= C\cap A$ and, hence,
    $$\deg_\pi(C\cap A) \leq \deg_{z}(V\cap B) \leq \deg_{z}(V).$$
    For Claim $ii)$, assume that
    there exist two locally closed sets $V\subseteq \A^{m_1}$ and $W\subseteq \A^{m_2}$ such that $m_{1} \geq n$ and $m_{2} \geq m$, and  $\pi_1(V): =C$, $\pi_2(W): =D$, where $\pi_1 : \A^{m_1} \longrightarrow \A^n$ and $\pi_2 : \A^{m_2} \longrightarrow \A^m$ are the canonical projections, $\deg_\pi(C)=\deg_{z}(V)$ and $\deg_\pi(D)=\deg_{z}(W)$. Therefore, we consider the affine space $\A^{m_1+m_2}:=\A^{m_1}\times \A^{m_2}$ and the product projection:
    $$\begin{matrix}
    \overline{\pi}:=\pi_1\times \pi_2: &\A^{m_1}\times\A^{m_2}& \longrightarrow & \A^{n}\times \A^m\\
    & (z_1, z_2) & \longmapsto & (\pi_1(z_1), \pi_2(z_2)).
    \end{matrix}$$
    Then, $\overline{\pi}(V\times W)= C\times D$ and, as $V\times W$ is locally closed, we conclude:
    $$\deg_\pi(C\times D) \leq \deg(V\times W) = \deg_{z}(V)\deg_{z}(W) = \deg_{\pi} (C) \deg_{\pi} (D).$$
    Finally, for Claim $iii)$ we have that there exists $V\subseteq \A^m$ locally closed such that $\pi(V) := C$ and $\deg(V)=\deg_\pi(C)$, where $\pi:\A^m\longrightarrow \A^n$ is the canonical projection.
	Taking the open subset $\widetilde{U}:=\pi^{-1}(U)\subseteq \A^m$, we have that
    $C\cap U= \pi(V\cap \widetilde{U})$ and  the following inequalities hold because $V$ is locally closed:
    $$\deg_\pi(C\cap U) \leq \deg_{z}(V\cap \widetilde{U}) \leq \deg_{z}(V)=\deg_\pi(C).$$
\end{itemize}
\end{proof}

Finally, both $\deg_{\rm lci}$ and $\deg_\pi$ satisfy a B\'ezout's Inequality.

\begin{theorem}[{\bf B\'ezout's Inequalities for constructible sets}]\label{Desigualdad-Bezout-constructibles:teor}
Let $C, D\subseteq \A^n$ be two constructible sets. We have:
$$\deg_{\rm lci}(C\cap D) \leq \deg_{\rm lci}(C)\deg_{\rm lci}(D), \;\; \deg_\pi(C\cap D) \leq \deg_\pi(C)\deg_\pi(D).$$
\end{theorem}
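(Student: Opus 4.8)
The strategy is to reduce the intersection of constructible sets to an intersection of locally closed sets inside a larger affine space, where the corresponding B\'ezout's Inequality is already available (Theorem \ref{Bezout-localmente-cerrados:teor}), and then project back. I treat the two inequalities separately.

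\emph{The $\pi$-degree inequality.} This is the cleaner of the two. Pick locally closed sets $V\subseteq\A^{m_1}$ and $W\subseteq\A^{m_2}$ with $m_1,m_2\geq n$, with canonical projections $\pi_1,\pi_2$ onto $\A^n$, such that $\pi_1(V)=C$, $\pi_2(W)=D$, $\deg_z(V)=\deg_\pi(C)$ and $\deg_z(W)=\deg_\pi(D)$. Form the product $V\times W\subseteq\A^{m_1+m_2}$, which is locally closed with $\deg_z(V\times W)=\deg_z(V)\deg_z(W)$ by Theorem \ref{grado-producto-cartesiano:prop}. Now intersect with the linear affine variety $A\subseteq\A^{m_1+m_2}$ that forces the two copies of the $\A^n$-coordinates to coincide, i.e. $A:=\{(z_1,z_2): \pi_1(z_1)=\pi_2(z_2)\}$, a linear variety of degree $1$. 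Then $(V\times W)\cap A$ is locally closed, B\'ezout for locally closed sets (Theorem \ref{Bezout-localmente-cerrados:teor}) combined with Lemma \ref{interseccion-con-lineal-no aumenta-grado:lema} gives $\deg_z((V\times W)\cap A)\leq \deg_z(V)\deg_z(W)$, and the canonical projection of $(V\times W)\cap A$ onto the common $\A^n$-factor is exactly $C\cap D$. Hence $\deg_\pi(C\cap D)\leq\deg_z(V)\deg_z(W)=\deg_\pi(C)\deg_\pi(D)$.

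\emph{The $LCI$-degree inequality.} Here I would start from minimum $LCI$-degree decompositions $C=\bigcup_{i=1}^s (U_i\cap V_i)$ and $D=\bigcup_{j=1}^t (U'_j\cap V'_j)$ into locally closed irreducible sets satisfying Lemma \ref{descomposicion-union-irreducibles-distintos:lema}, with $\deg_{\rm lci}(C)=\sum_i\deg(V_i)$ and $\deg_{\rm lci}(D)=\sum_j\deg(V'_j)$. Distributing the intersection, $C\cap D=\bigcup_{i,j}(U_i\cap V_i)\cap(U'_j\cap V'_j)$. Each piece $(U_i\cap V_i)\cap(U'_j\cap V'_j)$ is locally closed, so by sub-additivity of $\deg_{\rm lci}$ (Proposition \ref{grado-constructibles-subaditivo:prop}), the fact that $\deg_{\rm lci}$ agrees with $\deg_z$ on locally closed sets (Proposition \ref{generaliza-gradoLCI-localmente-cerrados:prop}), and B\'ezout for locally closed sets, $\deg_{\rm lci}((U_i\cap V_i)\cap(U'_j\cap V'_j))\leq \deg(V_i\cap V'_j)\leq\deg(V_i)\deg(V'_j)$. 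Summing over all $i,j$ and again using sub-additivity, $\deg_{\rm lci}(C\cap D)\leq\sum_{i,j}\deg(V_i)\deg(V'_j)=\big(\sum_i\deg(V_i)\big)\big(\sum_j\deg(V'_j)\big)=\deg_{\rm lci}(C)\deg_{\rm lci}(D)$.

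\emph{Main obstacle.} The delicate point is a bookkeeping subtlety rather than a deep one: after distributing and intersecting, each piece $(U_i\cap V_i)\cap(U'_j\cap V'_j)$ is locally closed but its Zariski closure need not be irreducible and need not equal $V_i\cap V'_j$, so one must be careful that $\deg_{\rm lci}$ of such a piece is indeed controlled by $\deg(V_i)\deg(V'_j)$. This is handled by invoking that a locally closed set and its Zariski closure have the same degree (Proposition \ref{propiedades-basicas-grado-lc:prop}(i)), that $\deg_{\rm lci}$ coincides with $\deg_z$ on locally closed sets, and then Theorem \ref{Bezout-localmente-cerrados:teor}. One should also note in passing that the right-hand bound does not depend on choosing the \emph{minimum} decomposition for the final union — sub-additivity of $\deg_{\rm lci}$ upstairs absorbs any redundancy — so no optimization over decompositions of $C\cap D$ is needed beyond the initial choice for $C$ and $D$.
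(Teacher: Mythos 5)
Your proposal is correct, and the two halves relate to the paper's proof in different ways. For the $\pi$-degree you are doing essentially what the paper does: the paper writes $C\cap D=\pi\bigl((C\times D)\cap\Delta^{(2n)}\bigr)$ and chains Proposition \ref{pi-degree-propiedades:prop}~$ii)$ with Claims $i)$ and $ii)$ of Proposition \ref{varias-propiedades-basicas-grado-construtibles:prop}; your version merely unwinds those propositions to the concrete presentations $V,W$ and performs the diagonal trick upstairs in $\A^{m_1+m_2}$, which is the same computation. For the $LCI$-degree, however, your route is genuinely different and more direct. The paper also passes through the diagonal: it forms $C'=(C\times D)\cap\Delta^{(2n)}$, takes a minimum $LCI$-decomposition of $C'$, and then must argue that the projection $\restr{\pi}{\Delta^{(2n)}}$ is a biregular isomorphism so that each projected piece stays locally closed with non-increasing degree --- a step that is forced because $\deg_{\rm lci}$ can grow under general linear images (Claim $iv)$ of Proposition \ref{constructibles-grado-imagenes-por-lineal:prop}). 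You sidestep this entirely by never leaving $\A^n$: distributing the intersection over the two minimum decompositions, noting each piece $(U_i\cap U'_j)\cap(V_i\cap V'_j)$ is an open subset of $V_i\cap V'_j$ whose degree is bounded by $\deg(V_i)\deg(V'_j)$ via Theorem \ref{Bezout-localmente-cerrados:teor}, and summing with sub-additivity. Your closing remark is also the right one: since $\deg_{\rm lci}(C\cap D)$ is a minimum over admissible decompositions, sub-additivity absorbs the fact that the distributed union is neither irredundant nor of the normal form of Lemma \ref{descomposicion-union-irreducibles-distintos:lema}. The trade-off is that the paper's diagonal argument is the one that also drives the $\pi$-degree case and the later multiple-intersection bounds, whereas your distributivity argument is shorter but specific to $\deg_{\rm lci}$.
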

\begin{proof}
We first prove the inequality for $\deg_\pi$ and, then, for $\deg_{\rm lci}$. For $\deg_\pi$ the statement is almost obvious.  Let us first consider the Cartesian product $C\times D\subseteq \A^{2n}$. Next, let $\Delta^{(2n)}\subseteq \A^{2n}$  be the linear affine subvariety given by the following identity:
$$\Delta^{(2n)}:=\{(x_1,\ldots,x_n,y_1,\ldots,y_n)\in\A^{2n}\; :\; x_i-y_i=0,1\leq i\leq n\}.$$
Let $\pi:\A^{2n}\longrightarrow \A^n$ be the projection onto the first $n$ variables. We have:
$$\pi\left(\left( C\times D\right) \cap \Delta^{(2n)}\right) = C\cap D.$$
From Claim $ii)$ of Proposition \ref{pi-degree-propiedades:prop}, we have that:
$$\deg_\pi(C\cap D) \leq \deg_\pi\left(\left( C\times D\right) \cap \Delta^{(2n)}\right).$$
As $\Delta^{(2n)}$ is  a linear affine subvariety, we deduce from Claim $i)$ of Proposition \ref{varias-propiedades-basicas-grado-construtibles:prop} that:
$$\deg_\pi(C\cap D) \leq \deg_\pi\left( C\times D\right) .$$
And, finally, Claim $ii)$ of Proposition \ref{varias-propiedades-basicas-grado-construtibles:prop} yields the final inequality:
$$\deg_\pi(C\cap D) \leq \deg_\pi\left( C\times D\right)\leq \deg_\pi(C)\deg_\pi(D) .$$

In the case of $\deg_{\rm lci}$, the trouble is that degree is not preserved by linear transformations (as observed in Claim $iii)$ of Proposition \ref{constructibles-grado-imagenes-por-lineal:prop}). Therefore, we have to make some subtle changes into the proof scheme used for $\deg_\pi$. As above, we consider the diagonal subvariety $\Delta^{(2n)}\subseteq \A^{2n}$ and its intersection with the Cartesian product:
$$C':=(C\times D)\cap \Delta^{(2n)}.$$
By Proposition \ref{varias-propiedades-basicas-grado-construtibles:prop}, we conclude:
$$\deg_{\rm lci}(C')\leq\deg_{\rm lci}(C\times D)\leq\deg_{\rm lci}(C)\deg_{\rm lci}(D).$$
Then, we consider a minimum $LCI$-degree decomposition of $C'$ in locally closed subsets as in Definition \ref{grado-constructibles:def}. Namely, a decomposition:
$$C'=W_1\cup\ldots\cup W_s$$
where $W_i=U_i\cap V_i\not=\emptyset$, $U_i\subseteq \Delta^{(2n)}$ is a Zariski open set in $\Delta^{(2n)}$, $V_i\subseteq\Delta^{(2n)}$  is an irreducible subvariety for the Zariski topology and such that:
$$\deg_{\rm lci}(C')=\sum_{i=1}^s\deg(W_i)=\sum_{i=1}^s \deg(V_i).$$
For every $i$, $1\leq i \leq s$, we consider:
\begin{itemize}
\item $O_i\subseteq\A^n$ given by:
 $$O_i:=\{\underline{x}\in\A^n \; : \; (\underline{x},\underline{x})\in U_i\},$$
\item $Q_i\subseteq\A^n$ given by:
 $$Q_i:=\{\underline{x}\in\A^n\; : \; (\underline{x},\underline{x})\in V_i\}.$$
\end{itemize}

We immediately see that if  $\pi: \A^{2n}\longrightarrow \A^n$ is the projection onto the first $n$ coordinates, then  $\restr{\pi}{\Delta^{(2n)}}: \Delta^{(2n)} \longrightarrow \A^n$ is a biregular isomorphism. Hence, we conclude that $O_i$ is a Zariski open set in $\A^n$,  $Q_i$ is a Zariski closed subset of $\A^n$ and we have:
$$\pi(W_i)=O_i\cap Q_i.$$
As $\pi(W_i)$ is locally closed, by Proposition \ref{grado-preservado-por-transformaciones-lineales:prop} we conclude:
$$\deg(\pi(W_i))=\deg(\overline{\pi(W_i)}^z)= \deg(Q_i)=\deg(O_i\cap Q_i)\leq \deg(W_i).$$
Additionally, we have:
$$C\cap D= \pi( (C\times D)\cap \Delta^{(2n)})= (O_1\cap Q_1)\cup\ldots\cup(O_s\cap Q_s).$$
Finally, because of Proposition \ref{grado-constructibles-subaditivo:prop} we conclude:
$$\deg_{\rm lci}(C\cap D)\leq\sum_{i=1}^s\deg(O_i\cap Q_i)\leq\sum_{i=1}^s\deg(W_i)=\deg_{\rm lci}(C')\leq\deg_{\rm lci}(C)\deg_{\rm lci}(D).$$
\end{proof}

\section{Upper Bounds for the intersection of several constructible sets}\label{upper-intersection-several:sec}

In this Section we exhibit a generalization of Proposition 2.3 in \cite{HeintzSchnorr} for constructible sets, as promised at the Introduction. Observe that a literal application of B\'ezout's Inequality puts the co-dimension at the exponent of bounds for the degree. Namely, given $f_1,\ldots, f_m\in K[X_1,\ldots, X_n]$ such that $V:=V_\A(f_1,\ldots, f_m)$ has dimension $n-m$, and let $W\subseteq \A^n$ be an algebraic variety. B\'ezout's Inequality yields the following upper bound:
$$\deg(W\cap V_\A(f_1,\ldots, f_m))\leq \deg(W)\prod_{i=1}^m \deg(f_i) \leq \deg(W)\left( \max\{\deg(f_1), \ldots, \deg(f_m)\}\right)^{\rm codim(V)}.$$
Nevertheless, Proposition 2.3 in  \cite{HeintzSchnorr} replaces co-dimension of $V$ by dimension of $W$, emphasizing the role of dimension of $W$ in this kind of upper bounds. Namely, Proposition 2.3 in \cite{HeintzSchnorr} yields:
$$\deg(W\cap V_\A(f_1,\ldots, f_m))\leq  \deg(W)\left( \max\{\deg(f_1), \ldots, \deg(f_m)\}\right)^{\rm dim(W)}.$$
The trade-off between these two upper bounds is the key ingredient to prove the existence of short correct test sequences (see Theorem \ref{teorema-principal-densidad-questores:teor} in Section \ref{probability-CTS-zero-dimensional:sec}).
\subsection{Main statement of this Section}

Proposition 2.3 in \cite{HeintzSchnorr} provides an elegant upper bound for the intersection of several algebraic varieties.  We want to extend it for constructible sets but, obviously, it cannot be possible if we use $\deg_z$ by the same reason why B\'ezout's Inequality is not true for $\deg_z$. Hence, we first extend that result, using similar inductive arguments as in the original proof, for the intersection of a constructible set with several locally closed sets for both $\deg_{\rm lci}$ and $\deg_{\pi}$. Then, we try to exhibit a similar upper bound for the general case for $\deg_{\rm lci}$. We have not been able to stablish a similar result for $\deg_{\pi}$.

\begin{proposition}[{\bf Extension of Proposition 2.3 of \cite{HeintzSchnorr}}] \label{grado-constructible-cerrados-multiple:prop} Let $\Omega\subseteq \A^n$ be a constructible set and let $C_1,\ldots, C_s\subseteq \A^n$ be a finite sequence of locally closed sets. Then, we have:
$$\deg_{\rm lci} \left(\Omega \cap\left(\bigcap_{i=1}^s C_i\right)\right)\leq \deg_{\rm lci}(\Omega) \left(\max\{ \deg_{\rm lci}(C_i)\; : \; 1\leq i \leq s\}\right)^{\dim(\Omega)}.$$
\end{proposition}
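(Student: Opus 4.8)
The plan is to reduce to the irreducible locally closed case and then imitate the inductive proof of Proposition~2.3 of \cite{HeintzSchnorr}, the new ingredient being a dichotomy for intersecting an irreducible locally closed set with a locally closed set. First I would dispose of the degenerate situations: if some $C_i$ is empty the intersection is empty and there is nothing to prove, and the case $s=0$ is trivial, so assume $s\ge1$ and $D:=\max\{\deg_{\rm lci}(C_i):1\le i\le s\}\ge1$. Taking a minimum $LCI$-degree decomposition $\Omega=W_1\cup\cdots\cup W_p$ into locally closed irreducible sets (Lemma~\ref{descomposicion-union-irreducibles-distintos:lema}, Definition~\ref{grado-constructibles:def}), so that $\sum_j\deg(W_j)=\deg_{\rm lci}(\Omega)$ and $\dim(W_j)\le\dim(\Omega)$, subadditivity of $\deg_{\rm lci}$ (Proposition~\ref{grado-constructibles-subaditivo:prop}) shows that it is enough to prove the following reduced statement: for every locally closed irreducible $V\subseteq\A^n$ of dimension $r$ and every finite family of locally closed sets $C_1,\dots,C_s$,
$$\deg_{\rm lci}\Big(V\cap\bigcap_{i=1}^s C_i\Big)\le \deg(V)\,D^{r}.$$
Indeed, summing this over the $W_j$ and using $D\ge1$ recovers the Proposition.

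The reduced statement is then proved by induction on $r$. For $r=0$ the set $V$ is a point, $\deg(V)=1$, and $V\cap\bigcap_i C_i$ is $V$ or empty. For $r\ge1$ the key remark is: writing $C_i=\overline{C_i}^{z}\cap O_i$ with $O_i$ Zariski open, if $V\cap C_i$ is nonempty and dense in $V$ then necessarily $V\subseteq\overline{C_i}^{z}$, whence $V\cap C_i=V\cap O_i$, a nonempty open subset of the irreducible $V$, hence itself locally closed irreducible of dimension $r$ with $\deg(V\cap C_i)=\deg(\overline{V}^{z})=\deg(V)$ by Proposition~\ref{propiedades-basicas-grado-lc:prop}; otherwise $\dim(V\cap C_i)<r$. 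Let $A$ be the set of indices of the first kind and $B$ the rest. Since a finite intersection of nonempty open subsets of an irreducible space is again nonempty (and dense), $V':=V\cap\bigcap_{i\in A}C_i=V\cap\bigcap_{i\in A}O_i$ is a nonempty locally closed irreducible set of dimension $r$ with $\overline{V'}^{z}=\overline{V}^{z}$, so $\deg(V')=\deg(V)$, and $V\cap\bigcap_{i=1}^s C_i=V'\cap\bigcap_{i\in B}C_i$. Moreover, for each $i\in B$ the set $V'\cap C_i$ cannot be dense in $V'$ (otherwise $V\cap C_i\supseteq V'\cap C_i$ would be dense in $V$), so $\dim(V'\cap C_i)<r$.

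If $B=\emptyset$ we are done, since $\deg_{\rm lci}(V\cap\bigcap_i C_i)=\deg(V')=\deg(V)\le\deg(V)D^{r}$. Otherwise fix $i_0\in B$; then $V'\cap C_{i_0}$ is locally closed of dimension $\le r-1$, and Bézout's inequality for locally closed sets (Theorem~\ref{Bezout-localmente-cerrados:teor}), together with $\deg(C_{i_0})=\deg_{\rm lci}(C_{i_0})\le D$ for locally closed $C_{i_0}$, gives $\deg(V'\cap C_{i_0})\le\deg(V')\deg(C_{i_0})\le\deg(V)D$. Decomposing $V'\cap C_{i_0}=Z_1\cup\cdots\cup Z_t$ into locally closed irreducible components, one has $\dim(Z_k)\le r-1$ and $\sum_k\deg(Z_k)=\deg(V'\cap C_{i_0})\le\deg(V)D$. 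Writing $V\cap\bigcap_{i=1}^s C_i=\bigcup_{k=1}^t\big(Z_k\cap\bigcap_{i\in B\setminus\{i_0\}}C_i\big)$, subadditivity of $\deg_{\rm lci}$ together with the induction hypothesis applied to each $Z_k$ (legitimate since $\dim Z_k\le r-1$ and the $LCI$-degrees of a sublist of the $C_i$ are still $\le D$; if $B=\{i_0\}$ one uses instead the trivial identity $\deg_{\rm lci}(Z_k)=\deg(Z_k)$) yields
$$\deg_{\rm lci}\Big(V\cap\bigcap_{i=1}^s C_i\Big)\le\sum_{k=1}^t\deg(Z_k)\,D^{\dim Z_k}\le D^{r-1}\sum_{k=1}^t\deg(Z_k)\le D^{r-1}\deg(V)D=\deg(V)D^{r},$$
which completes the induction.

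I expect the main obstacle to be the degree bookkeeping around the dichotomy above: verifying that intersecting the irreducible $V$ with one of the $C_i$ either strictly lowers the dimension or leaves both irreducibility and degree untouched, and that the ``good'' intersections over $i\in A$ can be carried out simultaneously without emptying the set. This is exactly the step that uses that each $C_i$ is locally closed rather than merely constructible — a general constructible set may meet $V$ in a set that is neither dense nor uniformly lower-dimensional — and it is the reason the fully general intersection estimate needs the extra combinatorial input of Theorem~\ref{cotas-grado-varias-intersecciones:teor}.
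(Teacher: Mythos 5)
Your proof is correct and follows essentially the same strategy as the paper's: reduce to a locally closed irreducible piece via a minimum $LCI$-degree decomposition and sub-additivity, exploit the dichotomy that intersecting with a locally closed $C_i$ either preserves density (hence dimension and degree) or strictly drops the dimension, and pay one factor of $D$ via B\'ezout's Inequality for locally closed sets each time the dimension drops. The only organizational difference is that you induct on $\dim(V)$ and absorb all density-preserving $C_i$'s in a single step before peeling off one dimension-dropping index, whereas the paper inducts on $s$ and treats the $C_i$'s one at a time; both bookkeepings yield the same bound.
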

\begin{proof} The argument goes by induction on $s$ as in \cite{HeintzSchnorr}. The case $s=2$ is just the B\'{e}zout's Inequality for constructible sets, then we assume $s\geq 3$. First of all, suppose that $\Omega$ is a locally closed irreducible set (i.e. a non-empty open set in some irreducible algebraic variety). Then, we decompose the intersection as:
$$\Omega \cap\left(\bigcap_{i=1}^s C_i\right)= \left( \Omega \cap C_1\right) \cap \left( \bigcap_{i=2}^s C_i\right).$$
We observe that $\Omega\cap C_1$ is locally closed and we may consider two options:
\begin{itemize}
\item Either $\dim(\Omega\cap C_1)= \dim(\Omega)$. In this case, as $\Omega$ is locally closed irreducible, then $\Omega\cap C_1$ is a locally closed set dense in $\overline{\Omega}^z$ and, we have:
    $$\deg(\Omega\cap C_1) = \deg(\overline{(\Omega\cap C_1)}^z) \deg(\overline{\Omega}^z)=\deg(\Omega),$$
    where the last equality is true since $\Omega$ is locally closed. Applying the inductive hypothesis, we conclude:
    $$\deg\left( \Omega \cap \left(\bigcap_{j=1}^s C_j\right)\right)\leq \deg(\Omega\cap C_1) \left(\max\{ \deg(C_i) \; : \; 2\leq i \leq s\}\right)^{\dim(\Omega)},$$
    which implies the statement.
\item Or $\dim(\Omega\cap C_1)< \dim(\Omega)$. In this case, just using the inductive hypothesis we have:
    $$\deg\left( \Omega \cap \left(\bigcap_{j=1}^s C_j \right)\right)\leq
\deg(\Omega\cap C_1) \left(\max\{ \deg(C_i) \; : \; 2\leq i \leq s\}\right)^{\dim(\Omega \cap C_1)}.$$
Using the B\'ezout's Inequality for constructible sets we stated in previous sections, we obtain:
    $$\deg\left( \Omega \cap \left(\bigcap_{j=1}^s C_j\right)\right)\leq
\deg(\Omega)\deg(C_1) \left(\max\{ \deg(C_i) \; : \; 2\leq i \leq s\}\right)^{\dim(\Omega)-1},$$
and this also yields the wanted inequality.
\end{itemize}

As for the general case, let us consider a minimum $LCI$-degree decomposition of  $\Omega$ into locally closed irreducible subsets. Namely, we consider a decomposition:
$$\Omega:=(U_1\cap V_1) \cup \cdots \cup (U_t\cap V_t),$$
where $U_i\cap V_i\not=\emptyset$, $U_i\subseteq \A^n$ is a Zariski open set, $V_i\subseteq \A^n$ is an irreducible closed subset for the Zariski topology and such that the following holds:
$$\deg_{\rm lci}(\Omega)=\sum_{i=1}^t \deg(V_i).$$
As the degree is sub-additive we have:
$$\deg_{\rm lci} \left( \Omega \cap \left(\bigcap_{j=1}^s C_j\right)\right)\leq\sum_{i=1}^t \deg\left((U_i\cap V_i)\cap \left(\bigcap_{j=1}^s C_j\right)\right).$$
As $(U_i\cap V_i)$ is locally closed irreducible, the previous arguments apply and we conclude:

$$\deg_{\rm lci} \left( \Omega \cap \left(\bigcap_{j=1}^s C_j\right)\right)\leq\sum_{i=1}^t \deg(V_i)\left(\max\{ \deg(C_j) \; \; 1\leq j \leq s\}\right)^{\dim(U_i\cap V_i)}.$$
As $\dim(\Omega)= \max\{ \dim(U_i\cap V_i) \; :\; 1\leq i \leq t\}$, we conclude the proof of the statement.
\end{proof}

For the $\pi$-degree, we have a similar upper bound for globally equi-dimensional constructible sets:

\begin{corollary}\label{grado-interseccion-equidimensional-lc:corol}
 Let $\Omega\subseteq \A^n$ be a globally equi-dimensional constructible set and let $C_1,\ldots, C_s\subseteq \A^n$ be a finite sequence of locally closed sets. Then, we have:
$$ \deg_{\pi} \left(\Omega \cap \ \left( \bigcap_{i=2}^s C_i\right) \right) \leq \deg_{\pi}(\Omega) \left( \max\{ \deg_{\pi}(C_i)\; :\; 2\leq i \leq s\}\right)^{\dim(\Omega)}$$
\end{corollary}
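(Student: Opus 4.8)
The plan is to reduce the statement directly to the $\deg_{\rm lci}$-version already proved in Proposition \ref{grado-constructible-cerrados-multiple:prop}, using the fact that on the sets at hand the three notions of degree collapse. The starting point is the universal chain $\deg_z(D)\leq\deg_\pi(D)\leq\deg_{\rm lci}(D)$ of Proposition \ref{relations-between-three-degrees:prop}, valid for every constructible set $D\subseteq\A^n$; in particular it bounds the left-hand side $\deg_\pi\big(\Omega\cap\bigcap_{i=2}^sC_i\big)$ by the corresponding $\deg_{\rm lci}$ of the same set.

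Next I would apply Corollary \ref{generaliza-grado-localmente-cerrados:prop} twice: to the globally equi-dimensional set $\Omega$, which gives $\deg_\pi(\Omega)=\deg_{\rm lci}(\Omega)$, and to each locally closed set $C_i$, which gives $\deg_\pi(C_i)=\deg_{\rm lci}(C_i)$. Feeding $\Omega$ and the locally closed family $C_2,\ldots,C_s$ into Proposition \ref{grado-constructible-cerrados-multiple:prop} produces
$$\deg_{\rm lci}\left(\Omega\cap\bigcap_{i=2}^sC_i\right)\leq\deg_{\rm lci}(\Omega)\left(\max\{\deg_{\rm lci}(C_i)\;:\;2\leq i\leq s\}\right)^{\dim(\Omega)},$$
and substituting the above equalities on the right-hand side, together with $\deg_\pi\leq\deg_{\rm lci}$ on the left-hand side, yields exactly the asserted inequality.

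There is essentially no genuine obstacle in the argument; the one point worth noting is that the exponent is already $\dim(\Omega)$ in Proposition \ref{grado-constructible-cerrados-multiple:prop} (and not the dimension of some intermediate intersection), so the induction behind that proposition need not be revisited. The global equi-dimensionality hypothesis enters at exactly one place — to force $\deg_{\rm lci}(\Omega)=\deg_\pi(\Omega)$ — and it is genuinely needed: for a general constructible $\Omega$ one has only $\deg_\pi(\Omega)\leq\deg_{\rm lci}(\Omega)$, with possibly strict inequality (Example \ref{La-Croix-de-Berny-grados-varios:ej}), so a $\deg_\pi$-version with no restriction on $\Omega$ cannot be obtained by this route.
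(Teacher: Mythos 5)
Your proof is correct and follows essentially the same route as the paper's: the paper likewise reduces to Proposition \ref{grado-constructible-cerrados-multiple:prop} by observing that $\deg_\pi$ and $\deg_{\rm lci}$ coincide on the globally equi-dimensional set $\Omega$ (and on the locally closed sets $C_i$). Your version packages this as a clean black-box application of the $\deg_{\rm lci}$ bound together with the inequality $\deg_\pi\leq\deg_{\rm lci}$ on the left-hand side, which is exactly what the paper's one-line argument amounts to.
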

\begin{proof}
Essentially the same proof of the previous Proposition, just note that since $\Omega$ is globally equi-dimensional it admits a decomposition into locally closed irreducible sets $\Omega = W_1\cup \cdots \cup W_s$ such that the following property holds:
$$\deg_{\pi} (\Omega) = \deg_{\rm lci} (\Omega) = \sum_{i=1}^{s} \deg(W_{i}).$$
\end{proof}

Proposition \ref{grado-constructible-cerrados-multiple:prop} also applies to have estimates on the degree of images of constructible sets under polynomial mappings as shown in the following Corollary.

\begin{corollary}\label{grado-imagen-constructible:corol}
Let $\varphi:=(\varphi_1,\ldots, \varphi_m): \A^n(K)\longrightarrow \A^m(K)$ be a polynomial mapping, where $K$ is an algebraically closed field. Assume that for every $i$, $1\leq i\leq m$, $\varphi_i\in K[X_1,\ldots, X_n]$ is a polynomial of degree at most $d$, where $d\geq 1$. Let $C\subseteq \A^n(K)$ be a constructible subset. Then, we have:
$$\deg_z\left( \varphi(C)\right)\leq  \deg_{\rm lci}(C) d^{\dim(C)},$$
$$\deg_z\left( \varphi(C)\right)\leq \deg_{\pi}(C) d^{m} \leq  \deg_{\rm lci}(C) d^{m}.$$
The bounds are not always true if we replace $\deg_{\rm lci}$ by $\deg_z$ on the right hand side of the inequality. Moreover, if either $\varphi(C)$ is locally closed or globally equi-dimensional, we also have:
$$\deg_z\left( \varphi(C)\right)=\deg_\pi(\varphi(C)) = \deg_{\rm lci}(\varphi(C)) \leq \deg_{\rm lci}(C) d^{\dim(C)},$$
$$\deg_z\left( \varphi(C)\right)=\deg_\pi(\varphi(C)) = \deg_{\rm lci}(\varphi(C))\leq \deg_{\pi}(C) d^{m} \leq  \deg_{\rm lci}(C) d^{m}.$$
\end{corollary}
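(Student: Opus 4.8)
The plan is to realize the image $\varphi(C)$ as a projection of the graph of $\varphi$ restricted to $C$, and then apply the B\'ezout-type estimate of Proposition \ref{grado-constructible-cerrados-multiple:prop} together with the behaviour of the various degrees under linear images already established in Propositions \ref{grado-preservado-por-transformaciones-lineales:prop}, \ref{constructibles-grado-imagenes-por-lineal:prop} and \ref{pi-degree-propiedades:prop}. First I would introduce the graph
$$G:=\{(x,y)\in \A^n(K)\times \A^m(K)\; :\; x\in C,\; y_i-\varphi_i(x)=0,\; 1\leq i\leq m\}\subseteq \A^{n+m}(K),$$
which is a constructible set, and observe that $\varphi(C)=\pi(G)$ where $\pi:\A^{n+m}\longrightarrow \A^m$ is the canonical projection onto the last $m$ coordinates. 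Writing $\widetilde{C}:=C\times \A^m\subseteq \A^{n+m}$ and $H_i:=\{(x,y)\; :\; y_i-\varphi_i(x)=0\}$ for the hypersurfaces cut out by the graph equations (each of degree at most $d$, viewed as a locally closed, in fact closed, subset of $\A^{n+m}$), we have $G=\widetilde{C}\cap H_1\cap\cdots\cap H_m$ and $\dim(\widetilde{C})=\dim(C)+m$, while $\dim(G)=\dim(C)$ since $\pi$ restricted to $G$ is injective (it is the inverse of $x\mapsto (x,\varphi(x))$). The degrees of the factors are controlled by $\deg_{\rm lci}(\widetilde{C})=\deg_{\rm lci}(C)$ and $\deg_{\pi}(\widetilde{C})=\deg_{\pi}(C)$ by Proposition \ref{varias-propiedades-basicas-grado-construtibles:prop}(ii) (multiplying by $\A^m$, whose degree is $1$), and $\deg_{\rm lci}(H_i)=\deg_{\pi}(H_i)\leq d$ by Proposition \ref{propiedades-basicas-grado-lc:prop}(vi).

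For the first inequality, $\deg_z(\varphi(C))=\deg_z(\pi(G))\leq \deg_{\rm lci}(G)$ by Proposition \ref{constructibles-grado-imagenes-por-lineal:prop}(i). Now I would like to bound $\deg_{\rm lci}(G)$; the subtlety is that I want the exponent to be $\dim(C)=\dim(G)$ rather than $\dim(\widetilde{C})=\dim(C)+m$, so I cannot directly apply Proposition \ref{grado-constructible-cerrados-multiple:prop} with $\Omega=\widetilde{C}$ (which would give exponent $\dim(C)+m$ and a factor of $\deg_{\rm lci}(C)$). Instead I would peel off the equations one at a time: each intersection with $H_i$ either cuts the dimension strictly or, by B\'ezout, multiplies the degree by at most $d$; the point is that after intersecting with all $m$ hypersurfaces the dimension has dropped from $\dim(C)+m$ to $\dim(C)$, so there are at least $m$ "dimension-dropping" steps and at most $\dim(C)$ steps where the degree genuinely gets multiplied — this is exactly the trade-off argument underlying Proposition \ref{grado-constructible-cerrados-multiple:prop}. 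Running this induction (reducing to the locally closed irreducible case via a minimum $LCI$-degree decomposition of $C$, as in the proof of Proposition \ref{grado-constructible-cerrados-multiple:prop}) yields $\deg_{\rm lci}(G)\leq \deg_{\rm lci}(C)\,d^{\dim(C)}$, and hence the first bound. For the second inequality I apply the crude estimate instead: $\varphi(C)=\pi(G)$ with $G=\widetilde{C}\cap H_1\cap\cdots\cap H_m$, so by Proposition \ref{grado-constructible-cerrados-multiple:prop} (or its $\pi$-analogue, or just iterated B\'ezout for $\deg_{\pi}$ via Theorem \ref{Desigualdad-Bezout-constructibles:teor}) we get $\deg_{\pi}(G)\leq \deg_{\pi}(\widetilde{C})\,d^{m}=\deg_{\pi}(C)\,d^{m}$, and then $\deg_z(\varphi(C))\leq \deg_{\pi}(\pi(G))\leq \deg_{\pi}(G)$ by Proposition \ref{pi-degree-propiedades:prop}(ii); finally $\deg_{\pi}(C)\leq \deg_{\rm lci}(C)$ by Proposition \ref{relations-between-three-degrees:prop}.

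For the counterexample to using $\deg_z$ on the right-hand side, I would invoke the constructible set of Example \ref{La-Croix-de-Berny-grados-varios:ej}: there $C\subseteq \A^2(\C)$ has $\deg_z(C)=1$ but is the projection of a quadric, and taking $\varphi$ to be (say) a suitable linear map or the identity one already sees that $\deg_z(\varphi(C))$ need not be bounded by $\deg_z(C)\,d^{\dim C}$ — indeed $\deg_z$ does not even satisfy B\'ezout (Example \ref{croix-de-berny-def:ej}), so no such bound with $\deg_z$ in place of $\deg_{\rm lci}$ can hold. For the final assertion, when $\varphi(C)$ is locally closed or globally equi-dimensional, Proposition \ref{generaliza-grado-localmente-cerrados:prop} gives $\deg_z(\varphi(C))=\deg_{\pi}(\varphi(C))=\deg_{\rm lci}(\varphi(C))$, so the already-established bounds on $\deg_z(\varphi(C))$ transfer verbatim to the other two. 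The main obstacle I anticipate is the bookkeeping in the dimension-vs-degree trade-off of the first inequality: one must be careful that intersecting the equidimensional factor $\widetilde{C}$ with the $m$ graph hypersurfaces forces $m$ strict dimension drops (which is clear because $\pi|_G$ is injective, so $\dim G=\dim C$), and that the remaining at most $\dim(C)$ degree-multiplying steps each cost a factor $\leq d$; this is essentially contained in the inductive proof of Proposition \ref{grado-constructible-cerrados-multiple:prop}, but the reduction to that proposition requires phrasing the induction on the number of remaining equations together with a decreasing dimension bound, rather than a single blunt application.
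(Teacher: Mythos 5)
Your treatment of the second inequality (via the graph $Gr(\varphi)$, the projection onto the last $m$ coordinates, Proposition \ref{pi-degree-propiedades:prop} and iterated B\'ezout for $\deg_\pi$) coincides with the paper's proof, and your handling of the counterexample and of the final claims via Proposition \ref{generaliza-grado-localmente-cerrados:prop} is also fine. The problem is the first inequality. Your plan is to show $\deg_{\rm lci}(G)\leq \deg_{\rm lci}(C)\,d^{\dim(C)}$ by ``peeling off'' the $m$ graph equations, on the grounds that each intersection with $H_i$ ``either cuts the dimension strictly or multiplies the degree by at most $d$'', so that the $m$ forced dimension drops leave at most $\dim(C)$ degree-multiplying steps. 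This inverts the actual trade-off in the proof of Proposition \ref{grado-constructible-cerrados-multiple:prop}: there, a step that \emph{preserves} the dimension also preserves the degree (the intersection is dense in the irreducible piece), while a step that \emph{drops} the dimension is precisely the one where B\'ezout multiplies the degree by $d$; the exponent $\dim(\Omega)$ appears because at most $\dim(\Omega)$ drops can occur. In your graph situation every one of the $m$ steps drops the dimension (from $\dim(C)+m$ down to $\dim(C)$), so every step is a degree-multiplying step and the argument yields only $\deg_{\rm lci}(G)\leq \deg_{\rm lci}(C)\,d^{m}$ --- i.e. it reproves the second inequality and is strictly weaker than $d^{\dim(C)}$ whenever $m>\dim(C)$ (take $C=\A^1$ and $m=3$: the graph is a curve of degree $d$, but the peeling argument only gives $d^{3}$).

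The first inequality needs a different mechanism, which is what the paper uses: work in the \emph{target}. Reduce to $C$ locally closed irreducible, set $W:=\overline{\varphi(C)}^z$ and $s:=\dim(W)\leq\dim(C)$, and choose a linear affine variety $A\subseteq\A^m$ of codimension $s$ with $\sharp(A\cap W)=\sharp(A\cap\varphi(C))=\deg(W)$. Then $\varphi^{-1}(A)$ is cut out by only $s$ polynomial equations of degree at most $d$, so Proposition \ref{grado-constructible-cerrados-multiple:prop} bounds $\deg_{\rm lci}(C\cap\varphi^{-1}(A))$, and hence the number of its locally closed irreducible components, by $\deg_{\rm lci}(C)\,d^{\dim(C)}$; since each such component has irreducible image inside the finite set $A\cap\varphi(C)$ and the components surject onto $A\cap\varphi(C)$, one concludes $\deg(W)\leq\deg_{\rm lci}(C)\,d^{\dim(C)}$, and sub-additivity handles general constructible $C$. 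If you want to keep your graph formulation, you would have to prove $\deg_{\rm lci}(G)\leq\deg_{\rm lci}(C)\,d^{\dim(C)}$ by exactly this pull-back-a-linear-space argument applied to $x\mapsto(x,\varphi(x))$, not by iterating B\'ezout on the graph equations.
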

\begin{proof}
\leavevmode

\begin{itemize}
\item Inequality $\deg_z\left( \varphi(C)\right)\leq  \deg_{\rm lci}(C) d^{\dim(C)}$: Let us first assume that  $C$ is locally closed and irreducible. Then, the Zariski closure $W:=\overline{\varphi(C)}^z\subseteq \A^m(K)$ of $\varphi(C)$  is an irreducible variety. Let $D:=\deg(W)$ be the degree of $W$ and let  $s:=\dim(W)\leq \dim(V)$. Then, there is a linear affine subvariety  $A\subseteq \A^m(K)$ of co-dimension $s$ such that:
$$\sharp\left(A \cap W\right) = \sharp\left(A \cap \varphi(C)\right) = \deg(W)= D.$$
Let us now consider the algebraic variety $\varphi^{-1}(A)\subseteq \A^n(K)$. As $A$ is given by $s$ linear equations, then $\varphi^{-1}(A)$ is also given by $s$ polynomial equations of degree at most $d$ (combining the coordinates $\varphi_1, \ldots, \varphi_s$ of $\varphi$ with the equations defining $A$ in $\A^m$). From Proposition \ref{grado-constructible-cerrados-multiple:prop}, we conclude:
$$\deg_{\rm lci}\left(C \cap \varphi^{-1}(A)\right)\leq \deg_{\rm lci}(C) d^{\dim(C)}.$$
Let $\scrC$ be the class of locally closed irreducible components of $C\cap \varphi^{-1}(A)$. As the number of irreducible components of a locally closed set is bounded by its degree, we conclude:
$$\sharp(\scrC)\leq \deg_{\rm lci}\left(C \cap \varphi^{-1}(A)\right)\leq \deg_{\rm lci}(C) d^{\dim(C)}.$$
However, let $V\in \scrC$  be one of such irreducible components of $C\cap \varphi^{-1}(A)$. We know that its Zariski closure  $\overline{\varphi(V)}^z$ is irreducible and, at the same time, we have:
$$\varphi(V)\subseteq A\cap \varphi(C).$$
As $A\cap \varphi(C)$ is a finite number of points,  $\varphi(V)$ has to be a point in  $\A^m$. Additionally, as
$\varphi(C\cap \varphi^{-1}(A))= \varphi(C)\cap A$, there is an onto mapping between the following two finite sets:
$$\begin{matrix}
\Phi: & \scrC & \longrightarrow & \varphi(C)\cap A\\
& V& \longmapsto & \Phi(V) := \varphi(V).\end{matrix}$$
Then, we conclude that if  $C$ is a locally closed irreducible set we have:
$$\deg(\overline{\varphi(C)}^z)=\deg(W)= \sharp\left( \varphi(C)\cap A\right) \leq \sharp\left(\scrC\right) \leq \deg(C) d^{\dim(C)}.$$
For any constructible set $C\subseteq \A^n$, we just have to consider a minimum degree decomposition of $C$ as in Definition \ref{grado-constructibles:def}, i.e. a decomposition of $C$ as finite union of  locally closed irreducible sets satisfying Lemma \ref{descomposicion-union-irreducibles-distintos:lema}:
$$
C:=(U_1\cap V_1) \cup \cdots \cup (U_s\cap V_s),$$
where $U_i\cap V_i\not=\emptyset$, $U_i\subseteq \A^n$ is open, $V_i\subseteq \A^n$ is closed irreducible in the Zariski topology and they satisfy:
\begin{equation}\label{descomposicion-de-constructible-para-cota-grado-imagen:eqn}
\deg(C)=\sum_{i=1}^s \deg(V_i).
\end{equation}
Then, we have:
$$\overline{\varphi(C)}^z= \overline{\left(\bigcup_{i=1}^s \varphi\left( U_i\cap V_i\right) \right)}^z=
\bigcup_{i=1}^s \overline{\varphi\left( U_i\cap V_i\right)}^z .$$
As $\deg$ is sub-additive when applied to algebraic varieties, we would have:
$$\deg\left(\overline{\varphi(C)}^z\right)\leq \sum_{i=1}^s \deg\left(\overline{\varphi\left( U_i\cap V_i\right)}^z\right) \leq  $$
$$\leq \sum_{i=1}^s \deg_{\rm lci}(V_i\cap U_i) d^{\dim(V_i)}= \sum_{i=1}^s \deg(V_i) d^{\dim(V_i)}.$$
As $\dim(C):=\max\{\dim(V_1), \ldots, \dim(V_s)\}$, from Identity (\ref{descomposicion-de-constructible-para-cota-grado-imagen:eqn}) we conclude:
$$\deg_z\left(\varphi(C)\right) \leq\left( \sum_{i=1}^s \deg(V_i)\right) d^{\dim(C)}=
\deg_{\rm lci}(C)d^{\dim(C)} .$$
\item Inequality $\deg_z\left( \varphi(C)\right)\leq \deg_{\pi}(C) d^{m} \leq  \deg_{\rm lci}(C) d^{m}$: First, we consider the graph of the polynomial map $\varphi$:
$$ Gr(\varphi) :=   \{ (x,y) \in C \times \A^m \: : \: y_{i} - \varphi_i (x) = 0, \: i=1, \ldots, m \} = $$
$$  = (C \times \A^{m}) \cap (\bigcap_{i=1}^{m} \{ Y_{i} - \varphi_{i}(X) = 0 \}). $$
Then, let $\Phi$ be the following polynomial map:
$$\begin{matrix}
\Phi: & C & \longrightarrow & Gr(\varphi)\\
& x& \longmapsto & (x, \varphi(x)).\end{matrix}$$
Obviously we have $\Phi(C) = Gr(\varphi)$, therefore $Gr(\varphi)$ is a constructible set. Let $\pi: \A^{n} \times \A^{m} \rightarrow \A^{m}$ be the canonical projection that forgets the first $n$ variables. Thus, we have that $\varphi(C) = \pi(Gr(\varphi))$ and $\deg_{z}(\varphi(C))= \deg_{z}(\pi(Gr(\varphi)))$. From Proposition \ref{pi-degree-propiedades:prop}, we conclude:
$$\deg_{z}(\pi(Gr(\varphi))) \leq \deg_{\pi} (\pi(Gr(\varphi))) \leq \deg_{\pi} (Gr(\varphi)).$$

Applying B\'ezout's inequality (for the $\pi$-degree), we deduce:
$$\deg_{\pi}(Gr(\varphi)) = \deg_{\pi} \left((C \times \A^{m}) \cap (\bigcap_{i=1}^{m} \lbrace Y_{i} - \varphi_{i}(X) = 0 \rbrace ) \right)   \leq $$
$$ \leq \deg_{\pi} (C \times \A^{m})\prod_{i=1}^{m} \deg_{\pi} (\lbrace Y_{i} - \varphi_{i}(X) = 0 \rbrace).$$
As $\deg_{\pi} (C \times \A^{m}) \leq \deg_{\pi} (C) \deg_{\pi} (\A^{m}) = \deg_{\pi}(C)$ and for all $i$, $1 \leq i \leq m$, we have that $\deg_{\pi} ( \lbrace Y_{i} - \varphi_{i} (X) = 0 \rbrace) = \deg(Y_{i} - \varphi_{i} (X)) = \max \lbrace deg(\varphi_{i}), 1 \rbrace \leq d$, we obtain:
$$deg_{\pi} (C \times \A^{m})\prod_{i=1}^{m} \deg_{\pi} (\lbrace Y_{i} - \varphi_{i}(X) = 0 \rbrace) \leq \deg_{\pi} (C) d^{m}.$$
Finally, from Proposition \ref{relations-between-three-degrees:prop} we conclude:
$$\deg_z\left( \varphi(C)\right)\leq \deg_{\pi}(C) d^{m} \leq  \deg_{\rm lci}(C) d^{m}.$$

\end{itemize}
As for the counterexample, we just need to recall the one exhibited in Claim $iii)$ of Proposition \ref{constructibles-grado-imagenes-por-lineal:prop}. The rest of the claims are obvious in the case $\varphi(C)$ is either locally closed or globally equi-dimensional.
\end{proof}

The remaining pages of this section are devoted to prove its main outcome:

\begin{theorem}[{\bf Bounds for the $LCI$-degree of the intersection of several constructible sets}]\label{cotas-grado-varias-intersecciones:teor}
Let $C_1, \ldots, C_s\subseteq \A^n$ be a sequence of constructible sets. Let $r:=\dim(C_1)$ be the dimension of the constructible set $C_1$. The following inequalities hold:

\begin{enumerate}
\item  \emph{First upper bound:}
$$\deg_\pi\left(\bigcap_{i=1}^s C_i\right)\leq \deg_{\rm lci} \left( \bigcap_{i=1}^s C_i\right)\leq {{s+r-1}\choose {r}}\deg_{\rm lci}(C_1) \left( \max\{\deg_{\rm lci}(C_j)\; :\; 2\leq j \leq s\}\right)^{r}.$$
\item \emph{Second upper bound:}
$$\deg_\pi\left(\bigcap_{i=1}^s C_i\right)\leq \deg_{\rm lci}\left(\bigcap_{i=1}^s C_i\right)\leq \deg_{\rm lci}(C_1) \left(1+ \sum_{i=2}^s \deg_{\rm lci}(C_i) \right)^{\dim(C_1)}.$$
\item \emph{Upper bound in terms of the average degree:} Given a family of constructible sets $C_1,\ldots,C_s\subseteq \A^n$, we define its average degree as:
    $$\deg_{\rm av}^{(E)}(C_1,\ldots, C_s):= {{1}\over{s}}\sum_{i=1}^s \deg_{\rm lci}(C_i).$$
    Then, we also have:
$$\deg_\pi\left(\bigcap_{i=1}^s C_i\right)\leq \deg_{\rm lci}\left(\bigcap_{i=1}^s C_i\right)\leq \deg_{\rm lci}(C_1) s^{\dim(C_1)}\left(\deg_{\rm av}^{(E)}(C_1,\ldots, C_s)\right)^{\dim(C_1)}.$$
\end{enumerate}

\end{theorem}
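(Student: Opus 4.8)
In each of the three inequalities the left-hand bound $\deg_\pi\!\left(\bigcap_i C_i\right)\le\deg_{\rm lci}\!\left(\bigcap_i C_i\right)$ is just Proposition~\ref{relations-between-three-degrees:prop}, so the whole task is to bound $\deg_{\rm lci}\!\left(\bigcap_{i=1}^s C_i\right)$. I would prove the first two bounds \emph{simultaneously by induction on $s$}, in the spirit of Proposition~\ref{grado-constructible-cerrados-multiple:prop} (i.e. of Proposition~2.3 in \cite{HeintzSchnorr}), the base case $s=2$ being exactly the B\'ezout's Inequality for constructible sets (Theorem~\ref{Desigualdad-Bezout-constructibles:teor}), together with the trivial case $\dim(C_1)=0$ where $\bigcap_i C_i$ is a finite set of cardinality at most $\sharp(C_1)=\deg_{\rm lci}(C_1)$. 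Concretely one checks that $\deg_{\rm lci}(C_1)\deg_{\rm lci}(C_2)\le\binom{r+1}{r}\deg_{\rm lci}(C_1)\deg_{\rm lci}(C_2)^{r}$ and $\le\deg_{\rm lci}(C_1)(1+\deg_{\rm lci}(C_2))^{r}$ for all $r\ge 1$. The third bound I would then \emph{deduce} from the second, using that $\deg_{\rm lci}(C_1)\ge 1$ forces $1+\sum_{i=2}^s\deg_{\rm lci}(C_i)\le\sum_{i=1}^s\deg_{\rm lci}(C_i)=s\,\deg_{\rm av}^{(E)}(C_1,\dots,C_s)$, so raising to the power $\dim(C_1)$ gives the statement.

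\textbf{Reduction to $C_1$ irreducible.} Using sub-additivity of $\deg_{\rm lci}$ (Proposition~\ref{grado-constructibles-subaditivo:prop}) I would reduce to the case where $C_1$ is locally closed irreducible: decompose $C_1=\bigcup_\ell W_\ell$ into locally closed irreducible pieces realizing $\deg_{\rm lci}(C_1)=\sum_\ell\deg(W_\ell)$ as in Definition~\ref{grado-constructibles:def}, note $\bigcap_{i=1}^s C_i=\bigcup_\ell\bigl(W_\ell\cap\bigcap_{i=2}^s C_i\bigr)$, and observe that $\binom{s+k-1}{k}$ and $k\mapsto k$ are nondecreasing, while $\max_{j\ge2}\deg_{\rm lci}(C_j)\ge1$ and $1+\sum_{j\ge2}\deg_{\rm lci}(C_j)\ge1$; hence the irreducible case applied to each $W_\ell$ (which has $\dim(W_\ell)\le r=\dim(C_1)$) and summed over $\ell$ yields the general statement.

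\textbf{Inductive step.} Assume now $C_1=W$ locally closed irreducible of dimension $r'\le r$ and $s\ge 3$. I would peel off $C_s$ and analyse $W\cap C_s$. Since $\overline{W\cap C_s}^z$ is a closed subset of the irreducible variety $\overline{W}^z$, the uniqueness of top-dimensional locally closed components (Proposition~\ref{componentes-alta-dimension:def}) shows that any decomposition of $W\cap C_s$ as in Lemma~\ref{descomposicion-union-irreducibles-distintos:lema} has at most one piece of dimension $r'$, which is a dense open subset $U_1\cap\overline{W}^z$ of $\overline{W}^z$ contributing $\deg(\overline{W}^z)=\deg(W)\le\deg_{\rm lci}(C_1)$ to the $LCI$-degree (here I use Proposition~\ref{generaliza-gradoLCI-localmente-cerrados:prop}, $\deg_{\rm lci}=\deg_z$ on locally closed sets). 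Writing $L$ for the union of the remaining pieces, $L$ is constructible with $\dim(L)\le r'-1$ and, by B\'ezout (Theorem~\ref{Desigualdad-Bezout-constructibles:teor}), $\deg_{\rm lci}(L)\le\deg_{\rm lci}(W\cap C_s)-\deg(W)\le\deg(W)\bigl(\deg_{\rm lci}(C_s)-1\bigr)$. Then
$$\bigcap_{i=1}^s C_i=\Bigl((U_1\cap\overline{W}^z)\cap\bigcap_{i=2}^{s-1}C_i\Bigr)\ \cup\ \Bigl(L\cap\bigcap_{i=2}^{s-1}C_i\Bigr),$$
and sub-additivity reduces us to bounding the two terms by the inductive hypothesis with $s-1$ sets. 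The first term has a locally closed irreducible ``$C_1$'' of dimension $\le r$ and degree $\le\deg_{\rm lci}(C_1)$; the second has ``$C_1$'' $=L$ of dimension $\le r-1$, so one power of $\max_{j\ge2}\deg_{\rm lci}(C_j)$ (resp. one factor $1+\sum_{j\ge2}\deg_{\rm lci}(C_j)$) is freed and absorbs the factor $\deg_{\rm lci}(C_s)$ coming from $\deg_{\rm lci}(L)$. Collecting terms, bound $(1)$ closes via the Pascal-type inequality $\binom{s+r-2}{r}+\binom{s+r-3}{r-1}\le\binom{s+r-2}{r}+\binom{s+r-2}{r-1}=\binom{s+r-1}{r}$, and bound $(2)$ closes via the elementary identity, with $T:=\sum_{i=2}^{s-1}\deg_{\rm lci}(C_i)$,
$$(1+T)^{r'}+\bigl(\deg_{\rm lci}(C_s)-1\bigr)(1+T)^{r'-1}=(1+T)^{r'-1}\bigl(T+\deg_{\rm lci}(C_s)\bigr)\le\bigl(1+T+\deg_{\rm lci}(C_s)\bigr)^{r'}.$$

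\textbf{Main obstacle.} The whole difficulty is concentrated in the geometric bookkeeping of the inductive step, i.e. in making rigorous the intuition that ``only the dimension-dropping intersections cost a degree factor''. One must show that inside $W\cap C_s$ the full-dimensional stratum is a \emph{single} locally closed irreducible set of degree exactly $\deg(W)$ — which is where Proposition~\ref{componentes-alta-dimension:def} (uniqueness of top-dimensional components) and Proposition~\ref{generaliza-gradoLCI-localmente-cerrados:prop} are essential — and that the ``remainder'' $L$ simultaneously glues back with that stratum to recover $W\cap C_s$ \emph{and} admits a decomposition satisfying Lemma~\ref{descomposicion-union-irreducibles-distintos:lema} of $LCI$-degree at most $\deg_{\rm lci}(W\cap C_s)-\deg(W)$. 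Once this stratification and these two degree estimates are in place, everything else is the two displayed elementary inequalities, and the induction closes.
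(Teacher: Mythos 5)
Your proposal is correct. For the first and third bounds you follow essentially the paper's own route: the first bound is obtained by the same peel-off induction on $s$ (reduce to $C_1=W$ locally closed irreducible by sub-additivity, split the intersection of $W$ with one of the other sets into the unique top-dimensional stratum — a dense open subset of $\overline{W}^z$ of degree $\deg(W)$ — plus a remainder of strictly smaller dimension controlled by B\'ezout, and close with Pascal's rule), and the third bound is read off from the second exactly as in the paper via $1+\sum_{i\geq 2}\deg_{\rm lci}(C_i)\leq \sum_{i=1}^s\deg_{\rm lci}(C_i)$. Where you genuinely diverge is the second bound. The paper does not derive it from this induction: it adapts Heintz's cell-counting argument and proves Lemma \ref{control-celdas:lema}, which bounds, by a descending induction on dimension, the total degree of \emph{all} irreducible components arising from \emph{all} partial intersections $D\cap\bigl(\bigcap_{i\in S}V_i\bigr)$, and then specializes to the full intersection. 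You instead run the very same induction on $s$ used for the first bound; the two extra ingredients that make it close are the sharpened remainder estimate $\deg_{\rm lci}(L)\leq \deg_{\rm lci}(W\cap C_s)-\deg(W)\leq \deg(W)\bigl(\deg_{\rm lci}(C_s)-1\bigr)$ (the ``$-1$'' is available because, in any minimal $LCI$-decomposition of $W\cap C_s$, the unique top-dimensional piece already accounts for $\deg(\overline{W}^z)=\deg(W)$, by Proposition \ref{componentes-alta-dimension:def}), and the factorization $(1+T)^{r}+\bigl(\deg_{\rm lci}(C_s)-1\bigr)(1+T)^{r-1}=(1+T)^{r-1}\bigl(T+\deg_{\rm lci}(C_s)\bigr)\leq \bigl(1+T+\deg_{\rm lci}(C_s)\bigr)^{r}$. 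I have checked that this closes, including the degenerate cases $\dim(W\cap C_s)<\dim(W)$ (no top stratum, one exponent is saved directly) and $\dim(W)=0$. Your route is shorter and unifies the two upper bounds under a single induction; the paper's route is heavier but yields the stronger intermediate statement of Lemma \ref{control-celdas:lema} — a degree bound over the whole lattice of sub-intersections — which is of independent interest beyond the theorem itself.
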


First upper bound in the previous statement generalizes the main bound of Proposition 2.3 in \cite{HeintzSchnorr}, but the constant coefficient  is exponential in $s-1$ because
$$\left( {{r+s-1}\over{s-1}}\right)^{s-1} \leq {{r+s-1}\choose{r}}\leq \left( {{e(r+s-1)}\over{s-1}}\right)^{s-1},$$
where $e$ is the basis of the natural logarithm. The reader may also think that the constant is also exponential in $r$ since
$$\left( {{r+s-1}\over{r}}\right)^{r} \leq {{r+s-1}\choose{r}}\leq \left( {{e(r+s-1)}\over{r}}\right)^{r}.$$
In any case it is not the constant 1 one would expected according to Proposition 2.3. of \cite{HeintzSchnorr}.
Third upper bound is simply obtained by rewriting second upper bound since $\deg_{\rm lci}(C_1)\geq 1$ always hold. Hence, second and third upper bounds are exponential in $r$, but polynomial in $s$ which may be helpful in some applications.
Note that the left hand side inequalities for the  $\pi$-degree are an immediate consequence of Proposition \ref{relations-between-three-degrees:prop}.

The rest of the Section is devoted to prove this Theorem: First we prove Claim $i)$ in Subsection \ref{cota-primera-grado-varias-intersecciones:subsec} and, then, we prove Claim $ii)$ in Subsection \ref{cota-segunda-grado-varias-intersecciones:subsec}. Finally, some immediate applications of these discussions are exhibited in Subsection \ref{cotas-intersecciones-multiples-varios:subsec}.

\subsection{Proof of the first bound of Theorem \ref{cotas-grado-varias-intersecciones:teor}}\label{cota-primera-grado-varias-intersecciones:subsec}
We begin with the case $s=2$. Then, B\'ezout's Inequality for constructible sets yields:
$$\deg_{\rm lci}(C_1\cap C_2) \leq \deg_{\rm lci}(C_1) \deg_{\rm lci}(C_2) \leq {{r+1}\choose{r}}\deg_{\rm lci}(C_1) \deg_{\rm lci}(C_2).$$
We thus apply induction and assume that $C_1=W$ is a locally closed irreducible subset. Then, we have:
$$W\cap C_2 \cap \left( \bigcap_{i=3}^s C_i\right)= W \cap \left( \bigcap_{i=2}^s C_i\right).$$
We distinguish two cases:
\begin{itemize}
\item Assume that $\dim(W\cap C_2) <\dim (W)$. Then, applying the inductive hypothesis we have:
\begin{equation*}
\begin{split}
 \deg_{\rm lci}\left(W \cap \left( \bigcap_{i=2}^s C_i\right)\right) & \leq {{\dim(W\cap C_2) + s-2}\choose {\dim(W\cap C_2)}} \\
& \deg_{\rm lci}(W\cap C_2)  \left( \max\{\deg_{\rm lci}(C_j)\; :\; 3\leq j \leq s \}\right)^{\dim(W\cap C_2)}.
\end{split}
\end{equation*}
Now, observe that if $a\leq b$ are two positive integers, we have:
$${{a+(s-2)}\choose{a}}\leq {{b+(s-2)}\choose {b}}\leq {{b+(s-1)}\choose{b}}.$$
As $\dim(W\cap C_2) \leq \dim(W)-1\leq \dim(W)$ we have:
\begin{equation*}
\begin{split}
 \deg_{\rm lci}\left(W \cap \left( \bigcap_{i=2}^s C_i\right)\right) & \leq {{\dim(W) + s-2}\choose {\dim(W)}}\\
&\deg_{\rm lci}(W\cap C_2)  \left( \max\{\deg_{\rm lci}(C_j)\; :\; 3\leq j \leq s \}\right)^{\dim(W)-1}.
\end{split}
\end{equation*}
Moreover, by B\'ezout's Inequality for constructible sets we would have:
\begin{equation*}
\begin{split}
\deg_{\rm lci}\left(W \cap \left( \bigcap_{i=2}^s C_i\right)\right) &  \leq {{\dim(W) + s-2}\choose {\dim(W)}}\\
&\deg(W) \deg_{\rm lci}(C_2)  \left( \max\{\deg_{\rm lci}(C_j)\; :\; 3\leq j \leq s \}\right)^{\dim(W)-1}.
\end{split}
\end{equation*}
Putting everything together we conclude the wanted inequality:
\begin{equation*}
\begin{split}
\deg_{\rm lci}\left(W \cap \left( \bigcap_{i=2}^s C_i\right)\right) & \leq {{\dim(W) + s-1}\choose {\dim(W)}}\\
&\deg(W) \left( \max\{\deg_{\rm lci}(C_j)\; :\; 2\leq j \leq s \}\right)^{\dim(W)}.
\end{split}
\end{equation*}

\item Otherwise, assume that $\dim(W\cap C_2)= \dim(W)$. Let us consider a decomposition of $C_2$ as finite union of locally closed irreducible subsets:
    $$C_2= W_1 \cup W_2 \cup \cdots \cup W_m,$$
    such that the degree of $C_2$ satisfies:
    $$\deg_{\rm lci}(C_2)=\sum_{i=1}^m \deg(W_i).$$
    Up to some reordering of the $W_i$'s, as $\dim(W\cap C_2)=\dim(W)$, there must exist some integer $r$, with $1\leq r \leq m$ such that the following holds:
    \begin{itemize}
    \item For all $i$, $1\leq i \leq r$, $\dim(W\cap W_i)=\dim(W)$.
    \item For all $i$, $ r+1 \leq i \leq s$, $\dim(W\cap W_i)< \dim(W)$.
    \end{itemize}
Next, we observe that the following set is a Zariski open subset in $W$ and, hence, a locally closed irreducible set:
$$W\cap \left(\bigcup_{i=1}^r W_i\right).$$
Recall that both $W$ and $W_i$ are locally closed irreducible. Then, there are Zariski open subsets $U_1,U_2\subseteq \A^n$ and irreducible algebraic varieties $V, V_i\subseteq \A^n$ such that:
$$ W=U_1\cap V, \; W_i=U_2\cap V_i.$$
As $\dim(W)=\dim(W\cap W_i)$, then $W\cap W_i\not=\emptyset$ and, hence,
$$W\cap W_i= U_1\cap U_2 \cap V \cap V_i.$$
In particular, $U_1\cap U_2\cap V$ is a non-empty Zariski open subset in $V$ of dimension equal to $\dim(V)$. Moreover, the dimension of the intersection $U_1\cap U_2\cap V \cap V_i$ is also $\dim(V)$. Thus,
$$\dim(W\cap W_i)= \dim (U_1\cap U_2\cap V \cap V_i) = \dim(U_1\cap U_2\cap V) = \dim(V).$$
The Zariski closure of $W\cap W_i$ is then a closed subvariety of $V$ of the same dimension than $V$. As $V$ is irreducible:
$$\overline{W\cap W_i}^z = V.$$
On the other hand, it is obvious that $\overline{W\cap W_i}^z\subseteq V_i$ and, hence, we conclude
$V\subseteq V_i$. Thus, we have:
$$W\cap W_i= U_i\cap V,$$
for some open subset $U_i\subseteq \A^n$. In conclusion, we have that:
$$W\cap \left(\bigcup_{i=1}^r W_i\right)= \left(\bigcup_{i=1}^r U_i\right) \cap V,$$
and this is a locally closed set.\\
Let us then define the following two constructible subsets:
$$C_2':=\bigcup_{i=1}^r W_i, \;\; \widetilde{C_2}:=\bigcup_{j=r+1}^s W_j.$$
As $C_2'\cap W$ is a Zariski open subset of the Zariski closure of $W$, we conclude:
$$\deg(W\cap C_2') = \deg(W).$$
On the other hand, we have that:
$$\deg_{\rm lci}\left(\widetilde{C_2}\right)=\deg_{\rm lci}(C_2)-\sum_{i=1}^r \deg(W_i)\leq \deg_{\rm lci}(C_2).$$
Hence, B\'ezout's Inequality for constructible sets yields:
$$\deg_{\rm lci}\left(W\cap \widetilde{C_2}\right)\leq\deg(W)(\deg_{\rm lci}(C_2)-\sum_{i=1}^r \deg(W_i))\leq \deg(W)\deg_{\rm lci}(C_2).$$
Next, let us consider:
$$W\cap \left(\bigcap_{i=2}^sC_i\right)= \left((W\cap C_2') \cap \left(\bigcap_{i=3}^s C_i\right)\right)
\cup \left((W\cap \widetilde{C_2}) \cap \left(\bigcap_{i=3}^s C_i\right)\right).$$
So, as $\deg_{\rm lci}$ is sub-additive for constructible sets, we conclude:
$$\deg_{\rm lci}\left(W\cap \left(\bigcap_{i=2}^sC_i\right)\right) \leq I_1 + I_2,$$
where
$$I_1:=\deg_{\rm lci}\left((W\cap C_2') \cap \left(\bigcap_{i=3}^s C_i\right)\right),$$
and
$$I_2:=\deg_{\rm lci}\left((W\cap \widetilde{C_2}) \cap \left(\bigcap_{i=3}^s C_i\right)\right).$$
We bound both quantities separately:
\begin{itemize}
\item Taking $r:=\dim(W)=\dim(W\cap C_2')$ and, knowing that $\deg(W)=\deg(W\cap C_2')$,
the application of the induction hypothesis yields:
    $$I_1\leq {{r+s-2}\choose{r}} \deg(W) \left( \max\{\deg_{\rm lci}(C_j)\; :\; 3\leq j \leq s\}\right)^{r}.$$
\item On the other hand, let us consider $t:=\dim(W\cap \widetilde{C_2})\leq r-1$, knowing that
$\deg_{\rm lci}(W\cap \widetilde{C_2}) \leq \deg(W) \deg_{\rm lci}(C_2)$,
the application of the induction hypothesis yields:
$$I_2 \leq {{t+s-2}\choose{t}} \deg(W) \deg_{\rm lci}(C_2) \left( \max\{\deg_{\rm lci}(C_j)\; :\; 3\leq j \leq s\}\right)^{t}.$$
As $t\leq r-1$, we also have:
$${{t+s-2}\choose{t}} \leq {{r-1+s-2}\choose{r-1}}\leq {{r-1+ s-1}\choose{r-1}},$$
and, hence, we conclude:
$$I_2 \leq {{r+s-2}\choose{r-1}} \deg(W) \deg_{\rm lci}(C_2) \left( \max\{\deg_{\rm lci}(C_j)\; :\; 3\leq j \leq s\}\right)^{r-1}.$$

\end{itemize}
Putting the two previous inequalities in a single one, we have:
\begin{equation*}
\begin{split}
I_1+I_2 \leq &  \deg(W)\left( \max\{\deg(C_j)\; :\; 3\leq j \leq s\}\right)^{r-1} \\
& \left({{r+s-2}\choose{r}} \max\{\deg_{\rm lci}(C_j) \; :\; 3\leq j \leq s\}+{{r+s-2}\choose{r-1}} \deg_{\rm lci}(C_2)\right).
\end{split}
\end{equation*}

Thus,

\begin{equation*}
\begin{split}
I_1+I_2 \leq & \deg(W)\left( \max\{\deg_{\rm lci}(C_j)\; :\; 3\leq j \leq s\}\right)^{r-1}  \\
& \max\{\deg_{\rm lci}(C_i)\; :\; 2\leq i \leq s \}  \left({{r+s-2}\choose{r}} +{{r+s-2}\choose{r-1}} \right).
\end{split}
\end{equation*}

Using Pascal's Triangle equality we obtain:
$$I_1+I_2 \leq \deg(W)\left( \max\{\deg_{\rm lci}(C_j)\; :\; 2\leq j \leq s\}\right)^{r}
{{r+s-1}\choose{r}}, $$
as wanted.
\end{itemize}

In order to conclude the general inequality, assume that $C_1$ admits a decomposition into locally closed irreducible varieties:
$$C_1:=V_1\cup V_2 \cup \cdots \cup V_t,$$
such that
\begin{equation}\label{grado-componentes-interseccion-multiple:eqn}
\deg_{\rm lci}(C_1)= \sum_{j=1}^t \deg(V_j).
\end{equation}
Since $\deg_{\rm lci}$ is sub-additive, we obtain:
$$\deg_{\rm lci}\left( C_1 \cap \left( \bigcap_{i=2}^s C_i \right)\right)
\leq \sum_{j=1}^t \deg_{\rm lci}\left( V_j  \cap \left( \bigcap_{i=2}^s C_i \right)\right).$$
Applying the previous discussion, taking $r_j=\dim(V_j)\leq \dim(V)$ we conclude:
$$\deg_{\rm lci}\left( C_1 \cap \left( \bigcap_{i=2}^s C_i \right)\right)
\leq \sum_{j=1}^t {{r_j+s-1}\choose {r_j}} \deg(V_j) \left(\max\{ \deg_{\rm lci}(C_i)\; : \; 2\leq i \leq s\}\right)^{r_j}.$$
As $r_j\leq r$ for each $j$, $1\leq j \leq t$, we have:
$${{r_j+s-1}\choose {r_j}}\leq {{r+s-1}\choose {r}},$$
and, hence, we deduce:
$$\deg_{\rm lci}\left( C_1 \cap \left( \bigcap_{i=2}^s C_i \right)\right)
\leq {{r+s-1}\choose {r}} \left(\sum_{j=1}^t \deg(V_j)\right) \left(\max\{ \deg_{\rm lci}(C_i)\; : \; 2\leq i \leq s\}\right)^{r}.$$
Using Identity (\ref{grado-componentes-interseccion-multiple:eqn}) we conclude the statement.

\subsection{Proof of the second bound of Theorem \ref{cotas-grado-varias-intersecciones:teor}}\label{cota-segunda-grado-varias-intersecciones:subsec}

Here, we adapt some of the proof strategies of Section 3 of \cite{Heintz83}, to produce a proof for the degree of the intersection of several constructible sets. Before going into the proof, we need of some preparatory results.

\subsubsection{Preparatory results}

\begin{definition} Let $C\subseteq \A^n$ be a constructible set and $V\subseteq \A^n$ an irreducible variety. We say that $V$ is an irreducible component of $C$ with respect to the $LCI$-degree of $C$ if there is some minimum $LCI$-degree decomposition of $C$ into locally closed irreducible sets:
$$C:=W_1\cup \cdots \cup W_s,$$
such that there exists $i$ with $V=\overline{W_i}^z$.
\end{definition}

Observe that if $C$ is locally closed or globally equi-dimensional, then the class $\scrD(C)$ of all irreducible components of $C$ with respect to the degree of $C$ is a finite set and it is completely determined by $C$. This is not true for general constructible sets  as we have shown in Remark \ref{no-unicidad-de-la-descomposicion-minimal:rk}.

\begin{lemma}\label{control-celdas:lema} Let $D\subseteq \A^n$ be a locally closed subset.  Let $\scrV_1,\ldots, \scrV_s$ be several finite families of locally closed sets. Let $\scrW$ be the set of all
locally closed subsets of $\A^n$ that may be defined as intersections with $D$ of some locally closed sets chosen according to the list $\scrV_1,\ldots, \scrV_s$. Namely, let $\scrW$ be the set of locally closed sets given by the following identity:
$$\scrW:=\{D\cap\left(\bigcap_{i\in S} V_{i}\right)\; : \; S\subseteq\{1,\ldots, s\}, \; V_{i}\in \scrV_i\}$$
Let $\scrC$ be the set of algebraic varieties defined by the following identity:
$$\scrC:=\{ V \; : \;  {\hbox {\rm $V$ irreducible variety and}}\;  \exists W\in \scrW, {\hbox {\rm a non-empty open subset of $V$  is a component of $W$}}\}.$$
Then, we have:
\begin{equation}\label{cota-superior-grado-celdas:eqn}
\sum_{V \in \scrC}\deg(V) \leq  \deg(D) \left(1+\sum_{V\in \bigcup_{i=1}^s \scrV_i} \deg(V) \right)^{\dim(D)}\leq \deg(D)\left(1+ \sum_{i=1}^s \sum_{V\in \scrV_i} \deg(V) \right)^{\dim(D)}.
\end{equation}
Moreover, there is a degree preserving bijection between $\scrC$ and the following set:
\begin{equation}\label{me-quedo-con-componentes:eqn}
\scrD:=\{ C \; : \;  \exists W\in \scrW, {\hbox {\rm  $C$ is an irreducible component $\overline{W}^z$}}\}.
\end{equation}
and the same upper bound holds:
\begin{equation}\label{cota-superior-grado-celdas:eqn}
\sum_{C\in\scrD} \deg(C)  \leq  \deg(D) \left(1+\sum_{V\in \bigcup_{i=1}^s \scrV_i} \deg(V) \right)^{\dim(D)}\leq \deg(D)\left(1+ \sum_{i=1}^s \sum_{V\in \scrV_i} \deg(V) \right)^{\dim(D)}.
\end{equation}
\end{lemma}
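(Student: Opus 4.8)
The plan is to reduce the statement to a statement about the degrees of the intersections that occur in $\scrW$, and then to bound those degrees by an inductive argument on the number $s$ of families, closely following the proof of Proposition 2.3 of \cite{HeintzSchnorr} (adapted in Section 3 of \cite{Heintz83}) but keeping careful track of \emph{all} irreducible components that appear along the way, not just those of highest dimension.

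First I would dispose of the ``moreover'' part, since it is purely formal: every locally closed irreducible set $W \in \scrW$ is dense in its Zariski closure $\overline{W}^z$, and by the remarks following the definition of locally closed sets, the locally closed irreducible components of $W$ are in degree-preserving bijection with the irreducible components of $\overline{W}^z$ (each irreducible component of $W$ has the same degree as its Zariski closure, by Proposition \ref{propiedades-basicas-grado-lc:prop}(i)). Hence $\scrC$ and $\scrD$ carry the same total degree and it suffices to prove the first displayed inequality, the one for $\sum_{V\in\scrC}\deg(V)$. The second inequality in each display (replacing $\sum_{V\in\bigcup\scrV_i}\deg V$ by $\sum_i\sum_{V\in\scrV_i}\deg V$) is trivial since the union of the $\scrV_i$ has total degree at most the sum of the totals.

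For the main inequality I would argue by induction on $s$. When $s=0$ the only element of $\scrW$ is $D$ itself, its irreducible components have total degree $\deg(D)$ (by definition of the degree of a locally closed set), and the right-hand side is $\deg(D)\cdot 1^{\dim D}$, so equality holds. For the inductive step, I would first reduce to the case where $D$ is locally closed \emph{irreducible}: decompose $D$ into its locally closed irreducible components $D = D_1\cup\cdots\cup D_t$ with $\sum_j\deg(D_j)=\deg(D)$ and $\dim(D_j)\le\dim(D)$; every component $V$ counted on the left of \eqref{cota-superior-grado-celdas:eqn} is a component of some $W\in\scrW$, hence a component of $D_j\cap(\bigcap_{i\in S}V_i)$ for some $j$, so the total degree over $\scrC$ is at most $\sum_j$ of the corresponding quantity for $D_j$, and it is enough to prove the bound for each irreducible $D_j$ (using $\dim D_j\le\dim D$ and monotonicity of $(1+x)^{k}$ in $k$ when $x\ge 0$). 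Now, with $D$ irreducible, I would split off the last family $\scrV_s$: for each $V_s\in\scrV_s$ either $\dim(D\cap V_s)<\dim(D)$ or $D\cap V_s$ is a dense open subset of $D$ (so $\deg(D\cap V_s)=\deg(D)$ and $\dim = \dim D$). Grouping the ``full-dimensional'' members of $\scrV_s$ into a single locally closed set $D'\subseteq D$ with $\deg(D')=\deg(D)$ and the rest into the family $\scrV_s'$, one gets that every $W\in\scrW$ is either of the form $W'\cap(\bigcap V_i)$ with $W'$ a full-dimensional cell (handled by the induction hypothesis applied to the $s-1$ families $\scrV_1,\dots,\scrV_{s-1}$ over the base $D'$, whose degree is $\deg D$ and whose dimension is $\dim D$), or it is contained in $D\cap V_s$ for some $V_s\in\scrV_s'$, whose Zariski closure has dimension $<\dim D$ and, by B\'ezout for locally closed sets (Theorem \ref{Bezout-localmente-cerrados:teor}), degree at most $\deg(D)\deg(V_s)$; decomposing this smaller-dimensional locally closed set into its irreducible components and applying the induction hypothesis again (now over these lower-dimensional bases) yields the second contribution. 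Adding the two contributions and bounding $\big(1+\sum_{i<s}\sum_{V\in\scrV_i}\deg V\big)^{\dim D-1}\cdot\big(\deg D + \sum_{V\in\scrV_s}\deg V\cdot(\text{stuff})\big)$ by $\deg(D)\big(1+\sum_{i\le s}\sum_{V\in\scrV_i}\deg V\big)^{\dim D}$ via the binomial-type inequality $a\,b^{k-1}(c+d)\le (b+d')^k$-style estimates finishes the step.

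The main obstacle I anticipate is the bookkeeping in the inductive step: one must be sure that \emph{every} irreducible variety $V$ that can arise as (the closure of) a component of \emph{some} cell $D\cap\bigcap_{i\in S}V_i$ — over all subsets $S$ and all choices $V_i\in\scrV_i$, including $S$ not containing $s$ — is accounted for exactly once (or at least not under-counted) when one performs the split along $\scrV_s$, and that the dimension drop in the ``transverse'' case is genuine so that the exponent decreases from $\dim(D)$ to $\dim(D)-1$. The clean way to handle this is to observe that a cell not involving $V_s$ is simultaneously a cell over $D'$ (since $D'\subseteq D$ is only an \emph{open} subset, one must instead note that such a cell, intersected with $D$, either already lies in $D'$ up to lower-dimensional pieces or contributes lower-dimensional components, which are again captured by the transverse branch). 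Making this dichotomy airtight — rather than the arithmetic at the end, which is routine Pascal's-triangle manipulation as in Subsection \ref{cota-primera-grado-varias-intersecciones:subsec} — is where the real care is needed.
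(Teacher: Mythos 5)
Your overall strategy (induction on the number $s$ of families, splitting off $\scrV_s$ according to whether $D\cap V_s$ is dense in the irreducible base $D$ or drops dimension) is genuinely different from the paper's proof. The paper instead stratifies $\scrD$ by dimension and runs a downward induction on dimension: it shows that every $C\in\scrD$ of dimension $k<\dim(D)$ is an irreducible component of $\overline{C^*\cap V}^z$ for some strictly higher-dimensional $C^*\in\scrD$ and some $V\in\bigcup_i\scrV_i$, and then bounds the total degree in dimensions $\geq k$ by $\deg(D)(1+R)^{\dim(D)-k}$, where $R:=\sum_{V\in\bigcup_i\scrV_i}\deg(V)$, applying B\'ezout to each fibre of this assignment. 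Your route can be made to work, but as written the dichotomy in your inductive step has a genuine gap.

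The gap: cells $D\cap\left(\bigcap_{i\in S}V_i\right)$ with $s\notin S$ are handled by neither of your two branches. They are not cells over the base $D'$ (only over $D$, and $D'$ is in general a proper open subset of $D$), and they are not contained in $D\cap V_s$ for any transverse $V_s\in\scrV_s'$. Your proposed patch --- that their components avoiding $D'$ are ``captured by the transverse branch'' --- fails: such a component lies in $D\setminus D'$, a closed subset of $D$ cut out by the complements of the \emph{open} parts of the full-dimensional members of $\scrV_s$; it is unrelated to the sets $D\cap V_s$ with $V_s\in\scrV_s'$, and its degree is not controlled by $\deg(D)$ and the $\deg(V)$'s (the complement of an open set $U_s$ has degree unrelated to $\deg(V_s)$), so it cannot be absorbed into the transverse contribution. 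The fix is to discard $D'$ altogether: apply the induction hypothesis to the base $D$ itself with the $s-1$ families $\scrV_1,\ldots,\scrV_{s-1}$, and observe that if $D\cap V_s$ is dense in the irreducible $D$ then $D\cap V_s=D\cap U_s$ with $U_s$ open (since full dimension forces $\overline{D}^z\subseteq\overline{V_s}^z$), so intersecting any cell with such a $V_s$ only passes to an open subset of it and creates no new Zariski closures in $\scrC$; only the transverse $V_s$ then require the second branch, and your closing estimate $\deg(D)(1+R_{<s})^{\dim(D)}+\deg(D)\,R_s\,(1+R_{<s})^{\dim(D)-1}\leq\deg(D)(1+R)^{\dim(D)}$, with $R_{<s}:=\sum_{i<s}\sum_{V\in\scrV_i}\deg(V)$ and $R_s:=\sum_{V\in\scrV_s}\deg(V)$, goes through.
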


\begin{proof} First of all, as the degree is a sub-additive function, it will be enough to prove the Lemma for $D$ locally closed irreducible. We assume it from now on.

Secondly, observe that all elements in $\scrW$ are locally closed sets. Hence, for every $W\in \scrW$, its locally closed irreducible components are in bijection with the irreducible components of $\overline{W}^z$. This bijection is given by the fact that locally closed irreducible components are simply Zariski open subsets on irreducible components of $\overline{W}^z$ and the degree is preserved. Conversely, irreducible components of $\overline{W}^z$ are simply the Zariski closures of the locally closed irreducible components of $W$. We proceed by proving the upper bound of Identity (\ref{cota-superior-grado-celdas:eqn}).

  Let $d=\dim(D)$ be the dimension of $D$. Each  $W\in\scrW$ and each $C\in\scrD$ also have dimension smaller than $d$.  For each $k$, $0\leq k \leq n$, let us define $\scrD(k)$ as the set of elements in  $\scrD$ of dimension $k$, i.e.
$$\scrD(k):=\{ C \in \scrD\; : \; \dim(C)= k\},$$
where $\scrD(r)= \emptyset$  for $d+1\leq r \leq n$. As $D$ is locally closed irreducible,
we have $\scrD(d)=\{ \overline{D}^z\}$.

For  $k< d$  we have the following Claim:
\begin{claim}
For each $C\in \scrD(k)$ there exist $C^*\in\scrD$ and $W\in \scrV_1\cup \cdots \cup \scrV_s$ such that the following properties hold:
\begin{enumerate}
\item The dimension of  $C^*$ is greater than $k+1$ (i.e. $C^*\in \bigcup_{r=k+1}^d \scrD(r)$).
\item The irreducible variety $C$ is an irreducible component of the Zariski closure of the intersection $C^*\cap W$.
\end{enumerate}
\end{claim}

\begin{proof-claim}
Let $C\in \scrD(k)$ be an irreducible component and let $S\subseteq \{ 1,\ldots, s\}$ be a subset of minimal cardinal such that a Zariski open subset $U\cap C$ of $C$ is a locally closed irreducible component of some intersection:
$$D\cap \left( \bigcap_{i\in S} V_i\right),$$
where $V_i\in \scrV_i$. As $k<d$, $S$ cannot be the empty set. Let us define
 $S':=S\setminus \{ j\}$ for some $j\in S$. Then,  $U\cap C$ must be a locally closed irreducible component of
 $$(D\cap W')\cap V_j,$$
where $W':=\left( \bigcap_{i\in S'} V_i\right)$.

Then, there exists a locally closed irreducible component  $C^*$ of $W'$ such that some Zariski open subset of $C$  is a locally closed irreducible component of  $C^*\cap V_j$. Moreover, the dimension of $C^*$ must be bigger than $k+1$ since, otherwise, $C^*$ and $C$ would agree on a Zariski open subset and, hence, $S$ would not be of minimal cardinal. This proves the Claim. \end{proof-claim}

This Claim allows us to define the following mapping:
$$\begin{matrix}
\Phi_k: &\scrD(k)& \longrightarrow & \left(\bigcup_{r=k+1}^d\scrD(r)\right)\times \left( \bigcup_{i=1}^s \scrV_i\right)\\
& C & \longmapsto & (C^*, V)
\end{matrix},$$
given by the following rule:\\
To every $C\in\scrD(k)$ we associated a pair $(C^*, V)$ such that a non-empty Zariski open subset of $C$ is a locally closed irreducible component of $C^*\cap V$.

Let us define:
$$D(k):=\sum_{r=k}^d \sum_{C\in \scrD(r)}\deg(C),$$
we then prove by induction on $m=d-k$ the following inequality:
\begin{equation}\label{paso-final-cota-grado-intersecciones-menos:eqn}
D(k)=D(d-m)\leq \deg(D) \left(1+\sum_{V\in \bigcup_{i=1}^s \scrV_i} \deg(V) \right)^{m}.
\end{equation}
The case $m=0$ is obviously true since $\scrD(d)=\{\overline{D}^z\}$ and $D$ is locally closed. We have $D(d)=\deg(D)$.\\
Assume now that $m\geq 1$ and we have the following inequality:
$$\sum_{C\in \scrD(k)} \deg(C) \leq \sum_{(C^*, V)\in \left(\bigcup_{r=k+1}^d \scrD(r)\right)\times \left( \bigcup_{i=1}^s \scrV_i\right)} \; \sum_{C\in \Phi_k^{-1}(C^*, V)} \deg(C).$$
As all involved subsets are locally closed, for each $C^* \in \left(\bigcup_{r=k+1}^d \scrD(r)\right)$ and for each
$V\in \left( \bigcup_{i=1}^s \scrV_i\right)$, from Theorem \ref{Bezout-localmente-cerrados:teor} we obtain:
$$\sum_{C\in \Phi_k^{-1}(C^*, V)} \deg(C)\leq \deg(C^*)\deg(V).$$
Therefore, we have:
$$\sum_{C\in \scrD(k)} \deg(C) \leq \sum_{(C^*, V)\in \left(\bigcup_{r=k+1}^d \scrD(r)\right)\times \left( \bigcup_{i=1}^s \scrV_i\right)} \deg(C^*)\deg(V).$$
Rearranging the sums, we get:
$$\sum_{C\in \scrD(k)} \deg(C) \leq \left(\sum_{C^*\in \left(\bigcup_{r=k+1}^d \scrD(r)\right)} \deg(C^*)\right)
\left(\sum_{ V\in  \left( \bigcup_{i=1}^s \scrV_i\right)}\deg(V)\right)=D(k+1) R,$$
where $R=\sum_{ V\in  \left( \bigcup_{i=1}^s \scrV_i\right)}\deg(V)$.
As $D(k)= D(k+1) + \sum_{C\in \scrD(k)} \deg(C)$, we finally conclude:
$$D(k)= D(d-m) \leq D(d-(m-1))R+ D(d-(m-1))= D(k+1) (R+1).$$
Applying the inductive hypothesis one concludes Inequality (\ref{paso-final-cota-grado-intersecciones-menos:eqn}) and hence the proof of the Lemma.\end{proof}

\subsubsection{The wanted proof of the second upper bound for the degree in Theorem \ref{cotas-grado-varias-intersecciones:teor}.}

We may assume that $C_1$ is a constructible set. If the result is true in the locally closed irreducible case, we can decompose $C_1$ as a finite union of locally closed irreducible sets as in Lemma \ref{descomposicion-union-irreducibles-distintos:lema}:
$$C_1:=W_{1} \cup \cdots \cup W_{s},$$
that minimize the degree, i.e.:
$$\deg_{\rm lci}(C_1):= \sum_{i=1}^s \deg(W_i).$$
As the degree is sub-additive (see Proposition \ref{grado-constructibles-subaditivo:prop}) and $\dim(C_1)=\max\{\dim(W_i)\; : \; 1\leq i \leq s\}$, we would conclude the second upper bound from the case of locally closed irreducible sets by the following chain of inequalities:
$$\deg_{\rm lci}\left(C_1\cap \left(\bigcap_{i=2}^s C_i\right)\right)\leq \sum_{i=1}^s \deg_{\rm lci}\left(W_i\cap \left(\bigcap_{i=2}^s C_i\right)\right)\leq$$
$$\leq \sum_{i=1}^s \deg\left(W_i\right)\left(1+\sum_{i=1}^s\deg_{\rm lci}(C_i)\right)^{\dim(W_i)}\leq$$
 $$\leq \left(\sum_{i=1}^s \deg(W_i)\right) \left(1+\sum_{i=1}^s\deg_{\rm lci}(C_i)\right)^{\dim(C_1)} = \deg_{\rm lci}(C_1) \left(1+\sum_{i=1}^s\deg_{\rm lci}(C_i)\right)^{\dim(C_1)},$$
which proves the Proposition for any constructible set $C_1$. \\

We denote by $d:=\dim(C_1)$ the dimension of $C_1$ and we assume that $C_1$ is locally closed irreducible.
We also consider for each constructible set $C_i$, $2\leq i \leq s$, a decomposition according to Lemma
\ref{descomposicion-union-irreducibles-distintos:lema}:
$$C_i:=W_{i,1}\cup\ldots \cup W_{i,s(i)},$$
where each $W_{i, j}$ is a Zariski open subset of an irreducible variety that we denote by $V_{i,j}$. Assume also that these decompositions minimize the degree of the $C_i$'s, i.e.
$$\deg_{\rm lci}(C_i):=\sum_{i=1}^{s(i)} \deg(W_{i,j}) = \sum_{i=1}^{s(i)} \deg(V_{i,j}), \: 2 \leq i \leq s.$$
We now introduce classes of locally closed irreducible sets $\scrV_1, \ldots, \scrV_s$ given by the following identities:
$$\scrV_i:=\{ W_{i,1}, \ldots, W_{i,s(i)}\}, \; 2\leq i \leq s.$$
We define the class $\scrV$ of locally closed sets:
$$\scrV:=\{ W\; :\; \forall i, \; 2\leq i \leq s,\;  \exists W_i\in \scrV_i, \;
W=C_1\cap\left(\bigcap_{i=2}^s W_i\right)\}.$$
And, finally, we define the class $\scrC$  of the irreducible algebraic varieties given by the following equality:
$$\scrC:=\{ C \; : \; {\hbox {\rm $C$ is irreducible}},\; \exists W\in \scrV, {\hbox {\rm a non-empty open subset of $C$ is a component of $W$}}\}.$$
For each $C\in \scrC$, let $U_C\subseteq \A^n$ be a Zariski open subset, maximal with the property  $\emptyset \not= U_C\cap C \subseteq \left(C_1\cap \left(\bigcap_{i=2}^s C_i\right)\right)$.
\begin{claim}
With these notations, we have:
$$C_1\cap \left(\bigcap_{i=2}^s C_i\right)=  \bigcup_{C\in \scrC} U_C\cap C.$$
\end{claim}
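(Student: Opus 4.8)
The plan is to prove the displayed set equality by double inclusion. One inclusion is immediate: for every $C\in\scrC$ the set $U_C$ was chosen to be a Zariski open subset of $\A^n$ with $\emptyset\neq U_C\cap C\subseteq C_1\cap\left(\bigcap_{i=2}^s C_i\right)$, so the union $\bigcup_{C\in\scrC}U_C\cap C$ is contained in $C_1\cap\left(\bigcap_{i=2}^s C_i\right)$ without any further work.

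For the reverse inclusion I would fix an arbitrary point $x\in C_1\cap\left(\bigcap_{i=2}^s C_i\right)$ and exhibit some $C\in\scrC$ with $x\in U_C\cap C$. Since $x\in C_i$ and $C_i=W_{i,1}\cup\cdots\cup W_{i,s(i)}$, for each $i$ with $2\le i\le s$ one can pick an index $j_i$ with $x\in W_{i,j_i}$. The set
$$W:=C_1\cap\left(\bigcap_{i=2}^s W_{i,j_i}\right)$$
then lies in $\scrV$, contains $x$ (so it is non-empty), is locally closed (a finite intersection of locally closed sets), and satisfies $W\subseteq C_1\cap\left(\bigcap_{i=2}^s C_i\right)$ because $W_{i,j_i}\subseteq C_i$.

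Next I would write $W=O\cap\overline{W}^z$ with $O\subseteq\A^n$ Zariski open, which is possible precisely because $W$ is locally closed. As recalled after Lemma~\ref{descomposicion-union-irreducibles-distintos:lema}, $W$ is dense in $\overline{W}^z$, so every irreducible component of $\overline{W}^z$ meets $W$; consequently, for each irreducible component $V$ of $\overline{W}^z$ the set $O\cap V$ is a non-empty Zariski open subset of $V$ and is a locally closed irreducible component of $W\in\scrV$, which means $V\in\scrC$. Since $x\in W\subseteq\overline{W}^z$, the point $x$ belongs to at least one irreducible component $C$ of $\overline{W}^z$, and by the previous sentence $C\in\scrC$. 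Finally $O\cap C\subseteq O\cap\overline{W}^z=W\subseteq C_1\cap\left(\bigcap_{i=2}^s C_i\right)$ and $x\in O\cap C$, so $O$ witnesses the containment property defining $U_C$; since a union of two open sets both having this property again has it, maximality of $U_C$ forces $O\subseteq U_C$, whence $x\in U_C\cap C$, as desired.

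I expect the proof to be short and essentially formal; the only two points deserving care are (i) the passage between irreducible components of $\overline{W}^z$ and locally closed irreducible components of $W$, which rests on density of $W$ in $\overline{W}^z$ together with the irredundancy of the irreducible decomposition, and (ii) the trivial but necessary observation that the defining containment property of $U_C$ is stable under finite unions of open sets, so that maximality of $U_C$ may legitimately be invoked.
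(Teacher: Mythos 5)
Your proposal is correct and follows essentially the same route as the paper: the forward inclusion is read off from the defining property of $U_C$, and the reverse inclusion selects for each $x$ a tuple $W_{i,j_i}\in\scrV_i$ containing $x$, passes to the locally closed set $W=C_1\cap\bigl(\bigcap_{i=2}^s W_{i,j_i}\bigr)\in\scrV$, and locates $x$ in a locally closed irreducible component of $W$. You merely spell out the last step (that the witnessing open set $O$ is absorbed into $U_C$ by maximality, which the paper leaves implicit), and that extra care is welcome but not a different argument.
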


\begin{proof-claim}
For each $C\in \scrC$, there exists a Zariski open subset $U_0\subseteq \A^n$ such that $U_0\cap C\not=\emptyset$ and $U_0\cap C$ is a locally closed irreducible component of a locally closed subset of the following form:
$$C_1\cap \left(W_2 \cap \cdots \cap W_s\right),$$
where $W_i\in \scrV_i$. Thus, $U_0\cap C\subseteq \left(C_1\cap \left(\bigcap_{i=2}^s C_i\right)\right)$ and, by the definition of  $U_C$ we conclude $U_C\cap C\subseteq  \left(C_1\cap \left(\bigcap_{i=2}^s C_i\right)\right)$.\\
On the other hand, fix some point $x\in \left(C_1\cap \left(\bigcap_{i=2}^s C_i\right)\right)$. Since the equality
$$C_1\cap \left(\bigcap_{i=2}^s C_i\right) = \bigcup_{(W_2,\ldots, W_s)\in \scrV_1\times \cdots \times \scrV_s} C_1\cap\left(\bigcap_{i=2}^s W_i\right),$$
holds, we conclude that there exist $W_2\in \scrV_2, \ldots, W_s\in \scrV_s$ such that $x\in \left(C_1\cap \left(\bigcap_{i=2}^s W_i\right)\right)$. In particular, $x$ must belong to some locally closed irreducible component of  $\left(C_1\cap \left(\bigcap_{i=1}^s W_i\right)\right)$ and, hence, there must exist some  $C\in \scrC$ such that $x\in U_C\cap C$.\end{proof-claim} \\
By Proposition \ref{grado-constructibles-subaditivo:prop} ($LCI$-degree is sub-additive), we conclude:
\begin{equation}\label{primer-paso-hacia-componentes:eqn}
\deg_{\rm lci}\left(C_1\cap \left(\bigcap_{i=2}^s C_i\right)\right)\leq \sum_{C\in \scrC}\deg(C).
\end{equation}
Now we consider the family $\scrW$ of locally closed sets (as that of Lemma \ref{control-celdas:lema} above) given by the following identity:
$$\scrW:=\{ W\; :\; \exists S\subseteq \{ 2, \ldots, s\},  \; \forall i\in S, \; \exists W_i\in \scrV_i, \; W=C_1\cap\left(\bigcap_{i\in S} W_i\right)\}.$$
We also define a new family of irreducible varieties:
$$\scrD:=\{ C \; : \; {\hbox {\rm $C$ is irreducible and}},\; \exists W\in \scrW, {\hbox {\rm a non-empty open subset of $C$ is a component of $W$}}\}.$$
Obviously $\scrC\subseteq \scrD$ and, because of Lemma \ref{control-celdas:lema}, we conclude:
$$\sum_{C\in \scrC}\deg(C)\leq \sum_{C\in \scrD}\deg(C)\leq \deg(C_1)\left(1+ \sum_{i=2}^s \sum_{V\in \scrV_i} \deg(V) \right)^{\dim(C_1)}.$$
As we have
$\deg_{\rm lci}(C_i):=\sum_{V\in \scrV_i} \deg(V)$, according to the Inequality (\ref{primer-paso-hacia-componentes:eqn}) we finally conclude the second upper bound of Theorem \ref{cotas-grado-varias-intersecciones:teor}:
$$\deg_{\rm lci}\left(\bigcap_{i=1}^s C_i\right)\leq \deg(C_1) \left(1+ \sum_{i=2}^s \deg_{\rm lci}(C_i)\right)^{\dim(C_1)}.$$

\subsection{A couple of immediate applications to show bounds for multiple intersections of constructible sets}\label{cotas-intersecciones-multiples-varios:subsec}
\begin{example}[{\bf Counting $\F_q-$rational points in constructible sets}]
Let $\F_q$ be a finite field and $\overline{\F_q}$ be its  algebraic closure. For each constructible subset $C\subseteq \A^n (\overline{\F_q})$, we denote by $C_{\F_q}:=C\cap \F_q^n$ the set of its $\F_q-$rational points. The following generalizes to constructible sets a classical result for hyper-surfaces due to \O. Ore (cf. \cite{Ore}).

\begin{corollary}\label{comptage-des-points-rationels:corol}
With the above notations, for every constructible subset  $C\subseteq \A^n (\overline{\F_q})$, the number of $\F_q-$rational points satisfies:
$$\sharp\left(C_{\F_q}\right)= \sharp\left(C\cap \F_q^n\right)\leq \deg_{\rm lci}(C) q^{\dim(C)}.$$
In particular, for every non-zero polynomial $f\in \overline{\F_q}[X_1,\ldots, X_n]$, whose degree satisfies $\deg(f)\leq q-1$, there is some point $x\in \F_q^n$ such that $f(x)\not=0$. Moreover, the number of non-zeros of $f$ in $\F_q^n$ satisfies:
$$\sharp\{ x\in \F_q^n\; :\; f(x)\not=0\} \geq (q-\deg(V(f))q^{n-1}.$$

\end{corollary}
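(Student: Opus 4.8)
The plan is to realise $\F_q^n$ as the common zero locus of $n$ hypersurfaces, each of degree $q$, and then feed this into the extension of Proposition 2.3 of \cite{HeintzSchnorr} established above as Proposition \ref{grado-constructible-cerrados-multiple:prop}, which is precisely designed to keep the exponent equal to the dimension of the ambient constructible set rather than to the codimension of the cut. Concretely, one uses the identity $\F_q^n = V_\A(X_1^q - X_1, \ldots, X_n^q - X_n) \subseteq \A^n(\overline{\F_q})$, which holds because $T^q - T = \prod_{a\in\F_q}(T-a)$ has root set precisely $\F_q$ in $\overline{\F_q}$. For each $i$, set $C_i := V_\A(X_i^q - X_i)$; this is a closed (hence locally closed) set, and since $X_i^q - X_i$ is square-free, Proposition \ref{propiedades-basicas-grado-lc:prop} gives $\deg(C_i) = q$, so that $\deg_{\rm lci}(C_i) = \deg_z(C_i) = q$ by Corollary \ref{generaliza-grado-localmente-cerrados:prop}.

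Since $C_{\F_q} = C\cap \F_q^n = C \cap \bigl(\bigcap_{i=1}^n C_i\bigr)$, applying Proposition \ref{grado-constructible-cerrados-multiple:prop} with $\Omega = C$ and the $n$ locally closed sets $C_1,\ldots,C_n$ yields
$$\deg_{\rm lci}(C_{\F_q}) \leq \deg_{\rm lci}(C)\,\bigl(\max\{\deg_{\rm lci}(C_i) : 1\leq i\leq n\}\bigr)^{\dim(C)} = \deg_{\rm lci}(C)\, q^{\dim(C)}.$$
To finish the first assertion I would note that $C_{\F_q}$ is a finite set, hence closed, hence in particular locally closed, so Corollary \ref{generaliza-grado-localmente-cerrados:prop} together with Proposition \ref{propiedades-basicas-grado-lc:prop} gives $\deg_{\rm lci}(C_{\F_q}) = \deg_z(C_{\F_q}) = \sharp(C_{\F_q})$; substituting this into the displayed inequality produces $\sharp(C_{\F_q}) \leq \deg_{\rm lci}(C)\, q^{\dim(C)}$.

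For the ``in particular'' consequences I would apply this bound to the (closed, hence constructible) set $C := V_\A(f)$, where one may assume $f$ non-constant since the constant case is vacuous. Then $V_\A(f)$ is a hypersurface of pure dimension $n-1$, and being closed it satisfies $\deg_{\rm lci}(V_\A(f)) = \deg(V_\A(f)) \leq \deg(f)$ by Proposition \ref{propiedades-basicas-grado-lc:prop} and Corollary \ref{generaliza-grado-localmente-cerrados:prop}. Hence the number of zeros of $f$ in $\F_q^n$ is at most $\deg(V_\A(f))\, q^{n-1}$, and subtracting from $\sharp(\F_q^n) = q^n$ gives
$$\sharp\{x\in\F_q^n : f(x)\neq 0\} \geq q^n - \deg(V_\A(f))\, q^{n-1} = \bigl(q - \deg(V_\A(f))\bigr) q^{n-1}.$$
When $\deg(f)\leq q-1$ we have $\deg(V_\A(f)) \leq \deg(f) < q$, so the right-hand side is at least $q^{n-1}\geq 1$, which in particular forces $f$ to take a nonzero value somewhere on $\F_q^n$.

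I do not expect a genuine obstacle here: the only points needing a word of care are the two passages from an $LCI$-degree to an elementary count — $\deg_{\rm lci}(V_\A(X_i^q-X_i)) = q$ and $\deg_{\rm lci}(C_{\F_q}) = \sharp(C_{\F_q})$ — both of which follow at once from the coincidence of $\deg_z$, $\deg_\pi$ and $\deg_{\rm lci}$ on locally closed sets (Corollary \ref{generaliza-grado-localmente-cerrados:prop}), so that the whole argument reduces to a single application of Proposition \ref{grado-constructible-cerrados-multiple:prop}.
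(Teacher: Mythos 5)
Your proposal is correct and follows essentially the same route as the paper: the same realisation of $\F_q^n$ as $\bigcap_{i=1}^n V_\A(X_i^q-X_i)$, the same single application of Proposition \ref{grado-constructible-cerrados-multiple:prop}, and the same identification of degree with cardinality for the resulting zero-dimensional set. The only difference is that you spell out the square-freeness justification for $\deg(V_\A(X_i^q-X_i))=q$, which the paper leaves implicit.
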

\begin{proof} Firstly, let us consider for every  $j$, $1\leq j \leq n$, the algebraic varieties:
$$W_j:=\overline{\F_q}^{j-1}\times \{ x_j\in \overline{\F_q}\; : \; x_j^{q}-x_j=0\}\times \overline{\F_q}^{n-j}\subseteq \A^n(\overline{\F_q}).$$
Each $W_j\subseteq \A^n(\overline{\F_q})$ is a hyper-surface of degree  $q$ and we obviously have:
$$\F_q^n= \bigcap_{j=1}^n W_j.$$
As $W_1,\ldots, W_n$ are algebraic varieties of degree at most $q$, from Proposition \ref{grado-constructible-cerrados-multiple:prop} we conclude:
$$\deg_{\rm lci}(C\cap \F_q^n) = \deg_{\rm lci}(C\cap \left(\bigcap_{j=1}^n W_j\right) )\leq \deg_{\rm lci}(C) q^{\dim(C)}.$$
Since $C\cap \F_q^n$ is a zero-dimensional variety (see Proposition \ref{propiedades-basicas-grado-lc:prop}), we have:
$$\sharp\left(C_{\F_q}\right)= \sharp\left( C\cap \F_q^n\right)= \deg_{\rm lci}(C\cap \F_q^n).$$
As for the second claim, since $V(f)$ is a non-empty hyper-surface (of dimension $n-1$) in $\A^(\overline{\F_q})$, Proposition \ref{propiedades-basicas-grado-lc:prop} implies that the degree of  $V(f)$ is at most $\deg(f)$. Therefore, we have:
$$\sharp\left(V(f)\cap \F_q^n\right) \leq \deg(V(f)) q^{n-1}\leq \deg(f) q^{n-1}\leq (q-1) q^{n-1}.$$
Hence, we obviously conclude:
$$\sharp\{ x\in \F_q^n\; :\; f(x)\not=0\} \geq (q-\deg(V(f))q^{n-1}.$$
\end{proof}

The results of Weil, Lang and Stepanov (cf. \cite{Weil}, \cite{LangWeil} and \cite{Stepanov}) show that the bound of the previous Corollary is essentially optimal if $C$ is an algebraic variety.

\end{example}
\begin{example}[{\bf Evasive zero-dimensional varieties of constructible sets}]
\begin{corollary}\label{DeMillo-Lipton-Schwartz-Zippel:corol}
Let $\kappa$ be a field, $Q\subseteq \kappa$ a finite subset and $K$ the algebraic closure of $\kappa$.
Let $C\subseteq \A^n (K)$ be a constructible subset. Then, the number of points of the intersection of  $C$ and $Q^n$ satisfies:
$$\sharp\left(C\cap Q^n\right)\leq \deg_{\rm lci}(C) \sharp(Q)^{\dim(C)}.$$
In other terms, the probability that a random point $x\in Q^n$ satisfies $x\not\in C$ is greater than
$$1- {{\deg_{\rm lci}(C)}\over{\sharp(Q)^{\codim(C)}}},$$
where $\codim(C)= n- \dim(C)$ is the co-dimension  of $C$ in $\A^n (K)$.
\end{corollary}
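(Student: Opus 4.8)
The plan is to reduce everything to Proposition \ref{grado-constructible-cerrados-multiple:prop}, repeating verbatim the strategy used for the $\F_q$-rational point count in Corollary \ref{comptage-des-points-rationels:corol}, with the role of the finite field replaced by the finite set $Q$. First I would introduce the univariate square-free polynomial $g_Q(T):=\prod_{a\in Q}(T-a)\in\kappa[T]$, whose degree is $\sharp(Q)$ and whose set of roots is exactly $Q$ (square-freeness is automatic since the elements of $Q$ are pairwise distinct). For each $j$, $1\le j\le n$, set $W_j:=V_\A\bigl(g_Q(X_j)\bigr)\subseteq\A^n(K)$. Since $g_Q(X_j)\in\kappa[X_j]\subseteq K[X_1,\dots,X_n]$ stays square-free over $K$ (its irreducible factors in $K[X_1,\dots,X_n]$ are the pairwise distinct linear forms $X_j-a$, $a\in Q$), Proposition \ref{propiedades-basicas-grado-lc:prop} gives $\deg(W_j)=\deg\bigl(g_Q(X_j)\bigr)=\sharp(Q)$; and as $W_j$ is Zariski closed, hence locally closed, Proposition \ref{generaliza-gradoLCI-localmente-cerrados:prop} yields $\deg_{\rm lci}(W_j)=\deg(W_j)=\sharp(Q)$.

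Next I would record the set-theoretic identity $Q^n=\bigcap_{j=1}^n W_j$ inside $\A^n(K)$: a point $x=(x_1,\dots,x_n)$ lies in $W_j$ precisely when $g_Q(x_j)=0$, i.e. when $x_j\in Q$. Therefore $C\cap Q^n=C\cap\bigl(\bigcap_{j=1}^n W_j\bigr)$, and applying Proposition \ref{grado-constructible-cerrados-multiple:prop} with $\Omega:=C$ and locally closed sets $C_1:=W_1,\dots,C_n:=W_n$ gives
\[
\deg_{\rm lci}(C\cap Q^n)\le \deg_{\rm lci}(C)\Bigl(\max_{1\le j\le n}\deg_{\rm lci}(W_j)\Bigr)^{\dim(C)}=\deg_{\rm lci}(C)\,\sharp(Q)^{\dim(C)}.
\]
To finish the cardinality bound I would observe that $C\cap Q^n$, being contained in the finite set $Q^n$, is itself a finite set and hence a zero-dimensional Zariski closed (in particular globally equi-dimensional) variety, so that $\deg_{\rm lci}(C\cap Q^n)=\deg_z(C\cap Q^n)=\sharp(C\cap Q^n)$ by Propositions \ref{generaliza-gradoLCI-localmente-cerrados:prop} and \ref{propiedades-basicas-grado-lc:prop}. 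Combining the two displays yields $\sharp(C\cap Q^n)\le\deg_{\rm lci}(C)\,\sharp(Q)^{\dim(C)}$.

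The probabilistic reformulation is then immediate: $Q^n$ carries the uniform distribution and $\sharp(Q^n)=\sharp(Q)^n$, so
\[
{\rm Prob}_{Q^n}[\,x\in C\,]=\frac{\sharp(C\cap Q^n)}{\sharp(Q)^n}\le\frac{\deg_{\rm lci}(C)}{\sharp(Q)^{\,n-\dim(C)}}=\frac{\deg_{\rm lci}(C)}{\sharp(Q)^{\codim(C)}},
\]
whence ${\rm Prob}_{Q^n}[\,x\notin C\,]\ge 1-\deg_{\rm lci}(C)/\sharp(Q)^{\codim(C)}$. There is essentially no obstacle here, since the statement is a direct transcription of Corollary \ref{comptage-des-points-rationels:corol}; the only point meriting a moment's care is that, although $Q$ lives in $\kappa$, the polynomial $g_Q$ has coefficients in $\kappa\subseteq K$, so each $W_j$ is a genuine hypersurface of $\A^n(K)$ and $g_Q(X_j)$ remains square-free over the algebraically closed field $K$, which is what lets us compute $\deg_{\rm lci}(W_j)$ exactly.
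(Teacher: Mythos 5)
Your proof is correct and follows exactly the route the paper intends: it replaces $\F_q$ by $Q$ and the Artin--Schreier-type equation by the square-free polynomial $\prod_{a\in Q}(T-a)$, then applies Proposition \ref{grado-constructible-cerrados-multiple:prop} and the degree-equals-cardinality fact for zero-dimensional sets, just as in Corollary \ref{comptage-des-points-rationels:corol}. No differences of substance from the paper's argument.
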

\begin{proof} The proof is similar to that of the preceding Corollary by replacing $\F_q$ by $Q$, and replacing the equation of degree $q$ that defines $\F_q$ in $\overline{\F_q}$ by an univariate polynomial of degree $\sharp(Q)$ whose zeros in $K$ are exactly $Q$. In what concerns to the lower bound for the probability, it is a consequence of the following chain of inequalities:
$${\rm Prob}_{Q^n}[ x\in Q^n \; :\; x\not\in C]\geq 1- {{\deg_{\rm lci}(C)\sharp(Q)^{\dim(C)}}\over{\sharp(Q)^{n}}}=
1- {{\deg_{\rm lci}(C)}\over{\sharp(Q)^{\codim(C)}}}.$$
\end{proof}

Observe that in the case $C=V(f)$, where $C$ is a hyper-surface defined by  a non-zero polynomial, this last Corollary is the foundation of the classical \emph{Demillo-Lipton-Schwartz-Zippel Lemma} (cf. \cite{DeMilloLipton}, \cite{Zippel} and \cite{Schwartz}).
\end{example}

\section{Correct Test Sequences}\label{CTS:sec}

In this Section we revise the notion of correct test sequence (introduced in \cite{HeintzSchnorr}) and see that it is almost omnipresent, by exhibiting its presence in many scientific contexts. We do not want to be exhaustive here, but to exhibit different equivalent terms in the mathematical literature. We also prove some elementary and immediate properties.

\subsection{The notion, some equivalent notions and first properties}

\begin{definition}[{\bf Correct Test Sequences for vector functions}]\label{CTS-funciones-definicion:def}
Let $X$ be a set, $K$ a field and let $\scrF(X)\subseteq K^X$ be a subgroup of the abelian group $(K^X, +)$. Let $m\in\N$ be a positive integer and let $\Omega\subseteq \scrF(X)^m$ be a set of lists of such functions. Let $\Sigma\lestricto \Omega$  be a proper subset of $\Omega$, which we call the discriminant.

A correct test sequence (CTS) of length $L$ for $\Omega$ with discriminant $\Sigma$ is a finite set of $L$ elements ${\bf Q}:=\{x_1,\ldots, x_L\} \subseteq X$ such that the following formula holds:
\begin{equation}\label{CTS-formula:eqn}
\forall f\in \Omega, \;\; f(x_1)=\cdots = f(x_L)=0\in K^m \Longrightarrow f\in \Sigma.
\end{equation}
In the case $\Sigma=\{0\}\lestricto \Omega$, we say that ${\bf Q}$ is a correct test sequence for $\Omega$.
\end{definition}

Given $\Omega\subseteq \scrF(X)^m$ we denote by $\Omega-\Omega$ the subset  of $(\scrF(X)^m, +)$ given by the differences between elements in $\Omega$. Namely,
$$\Omega-\Omega:=\{f-g\; : \; f,g\in \Omega\}\subseteq \scrF(X)^m.$$
Observe that if $\Omega$ is a semi-group in $(\scrF(X)^m, +)$, then $\Omega-\Omega$ is simply the abelian subgroup of $(\scrF(X)^m, +)$ generated by $\Omega$.

For a list of points ${\bf Q}:=\{x_1,\ldots, x_L\}$ we define the evaluation map at the points of ${\bf Q}$ as the mapping:
$$\begin{matrix} {\rm ev}_{\bf Q}: & \scrF(X)^m & \longrightarrow & K^{mL}\\
& (f_1,\ldots, f_m) & \longmapsto & (f_1(x_1), \ldots, f_m(x_1), \ldots,f_1(x_L), \ldots, f_m(x_L)),\end{matrix} $$
which is a $K-$linear mapping when $\scrF(X)^m$ is a vector subspace of $(K^m)^X$.

\begin{proposition}[{\bf CTS and Function Identity Tests}]
With the notations of the previous definition, let  $\Omega$  be a subset  of $\scrF(X)^m$.  Then, the following properties are equivalent for every finite subset
${\bf Q}:=\{x_1,\ldots, x_L\}\subseteq X$:
\begin{enumerate}
\item ${\bf Q}$ is a correct test sequence for $\Omega-\Omega$ (with respect to $\Sigma:=\{0\}$).
\item The following formula holds:
$$\forall f,g \in \Omega, \;\; f(x_1)=g(x_1), \ldots, f(x_L)=g(x_L) \Longrightarrow f=g.$$
\end{enumerate}

 If, additionally, $\scrF(X)^m$ is a vector subspace of $(K^m)^X$, both properties are equivalent to the following ones:\\
\begin{enumerate}[resume]
\item The evaluation map ${\rm ev}_{\bf Q}$ at the points in ${\bf Q}$ is a $K-$linear mapping  such that its restriction to $\Omega$   is injective.
\item If $L:=\sharp\left({\bf Q}\right)$, there is a linear map $\Lambda:K^{mL}\longrightarrow \scrF(X)^m$ such that the following two properties hold:
    \begin{enumerate}
    \item For all $f\in \scrF(X)^m$, $f-\Lambda({\rm ev}_{\bf Q}(f))\in \ker\left({\rm ev}_{\bf Q}\right)$ and,
    \item ${\rm ev}_{\bf Q}(\Omega) \cap \ker(\Lambda)=\{0\}$.
    \end{enumerate}
\end{enumerate}
\end{proposition}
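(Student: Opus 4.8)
The plan is to establish $(i)\Leftrightarrow(ii)$ with no extra hypothesis, and then, under the assumption that $\scrF(X)^m$ is a $K$-vector subspace of $(K^m)^X$, to close the loop $(ii)\Rightarrow(iii)\Rightarrow(iv)\Rightarrow(ii)$ by elementary linear algebra on the evaluation map ${\rm ev}_{\bf Q}$.

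\emph{The equivalence $(i)\Leftrightarrow(ii)$} is a direct unwinding of Definition~\ref{CTS-funciones-definicion:def}. Every element of $\Omega-\Omega$ has the form $h=f-g$ with $f,g\in\Omega$, and since $\scrF(X)\subseteq K^X$ is a subgroup, $h(x_j)=f(x_j)-g(x_j)$ in $K^m$ for each $x_j\in{\bf Q}$; hence ``$h(x_1)=\cdots=h(x_L)=0$'' is equivalent to ``$f(x_j)=g(x_j)$ for all $j$'', and ``$h=0$'' is equivalent to ``$f=g$''. So formula~(\ref{CTS-formula:eqn}) for $\Omega-\Omega$ and $\Sigma=\{0\}$ is, read off term by term, exactly statement~$(ii)$. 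For $(ii)\Leftrightarrow(iii)$, note that under the vector-space hypothesis ${\rm ev}_{\bf Q}$ is $K$-linear by construction, and for $f,g\in\scrF(X)^m$ one has ${\rm ev}_{\bf Q}(f)={\rm ev}_{\bf Q}(g)$ iff $f(x_j)=g(x_j)$ in $K^m$ for every $j$; thus the implication in $(ii)$ says precisely that ${\rm ev}_{\bf Q}$ separates the points of $\Omega$, i.e. that ${\rm ev}_{\bf Q}|_\Omega$ is injective, which is $(iii)$.

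\emph{The substantive step is the equivalence with $(iv)$.} Set $N:=\ker({\rm ev}_{\bf Q})\subseteq\scrF(X)^m$ and $W:={\rm ev}_{\bf Q}(\scrF(X)^m)\subseteq K^{mL}$. For $(iii)\Rightarrow(iv)$, choose (invoking a basis, hence Zorn's Lemma, if $\scrF(X)^m$ is infinite-dimensional) a linear complement $U$ of $N$ in $\scrF(X)^m$ and a linear complement $W'$ of $W$ in $K^{mL}$; then ${\rm ev}_{\bf Q}|_U\colon U\to W$ is an isomorphism. Define $\Lambda\colon K^{mL}\to\scrF(X)^m$ by $\Lambda|_W:=({\rm ev}_{\bf Q}|_U)^{-1}$ and $\Lambda|_{W'}:=0$. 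Decomposing $f=n+u\in N\oplus U$ gives ${\rm ev}_{\bf Q}(f)={\rm ev}_{\bf Q}(u)$ and $\Lambda({\rm ev}_{\bf Q}(f))=u$, so $f-\Lambda({\rm ev}_{\bf Q}(f))=n\in N$, which is property~$(a)$; moreover $\Lambda|_W$ is injective, so $\ker(\Lambda)=W'$ and $\ker(\Lambda)\cap W=\{0\}$, whence ${\rm ev}_{\bf Q}(\Omega)\cap\ker(\Lambda)\subseteq W\cap W'=\{0\}$, which is property~$(b)$. For $(iv)\Rightarrow(ii)$, suppose $\Lambda$ satisfies $(a)$ and $(b)$: property~$(a)$ says ${\rm ev}_{\bf Q}\circ\Lambda$ is the identity on $W$, hence ${\rm ev}_{\bf Q}\circ\Lambda\circ{\rm ev}_{\bf Q}={\rm ev}_{\bf Q}$, so $\Lambda\circ{\rm ev}_{\bf Q}$ is a linear projection of $\scrF(X)^m$ with kernel $N$; then for $f,g\in\Omega$ with ${\rm ev}_{\bf Q}(f)={\rm ev}_{\bf Q}(g)$ one applies $(b)$ to the common value ${\rm ev}_{\bf Q}(f)={\rm ev}_{\bf Q}(g)\in{\rm ev}_{\bf Q}(\Omega)$ together with $(a)$ to conclude $f=g$, which is statement~$(ii)$.

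I expect the only genuine difficulty to be the $(iii)\Leftrightarrow(iv)$ step, and specifically pinning down the exact bookkeeping around condition~$(b)$: one must verify that $(b)$ really does transcribe the injectivity of ${\rm ev}_{\bf Q}|_\Omega$ (and is not merely automatically inherited from the generalized inverse constructed in $(iii)\Rightarrow(iv)$), and one must handle the possibly infinite-dimensional $\scrF(X)^m$ via the choice of complements $\scrF(X)^m=N\oplus U$ and $K^{mL}=W\oplus W'$. Steps $(i)\Leftrightarrow(ii)$ and $(ii)\Leftrightarrow(iii)$ are pure definition-chasing, and the remainder is routine diagram-chasing with these two splittings and the definition of ${\rm ev}_{\bf Q}$.
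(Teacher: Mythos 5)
Your treatment of $(i)\Leftrightarrow(ii)\Leftrightarrow(iii)$ and of $(iii)\Rightarrow(iv)$ is correct and matches the paper's (very terse) proof: the paper dismisses the first three equivalences as immediate and, for $(iv)$, takes $\Lambda$ to be ``some extension to $K^{mL}$ of the right inverse of the epimorphism between $\scrF(X)^m$ and ${\rm ev}_{\bf Q}(\scrF(X)^m)$'', which is precisely the $\Lambda$ you build from the splittings $\scrF(X)^m=N\oplus U$ and $K^{mL}=W\oplus W'$.

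The genuine gap is in $(iv)\Rightarrow(ii)$, and you half-saw it yourself. Your construction of $\Lambda$ in $(iii)\Rightarrow(iv)$ never uses the injectivity of ${\rm ev}_{\bf Q}|_\Omega$: properties $(a)$ and $(b)$ hold for that $\Lambda$ for \emph{every} subset $\Omega$, since ${\rm ev}_{\bf Q}(\Omega)\subseteq W$ and $\ker(\Lambda)\cap W=\{0\}$ automatically. So $(iv)$, read literally, cannot recover $(iii)$. The sentence ``one applies $(b)$ to the common value ${\rm ev}_{\bf Q}(f)={\rm ev}_{\bf Q}(g)\in{\rm ev}_{\bf Q}(\Omega)$ together with $(a)$ to conclude $f=g$'' is not an argument: condition $(b)$ constrains that common value $w$ only when $w\in\ker(\Lambda)$, and when $w\notin\ker(\Lambda)$ all that $(a)$ yields is $f-g\in\ker({\rm ev}_{\bf Q})$, which is the hypothesis you started from. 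Concretely, take $X=\{x_1,x_2\}$, ${\bf Q}=\{x_1\}$, $m=1$, $\scrF(X)=K^X\cong K^2$, $\Omega=\{(0,0),(0,1)\}$ and $\Lambda(c)=(c,0)$: then $(a)$ and $(b)$ hold but ${\rm ev}_{\bf Q}|_\Omega$ is not injective. To close the loop one must replace $(b)$ by a condition that genuinely encodes injectivity on $\Omega$, for instance $(\Omega-\Omega)\cap\ker(\Lambda\circ{\rm ev}_{\bf Q})=\{0\}$, which, combined with $(a)$ and your correct observation that $\ker(\Lambda\circ{\rm ev}_{\bf Q})=\ker({\rm ev}_{\bf Q})$, is exactly $(iii)$. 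The paper's own proof is silent on the direction $(iv)\Rightarrow(iii)$, so you have reproduced its construction faithfully, but the converse step as you wrote it would fail.
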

\begin{proof} The equivalences between $i)$, $ii)$ and, in the case $\scrF(X)$ is a vector space, with $iii)$ are immediate. As for the equivalence between $iii)$ and $iv)$ simply assume that $\Lambda$ is some extension to $K^{mL}$ of the right inverse of the epimorphism between $\scrF(X)^m$ and ${\rm ev}_{\bf Q}(\scrF(X)^m)\subseteq K^{mL}$.
\end{proof}

Correct test sequences viewed as identity tests is the most common interpretation in the mathematical literature. When $\scrF(X)$ is a group of polynomials defined on some algebraic variety $X$, correct test sequences are called \emph{Polynomial Identity Tests} (as in \cite{Saxena} and references therein). These Polynomial Identity Tests have been used in Elimination Theory for years to compute Noether's normalizations, Kronecker's birational descriptions of equi-dimensional varieties and for many other purposes (see, for instance, \cite{lower-diophantine:jpaa}, \cite{Kronecker-CRAS}, \cite{Pardo-survey} and references therein). We return to this polynomial case in the next subsection.

With the same notations as above, assume that $\scrF(X)$ is a $K-$algebra. Then, for every $x\in X$, the following is a maximal ideal in $\scrF(X)$:
$${\frak m}_ x:=\{ f\in \scrF(X) \; : \; f(x)=0\}\in {\rm MaxSpec}(\scrF(X)).$$
In fact, ${\frak m}_x$ is the kernel of the onto map ${\rm ev}_x:\scrF(X) \longrightarrow K$ given by ${\rm ev}_x(f):=f(x)$,  for all $f\in \scrF(X)$. For every ${\bf Q}\subseteq X$ let us denote by $I({\bf Q})$ the radical ideal given by the following identity:
$$I({\bf Q}):=\bigcap_{x\in{\bf Q}} {\frak m}_x.$$

\begin{proposition} With the same notations as above, assume $\scrF(X)$ is a $K-$algebra  and $\Omega\subseteq \scrF(X)$ a subset. The following properties are equivalent:
\begin{enumerate}
\item A finite subset ${\bf Q}\subseteq X$ is a correct test sequence for $\Omega-\Omega$.
\item The canonical projection $\pi:\scrF(X)\longrightarrow \scrF({\bf Q}):=\scrF(X)/I({\bf Q})$ satisfies that its restriction $\restr{\Omega}{{\bf Q}}$ is injective.
\item There is a $K-$vector subspace $W({\bf Q})$ of $\scrF(X)$ of dimension $\sharp({\bf Q})$ such that the following properties hold:
    \begin{itemize}
    \item For every $f\in \Omega$ there is one and only one $g\in W({\bf Q})$ such that $f-g\in I({\bf Q})$.
    \item For every $g\in W({\bf Q})$ there is at most one $f\in \Omega$ such that $f-g\in I({\bf Q})$.
    \end{itemize}
     \end{enumerate}
\end{proposition}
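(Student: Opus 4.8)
The plan is to turn the quotient map into the evaluation map of the $m=1$ case and then reduce everything to elementary linear algebra over the finite-dimensional $K$-algebra $\scrF({\bf Q})$. Write ${\bf Q}=\{x_1,\ldots,x_L\}$ with $L=\sharp({\bf Q})$. Since each ${\frak m}_{x_i}$ is the kernel of the surjection ${\rm ev}_{x_i}:\scrF(X)\twoheadrightarrow K$, distinct points of ${\bf Q}$ give distinct, hence pairwise comaximal, maximal ideals; so the Chinese Remainder Theorem produces a $K$-algebra isomorphism $\scrF({\bf Q})=\scrF(X)/I({\bf Q})\isomorf\prod_{i=1}^{L}\scrF(X)/{\frak m}_{x_i}\isomorf K^{L}$ under which $\pi$ becomes $f\mapsto(f(x_1),\ldots,f(x_L))$. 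In particular $\pi$ is a $K$-linear surjection onto a space of dimension $L$, and for every $h\in\scrF(X)$ one has $h(x_1)=\cdots=h(x_L)=0\iff h\in I({\bf Q})\iff\pi(h)=0$. With this dictionary fixed, the equivalences become almost formal.

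First, $i)\Leftrightarrow ii)$ is a direct translation. By definition ${\bf Q}$ is a correct test sequence for $\Omega-\Omega$ (with respect to $\{0\}$) exactly when, for all $f,g\in\Omega$, the vanishing $(f-g)(x_1)=\cdots=(f-g)(x_L)=0$ forces $f-g=0$. By the dictionary this vanishing is the same as $\pi(f-g)=0$, i.e.\ $\pi(f)=\pi(g)$, so the statement says precisely that $\restr{\pi}{\Omega}$ is injective, which is $ii)$.

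Next, for $ii)\Rightarrow iii)$ I would choose a $K$-linear section $s:\scrF({\bf Q})\to\scrF(X)$ of $\pi$ (it exists because $\scrF({\bf Q})$ is a vector space) and set $W({\bf Q}):=s(\scrF({\bf Q}))$. Then $\dim_K W({\bf Q})=\dim_K\scrF({\bf Q})=L=\sharp({\bf Q})$, and $\restr{\pi}{W({\bf Q})}$ is a $K$-linear bijection onto $\scrF({\bf Q})$; hence for any $f\in\scrF(X)$ the element $g:=s(\pi(f))\in W({\bf Q})$ is the unique element of $W({\bf Q})$ with $\pi(g)=\pi(f)$, that is with $f-g\in I({\bf Q})$, which is the first bullet of $iii)$. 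For the second bullet, if $f_1,f_2\in\Omega$ and $g\in W({\bf Q})$ satisfy $f_1-g,\,f_2-g\in I({\bf Q})$, then $f_1-f_2\in I({\bf Q})$, so $\pi(f_1)=\pi(f_2)$ and $f_1=f_2$ by the injectivity in $ii)$.

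Finally, $iii)\Rightarrow ii)$ is a two-line diagram chase: given $f_1,f_2\in\Omega$ with $\pi(f_1)=\pi(f_2)$, i.e.\ $f_1-f_2\in I({\bf Q})$, let $g\in W({\bf Q})$ be the element given by the first bullet applied to $f_1$, so $f_1-g\in I({\bf Q})$; then $f_2-g=(f_2-f_1)+(f_1-g)\in I({\bf Q})$ as well, so $f_1$ and $f_2$ are both elements of $\Omega$ congruent modulo $I({\bf Q})$ to the same $g\in W({\bf Q})$, and the second bullet yields $f_1=f_2$. I do not expect a real obstacle here; the only delicate point --- and the reason $\scrF(X)$ is assumed to be a $K$-algebra --- is the CRT identification $\scrF({\bf Q})\isomorf K^{\sharp({\bf Q})}$, which is also what makes the dimension count in $iii)$ correct and tacitly uses that the functions of $\scrF(X)$ separate the points of ${\bf Q}$ (i.e.\ the ${\frak m}_{x_i}$ are pairwise distinct); once this is granted, the rest is pure formalism.
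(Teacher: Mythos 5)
Your proof is correct and follows essentially the same route as the paper: the CRT identification $\scrF({\bf Q})\isomorf K^{\sharp({\bf Q})}$ gives $i)\Leftrightarrow ii)$, and your abstract linear section $s$ of $\pi$ is just a coordinate-free version of the paper's interpolation basis $\chi_1,\ldots,\chi_L$ with $\chi_i(x_j)=\delta_{i,j}$, yielding the same $W({\bf Q})$ and the same two-line checks for $iii)$. Your explicit remark that the argument presupposes the ideals ${\frak m}_{x_i}$ are pairwise distinct is a point the paper leaves tacit, but otherwise there is no substantive difference.
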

\begin{proof}
By the Chinese Remainder Theorem, we have the following ring isomorphism:
$$\varphi: \scrF(X)/I({\bf Q}) \longrightarrow \prod_{i=1}^L \scrF(X)/{\frak m}_{x_i}\isomorf K^L,$$
where ${\bf Q}:=\{ x_1,\ldots, x_L\}$. Noting that:
$$\varphi(f + I({\bf Q})):=(f+ {\frak m}_{x_1}, \ldots, f + {\frak m}_{x_L}),$$
we obviously conclude the equivalence between $i)$ and $ii)$. Moreover, the Chinese Remainder Theorem means that there is interpolation in $\scrF(X)$. Namely, there are functions $\chi_1,\ldots, \chi_L\in \scrF(X)$ such that $\chi_i(x_j)=\delta_{i,j}$, for every $i,j$, $1\leq i,j \leq L$, where $\delta_{i,j}$ is Kronecker's delta. Then, taking $W({\bf Q}):={\rm Span}(\{\chi_1,\ldots, \chi_L\})$, the $K-$vector space spanned by $\{\chi_1,\ldots, \chi_L\}$, we obviously have that for every $f\in\Omega$, the function
$$g:=\sum_{i=1}^L f(x_i) \chi_i\in W({\bf Q}),$$
satisfies the requirements of the statement. Moreover, given any $g\in W({\bf Q})$ there is at most one element $f\in \Omega$ such that $f-g\in I({\bf Q})$.\end{proof}

\begin{example}[{\bf Finite Norming sets}] With the same notations as above, assume that $(K, \vert \cdot \vert)$ is a field with some absolute value. And for any positive integer $m\in \N$, let $\vert\vert \cdot \vert\vert$ be any norm in $K^m$ induced by $\vert \cdot \vert$. Assume that $\scrF(X)$ is a $K-$vector space and $\Omega \subseteq \scrF(X)$ is a subset, a \emph{finite norming set} for $\Omega$ is a finite subset ${\bf Q}:=\{x_1,\ldots, x_L\}\subseteq X$ such that  the following is a norm in $\Omega$:
$$\vert\vert f \vert\vert_\infty^{({\bf Q})}:=\max \{  \vert f(x_i)\vert \; :\; 1\leq i \leq L\}.$$
As $\Omega$ is not always a vector subspace, we use the term ``norm'' in a wide sense here. Just for completeness, a norm for $\Omega$ is a function $\vert\vert \cdot \vert \vert: \Omega \longrightarrow \R_+$ such that the following properties hold:
\begin{itemize}
\item $\vert\vert f \vert\vert \geq 0$, $\forall f\in \Omega$.
\item For all $f\in \Omega$, $\vert\vert f \vert\vert=0$ if and only if $f=0$.
\item Given $f,g\in \Omega$, if $f+g\in \Omega$, then we have
$$\vert\vert f + g \vert \vert \leq \vert\vert f \vert\vert+ \vert\vert g \vert\vert.$$
\item Given $f\in \Omega$ and $\lambda \in K$, if $\lambda f \in \Omega$, then $\vert\vert\lambda f \vert\vert \leq \vert \lambda \vert \vert\vert f \vert\vert$.
\end{itemize}
Obviously, if $\Omega$ is a vector subspace, this is the usual notion of norm. See \cite{Albiac} and references therein for the term \emph{norming set}.
\end{example}

\begin{proposition} \label{cuestores-y-metrica:prop}
 With the same notations and assumptions as in the former example, given ${\bf Q}:=\{x_1,\ldots, x_L\}\subseteq X$ a finite set and $\Omega \subseteq \scrF(X)$ a subgroup of the additive group $(\scrF(X), +)$, the following are equivalent:

\begin{enumerate}
\item The set ${\bf Q}$ is a correct test sequence for $\Omega$.
\item The set ${\bf Q}$ is a finite norming set for $\Omega$.
\item The restriction to the evaluation map ${\rm ev}_{\bf Q}$ to $\Omega$ is a group monomorphism.
\item There exists $p$, $1\leq p \leq \infty$, such that the following is a norm on $\Omega$:
$$\vert\vert f \vert\vert_p^{({\bf Q})}:= \left( \sum_{i=1}^L \vert f(x_{i}) \vert^p\right)^{1/p}.$$
\item For every $p$, $1\leq p \leq \infty$, the following is a norm on $\Omega$:
$$\vert\vert f \vert\vert_p^{({\bf Q})}:= \left( \sum_{i=1}^L \vert f (x_{i}) \vert^p\right)^{1/p}.$$
\item $\Omega$ is a metric space with the distance function:
$$d_\infty^{({\bf Q})}(f,g):=\vert\vert f-g \vert\vert_\infty^{({\bf Q})}.$$
\item For every $p$, $1\leq p \leq \infty$,  $\Omega$ is a metric space with the distance function:
$$d_p^{({\bf Q})}(f,g):=\vert\vert f-g \vert\vert_p^{({\bf Q})}.$$

\end{enumerate}
In particular, $\Omega$ has a finite length correct test sequence if and only if it admits a finite norming set. If, additionally, $\Omega$ were a linear subspace of $\scrF(X)$, then $\Omega$ admits a correct test sequence of finite length if and only if $\Omega$ is a normed linear space of finite dimension.
\end{proposition}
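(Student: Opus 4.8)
The plan is to funnel all seven conditions through condition $iii)$ --- injectivity of the evaluation map restricted to $\Omega$ --- and then read off the two ``in particular'' assertions. Note at the outset that here $m=1$, so ${\rm ev}_{\bf Q}\colon\scrF(X)\to K^L$ sends $f$ to $(f(x_1),\ldots,f(x_L))$ and is a group homomorphism; since $\Omega$ is a subgroup, $iii)$ just says $\ker({\rm ev}_{\bf Q}|_\Omega)=\{0\}$, which is word for word the defining implication $i)$, namely that $f(x_1)=\cdots=f(x_L)=0$ forces $f=0$ for $f\in\Omega$. So $i)\Leftrightarrow iii)$ is immediate.

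Next I would record a purely formal observation. For every $p$ with $1\le p\le\infty$ the function $f\mapsto\|f\|_p^{({\bf Q})}$ is the composite of ${\rm ev}_{\bf Q}$ with the usual $\ell^p$-norm on $K^L$ attached to $|\cdot|$; hence non-negativity, the homogeneity $\|\lambda f\|_p^{({\bf Q})}=|\lambda|\,\|f\|_p^{({\bf Q})}$, and the triangle inequality $\|f+g\|_p^{({\bf Q})}\le\|f\|_p^{({\bf Q})}+\|g\|_p^{({\bf Q})}$ (Minkowski, legitimate because $|\cdot|$ is sub-additive with values in $\R_{\ge 0}$) hold for any finite ${\bf Q}$ with no hypothesis whatsoever, and $f=0\Rightarrow\|f\|_p^{({\bf Q})}=0$ is trivial. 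Thus ``$\|\cdot\|_p^{({\bf Q})}$ is a norm on $\Omega$ in the wide sense of the preceding example'' is equivalent to the single surviving axiom, definiteness: $f\in\Omega$ and $\|f\|_p^{({\bf Q})}=0$ force $f=0$; since $\|f\|_p^{({\bf Q})}=0$ says exactly $f(x_1)=\cdots=f(x_L)=0$, this is once more $i)$. Taking $p=\infty$ yields $ii)$, one value of $p$ yields $iv)$, all values yield $v)$, so $i)\Leftrightarrow ii)\Leftrightarrow iii)\Leftrightarrow iv)\Leftrightarrow v)$. The metric clauses go the same way: $d_p^{({\bf Q})}(f,g)=\|f-g\|_p^{({\bf Q})}$ is defined on all of $\Omega$ ($\Omega$ being a subgroup), non-negativity, symmetry (from $|-(f-g)(x_i)|=|(f-g)(x_i)|$) and the triangle inequality are inherited unconditionally, and ``$d_p^{({\bf Q})}$ is a metric'' reduces to $d_p^{({\bf Q})}(f,g)=0\Rightarrow f=g$, i.e.\ to $\|h\|_p^{({\bf Q})}=0\Rightarrow h=0$ with $h=f-g$ ranging over $\Omega$ --- again $iii)$. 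So $vi)$ and $vii)$ join the chain.

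For the ``in particular'' part, the equivalence $i)\Leftrightarrow ii)$ gives at once that $\Omega$ admits a finite correct test sequence if and only if it admits a finite norming set. When $\Omega$ is moreover a $K$-linear subspace of $\scrF(X)$, a correct test sequence of length $L$ makes ${\rm ev}_{\bf Q}|_\Omega\colon\Omega\hookrightarrow K^L$ an injective $K$-linear map, whence $\dim_K\Omega\le L<\infty$ and $\Omega$ carries a norm (e.g.\ the $\|\cdot\|_\infty^{({\bf Q})}$ just produced); conversely, if $\dim_K\Omega<\infty$ then the functionals $\{{\rm ev}_x|_\Omega\}_{x\in X}$ separate $\Omega$, because a nonzero element of $\scrF(X)\subseteq K^X$ is a nonzero function, so their common kernel is $\{0\}$, and since $\Omega^*$ is finite-dimensional finitely many of them, say at $x_1,\ldots,x_L$, already have common kernel $\{0\}$; then ${\bf Q}=\{x_1,\ldots,x_L\}$ satisfies $iii)$, hence is a correct test sequence (and a finite norming set), exhibiting $\Omega$ as a finite-dimensional normed linear space.

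Everything here is routine; the two places deserving attention are, first, keeping straight which of the wide-sense norm and metric axioms are inherited from $(K^L,\|\cdot\|_p)$ regardless of ${\bf Q}$, so that definiteness is cleanly isolated as the sole substantive requirement, and second, the standard linear-algebra fact used at the very end --- that a family of linear functionals on a finite-dimensional space whose common kernel is trivial already contains a finite subfamily with trivial common kernel --- which is what lets finitely many point evaluations do the separating job.
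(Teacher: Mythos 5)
Your proof is correct, and it follows the only sensible route: the paper itself dismisses this proposition with ``It is obvious from the definitions,'' and what you have written is precisely the careful unpacking of that remark --- funnelling everything through injectivity of ${\rm ev}_{\bf Q}|_\Omega$, isolating definiteness as the one non-automatic norm/metric axiom, and supplying the standard finite-dimensional duality argument for the final clause.
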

\begin{proof} It is obvious from the definitions. \end{proof}
The interesting cases are then restricted to subsets (not necessarily linear) of some finite dimension vector spaces.

\begin{example}[{\bf CTS's and the ring of continuous functions}]
Assume $X$ is a compact topological space, $\kappa=\R$ and $\scrF(X)\subseteq\scrC(X)$ is a vector subspace of the $\R-$algebra of continuous functions defined on $X$ with values in $\R$. Assume that $\scrC(X)$ is endowed with the topology defined by the maximum norm, which is a Banach $\R-$algebra. Because of the Theorem of
Banach-Stone-\v{C}ech-Gel'fand-Kolmogorov (cf. \cite{Gillman-Heinriksen}),
 we know that there exists a bijection between the maximal ideals in the spectrum of $\scrC(X)$ (which we denote by ${\rm MaxSpec}(\scrC(X))$) and the set of points in $X$. This bijection is given by the following identification:
$$ x\longmapsto {\frak m}_x:=\{f\in \scrC(X)\; :\; f(x)=0\}.$$
With these notations and assumptions, a finite subset ${\bf Q}:=\{x_1,\ldots, x_L\}\subseteq X$ of cardinality $L$ is a correct test sequence for $\Omega\subseteq \scrF(X)$ with respect to a discriminant $\Sigma\lestricto \Omega$ if and only if the following holds:
$$\Omega \cap \left( \bigcap_{i=1}^L {\frak m}_{x_i}\right) \subseteq \Sigma,$$
or, equivalently, if and only if the following is true:
$$\Omega\setminus \Sigma\subseteq  \left( \bigcup_{i=1}^L {\frak m}_{x_i}^c\right),$$
where ${\frak m}_{x_i}^c= \scrC(X)\setminus {\frak m}_{x_i}$.
\end{example}

\begin{proposition}\label{CTS-funciones-continuas:prop} With the same notations as above, assume that $X$ is a compact Hausdorff space, $\scrF(X)=\scrC(X)$, $\Omega \subseteq \scrC(X)$ and $\Sigma\lestricto \Omega$. Let $(\Sigma)$ be the ideal in $\scrC(X)$ generated by $\Sigma$ and assume that:
\begin{equation}\label{Hewit:eqn}
\sqrt[J]{(\Sigma)}\cap \Omega = \Sigma,
\end{equation}
where $\sqrt[J]{\cdot}$ means Jacobson radical.
Then, if  $\Omega\setminus \Sigma$ is quasi-compact in $\scrC(X)$ (i.e. every open covering has
a finite sub-covering), then there is a finite length correct test sequence for $\Omega$ with respect to $\Sigma$.\\
In particular, if $\Sigma=\{0\}$ and $\Omega\setminus \{0\}$ is quasi-compact, then there is a correct test sequence of finite length for $\Omega$ or, equivalently, $\Omega$ has a finite norming set.
\end{proposition}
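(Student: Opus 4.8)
The plan is to reduce the statement to a compactness argument in the Banach algebra $\scrC(X)$, using the reformulation of correct test sequences recorded in the preceding Example: a finite set ${\bf Q}=\{x_1,\ldots,x_L\}\subseteq X$ is a correct test sequence for $\Omega$ with respect to $\Sigma$ if and only if
$$\Omega\setminus\Sigma\subseteq\bigcup_{i=1}^{L}{\frak m}_{x_i}^{c},$$
where ${\frak m}_x^c=\{f\in\scrC(X)\;:\;f(x)\neq 0\}$. So it suffices to extract a finite subcover of $\Omega\setminus\Sigma$ from the family $\{{\frak m}_x^c\;:\;x\in X\}$. First I would observe that each ${\frak m}_x^c$ is open in the max-norm topology: the evaluation functional ${\rm ev}_x:\scrC(X)\longrightarrow\R$ is continuous since $|{\rm ev}_x(f)|=|f(x)|\leq\|f\|_\infty$, hence ${\frak m}_x^c={\rm ev}_x^{-1}(\R\setminus\{0\})$ is open in $\scrC(X)$, and therefore its trace on the subspace $\Omega\setminus\Sigma$ is open there.

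The only point that is not completely formal is checking that $\{{\frak m}_x^c\;:\;x\in X\}$ genuinely covers $\Omega\setminus\Sigma$, and this is exactly where hypothesis \eqref{Hewit:eqn} is needed. If some $f\in\Omega\setminus\Sigma$ lay in no ${\frak m}_x^c$, then $f(x)=0$ for every $x\in X$, i.e. $f$ would be the zero function; but $0$ belongs to the ideal $(\Sigma)$, hence to $\sqrt[J]{(\Sigma)}$, so $f\in\sqrt[J]{(\Sigma)}\cap\Omega=\Sigma$ by \eqref{Hewit:eqn}, contradicting $f\notin\Sigma$. Thus the family is an open cover of $\Omega\setminus\Sigma$; by quasi-compactness of $\Omega\setminus\Sigma$ there are $x_1,\ldots,x_L\in X$ with $\Omega\setminus\Sigma\subseteq\bigcup_{i=1}^{L}{\frak m}_{x_i}^{c}$, and by the equivalence above ${\bf Q}=\{x_1,\ldots,x_L\}$ is the desired finite correct test sequence. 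I expect the genuinely delicate matter to be only the bookkeeping around \eqref{Hewit:eqn}: it enters in precisely one place, and solely to rule out $0\in\Omega\setminus\Sigma$; everything else is soft point-set topology together with continuity of the evaluation maps.

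For the final assertion, when $\Sigma=\{0\}$ the ideal $(\Sigma)$ equals $(0)$, and $\sqrt[J]{(0)}$ — being contained in every maximal ideal ${\frak m}_x$ of $\scrC(X)$ — consists only of functions vanishing at all points of $X$, so $\sqrt[J]{(0)}=\{0\}$; hence \eqref{Hewit:eqn} holds automatically (both sides equal $\{0\}=\Sigma$, using $0\in\Omega$), and the conclusion of the previous paragraphs produces a finite correct test sequence for $\Omega$. The equivalence of this with the existence of a finite norming set for $\Omega$ is then immediate from Proposition \ref{cuestores-y-metrica:prop}.
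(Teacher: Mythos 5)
Your proof is correct, and it reaches the conclusion by a somewhat shorter route than the paper's. Both arguments rest on the same mechanism --- cover $\Omega\setminus\Sigma$ by sets of the form ${\frak m}_x^c$, which are open because the evaluation functionals are norm-continuous, and then invoke quasi-compactness --- but they differ in which family of such sets is used and, consequently, in how Identity (\ref{Hewit:eqn}) enters. The paper first identifies $X$ with ${\rm MaxSpec}(\scrC(X))$ via the Banach--Stone--\v{C}ech--Gel'fand--Kolmogorov theorem, proves the equality $\sqrt[J]{(\Sigma)}=\bigcap_{x\in V(\Sigma)}{\frak m}_x$, and uses the full strength of (\ref{Hewit:eqn}) to obtain the covering $\Omega\setminus\Sigma\subseteq\bigcup_{x\in V(\Sigma)}{\frak m}_x^c$ indexed only by the common zero set $V(\Sigma)$; what this buys is that the resulting correct test sequence can be taken inside $V(\Sigma)$, a mild refinement not recorded in the statement. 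You instead cover by the whole family $\{{\frak m}_x^c\;:\;x\in X\}$, for which the only possible obstruction is the zero function, and you correctly observe that (\ref{Hewit:eqn}) is needed only to exclude $0$ from $\Omega\setminus\Sigma$ (since $0\in(\Sigma)\subseteq\sqrt[J]{(\Sigma)}$). This bypasses the Gel'fand--Kolmogorov identification and the description of the Jacobson radical altogether, and it shows in passing that the stated conclusion already follows from the much weaker hypothesis $0\notin\Omega\setminus\Sigma$ together with quasi-compactness. Your treatment of the particular case $\Sigma=\{0\}$ coincides with the paper's.
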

\begin{proof} From Banach-Stone-\v{C}ech-Gel'fand-Kolmogorov Theorem, as $X$ is compact and Hausdorff, we know that there is an identification:
$$X\isomorf {\rm MaxSpec}(\scrC(X)).$$
Let us also consider the following closed subset of $X$:
$$V(\Sigma)=\{x\in X\; :\; f(x)=0, \; \forall f\in \Sigma\}=
\{x\in X \; :\; f(x)=0, \; \forall f \in (\Sigma)\}.$$
Then, the following equalities hold:
$$\sqrt[J]{(\Sigma)}= \bigcap_{\begin{matrix} {\frak m}\in {\rm MaxSpec}(\scrC(X))\\{\frak m}\supseteq \Sigma\end{matrix}} {\frak m}= \bigcap_{x\in V(\Sigma)} {\frak m}_x.$$
The first equality is simply the definition of Jacobson radical of an ideal. As for the second, we obviously have:
$$\bigcap_{\begin{matrix} {\frak m}\in {\rm MaxSpec}(\scrC(X))\\{\frak m}\supseteq \Sigma\end{matrix}} {\frak m}\subseteq \bigcap_{x\in V(\Sigma)} {\frak m}_x.$$
Conversely, let ${\frak m}$ be a maximal ideal in $\scrC(X)$ that contains $\Sigma$.  Then, there is some $x\in X$ such that ${\frak m}={\frak m}_x$. As ${\frak m}_x\supseteq \Sigma$, then $f(x)=0$ for all $f\in \Sigma$ and, hence, $x\in V(\Sigma)$. This implies the converse inclusion.
Thus, our hypothesis implies that:
$$\sqrt[J]{(\Sigma)}\cap \Omega = \left( \bigcap_{x\in V(\Sigma)} {\frak m}_x\right)\cap \Omega= \Sigma.$$
Thus,
$$\Omega\setminus \Sigma \subseteq \bigcup_{x\in V(\Sigma)} {\frak m}_x^c.$$
As ${\frak m}_x^c$ is open in $\scrC(X)$, we have an open covering of $\Omega\setminus \Sigma$, which is quasi-compact by hypothesis. Then, there is a finite sub-covering:
$$\Omega\setminus \Sigma \subseteq \bigcup_{i=1}^L {\frak m}_{x_i}^c,$$
for some finite subset ${\bf Q}:=\{x_1,\ldots, x_L\}$. But this last inclusion just means that ${\bf Q}$ is a correct test sequence of length $L$ for $\Omega$ with respect to $\Sigma$, and the first claim of our statement holds.\\
The particular case holds simply because $V(\{0\})=X$ and $\sqrt[J]{(0)}= \{0\}$.
\end{proof}

Similarly, we also have:
\begin{corollary}
Let $r\in \N\cup\{\infty\}$ be a positive integer or $\infty$.  Let $X$ be a compact $\scrC^r-$ manifold.
Then, for every $\Omega \subseteq \scrC^r(X)$ such that $0\in \Omega$, if $\Omega \setminus \{0\}$ is quasi-compact, then there is a correct test sequence for $\Omega$ of finite length or, equivalently, $\Omega$ has a finite norming set.

\end{corollary}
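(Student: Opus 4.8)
The plan is to mimic the proof of Proposition \ref{CTS-funciones-continuas:prop}, specialised to the case $\Sigma=\{0\}$, where the argument becomes considerably simpler since no Jacobson radical correspondence (and hence no Banach--Stone--type identification of the maximal spectrum) is needed. First I would fix on $\scrC^r(X)$ its usual topology (a Banach space topology if $r<\infty$, a Fr\'echet space topology if $r=\infty$) -- this is the topology with respect to which $\Omega\setminus\{0\}$ is assumed to be quasi-compact, and it dominates the uniform topology induced by the supremum norm $\|\cdot\|_\infty$. For each $x\in X$ consider the evaluation functional $\mathrm{ev}_x\colon\scrC^r(X)\longrightarrow\R$, $f\mapsto f(x)$. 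Since $|\mathrm{ev}_x(f)|\le\|f\|_\infty$, each $\mathrm{ev}_x$ is continuous, so the maximal ideal $\mathfrak{m}_x:=\ker(\mathrm{ev}_x)=\{f\in\scrC^r(X):f(x)=0\}$ is closed and its complement $\mathfrak{m}_x^c$ is open in $\scrC^r(X)$.

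Next I would invoke the elementary identity $\bigcap_{x\in X}\mathfrak{m}_x=\{0\}$ (a $\scrC^r$ function vanishing at every point of $X$ is the zero function), which gives $\Omega\setminus\{0\}\subseteq\bigcup_{x\in X}\mathfrak{m}_x^c$. Thus $\{\mathfrak{m}_x^c\cap(\Omega\setminus\{0\})\}_{x\in X}$ is an open covering of the quasi-compact space $\Omega\setminus\{0\}$, and extracting a finite sub-covering produces points $x_1,\ldots,x_L\in X$ with
$$\Omega\setminus\{0\}\subseteq\bigcup_{i=1}^L\mathfrak{m}_{x_i}^c.$$
I would then read off that ${\bf Q}:=\{x_1,\ldots,x_L\}$ is a correct test sequence for $\Omega$ in the sense of Definition \ref{CTS-funciones-definicion:def}: if $f\in\Omega$ satisfies $f(x_1)=\cdots=f(x_L)=0$, then $f\in\bigcap_{i=1}^L\mathfrak{m}_{x_i}$, hence $f\notin\bigcup_{i=1}^L\mathfrak{m}_{x_i}^c$, hence $f\notin\Omega\setminus\{0\}$, i.e. $f=0$. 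Finally, the equivalence with the existence of a finite norming set is the content of Proposition \ref{cuestores-y-metrica:prop}: taking $\|f\|_\infty^{({\bf Q})}:=\max\{|f(x_i)|:1\le i\le L\}$, the axioms of a norm in the wide sense all hold since $\|\cdot\|_\infty^{({\bf Q})}$ is the restriction of a genuine seminorm, and the only non-trivial one, $\|f\|_\infty^{({\bf Q})}=0\Rightarrow f=0$ on $\Omega$, is exactly the correct test sequence property just proved.

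I do not expect a genuine obstacle here; the single point demanding care is the choice of topology. One must be sure that the topology for which $\Omega\setminus\{0\}$ is quasi-compact is fine enough for every evaluation functional $\mathrm{ev}_x$ to be continuous, which is automatic because it dominates the uniform topology on $\scrC^r(X)$. It is worth remarking that the $\scrC^r$-manifold structure on $X$ plays no role in the argument beyond guaranteeing that $X$ is a (compact) topological space on which constants and evaluations behave as expected; the statement is really a statement about $X$ compact together with an algebra of functions whose evaluation maps are uniformly continuous, exactly as in Proposition \ref{CTS-funciones-continuas:prop}.
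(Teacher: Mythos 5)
Your proof is correct and follows the same underlying mechanism as the paper's: cover the quasi-compact set $\Omega\setminus\{0\}$ by the open sets $\mathfrak{m}_x^c$ and extract a finite subcover. The one genuine difference is in what you use to justify that these sets cover. The paper's proof reads the corollary as "essentially the same proof" as Proposition \ref{CTS-funciones-continuas:prop} \emph{after} observing the identification ${\rm MaxSpec}(\scrC^r(X))\isomorf X$, i.e.\ it routes through the nontrivial classification of maximal ideals of $\scrC^r(X)$. You correctly observe that for $\Sigma=\{0\}$ this machinery is superfluous: all that is needed is the trivial inclusion $\bigcap_{x\in X}\mathfrak{m}_x=\{0\}$ (a $\scrC^r$ function vanishing everywhere is zero), together with the openness of each $\mathfrak{m}_x^c$. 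This makes your argument more elementary and, arguably, more honest about what the hypothesis of compactness of $X$ versus quasi-compactness of $\Omega\setminus\{0\}$ is actually doing. Your remark on the topology is also the right one to make: the sets $\mathfrak{m}_x^c$ are open for the sup-norm topology, hence for any finer topology (such as the usual $\scrC^r$ or Fr\'echet topology), so quasi-compactness of $\Omega\setminus\{0\}$ in any such topology suffices to extract the finite subcover. The final reduction to the norming-set statement via Proposition \ref{cuestores-y-metrica:prop} is as in the paper.
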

\begin{proof} Essentially the same proof after observing that:
$${\rm MaxSpec}(\scrC^r(X)) \isomorf X.$$
\end{proof}

Note that the hypothesis described in Identity (\ref{Hewit:eqn}) may be rewritten as:
 $$\overline{(\Sigma)}\cap \Omega \subseteq \Sigma,$$
 where $\overline{(\Sigma)}=\sqrt[J]{(\Sigma)}$ is the closure of the ideal $(\Sigma)$ in $\scrC(X)$ with respect to the Hewitt $m-$topology (cf. \cite{Gillman-Heinriksen} and \cite{Hewit}).

A typical example where the hypothesis \emph{`` $\Omega\setminus \Sigma$ is quasi-compact''} holds is the case where $\Sigma$ is closed in $\Omega$ for the topology induced  by that of $\scrF(X)$ and  $\Omega$ is a Noetherian topological space.

Inspired by the case of continuous functions, we introduce the following notion:

\begin{definition}[{\bf CTS covering numbers}]\label{covering-number-CTS:def}
With the same notations as above, the covering number for $\Omega\subseteq \scrF(X)$ with respect to $\Sigma$ is the minimum length of a finite correct test sequence for $\Omega$ with respect to $\Sigma$.
We denote this quantity by:
$$\scrN_{\rm cts}(\Omega, \Sigma):= \min\{ L \in \N \; : \;\exists\; {\hbox {\rm a correct test sequence ${\bf Q}$ of length $L$  for $\Omega$ with respect to $\Sigma$}}\}.$$
We denote by $\scrN_{\rm cts}(\Omega):=\scrN_{\rm cts}(\Omega, \{0\})$.
\end{definition}

\begin{example}[{\bf Correct Test Sequences in Reproducing Kernel Hilbert spaces}]
Let $K: X\times X \longrightarrow \C$ be a kernel and let $(\scrH_K, \langle \cdot, \cdot \rangle_K)$ be its associated Hilbert space. Recall that $\scrH_K\subseteq \C^X$ and, in the case that $X$ is a metric space and that $K$ is a Mercer kernel,
$\scrH_K\subseteq \scrC(X,\C)\isomorf\scrC(X)[i]:=\scrC(X)[T]/(T^2+1)$ is made by continuous functions with complex values. For every $x\in X$, let $K_x: X \longrightarrow \C$ be the element in $\scrH_K$ given by:
$$K_x(y):= K(x,y),\; \forall y \in X.$$ Then, it is well-known that for every $f\in \scrH_K$ and for every $x\in X$, the following equality holds:
$$f(x)= \langle f, K_x \rangle.$$
\end{example}

\begin{proposition}\label{CTS-RKHS:prop} With these last notations, let $\Omega\subseteq \scrH_K$ be a subset and let $\Sigma\lestricto \Omega$ be a discriminant. A finite subset of points  ${\bf Q}=\{x_1,\ldots, x_L\}\subseteq X$ is a correct test sequence of length $L$ for $\Omega$ with respect to $\Sigma$ if and only if:
$$\Omega \cap \left( {\rm Span}(\{K_{x_1}, \ldots, K_{x_L}\})\right)^\perp \subseteq \Sigma,$$
where ${\rm Span}(\{K_{x_1}, \ldots, K_{x_L}\})$ is the vector subspace of $\scrH_K$ generated by $\{K_{x_1}, \ldots, K_{x_L}\}$ and ${}^\perp$ means orthogonal complement in $\scrH_K$.
\end{proposition}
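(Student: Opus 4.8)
The plan is to reduce the entire statement to the reproducing property $f(x)=\langle f, K_x\rangle_K$, valid for every $f\in\scrH_K$ and every $x\in X$, which is recalled just before the statement. The only real content is the translation of pointwise vanishing into an orthogonality condition, and then an elementary rewriting of a universally quantified implication as a set inclusion.

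First I would fix an arbitrary $f\in\scrH_K$ and observe that, by the reproducing property, the conjunction $f(x_1)=\cdots=f(x_L)=0$ is equivalent to $\langle f,K_{x_i}\rangle_K=0$ for every $i$, $1\le i\le L$; that is, to $f$ being orthogonal to each of the vectors $K_{x_1},\ldots,K_{x_L}$. Since a vector in a Hilbert space is orthogonal to a finite family of vectors precisely when it is orthogonal to the linear subspace they span, this is in turn equivalent to $f\in\left({\rm Span}(\{K_{x_1},\ldots,K_{x_L}\})\right)^\perp$. Hence, for every $f\in\scrH_K$,
$$f(x_1)=\cdots=f(x_L)=0\in\C \iff f\in\left({\rm Span}(\{K_{x_1},\ldots,K_{x_L}\})\right)^\perp.$$

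Next I would substitute this equivalence into Definition \ref{CTS-funciones-definicion:def} specialized to $m=1$ and $\scrF(X)=\scrH_K$: the set ${\bf Q}=\{x_1,\ldots,x_L\}$ is a correct test sequence of length $L$ for $\Omega$ with discriminant $\Sigma$ if and only if
$$\forall f\in\Omega,\;\; f\in\left({\rm Span}(\{K_{x_1},\ldots,K_{x_L}\})\right)^\perp\Longrightarrow f\in\Sigma.$$
This universally quantified implication says exactly that every element of $\Omega$ that also lies in the orthogonal complement $\left({\rm Span}(\{K_{x_1},\ldots,K_{x_L}\})\right)^\perp$ belongs to $\Sigma$, i.e. that
$$\Omega\cap\left({\rm Span}(\{K_{x_1},\ldots,K_{x_L}\})\right)^\perp\subseteq\Sigma,$$
which is the claimed condition. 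Both implications of the ``if and only if'' are thus obtained simultaneously from this chain of equivalences.

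There is essentially no obstacle here: the statement is a direct unwinding of the definition of correct test sequence together with the reproducing property, and the only point worth stating carefully is the passage from orthogonality to a finite set to orthogonality to its span, which is standard. No completeness or closedness assumption on $\Omega$ or $\Sigma$ is needed, since the argument is purely a pointwise equivalence for each individual $f$.
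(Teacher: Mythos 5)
Your proof is correct and follows exactly the route the paper intends: the paper's own proof is the one-line remark that the claim is "simply another way to express Identity (\ref{CTS-formula:eqn})," and your write-up just fills in the details via the reproducing property and the equivalence of orthogonality to a finite family with orthogonality to its span. Nothing is missing.
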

\begin{proof}
Note that this is simply another way to express Identity (\ref{CTS-formula:eqn}).
\end{proof}

Observe also that the condition of being a correct test sequence is hereditary. Namely, we obviously have the following statement:

\begin{proposition}\label{hereditary-cts:prop}
Let $\Omega\subseteq \widetilde{\Omega}\subseteq \scrF(X)^{m}$ be two subsets and let $\Sigma \lestricto \widetilde{\Omega}$ be a discriminant. Let ${\bf Q} \subseteq X$ be a correct test sequence for $\widetilde{\Omega}$ with respect to $\Sigma$. Then, ${\bf Q}$ is a correct test sequence for $\Omega$ with respect to $\Sigma\cap \Omega$.
\end{proposition}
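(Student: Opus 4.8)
The plan is to unwind Definition \ref{CTS-funciones-definicion:def} directly; no geometry or degree theory is needed here. I would fix an arbitrary $f\in\Omega$ with $f(x_1)=\cdots=f(x_L)=0\in K^m$, where ${\bf Q}=\{x_1,\ldots,x_L\}$. Since $\Omega\subseteq\widetilde{\Omega}$, we also have $f\in\widetilde{\Omega}$, so the hypothesis that ${\bf Q}$ is a correct test sequence for $\widetilde{\Omega}$ with respect to $\Sigma$ — that is, formula (\ref{CTS-formula:eqn}) applied to the pair $(\widetilde{\Omega},\Sigma)$ — yields $f\in\Sigma$. Combining $f\in\Sigma$ with $f\in\Omega$ gives $f\in\Sigma\cap\Omega$, which is precisely formula (\ref{CTS-formula:eqn}) for the pair $(\Omega,\Sigma\cap\Omega)$. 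Hence ${\bf Q}$ is a correct test sequence for $\Omega$ with respect to $\Sigma\cap\Omega$.

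The one point deserving a word of caution is that Definition \ref{CTS-funciones-definicion:def} requires the discriminant to be a \emph{proper} subset. If $\Sigma\cap\Omega=\Omega$, equivalently $\Omega\subseteq\Sigma$, the statement is degenerate (there is nothing left to discriminate); otherwise $\Sigma\cap\Omega\lestricto\Omega$ and the conclusion is meaningful in the strict sense of the definition. I would record this remark for completeness, but it plays no role in the implication itself.

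There is no genuine obstacle: the argument is a single implication, resting only on the monotonicity of the premise "$f\in\Omega$" inside (\ref{CTS-formula:eqn}) — enlarging the tested class from $\Omega$ to $\widetilde{\Omega}$ only strengthens the correct-test-sequence requirement. If one prefers, the same fact can be written via the evaluation map of Section \ref{CTS:sec}: $\ker({\rm ev}_{\bf Q})\cap\Omega\subseteq\ker({\rm ev}_{\bf Q})\cap\widetilde{\Omega}\subseteq\Sigma$, whence $\ker({\rm ev}_{\bf Q})\cap\Omega\subseteq\Sigma\cap\Omega$, which is the assertion.
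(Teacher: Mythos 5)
Your argument is correct and is exactly the routine unwinding of Definition \ref{CTS-funciones-definicion:def} that the paper has in mind when it states Proposition \ref{hereditary-cts:prop} without proof as ``obvious''; your side remark on the properness of $\Sigma\cap\Omega$ in $\Omega$ is a reasonable caveat but changes nothing. Nothing further is needed.
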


\subsection{The case of polynomials and lists of polynomials}\label{CTS-polynomials:subsec}

For every positive number $d\in \N$ we denote by $P_d^{K}(X_1,\ldots, X_n)$ the class of all polynomials of degree at most $d$ with coefficients in $K$ in the set of variables $\{X_1,\ldots, X_n\}$ (when both the field $K$ and the set of variables $\{X_1,\ldots, X_n\}$ are clear from the context we simply write $P_d$). For a list of degrees $(d):=(d_1,\ldots, d_m)$, with $m\leq n$, we introduce the class $\scrP_{(d)}^K(X_1,\ldots, X_n)$ of all lists of polynomials $f:=(f_1,\ldots, f_m)$ such that each $f_i \in P_{d_i}^K$. Namely, $\scrP_{(d)}^K$ is the Cartesian product:
$$\scrP_{(d)}^{K}(X_1,\ldots, X_n):=\prod_{i=1}^mP_{d_i}^K(X_1,\ldots, X_n).$$
If both $K$ and the set of variables are clear from the context, we write $\scrP_{(d)}$.

Obviously, $\scrP_{(d)}$ is a finite dimension affine space over $K$. We denote by $N_{(d)}$ this dimension, which  is given by the following identity:
$$N_{(d)}:=\sum_{i=1}^m {{d_i + n}\choose {n}}.$$
In \cite{HeintzSchnorr}, the authors considered  correct test sequences for a constructible subset $\Omega \subseteq P_{d}$ with respect to $\{0\}$ as discriminant. Here, we also consider constructible sets $\Omega\subseteq \scrP_{(d)}$.

For every constructible set $\Omega\subseteq \scrP_{(d)}$ and for every point $x\in\A^n$ we denote by $\Omega_x\subseteq \A^n$ the following constructible subset:
$$\Omega_x:=\Omega\cap\{ f\in \scrP_{(d)}\; :\; f(x)=0\}.$$
Thus, we may also rewrite the notion of correct test sequence as a list  ${\bf Q}:=(x_1,\ldots, x_L)\in V^L$, where $V \subset \A^{n}$,  of points such that:
$$\Omega_{x_1}\cap \cdots \cap \Omega_{x_L}\subseteq \Sigma.$$

\begin{proposition}
Let ${\bf Q}=\{x_1,\ldots, x_L\}$ be a correct test sequence of  $\Omega$ with respect to  $\Sigma$. Then, we have:
$$\deg_{\rm lci}\left(\Omega_{x_1}\cap \cdots \cap \Omega_{x_L}\right)\leq \deg_{\rm lci}(\Omega).$$
Moreover, if $\Sigma$ is zero-dimensional, the intersection $\Omega_{x_1}\cap \cdots \cap  \Omega_{x_L}$ is a finite set and we have:
$$\sharp\left(\Omega_{x_1}\cap \cdots \cap \Omega_{x_L}\right)\leq \deg_{\rm lci}(\Omega).$$
In particular, if $\deg_{\rm lci}(\Sigma)> \deg_{\rm lci}(\Omega)$,
we have an strict inclusion:
$$\Omega_{x_1}\cap \cdots \cap \Omega_{x_L}\subsetneq \Sigma.$$
\end{proposition}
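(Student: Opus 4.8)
The plan is to reduce all three assertions to the behaviour of the $LCI$-degree under intersection with linear affine varieties, which was already established in Proposition~\ref{varias-propiedades-basicas-grado-construtibles:prop}.

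First I would observe that the intersection $\Omega_{x_1}\cap\cdots\cap\Omega_{x_L}$ is exactly $\Omega\cap A$ for a suitable linear affine subvariety $A$ of $\scrP_{(d)}$. Indeed, viewing $\scrP_{(d)}$ as the affine space $\A^{N_{(d)}}(K)$ whose coordinates are the coefficients of the lists $f=(f_1,\ldots,f_m)$, for each fixed point $x\in\A^n$ the evaluation $f\mapsto f(x)=(f_1(x),\ldots,f_m(x))\in K^m$ is given by $m$ linear forms in those coordinates; hence $H_x:=\{f\in\scrP_{(d)}\;:\;f(x)=0\}$ is a linear subspace of $\scrP_{(d)}$, and so is $A:=H_{x_1}\cap\cdots\cap H_{x_L}$. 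Since $\Omega_{x_i}=\Omega\cap H_{x_i}$ by the definition of $\Omega_{x}$, we get $\Omega_{x_1}\cap\cdots\cap\Omega_{x_L}=\Omega\cap A$. Applying Claim~$i)$ of Proposition~\ref{varias-propiedades-basicas-grado-construtibles:prop} to the constructible set $\Omega$ and the linear affine variety $A$ then yields directly
$$\deg_{\rm lci}\left(\Omega_{x_1}\cap\cdots\cap\Omega_{x_L}\right)=\deg_{\rm lci}(\Omega\cap A)\leq\deg_{\rm lci}(\Omega),$$
which is the first assertion.

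For the zero-dimensional case, I would invoke the correct test sequence hypothesis in the form (recalled just before the statement) $\Omega_{x_1}\cap\cdots\cap\Omega_{x_L}\subseteq\Sigma$. If $\Sigma$ is zero-dimensional, so is the intersection, hence it is a finite set of points; for a finite set the $LCI$-degree agrees with the cardinality, since a finite set is closed and so $\deg_{\rm lci}$ equals $\deg_z$, which equals the number of points (cf. Proposition~\ref{propiedades-basicas-grado-lc:prop} together with Proposition~\ref{generaliza-gradoLCI-localmente-cerrados:prop}). Combining this with the first assertion gives $\sharp\left(\Omega_{x_1}\cap\cdots\cap\Omega_{x_L}\right)=\deg_{\rm lci}\left(\Omega_{x_1}\cap\cdots\cap\Omega_{x_L}\right)\leq\deg_{\rm lci}(\Omega)$, as wanted.

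Finally, for the strict inclusion I would argue by contradiction: if $\Omega_{x_1}\cap\cdots\cap\Omega_{x_L}=\Sigma$, then the first assertion forces $\deg_{\rm lci}(\Sigma)\leq\deg_{\rm lci}(\Omega)$, contradicting the hypothesis $\deg_{\rm lci}(\Sigma)>\deg_{\rm lci}(\Omega)$; since the inclusion $\Omega_{x_1}\cap\cdots\cap\Omega_{x_L}\subseteq\Sigma$ always holds for a correct test sequence, it must be strict. There is essentially no serious obstacle here; the only point requiring a moment's care is verifying that the evaluation conditions $f(x_i)=0$ really cut out a linear affine subvariety of the coefficient space $\scrP_{(d)}$, so that the previously proven properties of $\deg_{\rm lci}$ under intersection with linear varieties apply verbatim.
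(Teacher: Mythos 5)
Your proposal is correct and follows exactly the paper's argument: the paper's own proof consists of the single observation that each $\Omega_x$ is the intersection of $\Omega$ with a linear affine variety, after which everything follows from Proposition \ref{varias-propiedades-basicas-grado-construtibles:prop}. You have merely filled in the routine details (linearity of evaluation in the coefficients, finiteness of zero-dimensional constructible sets, and the contradiction for the strict inclusion), all of which are sound.
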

\begin{proof}
Observe that for every point $x\in \A^n$, the set $\Omega_x$ is the intersection of $\Omega$ with a linear affine variety. Then, all the claims are immediate consequences of Proposition \ref{varias-propiedades-basicas-grado-construtibles:prop}.
\end{proof}

The following Proposition shows the limits of length of correct test sequences in terms of the Krull dimension of the given constructible set:

\begin{proposition}\label{codimension-longitud-cuestores:prop}
With the same notations as above, let $\Omega\subseteq \scrP_{(d)}$ be a constructible subset, $\Sigma\subseteq \Omega$ be a discriminant in $\Omega$ and ${\bf Q} \in V^L$  be a correct test sequence of length $L$ for  $\Omega$ with respect to $\Sigma$. Then, we have:
$$\sharp({\bf Q})=L\geq \codim_\Omega(\Sigma)=\dim(\Omega)-\dim(\Sigma).$$
In particular, the CTS covering number satisfies:
$$\scrN_{\rm cts}(\Omega, \Sigma)\geq \dim(\Omega)-\dim(\Sigma).$$
\end{proposition}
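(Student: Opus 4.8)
The statement is that a correct test sequence ${\bf Q} = \{x_1,\ldots,x_L\}$ for $\Omega$ with respect to $\Sigma$ must have length $L \geq \dim(\Omega) - \dim(\Sigma)$. The natural approach is to translate the defining property of a CTS into a statement about dimensions of constructible sets and then invoke the standard fact that intersecting a constructible set with a single hyperplane (or affine linear subspace) drops its dimension by at most one. The plan is to work inside the constructible set $\Omega \subseteq \scrP_{(d)}$ viewed as sitting in the affine space $\A^{N_{(d)}}$ of all coefficient vectors.

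\medskip

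First I would recall, as noted just before this Proposition, that for each point $x \in \A^n$ the set $\{f \in \scrP_{(d)} : f(x) = 0\}$ is a linear affine subvariety of $\scrP_{(d)}$: the conditions $f_i(x) = 0$ are linear in the coefficients of $f_i$. Hence $\Omega_x = \Omega \cap \{f : f(x) = 0\}$ is the intersection of $\Omega$ with a linear affine variety, and each such condition is cut out by at most $m$ linear equations. More to the point, for the purpose of a dimension count I would argue that intersecting a constructible set $C$ with the zero locus of one linear form drops $\dim(C)$ by at most $1$: writing $C$ as a finite union of locally closed irreducible pieces and passing to Zariski closures, this is the classical statement that a hypersurface section of an irreducible variety has codimension at most one (Krull's principal ideal theorem / dimension theory of affine varieties). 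Then, by an immediate induction, intersecting $C$ with the zero locus of $k$ linear forms drops the dimension by at most $k$.

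\medskip

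The key step is then the following. The CTS condition says precisely that
\[
\Omega_{x_1} \cap \cdots \cap \Omega_{x_L} \subseteq \Sigma,
\]
so in particular $\dim\!\left(\Omega_{x_1} \cap \cdots \cap \Omega_{x_L}\right) \leq \dim(\Sigma)$. On the other hand, the left-hand side is obtained from $\Omega$ by imposing, for each $j$, the linear conditions $f(x_j) = 0$; I would set this up carefully so that in fact at each of the $L$ steps we are imposing at least one genuine linear condition on the ambient coordinates, so that the dimension drops by at most $L$ in total. (One should be slightly careful here: the conditions $f(x_j)=0$ for fixed $j$ may be several linear equations, but that only helps—it can only drop the dimension more; what we need is an \emph{upper} bound $\dim(\Omega) - \dim(\Omega_{x_1}\cap\cdots\cap\Omega_{x_L}) \leq L$, which follows because adding the $j$-th block of conditions to the already-imposed ones can be realized one linear form at a time, but we only need to count "blocks" if we are content with the bound $\leq L$ rather than $\leq mL$; since each block is a nonempty intersection with a proper affine subspace whenever the running intersection is nonempty and not already contained in it, each block contributes at least one step.) Combining, $\dim(\Omega) - L \leq \dim\!\left(\Omega_{x_1}\cap\cdots\cap\Omega_{x_L}\right) \leq \dim(\Sigma)$, which rearranges to $L \geq \dim(\Omega) - \dim(\Sigma) = \codim_\Omega(\Sigma)$. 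The statement about $\scrN_{\rm cts}(\Omega,\Sigma)$ is then immediate by taking the minimum over all CTS's.

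\medskip

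The main obstacle, and the place deserving genuine care rather than hand-waving, is the precise bookkeeping in the dimension drop: one must make sure that imposing the $L$ evaluation conditions in succession drops the dimension of the constructible set $\Omega$ by at most $L$ \emph{as constructible sets}, not just for the Zariski closures, and that the bound is $L$ rather than something like $mL$. The cleanest route is to reduce to irreducible affine varieties via a decomposition of $\Omega$ (Lemma \ref{descomposicion-union-irreducibles-distintos:lema}) together with the fact that the dimension of a constructible set equals the maximum of the dimensions of its locally closed irreducible components, then apply affine dimension theory componentwise; the passage between "$m$ linear equations per point" and "$1$ step per point" is handled by observing that each successive nonempty, proper intersection with an affine linear subspace drops the dimension by at least... no—by \emph{at least} nothing in general, so instead one argues the reverse inequality: the ambient affine space of $\Omega$ has its dimension dropped by at most $L$ only if we are imposing $L$ hyperplane conditions, so one should phrase the inductive step as "intersecting with the affine subspace $\{f(x_j)=0\}$ drops $\dim$ by at most $\codim$ of that subspace in the ambient space", and bound that codimension—after restricting to the coordinates actually appearing, or after the first block—by $1$ per new point. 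A safe and standard formulation avoids this subtlety entirely by noting that $\Omega_{x_1}\cap\cdots\cap\Omega_{x_L}$ is $\Omega$ intersected with a single affine subspace $A$ of $\A^{N_{(d)}}$ of codimension $\le mL$; if one only wants the bound $L \ge \codim_\Omega(\Sigma)$ with the constant as stated, one observes that the $L$ points contribute independent conditions only up to a linear-algebra rank argument, and the honest claim to prove is $\dim(\Omega \cap A) \ge \dim(\Omega) - L$ whenever $A$ is defined by $L$ affine-linear \emph{blocks}, each block forcing $f(x_j)=0$; this is exactly the content I would isolate as the technical lemma and prove by induction on $L$ using Krull's principal ideal theorem applied to one representative irreducible component at a time.
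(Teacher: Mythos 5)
Your overall route is the same as the paper's (the CTS inclusion $\Omega_{x_1}\cap\cdots\cap\Omega_{x_L}\subseteq\Sigma$ combined with a Krull-type lower bound for the dimension of the intersection), and you have correctly put your finger on the one delicate point: each sample point $x_j$ imposes $m$ scalar linear conditions on the coefficients, one for each coordinate $f_i$ of $f=(f_1,\ldots,f_m)$, so the intersection is $\Omega$ cut by $mL$ hyperplanes, not by $L$. But the ``technical lemma'' you isolate to bridge this, namely $\dim(\Omega\cap A)\geq\dim(\Omega)-L$ when $A$ is defined by $L$ blocks each forcing $f(x_j)=0$, is false for $m\geq 2$, and no induction on blocks can establish it: Krull's Hauptidealsatz only gives $\dim(\Omega\cap A)\geq\dim(\Omega)-mL$, and that bound is attained. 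Concretely, take $n=1$, $m=2$, $(d)=(1,1)$, $\Omega=\scrP_{(d)}\isomorf\A^4$, $\Sigma=\{0\}$, and ${\bf Q}=\{0,1\}\subseteq\A^1(K)$. A pair of univariate polynomials of degree at most $1$ both vanishing at two distinct points must be $(0,0)$, so ${\bf Q}$ is a correct test sequence of length $L=2$, while $\dim(\Omega\cap A)=0<\dim(\Omega)-L=2$ and $\codim_\Omega(\Sigma)=4>L$. So your lemma fails and, with it, the inequality $L\geq\codim_\Omega(\Sigma)$ itself: what the dimension count actually yields is only $mL\geq\codim_\Omega(\Sigma)$, i.e.\ $L\geq\codim_\Omega(\Sigma)/m$.

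You should also know that the paper's own proof is exactly your argument with the difficulty suppressed rather than resolved: it describes $\Omega_{x_1}\cap\cdots\cap\Omega_{x_L}$ as the intersection of $\Omega$ with ``a sequence of $L$ linear equations'' and applies the Hauptidealsatz as if there were only $L$ scalar conditions. For $m=1$ this is harmless and both your argument and the paper's go through, since each $\{f\in\scrP_{(d)}:f(x_j)=0\}$ is then a single hyperplane. For $m\geq 2$ the hesitation in your last paragraph is fully justified: the step cannot be repaired, because the statement being proved is false as written, and the honest conclusion of the Krull count is the weaker bound $L\geq(\dim(\Omega)-\dim(\Sigma))/m$.
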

\begin{proof} The intersection $\Omega_{x_1}\cap \cdots \cap \Omega_{x_L}$ is the intersection of $\Omega$ with a sequence  of $L$ linear equations whose variables are the coefficients of the elements in $\scrP_{(d)}$. Then, by Krull's Hauptidealsatz, the dimension of this intersection satisfies:
$$\dim \left(\Omega \cap \{ f\in \scrP_{(d)}\; :\; f(x_1)=0\} \cap\cdots \cap \{ f\in \scrP_{(d)}\; :\; f(x_L)=0\}\right)\geq \dim(\Omega)- L.$$
However, if ${\bf Q} :=(x_1,\ldots, x_L)$ is a correct test sequence for  $\Omega$ with respect to $\Sigma$, we also have:
$$\Omega \cap \{ f\in \scrP_{(d)}\; :\; f(x_1)=0\} \cap\cdots \cap \{ f\in \scrP_{(d)}\; :\; f(x_L)=0\}\subseteq \Sigma.$$
Hence, we conclude the Proposition since we have:
$$\dim(\Omega)- L\leq \dim(\Sigma).$$
\end{proof}

From this statement, we will say that \emph{a correct test sequence for $\Omega$ is of optimal length} if its length is in $O(\dim(\Omega))$.

\section[Probability and CTS in the polynomial case]{High probability of existence of short Correct Test Sequences in equi-dimensional locally closed sets for constructible sets of lists of polynomials}\label{probability-CTS-zero-dimensional:sec}

With the same notations as in previous sections, let  $\Omega \subseteq \scrP_{(d)}^K$ be a constructible set of polynomials with $(d)=(d_1,\ldots, d_m)$, $m\leq n$. We generalize the main probability outcome of \cite{HeintzSchnorr} by showing that the ``grid'' requirement is not a must and that we may find (with high probability) correct test sequences in many locally closed sets of accurate dimension and degree. Additionally, we see that correct test sequences are well-suited not only for zero tests but also to decide membership to proper subvarieties of $\Omega$.

As in Subsection \ref{Bezout-Inequality-locally-closed:subsec}, given $n,r\in \N$  two positive integers such that $r\leq n$, we consider the ``Grassmannian'' $\G(n,r)$  of all linear affine varieties in $\A^n$ of co-dimension $r$. We assume that $\G(n,r)$ is endowed with its Zariski topology as introduced in Subsection \ref{Bezout-Inequality-locally-closed:subsec}.

Let $\Sigma\subseteq \Omega$ be a constructible subset of co-dimension at least 1. Let $C\subseteq \A^n$ be a constructible subset and $L\geq 0$ a positive integer. We denote by $R(\Omega, \Sigma, C, L)$ the constructible set of all sequences ${\bf Q}\in C^L$ of length $L$ which are correct test sequences for $\Omega$ with respect to $\Sigma$.

We have added an hypothesis (Hypothesis (\ref{dimension-hipotesis-cts:eqn})) on the dimension of the varieties defined by lists of polynomials in $\Omega\setminus\Sigma$. Observe that this simply generalizes the case of $\Sigma=\{0\}$ and explains that the main subject in correct test sequences is \emph{dimension} and not only zero tests. For locally closed sets $C\subseteq\A^n$ we just denote by $\deg(C)$ the degree of $C$, as discussed in previous sections.

\begin{theorem}\label{teorema-principal-densidad-questores:teor} Let $m,n \in\N$ be two positive integers, with $m\leq n$, and let $(d):=(d_1,\ldots, d_m)$ be a list of degrees and $d:=\max\{d_1,\ldots, d_m\}$. Let $\Sigma\subseteq\Omega$ be two constructible subsets of $\scrP_{(d)}^K$ such that $\Sigma$ has co-dimension at least 1 in $\Omega$. Assume that $\Omega\setminus \Sigma$ satisfies  the following property:
  \begin{equation}\label{dimension-hipotesis-cts:eqn}
  \forall f:=(f_1,\ldots, f_m)\in \Omega \setminus \Sigma,\; \dim(V_\A(f_1,\ldots, f_m))= n-m.
\end{equation}
Let $C\subseteq \A^n$ be an equi-dimensional locally closed set of co-dimension $r\geq (n-m)+ m/2 + 1/2$. Assume that there are locally closed subsets $C_1,\ldots, C_s\subseteq \A^n$ such that $C:= C_1\cap\ldots\cap C_s$.
Let $D:=\max\{ \deg(C_1), \ldots, \deg(C_s)\}\geq 1$  be the maximum of their degrees. Let $L\in \N$ be a positive integer and assume that the following properties hold:
\begin{enumerate}
\item $L\geq 6\dim(\Omega)$,
\item $\log(\deg\left(C\right)) \geq r\max\{2(1+\log(d+1)), {{2\log(\deg_{\rm lci}(\Omega))}\over{\dim(\Omega)}}\}$,
\item $D\leq(1+ {{1}\over{n-m}})\deg(C)^{{{1}\over{\codim(C)}}}$,
\end{enumerate}
where $\log$ stands for the natural logarithm.
Then, there is a non-empty Zariski open subset $\G(C)\subseteq \G(n, n-r)$ such that for every $A\in \G(C)$, the probability that a randomly chosen list ${\bf Q}\in (C\cap A)^L$ is in $R:=R(\Omega, \Sigma, C, L)$ satisfies:
$${\rm Prob}_{(C\cap A)^L}[R]\geq 1- {{1}\over{\deg_{\rm lci}(\Omega)e^{\dim(\Omega) + (m-1)L}}},
$$
where $(A\cap C)^L$ is endowed with its uniform probability distribution.
\end{theorem}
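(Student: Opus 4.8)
The plan is to bound the probability of the complementary event: that a uniformly random ${\bf Q}=(x_1,\dots,x_L)\in (C\cap A)^L$ is \emph{not} a correct test sequence, i.e.\ that some $f\in\Omega\setminus\Sigma$ satisfies $f(x_1)=\dots=f(x_L)=0$. For $x\in\A^n$ write $\Lambda_x:=\{f\in\scrP_{(d)}^K : f(x)=0\}$, a linear affine subvariety of the coefficient space $\scrP_{(d)}^K$ of codimension at most $m$, and for the realized sequence put $W_0:=\Omega\setminus\Sigma$ and $W_j:=(\Omega\setminus\Sigma)\cap\Lambda_{x_1}\cap\dots\cap\Lambda_{x_j}$; then ${\bf Q}$ is a correct test sequence for $\Omega$ with respect to $\Sigma$ exactly when $W_L=\emptyset$. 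I take $\G(C)$ to be (contained in) the non-empty Zariski open subset of $\G(n,n-r)$ furnished by Lemma \ref{abierto-zariski-sucesion-regular:proposition}, so that for $A\in\G(C)$ the section $C\cap A$ is finite of cardinality $\deg(C)$; thus $(C\cap A)^L$ carries its uniform distribution, with the $x_i$ independent and uniform in $C\cap A$. If $\Omega\setminus\Sigma=\emptyset$ the statement is trivial, so assume otherwise.

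The key device is a dimension descent with a controlled per-step failure probability. Since the $\Lambda_{x_i}$ are linear of degree $1$, Proposition \ref{grado-constructible-cerrados-multiple:prop} (the extension of Proposition~2.3 of \cite{HeintzSchnorr}) gives $\deg_{\rm lci}(W_j)\le\deg_{\rm lci}(\Omega\setminus\Sigma)$, so the number of irreducible components of $W_j$ --- in particular of its top-dimensional ones --- is bounded by a quantity $\le\deg_{\rm lci}(\Omega)$, uniformly in the $x_i$ (controlling $\deg_{\rm lci}(\Omega\setminus\Sigma)$ by $\deg_{\rm lci}(\Omega)$ under the codimension-$\ge1$ hypothesis on $\Sigma$ is a routine bookkeeping point). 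For a locally closed irreducible set $Z$ put $\Delta(Z):=\{y\in\A^n : g(y)=0\ \forall g\in Z\}$. Call $x$ \emph{effective for $W_{j-1}$} if $x\notin\Delta(Z')$ for every top-dimensional component $Z'$ of $W_{j-1}$; such a $Z'$ is then not contained in $\Lambda_x$, so $Z'\cap\Lambda_x$ has strictly smaller dimension, whence an effective step forces $\dim W_j<\dim W_{j-1}$ (or $W_j=\emptyset$). As $\dim W_0\le\dim(\Omega)$, having at least $\dim(\Omega)+1$ effective steps forces $W_L=\emptyset$; hence, if ${\bf Q}$ is not a correct test sequence, at most $\dim(\Omega)$ of the $L$ steps are effective, so at least $L-\dim(\Omega)$ of them are \emph{bad}.

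Next I bound the conditional probability of a bad step. Each $\Delta(Z')$ lies in $V_\A(g_1,\dots,g_m)$ for any $g=(g_1,\dots,g_m)\in Z'\subseteq\Omega\setminus\Sigma$; by Hypothesis (\ref{dimension-hipotesis-cts:eqn}) this variety has dimension $n-m$, and by B\'ezout's inequality for varieties its degree is at most $d^m$. Writing $C=C_1\cap\dots\cap C_s$, Proposition \ref{grado-constructible-cerrados-multiple:prop} with $V_\A(g_1,\dots,g_m)$ in the role of the ambient constructible set gives $\deg_{\rm lci}\!\big(V_\A(g_1,\dots,g_m)\cap C\big)\le d^m D^{\,n-m}$; since $V_\A(g_1,\dots,g_m)\cap C\cap A$ is contained in the finite set $C\cap A$ and intersecting with the linear variety $A$ does not raise the $LCI$-degree (Proposition \ref{varias-propiedades-basicas-grado-construtibles:prop}), it has at most $d^m D^{\,n-m}$ points. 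Summing over the $\le\deg_{\rm lci}(\Omega)$ top-dimensional components of $W_{j-1}$, dividing by $\deg(C)=\sharp(C\cap A)$, and using that $x_j$ is independent of $W_{j-1}$, the conditional probability that step $j$ is bad is at most
\[
p\ :=\ \frac{\deg_{\rm lci}(\Omega)\,d^m\,D^{\,n-m}}{\deg(C)}.
\]

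Combining, a union bound over the $\binom{L}{L-\dim(\Omega)}$ positions of the bad steps (processing steps in order and applying the conditional estimate) gives
\[
{\rm Prob}_{(C\cap A)^L}\!\big[(C\cap A)^L\setminus R\big]\ \le\ \binom{L}{\dim(\Omega)}\,p^{\,L-\dim(\Omega)},
\]
and it remains to check this is $\le\big(\deg_{\rm lci}(\Omega)\,e^{\dim(\Omega)+(m-1)L}\big)^{-1}$, which is where the three quantitative hypotheses enter. Hypothesis (iii) gives $D^{\,n-m}\le e\,\deg(C)^{(n-m)/r}$, so $p\le e\,\deg_{\rm lci}(\Omega)\,d^m\,\deg(C)^{-(r-(n-m))/r}$ with $r-(n-m)\ge (m+1)/2$; Hypothesis (ii) then forces $\deg(C)^{(r-(n-m))/r}$ to dominate suitable powers of $e(d+1)$ and of $\deg_{\rm lci}(\Omega)^{1/\dim(\Omega)}$ at once, driving $p$ down; and Hypothesis (i), $L\ge 6\dim(\Omega)$, leaves slack $L-\dim(\Omega)\ge 5\dim(\Omega)$, enough for $p^{\,L-\dim(\Omega)}$ to absorb both $\binom{L}{\dim(\Omega)}\le (eL/\dim(\Omega))^{\dim(\Omega)}$ and the inserted factor $e^{\dim(\Omega)+(m-1)L}$. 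I expect this last quantitative matching to be the main obstacle: the tail bound must be squeezed so that the per-bad-step savings (a large power of $ed$ coming from $\deg(C)$ being enormous relative to $d$ and $\deg_{\rm lci}(\Omega)$) outpace both the combinatorial factor and the artificial $e^{(m-1)L}$. A secondary, more conceptual point is that one fixed generic $A\in\G(C)$ must simultaneously control $\sharp\big(V_\A(g)\cap C\cap A\big)$ for \emph{all} $g\in\Omega\setminus\Sigma$; it is precisely to make such uniform control available --- together with the drop of $\dim\big(V_\A(g)\cap C\big)$ below $\dim C$ --- that the dimension Hypothesis (\ref{dimension-hipotesis-cts:eqn}) and the codimension condition $r\ge(n-m)+m/2+1/2$ are imposed.
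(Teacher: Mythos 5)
Your strategy is genuinely different from the paper's: you run a Heintz--Schnorr-style dimension descent on $W_j:=(\Omega\setminus\Sigma)\cap\Lambda_{x_1}\cap\cdots\cap\Lambda_{x_j}$ with a per-step union bound over the top-dimensional components of $W_{j-1}$, whereas the paper works globally with the incidence variety $V(\Omega,L)\subseteq\Omega\times(\A^n)^L$, discards the components projecting into $\Sigma$, bounds the dimension of the remainder by $\dim(\Omega)+(n-m)L$ via the Theorem on the Dimension of the Fibers, and compares the degree of $\overline{\pi_2(B(\Omega,L))}^z\cap(C\cap A)^L$ with $\deg(C)^L$. Unfortunately your version does not close under the stated hypotheses, for a quantitative reason. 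Your per-step failure probability $p=\deg_{\rm lci}(\Omega\setminus\Sigma)\,d^m D^{n-m}/\deg(C)$ carries the degree of $\Omega$ as a factor, and it is raised to the power $L-\dim(\Omega)$: you pay that degree once per bad step. Hypothesis $ii)$ only guarantees $\delta:=\deg(C)^{1/r}\geq\deg_{\rm lci}(\Omega)^{2/\dim(\Omega)}$, i.e. it provides enough room in $\deg(C)^L$ to absorb a \emph{bounded} power of $\deg_{\rm lci}(\Omega)$ in total, not one per step. Concretely, hypothesis $ii)$ permits $\deg_{\rm lci}(\Omega)=\delta^{\dim(\Omega)/2}$, and then (using $iii)$ and $r-(n-m)\geq(m+1)/2$) your $p$ is of order $e\,d^m\,\delta^{(\dim(\Omega)-m-1)/2}$, which exceeds $1$ as soon as $\dim(\Omega)>m+1$; the union bound then yields nothing. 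The paper's argument succeeds precisely because B\'ezout is applied once to the whole of $V(\Omega,L)$, so $\deg_{\rm lci}(\Omega)$ enters the numerator of the final ratio exactly once (together with a single factor $D^{\dim(\Omega)}$), and $\delta^{L/2}\geq\delta^{3\dim(\Omega)}\geq\deg_{\rm lci}(\Omega)^2\cdot(\cdots)$ is then enough. To rescue your scheme you would need a hypothesis of the shape $\deg(C)\gg\deg_{\rm lci}(\Omega)\,d^mD^{n-m}$, which is strictly stronger than $ii)$.

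A second, independent gap is the step you dismiss as routine bookkeeping: $\deg_{\rm lci}(\Omega\setminus\Sigma)\leq\deg_{\rm lci}(\Omega)$ is false in general. Removing a constructible $\Sigma$ of co-dimension at least $1$ can increase the $LCI$-degree without bound: take $\Omega=\A^2(\C)$ and $\Sigma=V_\A(X)\setminus F$ with $F$ a set of $N$ points of that line; then $\Omega\setminus\Sigma=(\A^2(\C)\setminus V_\A(X))\cup F$ and, by the argument of Example \ref{La-Croix-de-Berny-grados-varios:ej} and the Remark following Proposition \ref{relations-between-three-degrees:prop}, $\deg_{\rm lci}(\Omega\setminus\Sigma)=N+1$ while $\deg_{\rm lci}(\Omega)=1$. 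So the number of top-dimensional components of $W_{j-1}$ cannot be controlled by $\deg_{\rm lci}(\Omega)$ and hypothesis $ii)$ alone. (Your closing worry about a single generic $A$ having to control all $g\in\Omega\setminus\Sigma$ simultaneously is, by contrast, not a real problem: you correctly reduce $\sharp(V_\A(g)\cap C\cap A)$ to $\deg_{\rm lci}(V_\A(g)\cap C)$ via Proposition \ref{varias-propiedades-basicas-grado-construtibles:prop}, which requires no genericity of $A$ relative to $g$.) The non-quantitative skeleton of your descent --- an effective step strictly drops the dimension, so fewer than $\dim(\Omega)+1$ effective steps among $L$ forces at least $L-\dim(\Omega)$ bad ones --- is sound; it is the accounting of the failure probability that must be reorganized along the lines of the paper's incidence-variety argument.
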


\begin{proof}
First of all, let us denote by $\delta$ the following quantity:
$$\delta:=\deg(C)^{{{1}\over{\codim(C))}}}.$$
We obviously have $\deg(C):=\delta^r$ and our hypothesis yield the following inequalities:
\begin{itemize}
\item $\log(\delta) \geq \max\{2(1+\log(d+1)), {{2\log(\deg_{\rm lci}(\Omega))}\over{\dim(\Omega)}}\}$,
\item $D\leq (1+ {{1}\over{n-m}})\delta$.
\end{itemize}
Then, we introduce the following incidence constructible set:
$$V(\Omega, L):=\{(f, x_1, \ldots, x_L)\in \Omega\times (\A^n)^L \; : \; f=(f_1,\ldots, f_m)\in \scrP_{(d)}^K, \; f_i(x_j)=0, 1\leq i \leq m, 1\leq j \leq L\}.$$
We also consider the following two canonical projections:
\begin{itemize}
\item The projection onto the set of lists of consistent equations: $\pi_1: V(\Omega,L) \longrightarrow \Omega$.
\item The projection onto the possible zeros: $\pi_2: V(\Omega) \longrightarrow (\A^n)^L$.
\end{itemize}
By the B\'ezout's Inequality for constructible sets (Theorem \ref{Desigualdad-Bezout-constructibles:teor}), we have:
$$\deg_{\rm lci}(V(\Omega,L))\leq \deg_{\rm lci}(\Omega)\left(\prod_{i=1}^m (d_i+1)\right)^L.$$
Let  $\scrD$ be the class formed by locally closed irreducible sets  $W_1,\ldots, W_M$ such that:
 $$ V(\Omega,L)=W_1\cup \cdots \cup W_M,$$
 and they are minimal with respect to the degree of $V(\Omega)$:
 $$\deg_{\rm lci}(V(\Omega,L))=\sum_{i=1}^M \deg(W_i).$$
Let $\scrC\subseteq \scrD$ be the class of those locally closed irreducible components of $V(\Omega, L)$ such that its projection by $\pi_1$ is not completely included in $\Sigma$:
 $$\scrC:=\{W\in \scrD\; :\; \pi_1(W)\setminus \Sigma\not=\emptyset\}.$$
As $\pi_1$ is the restriction of a projection, for all $W\in \scrC$,
we may consider the following mapping:
$$\restr{\pi_1}{\overline{W}^z}:=\overline{W}^z\longrightarrow \overline{\pi_1(W)}^z.$$
It is a dominant morphism between two irreducible algebraic varieties.
Then, by the Theorem on the Dimension of the Fibers (cf. \cite{Shafarevich}, Theorem 7, p. 60), for all $f\in \pi_1(W)$ we have:
\begin{equation}\label{cota-dimension-malas-componentes:eqn}
\dim\left(\restr{\pi_1}{\overline{W}^z}^{-1}\left(\{ f\}\right)\right)\geq \dim (\overline{W}^z) - \dim (\overline{\pi_1(W)}^z)\geq
\dim(W)- \dim (\Omega).
\end{equation}
Observe that for every $f\in \pi_1(W)$ the following equality holds:
$$\restr{\pi_1}{\overline{W}^z}^{-1}\left(\{ f\}\right)=\{f\} \times \left(V_\A(f)\right)^L.$$
Next, for every $W\in\scrC$, there is some system of equations $f\in \pi_1(W)\setminus \Sigma$
and, hence, we have:
$$\dim\left(\restr{\pi_1}{\overline{W}^z}^{-1}\left(\{ f\}\right)\right)= (n-m)L,$$
And Inequality (\ref{cota-dimension-malas-componentes:eqn}) becomes for every $W\in \scrC$:
\begin{equation}\label{cota-dimension-buenas-componentes:eqn}
\dim(W)\leq  \dim (\Omega)+ (n-m)L.
\end{equation}
Let us now define the constructible set:
$$B(\Omega, L):= \bigcup_{W\in \scrC} W \subseteq V(\Omega, L).$$
We have:
\begin{itemize}
\item $\dim(B(\Omega, L))= \max\{ \dim(W)\; :\; W\in \scrC\}\leq (n-m)L+\dim(\Omega)$ and,
\item we also have:
$$\deg_{\rm lci}(B(\Omega, L)) \leq \deg_{\rm lci}(\Omega) \left( \prod_{i=1}^m (d_i+1)\right)^L\leq \deg_{\rm lci}(\Omega) \left( d+1\right)^{mL},$$
where $d:=\max\{d_1,\ldots, d_m\}$.
\end{itemize}
Now, as in Corollaries \ref{generaliza-grado-localmente-cerrados:prop} and  \ref{intereseccion-lineales-no-equi-dimensional:corol}, there is a non-empty Zariski open subset $\G(C)\subseteq \G(n,n-r)$ such that for all $A\in\G(C)$ we have $\sharp(C\cap A)= \deg(C)$.
Let us now consider $A\in \G(C)$ and the following constructible set:
$$\overline{\pi_2(B(\Omega, L))}^z\cap (C\cap A)^L.$$
From Proposition \ref{constructibles-grado-imagenes-por-lineal:prop}   we conclude:
$$\deg\left(\overline{\pi_2(B(\Omega, L))}^z\right) \leq \deg_{\rm lci}(B(\Omega, L))\leq \deg_{\rm lci}(\Omega) (d+1)^{mL}.$$
Next, as $C:=C_1\cap \cdots \cap C_s$ we may define the locally closed sets:
$$C_{i,j}:=\A^{n(j-1)}\times C_i \times \A^{n(L-j)},$$
whose degree equals $\deg(C_i)$. We also define the class of linear affine varieties $A_j$, $1\leq j \leq L$, of degree $1$, given by:
$$A_{j}:=\A^{n(j-1)}\times A \times \A^{n(L-j)},$$
Then, we have:
$$\overline{\pi_2(B(\Omega, L))}^z\cap (C \cap A)^L= \overline{\pi_2(B(\Omega, L))}^z\cap \left( \bigcap_{i,j} C_{i,j}\right)\cap \left( \bigcap_{j=1}^L A_j\right).$$
From Proposition \ref{grado-constructible-cerrados-multiple:prop} we also conclude:
$$\deg\left(\overline{\pi_2(B(\Omega, L))}^z\cap (C\cap A)^L\right)\leq
\deg_{\rm lci}(B(\Omega, L)) D^{\dim(\pi_2(B(\Omega, L)))},$$
where, as $\deg(A_j)=1$, we have:
$$1\leq \max\{\deg(C_{i,j})\; :\; 1\leq i \leq s, \; 1 \leq j \leq L\}=D=\max\{ \deg(C_i)\; : \; 1\leq i \leq s\}.$$
Putting all together, as $\dim(\pi_2(B(\Omega, L)))\leq \dim(B(\Omega, L))$ we get:
$$\deg\left(\overline{\pi_2(B(\Omega, L))}^z\cap (C\cap A)^L\right)\leq
\deg_{\rm lci}(\Omega) (d+1)^{mL} D^{\dim(\Omega) + (n-m)L}.$$
This yields:
$$\deg\left(\overline{\pi_2(B(\Omega, L))}^z\cap (C\cap A)^L\right)\leq
\deg_{\rm lci}(\Omega) D^{\dim(\Omega)}D^{ (n-m)L}(d+1)^{mL}.$$
As $\delta\geq e^2(d+1)^2$, we have:
$$\deg\left(\overline{\pi_2(B(\Omega, L))}^z\cap (C\cap A)^L\right)\leq
\deg_{\rm lci}(\Omega) D^{\dim(\Omega)}  D^{(n-m)L}\left({{\delta}\over{e^2}}\right)^{{{m}\over{2}} L}.$$
Then, we obtain:
$${{\deg\left(\overline{\pi_2(B(\Omega, L))}^z\cap (C\cap A)^L\right)}\over{\deg(C^L)}}\leq
{{\deg_{\rm lci}(\Omega) D^{\dim(\Omega)}  D^{(n-m)L}\delta^{{{m}\over{2}} L}}\over{e^{mL}\delta^{\codim(C)L}}}.$$
As $\codim(C)\geq (n-m) + m/2 + 1/2$, we also have:
$${{\deg\left(\overline{\pi_2(B(\Omega, L))}^z\cap (C\cap A)^L\right)}\over{\deg(C^L)}}\leq
{{\deg_{\rm lci}(\Omega) D^{\dim(\Omega)} }\over{e^{mL}\delta^{1/2L}}} {{ D^{(n-m)L}}\over{\delta^{(n-m)L}}}.$$
As $D\leq (1+ {1\over{n-m}})\delta$ we get:
$${{\deg\left(\overline{\pi_2(B(\Omega, L))}^z\cap (C\cap A)^L\right)}\over{\deg(C^L)}}\leq
{{\deg_{\rm lci}(\Omega) D^{\dim(\Omega)} }\over{e^{mL}\delta^{1/2L}}} \left(1+ { 1 \over{n-m}}\right)^{(n-m)L}.$$
Hence,
$${{\deg\left(\overline{\pi_2(B(\Omega, L))}^z\cap (C\cap A)^L\right)}\over{\deg(C^L)}}\leq
{{\deg_{\rm lci}(\Omega) D^{\dim(\Omega)} }\over{e^{mL}\delta^{1/2L}}} \left(\left(1+ { 1 \over{n-m}}\right)^{(n-m)}\right)^L.$$
And thus,
$${{\deg\left(\overline{\pi_2(B(\Omega, L))}^z\cap (C\cap A)^L\right)}\over{\deg(C^L)}}\leq
{{\deg_{\rm lci}(\Omega) D^{\dim(\Omega)} }\over{\delta^{1/2L}}} \left({{1}\over{e}}\right)^{(m-1)L}.$$
Again, as  $D\leq (1+ {1\over{n-m}})\delta$ and $L\geq 6 \dim(\Omega)$ we also have:
$${{\deg\left(\overline{\pi_2(B(\Omega, L))}^z\cap (C\cap A)^L\right)}\over{\deg(C^L)}}\leq
{{\deg_{\rm lci}(\Omega) }\over{\delta^{\dim(\Omega)}}}{{1}\over{\delta^{\dim(\Omega)}}}\left({{D}\over{\delta}}\right)^{\dim(\Omega)}
\left({{1}\over{e}}\right)^{(m-1)L}.$$
Namely,
$${{\deg\left(\overline{\pi_2(B(\Omega, L))}^z\cap (C\cap A)^L\right)}\over{\deg(C^L)}}\leq
{{\deg_{\rm lci}(\Omega) }\over{\delta^{\dim(\Omega)}}}{{1}\over{\delta^{\dim(\Omega)}}}e^{{{\dim(\Omega)}\over{n-m}}}
\left({{1}\over{e}}\right)^{(m-1)L}.$$
As $\delta \geq e^2 (d+1)^2$, this yields:
$${{\deg\left(\overline{\pi_2(B(\Omega, L))}^z\cap (C\cap A)^L\right)}\over{\deg(C^L)}}\leq
{{\deg_{\rm lci}(\Omega) }\over{\delta^{\dim(\Omega)}}}\left({{1}\over{e}}\right)^{\dim(\Omega)}
\left({{1}\over{e}}\right)^{(m-1)L}.$$
Finally, as $\log(\delta) \dim(\Omega) \geq 2 \log(\deg_{\rm lci}(\Omega))$ we also get:
\begin{equation}\label{desigualdad-final-CTS:eqn}
{{\deg\left(\overline{\pi_2(B(\Omega, L))}^z\cap (C\cap A)^L\right)}\over{\deg(C^L)}}\leq
{{1}\over{\deg_{\rm lci}(\Omega)}}\left({{1}\over{e}}\right)^{\dim(\Omega)}
\left({{1}\over{e}}\right)^{(m-1)L}.
\end{equation}
Now, we consider the following constructible subset:
$$R^c(\Omega,\Sigma, C, L)=\{ {\bf Q}:=(x_1,\ldots, x_L)\in C^L\; : \; {\hbox {\rm ${\bf Q}$ is not a CTS for $\Omega$ w.r.t. $\Sigma$}}\}.$$
Note that $R^c(\Omega, \Sigma, C, L)= (C)^L\setminus R(\Omega,\Sigma, C, L)$.
\begin{claim}With these notations, if $A\in \G(C)$, we have:
$$\sharp\left(R^c(\Omega,\Sigma, C, L)\cap A^L \right) \leq \deg\left(\overline{\pi_2(B(\Omega, L))}^z\cap (C\cap A)^L\right).$$
\end{claim}
\begin{proof-claim} Observe that
$$R^c(\overline{\Omega}^z,\Sigma, C, L)\subseteq \pi_2(B(\Omega, L))\cap C^L.$$
For if ${\bf Q}:=(x_1,\ldots, x_L)\in R^c(\Omega, \Sigma, C, L)$ there is some list of polynomial equations $f\in \Omega\setminus \Sigma$, such that:
$$f(x_1)= \cdots = f(x_L)=0.$$
Hence, $(f, x_1,\ldots, x_L)\in V(\Omega, L)$. Moreover, there must be some locally irreducible component $W\in \scrD$ of $V(\Omega, L)$ that contains $(f, x_1,\ldots, x_L)$. But $f:=\pi_1(f, x_1,\ldots, x_L)\in \Omega\setminus\Sigma$ and, hence, $W\in \scrC$. Then, $(f, x_1,\ldots, x_L)\in B(\Omega, L)$ and
$${\bf Q}:=(x_1,\ldots, x_L)\in \pi_2(B(\Omega, L)).$$
Then, we conclude:
$$R^c(\overline{\Omega}^z,\Sigma, C, L)\subseteq \pi_2(B(\Omega, L))\cap C^L\subseteq \overline{\pi_2(B(\Omega, L))}^z\cap C^L,$$
and
$$R^c(\overline{\Omega}^z,\Sigma, C, L)\cap A^L\subseteq \overline{\pi_2(B(\Omega, L))}^z\cap (C\cap A)^L$$
As $A\in \G(C)$, then $(A\cap C)$ is a zero-dimensional algebraic variety (i.e. a finite set) and, thus,
$R^c(\overline{\Omega}^z,\Sigma, C, L)\cap A^L$ is also a finite set. Finally, in the zero-dimensional case degree equals cardinality and, hence, the claim follows:
$$\sharp\left(R^c(\Omega,\Sigma, C, L)\cap A^L \right) \leq \deg\left(\overline{\pi_2(B(\Omega, L))}^z\cap (C\cap A)^L\right).$$
\end{proof-claim}\\
Just to conclude our statement, as for every $A\in \G(C)$, $\sharp(A\cap C)= \deg(C)$, then we obtain:
$${\rm Prob}_{(C\cap A)^L}[R]:=1-{\rm Prob}_{(C\cap A)^L}[R^c],$$
where $R^c:=R^c(\Omega, \Sigma, C,L)$. Namely, we have:
 $${\rm Prob}_{(C\cap A)^L}[R] = 1- {{1}\over{\sharp\left(\left(A\cap C\right)^L \right)}}\sum_{\zeta\in (A\cap C)^L}\chi_{R^c}(\zeta)= 1- {{\sharp(R^c\cap (A\cap C)^L)}\over{\sharp\left(\left(A\cap C\right)^L \right) }}.$$
Then, from the previous Claim and Inequality (\ref{desigualdad-final-CTS:eqn}) we conclude:
 $${\rm Prob}_{(C\cap A)^L}[R] \geq 1- {{1}\over{\deg_{\rm lci}(\Omega)}}\left({{1}\over{e^{\dim(\Omega)+ (m-1)L}}}\right),$$
and the Theorem follows.
\end{proof}

In the complex case ($K=\C$) the complex Grassmannian may be equipped with a probability distribution $\mu$ such that for every Borel subset $B\subseteq \G(n, n-r)$ of the Zariski topology the following holds:
\begin{equation}\label{medida-Zariski-Borel:eqn}
\mu[B]:=\left\{\begin{array}{lr}1, & {\hbox {\rm $B$ has non-empty interior in $\G(n,r)$}}\\
0 & {\hbox {\rm otherwise}} \end{array}\right.
\end{equation}
Then, with this measure the previous statement becomes the following:

\begin{corollary} Assuming $K=\C$, a probability distribution $\mu$ on the Borel sets $\scrB(\G(n,n-r))$ that satisfies Identity (\ref{medida-Zariski-Borel:eqn}) above and with the same notations and assumptions as in Theorem \ref{teorema-principal-densidad-questores:teor}, the following inequality holds:
$${{1}\over{\deg_{\rm lci}(C)^L}}E_{\G(n,n-r)}[\sharp_{R}^{(L)}]\geq 1- {{1}\over{\deg_{\rm lci}(\Omega)e^{\dim(\Omega) + (m-1)L}}},
$$
where $E_{\G(n, n-r)}$ means expectation and
$$\begin{matrix}
\sharp_{R}^{(L)}: & \G(n, n-r) & \longrightarrow & \R_+\cup\{\infty\}\\
& A & \longmapsto & \sharp \left( R(\Omega, \Sigma, C, L) \cap A^L\right).\end{matrix}$$
\end{corollary}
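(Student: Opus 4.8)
The plan is to deduce this Corollary directly from Theorem \ref{teorema-principal-densidad-questores:teor} by integrating its pointwise probability estimate over the Grassmannian against $\mu$. Write $\delta_0:=\left(\deg_{\rm lci}(\Omega)e^{\dim(\Omega)+(m-1)L}\right)^{-1}$ for the error term appearing there. First I would invoke the proof of Theorem \ref{teorema-principal-densidad-questores:teor}, which produces a non-empty Zariski open subset $\G(C)\subseteq\G(n,n-r)$ with the property that for every $A\in\G(C)$ one has both $\sharp(C\cap A)=\deg(C)$ and ${\rm Prob}_{(C\cap A)^L}[R]\geq 1-\delta_0$, where $R:=R(\Omega,\Sigma,C,L)$. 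The equality $\sharp(C\cap A)=\deg(C)$ uses that $C$ is equi-dimensional and locally closed, so that $\deg(C)=\deg_{\rm lci}(C)$ by Corollary \ref{generaliza-grado-localmente-cerrados:prop}; this is precisely the place where equi-dimensionality is used, and it is what makes $\deg_{\rm lci}(C)^L$ an \emph{exact} count for $\sharp\left((C\cap A)^L\right)$ rather than merely an upper bound.

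Second, I would translate the probability into a fibre cardinality. Since $(C\cap A)^L$ carries the uniform distribution on a set of exactly $\deg_{\rm lci}(C)^L$ points whenever $A\in\G(C)$, we obtain
$$\sharp_R^{(L)}(A)=\sharp\left(R\cap A^L\right)=\deg_{\rm lci}(C)^L\cdot{\rm Prob}_{(C\cap A)^L}[R]\geq\deg_{\rm lci}(C)^L(1-\delta_0),\qquad A\in\G(C).$$
On the complement $\G(n,n-r)\setminus\G(C)$ nothing is claimed, but trivially $\sharp_R^{(L)}\geq 0$ there, so $\sharp_R^{(L)}\geq \deg_{\rm lci}(C)^L(1-\delta_0)\,\chi_{\G(C)}$ as functions on $\G(n,n-r)$.

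Third, I would use the defining property (\ref{medida-Zariski-Borel:eqn}) of $\mu$: since $\G(C)$ is a non-empty Zariski open set it has non-empty Zariski interior, hence $\mu[\G(C)]=1$ and its complement is $\mu$-null. Taking expectations in the previous inequality and using monotonicity of the integral,
$$E_{\G(n,n-r)}\left[\sharp_R^{(L)}\right]\geq \deg_{\rm lci}(C)^L(1-\delta_0)\,\mu[\G(C)]=\deg_{\rm lci}(C)^L(1-\delta_0),$$
and dividing through by $\deg_{\rm lci}(C)^L$ gives the stated inequality.

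The one genuine technical point — the step I would be most careful with — is the measurability and integrability of $\sharp_R^{(L)}$, so that $E_{\G(n,n-r)}[\sharp_R^{(L)}]$ is meaningful and the monotonicity step is legitimate. Here I would observe that $R$ is a constructible subset of $C^L$, so that on the Zariski open set $\G(C)$ the function $A\mapsto\sharp\left(R\cap A^L\right)$ agrees with a bounded, Zariski-constructible (hence Borel) function, while on the $\mu$-null complement its values are irrelevant; thus $\sharp_R^{(L)}$ coincides $\mu$-a.e.\ with an integrable function and the argument goes through (equivalently, one may read $E_{\G(n,n-r)}$ as the lower $\mu$-integral, for which only the pointwise lower bound on $\G(C)$ together with $\mu[\G(C)]=1$ is needed). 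No further ingredients are required.
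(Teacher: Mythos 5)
Your proposal is correct and follows essentially the same route as the paper's own proof: restrict to the Zariski open set $\G(C)$ (which has $\mu$-measure $1$ by Identity (\ref{medida-Zariski-Borel:eqn})), identify $\sharp_{R}^{(L)}(A)/\deg_{\rm lci}(C)^L$ with ${\rm Prob}_{(A\cap C)^L}[R]$ there, and apply Theorem \ref{teorema-principal-densidad-questores:teor}. Your extra care about measurability of $\sharp_{R}^{(L)}$ is a point the paper leaves implicit, but it does not change the argument.
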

\begin{proof} Just note that with the hypothesis of Identity (\ref{medida-Zariski-Borel:eqn}),
$$E_{\G(n,n-r)}[\sharp_{R}^{(L)}]= E_{\G(C)}[\sharp_{R}^{(L)}],$$
where $\G(C)$ is the Zariski open subset of $\G(n,n-r)$ discussed in the proof of the previous Theorem.
And, finally, just apply the previous Theorem on the quotient:
$${{\sharp_{R}^{(L)}(A)}\over{\deg_{\rm lci}(C)^L}}= {\rm Prob}_{(A\cap C)^L}[R].$$
\end{proof}

With a slight modification of the hypothesis of Theorem \ref{teorema-principal-densidad-questores:teor}, we obtain the following result for the case in which $C$  is a globally equi-dimensional constructible set:

\begin{corollary}\label{corolario-constructible-equidimensional-densidad-questores:corol} Let $m,n \in\N$ be two positive integers, with $m\leq n$, and let $(d):=(d_1,\ldots, d_m)$ be a list of degrees and $d:=\max\{d_1,\ldots, d_m\}$. Let $\Sigma\subseteq\Omega$ be two constructible subsets of $\scrP_{(d)}^K$ such that $\Sigma$ has co-dimension at least 1 in $\Omega$. Assume that $\Omega\setminus \Sigma$ satisfies  the following property:
  \begin{equation}\label{dimension-hipotesis-cts-constructible-equidimensional-corol:eqn}
  \forall f:=(f_1,\ldots, f_m)\in \Omega \setminus \Sigma,\; \dim(V_\A(f_1,\ldots, f_m))= n-m.
\end{equation}
Let $C\subseteq \A^n$ be a globally equi-dimensional constructible set of co-dimension $r\geq (n-m)+ m/2 + 1/2$. Assume that there are constructible subsets $C_1,\ldots, C_s \subseteq \A^n$ such that $C:= C_1\cap\ldots\cap C_s$.
Let $D:=\max\{ \deg_{\rm lci}(C_1), \ldots, \deg_{\rm lci}(C_s)\}\geq 1$  be the maximum of their degrees. Let $L\in \N$ be a positive integer and assume that the following properties hold:
\begin{enumerate}
\item $L\geq 6\dim(\Omega)$,
\item $\log(\deg_z\left(C\right)) \geq r\max\{2(1+\log(d+1)), {{2\log(\deg_{\rm lci}(\Omega))+sL}\over{\dim(\Omega)}}\}$,
\item $D\leq \frac{1}{e} (1+ {{1}\over{n-m}})\deg_z(C)^{{{1}\over{\codim(C)}}}$,
\end{enumerate}
Let $R:=R(\Omega, \Sigma, C, L)$ be the constructible set of all sequences ${\bf Q}\in C^L$ of length $L$ which are correct test sequences for $\Omega$ with respect to $\Sigma$. Then, there is a non-empty Zariski open subset $\G(C)\subseteq \G(n, n-r)$ such that for every $A\in \G(C)$, the probability that a randomly chosen list ${\bf Q}\in (C\cap A)^L$ is in $R$ satisfies:
$${\rm Prob}_{(C\cap A)^L}[R]\geq 1- {{1}\over{\deg_{\rm lci}(\Omega)e^{\dim(\Omega) + (m-1)L}}},
$$
where $(A\cap C)^L$ is endowed with its uniform probability distribution.
\end{corollary}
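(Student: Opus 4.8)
The plan is to run the proof of Theorem \ref{teorema-principal-densidad-questores:teor} essentially line by line, making only the two changes forced by $C$ and the $C_i$ now being constructible (and globally equi-dimensional) rather than locally closed: everywhere that proof used the degree of a locally closed set I would use $\deg_{\rm lci}$, and the single invocation of Proposition \ref{grado-constructible-cerrados-multiple:prop} — which only bounds the intersection of a constructible set with \emph{locally closed} sets — would be replaced by the first upper bound of Theorem \ref{cotas-grado-varias-intersecciones:teor}. Concretely, I would form the incidence set $V(\Omega,L):=\{(f,x_1,\dots,x_L)\in\Omega\times(\A^n)^L\ :\ f=(f_1,\dots,f_m),\ f_i(x_j)=0\}$; by Bézout's Inequality for constructible sets (Theorem \ref{Desigualdad-Bezout-constructibles:teor}) one has $\deg_{\rm lci}(V(\Omega,L))\le\deg_{\rm lci}(\Omega)(d+1)^{mL}$. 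Taking a minimal $LCI$-degree decomposition of $V(\Omega,L)$ into locally closed irreducible pieces, I keep only the subfamily $\scrC$ of those whose image under the first projection $\pi_1$ is not contained in $\Sigma$, and — applying the Theorem on the Dimension of the Fibers to $\pi_1\colon\overline{W}^z\to\overline{\pi_1(W)}^z$ together with the fact that for $f\in\Omega\setminus\Sigma$ the fibre is $\{f\}\times(V_\A(f))^L$, of dimension exactly $(n-m)L$ by Hypothesis (\ref{dimension-hipotesis-cts-constructible-equidimensional-corol:eqn}) — conclude $\dim W\le\dim(\Omega)+(n-m)L$ for every $W\in\scrC$. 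Writing $B(\Omega,L):=\bigcup_{W\in\scrC}W$, Proposition \ref{constructibles-grado-imagenes-por-lineal:prop} gives $\deg_z\big(\overline{\pi_2(B(\Omega,L))}^z\big)\le\deg_{\rm lci}(B(\Omega,L))\le\deg_{\rm lci}(\Omega)(d+1)^{mL}$ and $\dim\overline{\pi_2(B(\Omega,L))}^z\le\dim(\Omega)+(n-m)L$, exactly as in the theorem.

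The one step that genuinely changes is the estimate of $\deg_z\big(\overline{\pi_2(B(\Omega,L))}^z\cap(C\cap A)^L\big)$ for a generic linear section $A$. Since $C$ is globally equi-dimensional, Corollary \ref{generaliza-grado-localmente-cerrados:prop} furnishes a non-empty Zariski open $\G(C)\subseteq\G(n,n-r)$ with $\sharp(C\cap A)=\deg_z(C)=\delta^{r}$ for all $A\in\G(C)$, where $\delta:=\deg_z(C)^{1/\codim(C)}$. With $C_{i,j}:=\A^{n(j-1)}\times C_i\times\A^{n(L-j)}$ (constructible, $\deg_{\rm lci}\le D$) and $A_j:=\A^{n(j-1)}\times A\times\A^{n(L-j)}$ (linear), one has $(C\cap A)^L=\big(\bigcap_{i,j}C_{i,j}\big)\cap\big(\bigcap_j A_j\big)$. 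I would first remove the $L$ linear sections $A_j$ via Claim $i)$ of Proposition \ref{varias-propiedades-basicas-grado-construtibles:prop} (this increases neither $\deg_{\rm lci}$ nor the dimension), and then apply the first bound of Theorem \ref{cotas-grado-varias-intersecciones:teor} with $C_1=\overline{\pi_2(B(\Omega,L))}^z$ and the remaining $sL$ sets equal to the $C_{i,j}$'s; writing $\rho:=\dim\overline{\pi_2(B(\Omega,L))}^z\le\dim(\Omega)+(n-m)L$, this yields
$$\deg_z\big(\overline{\pi_2(B(\Omega,L))}^z\cap(C\cap A)^L\big)\le\binom{sL+\rho}{\rho}\,\deg_{\rm lci}(\Omega)\,(d+1)^{mL}\,D^{\dim(\Omega)+(n-m)L}.$$
Bounding the binomial crudely by $\binom{sL+\rho}{\rho}\le 2^{sL+\rho}\le 2^{sL}\cdot 2^{\dim(\Omega)+(n-m)L}$ is the key manoeuvre.

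From here the argument is purely numerical and tracks the theorem's calculation. The factor $2^{\dim(\Omega)+(n-m)L}$ combines with $D^{\dim(\Omega)+(n-m)L}$ to give $(2D)^{\dim(\Omega)+(n-m)L}$, and the strengthened third hypothesis $D\le e^{-1}(1+\tfrac1{n-m})\delta$ — i.e. $2D<(1+\tfrac1{n-m})\delta$ — is precisely what makes $2D$ play the role that $D$ played in Theorem \ref{teorema-principal-densidad-questores:teor}; every use there of $D\le(1+\tfrac1{n-m})\delta$ (e.g. $(2D/\delta)^{(n-m)L}\le e^{L}$ and $(2D)^{\dim(\Omega)}\le e^{\dim(\Omega)/(n-m)}\delta^{\dim(\Omega)}$) goes through verbatim. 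The residual factor $2^{sL}$ is then absorbed at the very end: the strengthened second hypothesis reads $\log(\deg_z(C))\ge r\max\{2(1+\log(d+1)),(2\log\deg_{\rm lci}(\Omega)+sL)/\dim(\Omega)\}$, hence $\delta\ge e^{2}(d+1)^{2}$ (as in the theorem) \emph{and} $\delta^{\dim(\Omega)}\ge\deg_{\rm lci}(\Omega)^{2}e^{sL}\ge\deg_{\rm lci}(\Omega)^{2}2^{sL}$, so that $2^{sL}\deg_{\rm lci}(\Omega)/\delta^{\dim(\Omega)}\le\deg_{\rm lci}(\Omega)^{-1}$. Dividing by $\sharp\big((C\cap A)^L\big)=\delta^{rL}$, using $\codim(C)=r\ge(n-m)+m/2+1/2$ and $L\ge6\dim(\Omega)$ exactly as in the theorem, one reaches $\deg_z\big(\overline{\pi_2(B(\Omega,L))}^z\cap(C\cap A)^L\big)/\delta^{rL}\le\deg_{\rm lci}(\Omega)^{-1}e^{-\dim(\Omega)-(m-1)L}$. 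Finally, the containment $R^c(\Omega,\Sigma,C,L)\cap A^L\subseteq\overline{\pi_2(B(\Omega,L))}^z\cap(C\cap A)^L$ (a finite set, for which degree equals cardinality) turns this into the asserted lower bound for ${\rm Prob}_{(C\cap A)^L}[R]$.

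The main thing to get right — and really the only non-routine point — is that the two strengthenings of the hypotheses are exactly calibrated to the cost of trading Proposition \ref{grado-constructible-cerrados-multiple:prop} for Theorem \ref{cotas-grado-varias-intersecciones:teor}: because $s$ enters the latter's first bound only through $sL$ inside $\binom{sL+\rho}{\rho}\le 2^{sL}2^{\rho}$, the extra combinatorial constant splits into one piece ($2^{sL}$) swallowed by the additive $sL$ now present in the second hypothesis and one piece ($2^{\rho}$, $\rho\le\dim(\Omega)+(n-m)L$) swallowed by the $\tfrac1e$ slack now present in the third. It is worth noting that the \emph{second} bound of Theorem \ref{cotas-grado-varias-intersecciones:teor} is useless here, since its $(1+\sum_{i,j}\deg_{\rm lci}(C_{i,j}))^{\dim}$ term produces a factor of the shape $(2sL)^{(n-m)L}$ that no polynomial-in-$s$ tightening of the hypotheses could absorb; it is precisely the separation of the combinatorial constant from the degree factor in the first bound that makes the statement go through.
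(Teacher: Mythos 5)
Your proposal is correct and follows essentially the same route as the paper's own proof: the paper likewise reruns Theorem \ref{teorema-principal-densidad-questores:teor}, replaces Proposition \ref{grado-constructible-cerrados-multiple:prop} by the first bound of Theorem \ref{cotas-grado-varias-intersecciones:teor} (obtaining the same binomial factor $\binom{sL+\rho}{\rho}$, bounded there by $e^{sL+\rho}$ rather than your $2^{sL+\rho}$), and absorbs the two resulting extra factors into the strengthened hypotheses exactly as you describe. Your explicit peeling-off of the linear sections $A_j$ before invoking the multiple-intersection bound is a minor (and if anything cleaner) variant of the paper's bookkeeping, not a different argument.
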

\begin{proof}
With the same notations as in the proof of Theorem \ref{teorema-principal-densidad-questores:teor}, we have that:
$$\overline{\pi_2(B(\Omega, L))}^z\cap (C \cap A)^L= \overline{\pi_2(B(\Omega, L))}^z\cap \left( \bigcap_{i,j} C_{i,j}\right)\cap \left( \bigcap_{j=1}^L A_j\right),$$
where the $C_{i,j}$'s are constructible sets.  From Theorem \ref{cotas-grado-varias-intersecciones:teor} we get:
\begin{equation*}
\begin{split}
 \deg\left(\overline{\pi_2(B(\Omega, L))}^z\cap (C\cap A)^L\right) & \leq {(sL+1) +\dim(\pi_2(B(\Omega, L)))-1 \choose \dim(\pi_2(B(\Omega, L)))}\\
& \deg_{\rm lci}(B(\Omega, L)) D^{\dim(\pi_2(B(\Omega, L)))}.
\end{split}
\end{equation*}
where, as $\deg(A_j)=1$, we have:
$$1\leq \max\{\deg_{\rm lci}(C_{i,j})\; :\; 1\leq i \leq s, \; 1 \leq j \leq L\}=D=\max\{ \deg_{\rm lci}(C_i)\; : \; 1\leq i \leq s\}.$$
Note that if $a,b \in \Z^+$, we have:
$$ {a+b \choose b} \leq e^{a+b}  $$
Thus, putting all together, as $\dim(\pi_2(B(\Omega, L)))\leq \dim(B(\Omega, L))$, we get:
$$\deg\left(\overline{\pi_2(B(\Omega, L))}^z\cap (C\cap A)^L\right)\leq
\deg_{\rm lci}(\Omega) (d+1)^{mL} D^{\dim(\Omega) + (n-m)L} e^{sL+\dim(\Omega)+(n-m)L}.$$
This yields:
$$\deg\left(\overline{\pi_2(B(\Omega, L))}^z\cap (C\cap A)^L\right)\leq
\deg_{\rm lci}(\Omega) (De)^{\dim(\Omega)} (De)^{ (n-m)L}  (d+1)^{mL} e^{sL}.$$
As $\delta\geq e^2(d+1)^2$, we have:
$$\deg\left(\overline{\pi_2(B(\Omega, L))}^z\cap (C\cap A)^L\right)\leq
\deg_{\rm lci}(\Omega) (De)^{\dim(\Omega)} (De)^{ (n-m)L}    \left({{\delta}\over{e^2}}\right)^{{{m}\over{2}} L} e^{sL}.$$
Then, we obtain:
$${{\deg\left(\overline{\pi_2(B(\Omega, L))}^z\cap (C\cap A)^L\right)}\over{\deg_z(C^L)}}\leq
{{\deg_{\rm lci}(\Omega) (De)^{\dim(\Omega)} (De)^{ (n-m)L}   \delta^{{{m}\over{2}} L} e^{sL}   } \over{e^{mL}\delta^{\codim(C)L}}}.$$
As $\codim(C)\geq (n-m) + m/2 + 1/2$, we also have:
$${{\deg\left(\overline{\pi_2(B(\Omega, L))}^z\cap (C\cap A)^L\right)}\over{\deg_z(C^L)}}\leq
{{\deg_{\rm lci}(\Omega) (De)^{\dim(\Omega)}  }\over{e^{mL}\delta^{1/2L}}} {{ (De)^{(n-m)L}  }\over{\delta^{(n-m)L}}} e^{sL}.$$
As $ D\leq \frac{1}{e} (1+ {1\over{n-m}})\delta$ we get:
$${{\deg\left(\overline{\pi_2(B(\Omega, L))}^z\cap (C\cap A)^L\right)}\over{\deg_z(C^L)}}\leq
{{\deg_{\rm lci}(\Omega) (De)^{\dim(\Omega)} }\over{e^{mL}\delta^{1/2L}}} \left(1+ { 1 \over{n-m}}\right)^{(n-m)L} e^{sL}.$$
Hence,
$${{\deg\left(\overline{\pi_2(B(\Omega, L))}^z\cap (C\cap A)^L\right)}\over{\deg_z(C^L)}}\leq
{{\deg_{\rm lci}(\Omega) (De)^{\dim(\Omega)} }\over{e^{mL}\delta^{1/2L}}} \left(\left(1+ { 1 \over{n-m}}\right)^{(n-m)}\right)^L e^{sL}.$$
And thus,
$${{\deg\left(\overline{\pi_2(B(\Omega, L))}^z\cap (C\cap A)^L\right)}\over{\deg_z(C^L)}}\leq
{{\deg_{\rm lci}(\Omega) (De)^{\dim(\Omega)} }\over{\delta^{1/2L}}} \left({{1}\over{e}}\right)^{(m-1)L} e^{sL}.$$
Again, as  $D\leq \frac{1}{e}(1+ {1\over{n-m}})\delta$ and $L\geq 6 \dim(\Omega)$ we also have:
$${{\deg\left(\overline{\pi_2(B(\Omega, L))}^z\cap (C\cap A)^L\right)}\over{\deg_z(C^L)}}\leq
{{\deg_{\rm lci}(\Omega) }\over{\delta^{\dim(\Omega)}}}{{1}\over{\delta^{\dim(\Omega)}}}\left({{De}\over{\delta}}\right)^{\dim(\Omega)}
\left({{1}\over{e}}\right)^{(m-1)L} e^{sL}.$$
Namely,
$${{\deg\left(\overline{\pi_2(B(\Omega, L))}^z\cap (C\cap A)^L\right)}\over{\deg_z(C^L)}}\leq
{{\deg_{\rm lci}(\Omega) }\over{\delta^{\dim(\Omega)}}}{{1}\over{\delta^{\dim(\Omega)}}}e^{{{\dim(\Omega)}\over{n-m}}}
\left({{1}\over{e}}\right)^{(m-1)L} e^{sL}.$$
As $\delta \geq e^2 (d+1)^2$, this yields:
$${{\deg\left(\overline{\pi_2(B(\Omega, L))}^z\cap (C\cap A)^L\right)}\over{\deg_z(C^L)}}\leq
{{\deg_{\rm lci}(\Omega) }\over{\delta^{\dim(\Omega)}}}\left({{1}\over{e}}\right)^{\dim(\Omega)}
\left({{1}\over{e}}\right)^{(m-1)L} e^{sL}.$$
Finally, as $\log(\delta) \dim(\Omega) \geq 2 \log(\deg_{\rm lci}(\Omega))+sL$ we also get:
$${{\deg\left(\overline{\pi_2(B(\Omega, L))}^z\cap (C\cap A)^L\right)}\over{\deg_z(C^L)}}\leq
{{1}\over{\deg_{\rm lci}(\Omega)}}\left({{1}\over{e}}\right)^{\dim(\Omega)}
\left({{1}\over{e}}\right)^{(m-1)L}.$$
The final part of the proof is identical to that of Theorem \ref{teorema-principal-densidad-questores:teor}.
\end{proof}

Using Corollary \ref{intereseccion-lineales-no-equi-dimensional:corol}, we can generalize the previous Corollary for any constructible set as follows:

\begin{corollary}\label{corolario-constructible-densidad-questores:corol} Let $m,n \in\N$ be two positive integers, with $m\leq n$, and let $(d):=(d_1,\ldots, d_m)$ be a list of degrees and $d:=\max\{d_1,\ldots, d_m\}$. Let $\Sigma\subseteq\Omega$ be two constructible subsets of $\scrP_{(d)}^K$ such that $\Sigma$ has co-dimension at least 1 in $\Omega$. Assume that $\Omega\setminus \Sigma$ satisfies  the following property:
  \begin{equation}\label{dimension-hipotesis-cts-constructible-corol:eqn}
  \forall f:=(f_1,\ldots, f_m)\in \Omega \setminus \Sigma,\; \dim(V_\A(f_1,\ldots, f_m))= n-m.
\end{equation}
Let $C\subseteq \A^n$ be a constructible set of co-dimension $r\geq (n-m)+ m/2 + 1/2$ and let $V_1, \ldots, V_k$ be the irreducible components of higher dimension of the Zariski closure of $C$ (as in Proposition \ref{componentes-alta-dimension:def}). Assume that there are constructible subsets $C_1,\ldots, C_s \subseteq \A^n$ such that $C:= C_1\cap\ldots\cap C_s$.
Let $D:=\max\{ \deg_{\rm lci}(C_1), \ldots, \deg_{\rm lci}(C_s)\}\geq 1$  be the maximum of their degrees. Let $L\in \N$ be a positive integer and assume that the following properties hold:
\begin{enumerate}
\item $L\geq 6\dim(\Omega)$,
\item $\log(\sum_{i=1}^k \deg(V_{i})) \geq r\max\{2(1+\log(d+1)), {{2\log(\deg_{\rm lci}(\Omega))+sL}\over{\dim(\Omega)}}\}$,
\item $D\leq \frac{1}{e} (1+ {{1}\over{n-m}})(\sum_{i=1}^k \deg(V_{i}))^{{{1}\over{\codim(C)}}}$,
\end{enumerate}
 Let $R:=R(\Omega, \Sigma, C, L)$ be the constructible set of all sequences ${\bf Q}\in C^L$ of length $L$ which are correct test sequences for $\Omega$ with respect to $\Sigma$.  Then, there is a non-empty Zariski open subset $\G(C)\subseteq \G(n, n-r)$ such that for every $A\in \G(C)$, the probability that a randomly chosen list ${\bf Q}\in (C\cap A)^L$ is in $R$ satisfies:
$${\rm Prob}_{(C\cap A)^L}[R]\geq 1- {{1}\over{\deg_{\rm lci}(\Omega)e^{\dim(\Omega) + (m-1)L}}},
$$
where $(A\cap C)^L$ is endowed with its uniform probability distribution.
\end{corollary}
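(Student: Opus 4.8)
The plan is to reproduce the proof of Corollary~\ref{corolario-constructible-equidimensional-densidad-questores:corol} almost line by line, with the single change that the role played there by $\deg_z(C)$ is now played by $\sum_{i=1}^k\deg(V_i)$; the justification for this substitution in the non-equi-dimensional setting is exactly Corollary~\ref{intereseccion-lineales-no-equi-dimensional:corol}. First I would form the incidence constructible set
$$V(\Omega,L):=\{(f,x_1,\ldots,x_L)\in\Omega\times(\A^n)^L\; :\; f=(f_1,\ldots,f_m),\ f_i(x_j)=0,\ 1\le i\le m,\ 1\le j\le L\},$$
together with the projections $\pi_1:V(\Omega,L)\to\Omega$ onto the list of equations and $\pi_2:V(\Omega,L)\to(\A^n)^L$ onto the tuples of points. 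By B\'ezout's Inequality for constructible sets (Theorem~\ref{Desigualdad-Bezout-constructibles:teor}) we have $\deg_{\rm lci}(V(\Omega,L))\le\deg_{\rm lci}(\Omega)(d+1)^{mL}$, and, just as in the proof of Theorem~\ref{teorema-principal-densidad-questores:teor}, the Theorem on the Dimension of the Fibers together with Hypothesis~(\ref{dimension-hipotesis-cts-constructible-corol:eqn}) shows that the union $B(\Omega,L)$ of the minimal $LCI$-components $W$ of $V(\Omega,L)$ with $\pi_1(W)\not\subseteq\Sigma$ satisfies $\dim(B(\Omega,L))\le\dim(\Omega)+(n-m)L$ and $\deg_{\rm lci}(B(\Omega,L))\le\deg_{\rm lci}(\Omega)(d+1)^{mL}$.

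Next I would apply Corollary~\ref{intereseccion-lineales-no-equi-dimensional:corol} to the constructible set $C$ (which has dimension $n-r$): this produces a non-empty Zariski open $\G(C)\subseteq\G(n,n-r)$ for which $\sharp(C\cap A)=\sum_{i=1}^k\deg(V_i)$ for every $A\in\G(C)$; in particular $C\cap A$ is finite, so $(C\cap A)^L$ is finite and any subset of it has degree equal to its cardinality. Fixing such an $A$, setting $C_{i,j}:=\A^{n(j-1)}\times C_i\times\A^{n(L-j)}$ and $A_j:=\A^{n(j-1)}\times A\times\A^{n(L-j)}$, and writing
$$\overline{\pi_2(B(\Omega,L))}^z\cap(C\cap A)^L=\overline{\pi_2(B(\Omega,L))}^z\cap\Bigl(\bigcap_{i,j}C_{i,j}\Bigr)\cap\Bigl(\bigcap_{j=1}^LA_j\Bigr),$$
I would bound the degree of the right-hand side by the first upper bound of Theorem~\ref{cotas-grado-varias-intersecciones:teor}, exactly as in Corollary~\ref{corolario-constructible-equidimensional-densidad-questores:corol}: the relevant maximum $LCI$-degree is $D=\max_i\deg_{\rm lci}(C_i)$ (the $A_j$ contribute degree $1$), the binomial coefficient is at most $e^{sL+\dim(\Omega)+(n-m)L}$, and $\deg_z\bigl(\overline{\pi_2(B(\Omega,L))}^z\bigr)\le\deg_{\rm lci}(B(\Omega,L))$ by Proposition~\ref{constructibles-grado-imagenes-por-lineal:prop}. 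This yields
$$\deg\bigl(\overline{\pi_2(B(\Omega,L))}^z\cap(C\cap A)^L\bigr)\le\deg_{\rm lci}(\Omega)(De)^{\dim(\Omega)}(De)^{(n-m)L}(d+1)^{mL}e^{sL}.$$

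Then, writing $\delta:=\bigl(\sum_{i=1}^k\deg(V_i)\bigr)^{1/\codim(C)}$ and dividing this estimate by $\sharp\bigl((C\cap A)^L\bigr)=\bigl(\sum_{i=1}^k\deg(V_i)\bigr)^L=\delta^{\codim(C)L}$, I would run the identical chain of elementary inequalities as in the proof of Corollary~\ref{corolario-constructible-equidimensional-densidad-questores:corol}: Hypothesis~(3) absorbs $(De)^{(n-m)L}$ against $\delta^{(n-m)L}$ via $(1+{1\over{n-m}})^{n-m}\le e$; the hypothesis $\codim(C)\ge(n-m)+m/2+1/2$ leaves a surplus factor $\delta^{-L/2}$; the inequality $\delta\ge e^2(d+1)^2$ absorbs $(d+1)^{mL}$; the inequality $\dim(\Omega)\log\delta\ge2\log(\deg_{\rm lci}(\Omega))+sL$ absorbs both $\deg_{\rm lci}(\Omega)$ and $e^{sL}$; and $L\ge6\dim(\Omega)$ takes care of the $(De/\delta)^{\dim(\Omega)}$ and $e^{\dim(\Omega)/(n-m)}$ terms. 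The outcome is
$$\frac{\deg\bigl(\overline{\pi_2(B(\Omega,L))}^z\cap(C\cap A)^L\bigr)}{\sharp\bigl((C\cap A)^L\bigr)}\le\frac{1}{\deg_{\rm lci}(\Omega)}\Bigl(\frac1e\Bigr)^{\dim(\Omega)}\Bigl(\frac1e\Bigr)^{(m-1)L}.$$
Finally I would conclude as in Theorem~\ref{teorema-principal-densidad-questores:teor}: if ${\bf Q}\in C^L$ is not a correct test sequence for $\Omega$ with respect to $\Sigma$, then ${\bf Q}\in\pi_2(B(\Omega,L))\cap C^L\subseteq\overline{\pi_2(B(\Omega,L))}^z\cap C^L$; hence, writing $R^c:=C^L\setminus R$, the number $\sharp\bigl(R^c\cap(C\cap A)^L\bigr)$ is at most $\deg\bigl(\overline{\pi_2(B(\Omega,L))}^z\cap(C\cap A)^L\bigr)$, and therefore
$${\rm Prob}_{(C\cap A)^L}[R]=1-\frac{\sharp\bigl(R^c\cap(C\cap A)^L\bigr)}{\sharp\bigl((C\cap A)^L\bigr)}\ge1-\frac{1}{\deg_{\rm lci}(\Omega)e^{\dim(\Omega)+(m-1)L}}.$$

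The only genuinely new point, and hence the one requiring care, is the normalization. When $C$ fails to be equi-dimensional, $\deg_z(C)$ counts all irreducible components of $\overline{C}^z$ and is no longer the number of points cut out by a generic linear section of complementary dimension; so both $\delta$ and the denominator $\sharp\bigl((C\cap A)^L\bigr)$ must be built from the top-dimensional contribution $\sum_{i=1}^k\deg(V_i)$ rather than from $\deg_z(C)$, which is precisely why Hypotheses~(2) and~(3) are phrased in those terms. Beyond this bookkeeping the argument is a verbatim transcription of the two preceding corollaries, using that both Theorem~\ref{cotas-grado-varias-intersecciones:teor} and Proposition~\ref{constructibles-grado-imagenes-por-lineal:prop} already hold for arbitrary constructible sets.
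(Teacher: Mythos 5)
Your proposal is correct and follows exactly the route of the paper's own proof, which simply says to repeat the argument of Corollary \ref{corolario-constructible-equidimensional-densidad-questores:corol} with $\delta:=\bigl(\sum_{i=1}^k\deg(V_i)\bigr)^{1/\codim(C)}$ and to invoke Corollary \ref{intereseccion-lineales-no-equi-dimensional:corol} to get $\sharp(C\cap A)=\sum_{i=1}^k\deg(V_i)$ for generic $A$. You have merely written out in full the chain of inequalities that the paper leaves implicit, and your remark about why the normalization must use the top-dimensional contribution rather than $\deg_z(C)$ is exactly the right observation.
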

\begin{proof}
The proof is similar to that of Corollary \ref{corolario-constructible-equidimensional-densidad-questores:corol}. In this case, we take:
$$\delta := (\sum_{i=1}^k \deg(V_{i}))^{{{1}\over{\codim(C)}}},$$
and, from Corollary \ref{intereseccion-lineales-no-equi-dimensional:corol}, we have that there is a non-empty Zariski open subset $\G(C) \subseteq \G(n, n-r)$ such that for all $A \in \G(C)$ we have $\sharp (C \cap A) = \sum_{i=1}^k \deg(V_{i})$.
\end{proof}

If we assume that $C$ is a complete intersection algebraic variety, we obtain the following statement:

\begin{corollary}\label{densidad-questores-variedades-ic:corol} Let $m,n \in\N$ be two positive integers, with $m\leq n$, and let $(d):=(d_1,\ldots, d_m)$ be a list of degrees and $d:=\max\{d_1,\ldots, d_m\}$.  Let $\Sigma\subseteq\Omega$ be two constructible subsets of $\scrP_{(d)}(X_1,\ldots, X_n)$ such that $\Sigma$ has co-dimension at least 1 in $\Omega$.  Assume that $\Omega\setminus \Sigma$ satisfies the following property:
  \begin{equation}\label{dimension-hipotesis-cts-corol:eqn}
  \forall f:=(f_1,\ldots, f_m)\in \Omega \setminus \Sigma, \; \dim(V_\A(f_1,\ldots, f_m))= n-m,
\end{equation}
Let $V:=V_\A(h_1,\ldots, h_r)\subseteq \A^n$ be a complete intersection algebraic variety of co-dimension $r\geq (n-m)+ m/2 + 1/2$ such that $\deg(V)\geq \delta^r$, where $\delta:=\min\{\deg(h_1),\ldots, \deg(h_r)\}$.
Let $L\in \N$ be a positive integer and assume that the following properties hold:
\begin{enumerate}
\item $L\geq 6\dim(\Omega)$,
\item $\log(\delta) \geq \max\{2(1+\log(d+1)), {{2\log(\deg_{\rm lci}(\Omega))}\over{\dim(\Omega)}}\}$,
\item $\max\{ \deg(h_1), \ldots, \deg(h_r)\}\leq(1+ {{1}\over{n-m}})\delta$.
\end{enumerate}
Let $R:=R(\Omega, \Sigma, V, L)$ be the constructible set of all sequences ${\bf Q}\in V^L$ of length $L$ which are correct test sequences for $\Omega$ with respect to $\Sigma$. Then, there is a non-empty Zariski open subset $\G(V)\subseteq \G(n, n-r)$ such that for every $A\in \G(V)$ the probability that a randomly chosen list ${\bf Q}\in (V\cap A)^L$ is in $R$ satisfies:
$${\rm Prob}_{(V\cap A)^L}[R]\geq 1- {{1}\over{\deg_{\rm lci}(\Omega)e^{\dim(\Omega) + (m-1)L}}},
$$
where $(A\cap V)^L$ is endowed with its uniform probability distribution.
\end{corollary}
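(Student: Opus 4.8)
The plan is to deduce this statement directly from Theorem~\ref{teorema-principal-densidad-questores:teor}, applied with $C:=V$ and with the defining hypersurfaces of $V$ playing the role of the sets $C_1,\ldots,C_s$ there. Concretely, I would set $s:=r$ and $C_i:=V_\A(h_i)$ for $1\le i\le r$, so that $C=C_1\cap\cdots\cap C_r$. Each $C_i$ is a hypersurface, hence a (closed, hence locally closed) algebraic variety, and by Proposition~\ref{propiedades-basicas-grado-lc:prop} its degree satisfies $\deg(C_i)\le\deg(h_i)$; thus the quantity $D:=\max\{\deg(C_1),\ldots,\deg(C_r)\}$ appearing in Theorem~\ref{teorema-principal-densidad-questores:teor} is bounded by $\max\{\deg(h_1),\ldots,\deg(h_r)\}$.

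The bulk of the argument is then a routine verification that the hypotheses of Theorem~\ref{teorema-principal-densidad-questores:teor} hold. First, $V$ is an equi-dimensional locally closed set of co-dimension $r$: it is Zariski closed, and because it is a complete intersection of co-dimension $r$, Krull's Hauptidealsatz forces every irreducible component of $V$ to have co-dimension at most $r$, while the hypothesis $\codim(V)=r$ forces it to be exactly $r$; hence $V$ is equi-dimensional of dimension $n-r$. The dimensional hypothesis~(\ref{dimension-hipotesis-cts:eqn}) on $\Omega\setminus\Sigma$ is literally hypothesis~(\ref{dimension-hipotesis-cts-corol:eqn}), the bound $r\ge(n-m)+m/2+1/2$ is assumed, and hypothesis (1) is unchanged. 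For hypothesis (2) of Theorem~\ref{teorema-principal-densidad-questores:teor} I would use $\deg(V)\ge\delta^r$, which gives $\log(\deg(V))\ge r\log(\delta)\ge r\max\{2(1+\log(d+1)),\,2\log(\deg_{\rm lci}(\Omega))/\dim(\Omega)\}$ by hypothesis (2) here. For hypothesis (3), since $\codim(V)=r$ and $\deg(V)\ge\delta^r$ we have $\deg(V)^{1/\codim(V)}\ge\delta$, so $D\le\max\{\deg(h_1),\ldots,\deg(h_r)\}\le(1+1/(n-m))\delta\le(1+1/(n-m))\deg(V)^{1/\codim(V)}$, which is exactly what is required.

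Having checked all hypotheses, Theorem~\ref{teorema-principal-densidad-questores:teor} produces a non-empty Zariski open subset $\G(V)\subseteq\G(n,n-r)$ such that for every $A\in\G(V)$ the probability that a random ${\bf Q}\in(V\cap A)^L$ lies in $R(\Omega,\Sigma,V,L)$ is at least $1-1/(\deg_{\rm lci}(\Omega)e^{\dim(\Omega)+(m-1)L})$, which is the assertion. I do not expect any real obstacle here; the only point that is not pure bookkeeping is the unmixedness of a complete intersection, and even that reduces to a standard application of Krull's principal ideal theorem. If one preferred, the same conclusion follows from Corollary~\ref{corolario-constructible-equidimensional-densidad-questores:corol}, but since $V$ is closed and equi-dimensional the direct reduction to Theorem~\ref{teorema-principal-densidad-questores:teor} is cleanest and avoids the extra $1/e$ factor and the $+sL$ term present in hypotheses (2)--(3) of that Corollary.
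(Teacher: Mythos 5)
Your proposal is correct and is exactly the paper's argument: the published proof also takes $C=V$, $s=r$ and $C_i:=V_\A(h_i)$ and invokes Theorem \ref{teorema-principal-densidad-questores:teor}, merely calling the hypothesis-checking ``straight-forward.'' Your verification of equi-dimensionality via Krull's Hauptidealsatz and of hypotheses (2)--(3) via $\deg(V)^{1/r}\geq\delta$ is the right bookkeeping.
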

\begin{proof} This is a straight-forward application of Theorem \ref{teorema-principal-densidad-questores:teor}, just taking $C=V$, $s=r$ and $C_i:=V_\A(h_i)$ for every $i$, $1\leq i \leq s$, we obtain the statement.\end{proof}

We may state the previous Corollary in the case we search for correct test sequences in zero-dimensional varieties:

\begin{corollary}\label{densidad-questores-zero-dimensional:corol}  Let $m,n \in\N$ be two positive integers, with $m\leq n$, and let $(d):=(d_1,\ldots, d_m)$ be a list of degrees and $d:=\max\{d_1,\ldots, d_m\}$.  Let $\Sigma\subseteq\Omega$ be two constructible subsets of $\scrP_{(d)}(X_1,\ldots, X_n)$ such that $\Sigma$ has co-dimension at least 1 in $\Omega$.  Assume that $\Omega\setminus \Sigma$ satisfies the following property:
$$  \forall f:=(f_1,\ldots, f_m)\in \Omega \setminus \Sigma, \; \dim(V_\A(f_1,\ldots, f_m))= n-m,
$$
Let $V:=V_\A(h_1,\ldots, h_n)\subseteq \A^n$ be a zero-dimensional algebraic variety given by polynomial equations of the same degree $\delta:=\deg(h_i)$, $1\leq i \leq n$. Assume that $\deg(V)= \delta^n$.
Let $L\in \N$ be a positive integer and assume that the following properties hold:
\begin{enumerate}
\item $L\geq 6\dim(\Omega)$,
\item $\log(\delta) \geq \max\{2(1+\log(d+1)), {{2\log(\deg_{\rm lci}(\Omega))}\over{\dim(\Omega)}}\}$,
\end{enumerate}
Let $R:=R(\Omega, \Sigma, V, L)$ be the constructible set of all sequences ${\bf Q}\in V^L$ of length $L$ which are correct test sequences for $\Omega$ with respect to $\Sigma$. Assume that $V$ is endowed with its uniform probability distribution. Then, we have:
$${\rm Prob}_{V^L}[R]\geq 1- {{1}\over{\deg_{\rm lci}(\Omega)e^{\dim(\Omega) + (m-1)L}}}.
$$
\end{corollary}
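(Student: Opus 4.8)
The plan is to obtain this statement as the special case $r=n$ of Corollary~\ref{densidad-questores-variedades-ic:corol} (equivalently, as a direct instance of Theorem~\ref{teorema-principal-densidad-questores:teor}). First I would set $C:=V=V_\A(h_1,\ldots,h_n)$, $s:=n$ and $C_i:=V_\A(h_i)$ for $1\le i\le n$, so that $C=C_1\cap\cdots\cap C_s$ is a complete intersection variety of codimension $r:=n$. Since all the $h_i$ share the same degree, $\delta=\min\{\deg(h_1),\ldots,\deg(h_n)\}=\deg(h_i)$, and the hypothesis $\deg(V)=\delta^n=\delta^r$ is exactly the inequality $\deg(V)\ge\delta^r$ required in Corollary~\ref{densidad-questores-variedades-ic:corol} (with equality).

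Next I would match the numerical hypotheses. Conditions $(1)$ and $(2)$ of the present statement coincide verbatim with conditions $(1)$ and $(2)$ of Corollary~\ref{densidad-questores-variedades-ic:corol}. Condition $(3)$ there reads $\max\{\deg(h_1),\ldots,\deg(h_r)\}\le(1+\tfrac{1}{n-m})\delta$, which holds trivially here because each $\deg(h_i)$ equals $\delta$ and $1+\tfrac{1}{n-m}\ge 1$. One also needs the codimension condition $r\ge (n-m)+m/2+1/2$; with $r=n$ this says $n\ge n-m/2+1/2$, i.e. $m\ge 1$, which is true since $m$ is a positive integer. (In the borderline case $n=m$, where the quantity $\tfrac{1}{n-m}$ is vacuous, the situation is harmless: $V_\A(f_1,\ldots,f_m)$ is then automatically zero-dimensional, and in the counting argument of the proof of Theorem~\ref{teorema-principal-densidad-questores:teor} the factor $\left(1+\tfrac{1}{n-m}\right)^{(n-m)L}$ is simply replaced by $1$.)

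With the hypotheses checked, Corollary~\ref{densidad-questores-variedades-ic:corol} yields a non-empty Zariski open subset $\G(V)\subseteq\G(n,n-r)=\G(n,0)$ such that for every $A\in\G(V)$ the probability that a random $\mathbf Q\in(V\cap A)^L$ lies in $R(\Omega,\Sigma,V,L)$ is at least $1-\tfrac{1}{\deg_{\rm lci}(\Omega)e^{\dim(\Omega)+(m-1)L}}$. The final step is the observation that, $V$ being zero-dimensional, the space $\G(n,0)$ of linear affine varieties of codimension $0$ in $\A^n$ consists of the single element $\A^n$; hence $\G(V)=\{\A^n\}$ and $V\cap A=V$, so $(V\cap A)^L=V^L$ with its uniform probability distribution and the estimate for ${\rm Prob}_{(V\cap A)^L}[R]$ is precisely the asserted bound for ${\rm Prob}_{V^L}[R]$.

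I do not expect a genuine obstacle here: the content is entirely in recognizing the reduction and matching hypotheses. If a self-contained proof were preferred, I would instead re-run the argument of Theorem~\ref{teorema-principal-densidad-questores:teor} directly with $C=V$: form the incidence set $V(\Omega,L)$, bound $\deg_{\rm lci}(V(\Omega,L))\le\deg_{\rm lci}(\Omega)(d+1)^{mL}$ by the B\'ezout Inequality for constructible sets, isolate the ``bad'' components $B(\Omega,L)$ whose $\pi_1$-image meets $\Omega\setminus\Sigma$, use the Theorem on the Dimension of the Fibers together with Hypothesis~(\ref{dimension-hipotesis-cts-corol:eqn}) to get $\dim(B(\Omega,L))\le\dim(\Omega)+(n-m)L$, then apply Proposition~\ref{grado-constructible-cerrados-multiple:prop} to $\overline{\pi_2(B(\Omega,L))}^z\cap V^L$ and run the same chain of inequalities governed by conditions $(1)$ and $(2)$; the only role played by the complete intersection structure is to guarantee $\sharp(V^L)=\deg(V)^L=\delta^{nL}$, which replaces $\sharp(C\cap A)=\deg(C)$ in the general proof and makes the linear slice $A$ superfluous.
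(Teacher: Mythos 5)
Your reduction is correct and is exactly what the paper intends: Corollary \ref{densidad-questores-zero-dimensional:corol} is stated as the specialization of Corollary \ref{densidad-questores-variedades-ic:corol} (hence of Theorem \ref{teorema-principal-densidad-questores:teor}) to $r=n$, where $\G(n,0)=\{\A^n\}$ forces $V\cap A=V$. Your verification of the hypotheses, including the codimension condition reducing to $m\ge 1$ and the harmless $n=m$ borderline, matches the paper's argument.
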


Corollary \ref{densidad-questores-variedades-ic:corol} gives sufficient conditions to have complete intersection varieties which are evasive to any system of polynomial equations in a given constructible set (as in \cite{Dvir-Kollar}). Given a degree list $(d):=(d_1,\ldots, d_m)$ and a constructible set $\widetilde{\Omega}\subseteq \scrP_{(d)}$, we say that a variety $V\subseteq \A^n$ is evasive with respect to $\widetilde{\Omega}$ if the following holds:
$$\forall f=(f_1,\ldots, f_m)\in \widetilde{\Omega}, \;\; \dim(V\setminus V_\A(f_1,\ldots, f_m))=\dim(V).$$
In the case $V$ is equi-dimensional, the previous property simply means that for all system $f\in\widetilde{\Omega}$ there is some irreducible component of $V$ which is not completely embedded in $V_\A(f_1,\ldots, f_m)$ or, equivalently, that there is always an irreducible component that evades any variety defined by equations in $\widetilde{\Omega}$.

\begin{corollary}[{\bf Evasive varieties for lists of equations in a constructible set}]\label{evesivas-variedades-ic:corol} Let $m,n \in\N$ be two positive integers, with $m\leq n$, and let $(d):=(d_1,\ldots, d_m)$ be a list of degrees and $d:=\max\{d_1,\ldots, d_m\}$.  Let $\Sigma\subseteq\Omega$ be two constructible subsets of $\scrP_{(d)}(X_1,\ldots, X_n)$ such that $\Sigma$ has co-dimension at least 1 in $\Omega$.  Assume that $\Omega\setminus \Sigma$ satisfies the property described in Equation (\ref{dimension-hipotesis-cts-corol:eqn}).\\
Let $V\subseteq \A^n$ be  any complete intersection variety such that $V:=V_\A(h_1,\ldots, h_r)\subseteq \A^n$  of co-dimension $r\geq (n-m)+ m/2 + 1/2$ and $\deg(V)\geq \delta^r$, where $\delta:=\min\{\deg(h_1),\ldots, \deg(h_r)\}$. Assume that the following properties hold:
\begin{enumerate}
\item $\log(\delta) \geq \max\{2(1+\log(d+1)), {{2\log(\deg(\Omega))}\over{\dim(\Omega)}}\}$ and,
\item $\max\{ \deg(h_1), \ldots, \deg(h_r)\}\leq(1+ {{1}\over{n-m}})\delta$.
\end{enumerate}
Then, $V$ evades $\Omega\setminus\Sigma$.
\end{corollary}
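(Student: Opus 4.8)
The plan is to deduce the statement from the density result of Corollary \ref{densidad-questores-variedades-ic:corol} by a short argument by contradiction, the point being that a failure of evasiveness would produce a list of equations vanishing on all of $V$, which in turn forbids any correct test sequence supported on $V$. First I would fix a length $L:=6\dim(\Omega)$. With this choice hypothesis $(i)$ of Corollary \ref{densidad-questores-variedades-ic:corol} holds, while its hypotheses $(ii)$ and $(iii)$ are exactly hypotheses $(i)$ and $(ii)$ of the present statement, and the remaining requirements there ($r\geq(n-m)+m/2+1/2$ and $\deg(V)\geq\delta^r$ with $\delta=\min\{\deg(h_1),\ldots,\deg(h_r)\}$) are among our assumptions. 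Hence Corollary \ref{densidad-questores-variedades-ic:corol} applies and yields a non-empty Zariski open set $\G(V)\subseteq\G(n,n-r)$ such that for every $A\in\G(V)$ a uniformly random ${\bf Q}\in(V\cap A)^L$ belongs to $R:=R(\Omega,\Sigma,V,L)$ with probability at least $1-\bigl(\deg_{\rm lci}(\Omega)e^{\dim(\Omega)+(m-1)L}\bigr)^{-1}>0$. In particular $R\neq\emptyset$, so there exists a correct test sequence ${\bf Q}:=\{x_1,\ldots,x_L\}\subseteq V$ for $\Omega$ with respect to $\Sigma$.

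Next I would translate a failure of evasiveness into a vanishing statement. Assume, for contradiction, that $V$ does not evade $\Omega\setminus\Sigma$: there is $f:=(f_1,\ldots,f_m)\in\Omega\setminus\Sigma$ with $\dim\bigl(V\setminus V_\A(f_1,\ldots,f_m)\bigr)<\dim(V)$. Since $V=V_\A(h_1,\ldots,h_r)$ is a complete intersection of co-dimension $r$, it is equi-dimensional of dimension $n-r$ (unmixedness theorem); write $V=V_1\cup\cdots\cup V_k$ with each $V_i$ irreducible of dimension $n-r$. For each $i$, the set $V_i\setminus V_\A(f)$ is Zariski open in $V_i$ of dimension strictly less than $n-r=\dim(V_i)$, which is impossible for a non-empty open subset of the irreducible variety $V_i$; hence $V_i\subseteq V_\A(f)$. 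Consequently $V\subseteq V_\A(f_1,\ldots,f_m)$, i.e.\ $f_1,\ldots,f_m$ vanish identically on $V$.

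The contradiction is then immediate: for the correct test sequence ${\bf Q}=\{x_1,\ldots,x_L\}\subseteq V$ obtained in the first step we would have $f(x_1)=\cdots=f(x_L)=0\in K^m$, while $f\in\Omega$ and $f\notin\Sigma$; this violates the defining implication (\ref{CTS-formula:eqn}) of a correct test sequence for $\Omega$ with respect to $\Sigma$. Hence no such $f$ can exist and $V$ evades $\Omega\setminus\Sigma$. I expect the only point requiring genuine care to be the first step, namely guaranteeing that $R$ is really non-empty, which amounts to the strict positivity of the displayed probability bound; this holds whenever $\dim(\Omega)\geq1$, or $m\geq2$, or $\deg_{\rm lci}(\Omega)\geq2$, and in the remaining fully degenerate situation — where $\Sigma$ having co-dimension at least $1$ in $\Omega$ forces $\Sigma=\emptyset$ and $\Omega$ to be a single non-zero list with zero-dimensional parameter space — one checks directly, using $\deg(V)\geq\delta^r$ with $\delta$ large compared with $d$, that $V$ cannot be contained in $V_\A(f)$. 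Everything else is routine.
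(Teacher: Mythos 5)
Your proposal is correct and follows essentially the same route as the paper: both arguments combine the density estimate of Corollary \ref{densidad-questores-variedades-ic:corol} (which guarantees a non-empty set of correct test sequences supported on $V\cap A$) with the observation that, since a complete intersection is equi-dimensional, a failure of evasiveness forces some $f\in\Omega\setminus\Sigma$ to vanish identically on $V$, contradicting the defining property of a correct test sequence. The only difference is cosmetic (you extract one CTS first and then derive the contradiction, whereas the paper shows the set of CTS's in $(V\cap A)^L$ is simultaneously empty and of positive density), and your explicit remark on the strict positivity of the probability bound in the degenerate case is a point the paper silently glosses over.
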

\begin{proof} Suppose that $V$ does not evade $\Omega\setminus\Sigma$. Then, there is some $f:=(f_1,\ldots, f_m)\in \Omega\setminus \Sigma$ such that:
\begin{equation}\label{evasive-dimension-prueba:eqn}
\dim(V\setminus V_\A(f_1,\ldots, f_m))<\dim(V).
\end{equation}
As $V$ is complete intersection, all irreducible components of $V$ have the same  dimension $n-r$. Hence, Inequality (\ref{evasive-dimension-prueba:eqn}) implies that all irreducible components of $V$ are included in $V_\A(f_1,\ldots, f_m)$.\\
By the same reason, there is a non-empty Zariski open subset $\G_1(V) \subseteq \G(n, n-r)$ such that
for all $A\in \G_1(V)$ and for every irreducible component $W$ of $V$, $\sharp(W\cap A) = \deg(W)$.\\
Let $\G(V)\subseteq \G(n,n-r)$ be the non-empty Zariski open subset of Corollary \ref{densidad-questores-variedades-ic:corol} above, then $\G_1(V)\cap \G(V)$ is also a non-empty Zariski open subset of $\G(n,n-r)$. Finally, for any positive integer number $L\in \N$, such that $L\geq 6\dim(\Omega)$, and any $A\in \G_1(V)\cap \G(V)$ we would have that the following properties hold:
\begin{itemize}
\item For every ${\bf Q}\in (A\cap V)^L$, ${\bf Q}$ is not a correct test sequence of $\Omega$ with respect to $\Sigma$. This holds because $f$ vanishes identically in ${\bf Q}$. In particular, with the same notations as above, we would have:
    $$\sharp\left(R(\Omega,\Sigma, V, L)\cap A^L\right)=0.$$
\item From Corollary \ref{densidad-questores-variedades-ic:corol}, as all its hypothesis hold, we also have:
    $${{\sharp\left(R(\Omega,\Sigma, V, L)\cap A^L\right)}\over{\deg(V)^L}}\geq 1- {{1}\over{\deg_{\rm lci}(\Omega)e^{\dim(\Omega) + (m-1)L}}}>0.$$
\end{itemize}
We have then arrived to a contradiction and no such $f\in\Omega\setminus\Sigma$ exist.
\end{proof}
A customary usage of correct test sequences is the case of constructible sets given as polynomial images of some affine space (i.e. the parameter space). We may also discuss our statements in these cases.

\begin{corollary}[{\bf Unirational families of lists of polynomials}]\label{questores-constructibles-uniracionales:corol}
With the same notations as above, let $(d):=(d_1,\ldots, d_m)$ be a degree list, $d:=\max\{ d_1,\ldots, d_m\}$ and $\Omega\subseteq \scrP_{(d)}$ an unirational constructible set, given as the image of a polynomial mapping:
$$\varphi:=(\varphi_1,\ldots, \varphi_N): W\subseteq \A^{M}\longrightarrow \Omega\subseteq \scrP_{(d)},$$
where  $N=N_{(d)}=\dim(\scrP_{(d)})$, $\Omega=\varphi(W)$ and $W$ is a constructible set of dimension $s$. Let $t:=s/\dim(\Omega)\geq 1$ be the quotient between the dimensions of $W$ and $\Omega$ and
 let  $D:=\max\{ \deg(\varphi_i) \; : \; 1\leq i \leq N\}$ be the maximum of the degrees of the coordinates of $\varphi$.  Let $\Sigma\subseteq \Omega$ be a constructible subset of co-dimension at least  $1$ in $\Omega$ and assume that the following property holds:
 $$
 \forall f:=(f_1,\ldots, f_m)\in \Omega \setminus \Sigma, \; \dim(V_\A(f_1,\ldots, f_m))= n-m,
$$

Let $V:=V_\A(h_1,\ldots, h_r)\subseteq \A^n$ be a complete intersection algebraic variety of co-dimension $r\geq (n-m)+ m/2 + 1/2$ such that $\deg(V)\geq \delta^r$, where $\delta:=\min\{\deg(h_1),\ldots, \deg(h_r)\}$. Let $L\in \N$ be a positive integer and assume that the following properties hold:
\begin{enumerate}
\item $L\geq 6s$,
\item $\log(\delta) \geq \max\{2(1+\log(d+1)), 2t\left({{\log(\deg_{\rm lci}(W))}\over{s}} +  \log(D)\right)\}$,
\item $\max\{ \deg(h_1), \ldots, \deg(h_r)\}\leq(1+ {{1}\over{n-m}})\delta$.
\end{enumerate}
Let $R:=R(\Omega, \Sigma, V, L)$ be the constructible set of all sequences ${\bf Q}\in V^L$ of length $L$ which are correct test sequences for $\Omega$ with respect to $\Sigma$. Then, there is a non-empty Zariski open subset $\G(V)\subseteq \G(n, n-r)$ such that for every $A\in \G(V)$ the probability that a randomly chosen list ${\bf Q}\in (V\cap A)^L$ is in $R$ satisfies:
$${\rm Prob}_{(V\cap A)^L}[R]\geq 1- {{1}\over{\deg_{\rm lci}(W)e^{s(\log(D)+1) + (m-1)L}}},
$$
where $(A\cap V)^L$ is endowed with its uniform probability distribution.
\end{corollary}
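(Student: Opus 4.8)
The plan is to adapt, nearly line for line, the argument of Theorem~\ref{teorema-principal-densidad-questores:teor}, but with the incidence variety built over the \emph{parameter space} $W$ rather than over $\Omega$, so that $\dim(W)=s$ and the ``degree of the parametrised family'' $\deg_{\rm lci}(W)D^{\,s}$ take over the roles played by $\dim(\Omega)$ and $\deg_{\rm lci}(\Omega)$ there. Let $\Gamma\subseteq W\times\scrP_{(d)}$ be the graph of $\varphi$ and put
$$\widehat V(W,L):=\bigl(\Gamma\times(\A^n)^L\bigr)\cap\bigcap_{i=1}^m\bigcap_{j=1}^L\{\,f_i(x_j)=0\,\}\subseteq W\times\scrP_{(d)}\times(\A^n)^L,$$
so that the projection forgetting $w$ maps $\widehat V(W,L)$ onto the usual incidence set of Theorem~\ref{teorema-principal-densidad-questores:teor} attached to $\Omega=\varphi(W)$. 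First I would bound $\deg_{\rm lci}(\Gamma)\le\deg_{\rm lci}(W)\,D^{\,s}$ by Corollary~\ref{grado-imagen-constructible:corol} (the graph is the image of $W$ under the polynomial map $w\mapsto(w,\varphi(w))$, whose coordinates have degree $\le D$), and then, because the $mL$ hypersurfaces $\{f_i(x_j)=0\}$ have degree $\le d_i+1\le d+1$ (they are linear in the coefficients of $f_i$ and of degree $d_i$ in $x_j$), iterate the B\'ezout Inequality for constructible sets (Theorem~\ref{Desigualdad-Bezout-constructibles:teor}) to obtain
$$\deg_{\rm lci}\bigl(\widehat V(W,L)\bigr)\le\deg_{\rm lci}(\Gamma)\,(d+1)^{mL}\le\deg_{\rm lci}(W)\,D^{\,s}\,(d+1)^{mL}.$$
The whole point of the graph construction is that here the exponent of $d+1$ stays equal to the \emph{number} of evaluation equations; composing $\varphi$ directly with ``evaluation at $x_j$'' would instead produce hypersurfaces of degree $D+d$ and an $(D+d)^{mL}$ term that the later estimates cannot absorb. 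Keeping the coefficients of $f$ as independent variables and paying for $\varphi$ only once, through $\deg_{\rm lci}(\Gamma)$, is what confines the $D$-dependence to the single factor $D^{\,s}$.

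Next I would isolate, exactly as in the proof of Theorem~\ref{teorema-principal-densidad-questores:teor}, the ``bad'' part $B(W,L)$: the union of those locally closed irreducible components of $\widehat V(W,L)$, taken from a minimum $LCI$-degree decomposition, whose projection onto the $w$-coordinate is not contained in $\varphi^{-1}(\Sigma)$. For such a component the fibre of that projection over any $w$ with $\varphi(w)\notin\Sigma$ equals $\{w\}\times\{\varphi(w)\}\times\bigl(V_\A(\varphi(w))\bigr)^L$, which has dimension $(n-m)L$ by the dimension hypothesis assumed on $\Omega\setminus\Sigma$; the Theorem on the Dimension of the Fibers (\cite{Shafarevich}, Thm.~7, p.~60) then yields $\dim B(W,L)\le\dim(W)+(n-m)L=s+(n-m)L$, while $\deg_{\rm lci}(B(W,L))\le\deg_{\rm lci}(\widehat V(W,L))$. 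Pushing $B(W,L)$ forward by the linear projection $\pi_2$ onto $(\A^n)^L$ (using Proposition~\ref{constructibles-grado-imagenes-por-lineal:prop} to control $\deg_z(\overline{\pi_2(B(W,L))}^z)$ by $\deg_{\rm lci}(B(W,L))$), intersecting with $(V\cap A)^L$ for a generic $A$ in the non-empty Zariski-open set $\G(V)\subseteq\G(n,n-r)$ supplied by Corollaries~\ref{generaliza-grado-localmente-cerrados:prop} and \ref{intereseccion-lineales-no-equi-dimensional:corol} (so that $\sharp(V\cap A)=\deg(V)\ge\delta^r$), and applying Proposition~\ref{grado-constructible-cerrados-multiple:prop} to the $rL$ hypersurfaces $\{h_i=0\}$ distributed over the $L$ blocks (degrees $\le\max_i\deg(h_i)\le(1+\tfrac{1}{n-m})\delta$) gives
$$\sharp\bigl(R^c(\Omega,\Sigma,V,L)\cap A^L\bigr)\le\deg_{\rm lci}(W)\,D^{\,s}\,(d+1)^{mL}\Bigl(\bigl(1+\tfrac{1}{n-m}\bigr)\delta\Bigr)^{\,s+(n-m)L},$$
where, as in Theorem~\ref{teorema-principal-densidad-questores:teor}, the inclusion $R^c(\Omega,\Sigma,V,L)\cap A^L\subseteq\overline{\pi_2(B(W,L))}^z\cap(V\cap A)^L$ is just the observation that a sequence that fails to be a correct test sequence carries, together with a witnessing parameter $w$, a point of a bad component.

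It then remains to divide by $\sharp\bigl((V\cap A)^L\bigr)\ge\delta^{rL}$ and to run the same elementary chain of estimates as in Theorem~\ref{teorema-principal-densidad-questores:teor}, now with $E:=\deg_{\rm lci}(W)D^{\,s}$ in the role of $\deg_{\rm lci}(\Omega)$ and $s$ in the role of $\dim(\Omega)$. Using $r\ge(n-m)+\tfrac{m}{2}+\tfrac{1}{2}$ and $\delta\ge e^{2}(d+1)^{2}$ the quotient collapses to $E\,\delta^{\,s-L/2}\,e^{s/(n-m)}\,e^{-(m-1)L}$ up to the tailored constants; hypothesis~(1), $L\ge 6s$, turns $\delta^{\,s-L/2}$ into $\delta^{-2s}=\delta^{-s}\cdot\delta^{-s}$, one factor $\delta^{-s}$ absorbing $e^{s/(n-m)}$ since $\delta\ge e^{2}$, and the other absorbing $E$ since $\delta^{\,s}\ge E^{2}$, which is precisely what hypothesis~(2), $\log\delta\ge 2t\bigl(\tfrac{\log\deg_{\rm lci}(W)}{s}+\log D\bigr)$, guarantees (recall $t=s/\dim(\Omega)\ge1$, so $\delta^{s/(2t)}=\delta^{\dim(\Omega)/2}\ge\deg_{\rm lci}(W)D^{\,s}$); finally $E^{-1}e^{-s}=\bigl(\deg_{\rm lci}(W)\,e^{s(\log D+1)}\bigr)^{-1}$ produces the announced probability. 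I expect the genuine obstacle to be the degree bookkeeping of $\widehat V(W,L)$ described above --- certifying that routing the incidence variety through the parameter space costs only the single factor $D^{\,s}$ with no dimension-dependent blow-up, and that the resulting appearance of $s(\log D+1)$ in place of $s$ is propagated consistently all the way to the end; everything else is a faithful transcription of the proof of Theorem~\ref{teorema-principal-densidad-questores:teor}.
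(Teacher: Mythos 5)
Your argument is correct, but it takes a genuinely different route from the paper, which disposes of this corollary in two lines: by heredity of correct test sequences (Proposition \ref{hereditary-cts:prop}) one replaces $\Omega$ by a larger set whose $LCI$-degree is controlled by $\deg_z(\Omega)=\deg(\overline{\varphi(W)}^z)\leq \deg_{\rm lci}(W)D^{s}$ (Corollary \ref{grado-imagen-constructible:corol}), and then reads the statement off Corollary \ref{densidad-questores-variedades-ic:corol}. You instead rebuild the incidence variety of Theorem \ref{teorema-principal-densidad-questores:teor} over the parameter space, routing through the graph $\Gamma$ of $\varphi$ so that the parametrization is paid for exactly once, by the factor $D^{s}$ in $\deg_{\rm lci}(\Gamma)$, while the $mL$ evaluation equations keep degree $d+1$ (your remark that composing $\varphi$ with the evaluations directly would produce $(D+d)^{mL}$, which the later estimates cannot absorb, is exactly the right reason for the detour). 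What each approach buys: the paper's reduction is short, but plugging the upper bounds $\deg_z(\Omega)\leq\deg_{\rm lci}(W)D^{s}$ and $\dim(\Omega)\leq s$ into the conclusion of Corollary \ref{densidad-questores-variedades-ic:corol} enlarges the denominator $\deg_{\rm lci}(\cdot)\,e^{\dim(\Omega)+(m-1)L}$, i.e.\ goes in the wrong direction for the inequality as stated; your recomputation is what actually certifies the constants $\deg_{\rm lci}(W)e^{s(\log D+1)+(m-1)L}$, the decisive points being $\delta^{s}\geq E^{2t}\geq E^{2}$ with $E=\deg_{\rm lci}(W)D^{s}$ (hypothesis (2)) and $\delta^{s}\geq e^{2s}$, which together absorb $E^{2}e^{s}e^{s/(n-m)}$ into $\delta^{2s}$. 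The one spot to tighten is the claim $\deg_{\rm lci}(\Gamma)\leq\deg_{\rm lci}(W)D^{s}$: Corollary \ref{grado-imagen-constructible:corol} as stated only controls $\deg_z$ of an image, so apply it to each piece $U\cap V$ of a minimum $LCI$-degree decomposition of $W$, observe that the graph over $U\cap V$ equals $(U\times\scrP_{(d)})\cap\{(w,y)\;:\;w\in V,\ y-\varphi(w)=0\}$ and is therefore locally closed irreducible (so its $\deg_{\rm lci}$ equals its $\deg_z$), and then sum over the pieces.
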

\begin{proof} Obvious since correct test sequences are hereditary (cf. Proposition \ref{hereditary-cts:prop})  and since the following inequality holds because of Corollary \ref{grado-imagen-constructible:corol}:
$$\deg_z(\Omega)= \deg\left(\overline{\varphi(W)}^z\right)\leq \deg_{\rm lci}(W) D^s.$$
\end{proof}

\begin{remark}
Note that if $W$ is a linear affine subvariety and the dimension does not decrease too much through $\varphi$ (for instance, $t:={{s}\over{\dim(\Omega)}}\leq 2$), the hypothesis of the previous Corollary become:
\begin{enumerate}
\item $L\geq 6s$,
\item $\log(\delta) \geq \max\{2(1+\log(d+1)), 4\left(\log(D)\right)\}$,
\item $\max\{ \deg(h_1), \ldots, \deg(h_r)\}\leq(1+ {{1}\over{n-m}})\delta$,
\end{enumerate}
And the thesis thus become the same:
$${\rm Prob}_{(V\cap A)^L}[R]\geq 1- {{1}\over{e^{s(\log(D)+1) + (m-1)L}}},
$$
\end{remark}

This is simply devoted to emphasize how the previous probability results apply to prove that correct test sequences of optimal length are highly dense in Demillo-Lipton-Schwartz-Zippel sampling set.
We return to Corollary \ref{DeMillo-Lipton-Schwartz-Zippel:corol}.  As main application of that Corollary, we consider a finite subset $Q\subseteq \kappa$,  a non-zero polynomial $f\in P_d^K$ and the hyper-surface $C:=V_\A(f)\subseteq \A^n(K)$, where $K$ is the algebraic closure of $\kappa$.
As in  \cite{DeMilloLipton}, \cite{Zippel} and  \cite{Schwartz}, we proved that the probability that $f$ does not vanishes at a random point $x\in Q^n$ is greater than:
$$1- {{d}\over{\sharp(Q)}}.$$
Thus, taking $Q\subseteq \kappa$ such that $\sharp(Q)\geq (2(d+1))^2$, we can produce a probabilistic algorithm for testing equality to zero for polynomials in the \emph{dense input case}: Inputs moving freely in $P_d^K$.

 Now, we shall use Corollary \ref{densidad-questores-zero-dimensional:corol} to prove that correct test sequences are highly distributed inside the sampling set $Q^n$ with small changes in the cardinality of $Q$.

\begin{corollary}\label{Zippel-Schwartz-repleto-CTS:corol}
Let $n, d \in\N$ be two positive integers.  Let $\Omega$ be a constructible subset of $P_d^K:=P_{d}^K(X_1,\ldots, X_n)$. Assume that $\Omega\setminus \{0\}\not=\emptyset$.

Let $Q\subseteq \kappa$ be a finite set.
Let $L\in \N$ be a positive integer such that the following properties hold:
\begin{enumerate}
\item $L\geq 6\dim(\Omega)$,
\item $\sharp(Q) \geq \max\{(2(d+1))^2, \deg_{\rm lci}(\Omega)^{{2}\over{\dim(\Omega)}}\}$.
\end{enumerate}
Then, the probability that a random sequence ${\bf Q}\in V^L=\left(Q^n\right)^L$ is a correct test sequence for $\Omega$ is greater than:
$$1- {{1}\over{\deg_{\rm lci}(\Omega)e^{\dim(\Omega)}}}.
$$
In the dense input case (i.e. when $\Omega=P_d^K$), for any $L\geq 6\dim(P_d^K)$ and $Q\subseteq \kappa$ such that:
$$\sharp(Q)\geq (2(d+1))^2,$$
the probability that a random point ${\bf Q}\in \left(Q^n\right)^L$ is a correct test sequence for $P_d^K$ is greater than:
$$1- {{1}\over{e^{{{d+n}\choose{n}}}}}.
$$
\end{corollary}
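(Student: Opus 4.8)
The plan is to obtain this as the special case $m=1$, $\Sigma=\{0\}$ of Corollary \ref{densidad-questores-zero-dimensional:corol}, applied to the grid $V:=Q^n\subseteq\A^n(K)$, while using the extra slack the grid provides over the general setting of Theorem \ref{teorema-principal-densidad-questores:teor}. First I would exhibit $Q^n$ as a zero-dimensional complete intersection of the required shape: writing $\delta:=\sharp(Q)$ and $h_i(X_1,\ldots,X_n):=\prod_{a\in Q}(X_i-a)\in\kappa[X_1,\ldots,X_n]$ for $1\leq i\leq n$, one has $Q^n=V_\A(h_1,\ldots,h_n)$ with $\deg(h_i)=\delta$ for all $i$ and $\sharp(Q^n)=\delta^n$, hence $\deg(Q^n)=\delta^n$ by Proposition \ref{propiedades-basicas-grado-lc:prop}. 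Since $\codim(Q^n)=n$, the Grassmannian $\G(n,n-r)$ with $r=\codim(Q^n)=n$ reduces to $\G(n,0)=\{\A^n\}$, so that $(V\cap A)^L$ is literally $(Q^n)^L$ with its uniform distribution, which is exactly the probability space in the statement; thus all of the Grassmannian bookkeeping in Theorem \ref{teorema-principal-densidad-questores:teor} is vacuous here.

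Next I would verify the two remaining hypotheses. The lower bound $L\geq 6\dim(\Omega)$ is assumed. The requirement $\sharp(Q)\geq\deg_{\rm lci}(\Omega)^{2/\dim(\Omega)}$ is precisely $\log(\delta)\dim(\Omega)\geq 2\log(\deg_{\rm lci}(\Omega))$, i.e. the second condition in Corollary \ref{densidad-questores-zero-dimensional:corol}. The first condition there is $\log(\delta)\geq 2(1+\log(d+1))$, i.e. $\sharp(Q)\geq e^2(d+1)^2$, whereas we are only given $\sharp(Q)\geq(2(d+1))^2=4(d+1)^2$; at this point I would not invoke Corollary \ref{densidad-questores-zero-dimensional:corol} verbatim but re-run the chain of inequalities in the proof of Theorem \ref{teorema-principal-densidad-questores:teor} with $C=Q^n$. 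The key point is that for the grid one has $D=\delta$ \emph{exactly}, not merely $D\leq(1+\frac{1}{n-m})\delta$, so the factor $(D/\delta)^{(n-m)L}$ equals $1$ and the denominator factor $e^{mL}=e^{L}$ in the estimate for $\deg(\overline{\pi_2(B(\Omega,L))}^z\cap(Q^n\cap A)^L)/\deg((Q^n)^L)$ is retained rather than being consumed by the bound $(1+\frac{1}{n-m})^{(n-m)L}\leq e^{L}$. Carrying this through, together with $L\geq 6\dim(\Omega)$ (which manufactures powers of the form $\delta^{-5\dim(\Omega)}$ that absorb the leftover constants) and $\sharp(Q)\geq 4(d+1)^2\geq\deg_{\rm lci}(\Omega)^{2/\dim(\Omega)}$, gives $\deg(\cdots)/\deg((Q^n)^L)\leq\frac{1}{\deg_{\rm lci}(\Omega)}e^{-\dim(\Omega)}$, and passing to cardinalities and complements yields ${\rm Prob}_{(Q^n)^L}[R]\geq 1-\frac{1}{\deg_{\rm lci}(\Omega)e^{\dim(\Omega)}}$ (the term $e^{(m-1)L}$ being trivial as $m=1$). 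I would also note that the dimension hypothesis (\ref{dimension-hipotesis-cts:eqn}) is automatic here: a non-empty locally closed irreducible component $W$ of the incidence variety with $\pi_1(W)\not\subseteq\{0\}$ has a non-empty generic fibre $\{f\}\times V_\A(f)^L$, which forces $V_\A(f)\neq\emptyset$ for generic $f\in\pi_1(W)$, hence $f$ non-constant, hence $\dim V_\A(f)=n-1$, which is all that Inequality (\ref{cota-dimension-buenas-componentes:eqn}) uses.

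Finally, for the dense case $\Omega=P_d^K$ I would observe that $P_d^K$ is the affine space $\A^{N_{(d)}}$ with $N_{(d)}=\binom{d+n}{n}$, so $\dim(P_d^K)=\binom{d+n}{n}$, and that $P_d^K$ is an irreducible linear space, so $\deg_{\rm lci}(P_d^K)=1$ by Propositions \ref{propiedades-basicas-grado-lc:prop} and \ref{generaliza-gradoLCI-localmente-cerrados:prop}. The second hypothesis then collapses to $\sharp(Q)\geq(2(d+1))^2$, and the general bound specializes to $1-\frac{1}{1\cdot e^{\binom{d+n}{n}}}=1-e^{-\binom{d+n}{n}}$. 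The step I expect to be the main obstacle is the quantitative bookkeeping in the middle paragraph: checking that the weaker grid hypothesis $\sharp(Q)\geq(2(d+1))^2$ genuinely suffices in place of the $\sharp(Q)\geq e^2(d+1)^2$ needed by the general theorem, which hinges on the exact identity $D=\sharp(Q)$ and on distributing the surplus from $L\geq 6\dim(\Omega)$ carefully enough to avoid an off-by-a-constant slip.
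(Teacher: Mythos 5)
Your proposal is correct and follows the route the paper intends: the corollary is the specialization of Theorem \ref{teorema-principal-densidad-questores:teor} (equivalently Corollary \ref{densidad-questores-zero-dimensional:corol}) to $m=1$, $\Sigma=\{0\}$ and the grid $C=Q^n=V_\A(h_1,\ldots,h_n)$ with $h_i=\prod_{a\in Q}(X_i-a)$, for which $r=n$ makes the Grassmannian slicing vacuous and $(C\cap A)^L=(Q^n)^L$. You are right to flag that the stated hypothesis $\sharp(Q)\geq(2(d+1))^2$ is weaker than the threshold $\delta\geq e^2(d+1)^2$ demanded by the general corollary, and your patch --- rerunning the degree estimate with the exact identities $D=\delta$ and $\deg(Q^n)=\delta^n$ --- does close the gap: the required inequality reduces to $e^{\dim(\Omega)}(d+1)^L\leq\delta^{L-2\dim(\Omega)}$, which holds with ample room since $\delta^{L-2\dim(\Omega)}\geq\left(4(d+1)^2\right)^{2L/3}$ and $4^{2L/3}\geq 256^{\dim(\Omega)}$. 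Your observation that nonzero constants never occur in $\pi_1(V(\Omega,L))$, so that hypothesis (\ref{dimension-hipotesis-cts:eqn}) holds at the only points where the proof of the theorem actually invokes it, is also the right way to dispose of that condition.
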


\section{A ${\bf BPP}_K$ algorithm for detecting generic dimension: ``Suite S\'ecante''}\label{suite-secante:sec}

In this section we exhibit an algorithmic application of the usage of correct test sequences that \emph{differs form the usual zero tests: ``Suite S\'ecante'' Problem.} The problem may be stated as follows:

Let $K$ be an algebraically closed field, $m,n \in \N$ two positive integers with $m\leq n$. Let $(d):=(d_1,\ldots, d_m)$ be a list of degrees and $\scrP_{(d)}:=\scrP_{(d)}^K(X_1,\ldots, X_n)$ the $K-$vector space of lists $f:=(f_1,\ldots,f_m)$ of polynomials $f_i\in K[X_1,\ldots, X_n]$ such that $\deg(f_i)\leq d_i$ for each $i$, $1\leq i \leq m$.  We denote by $N_{(d)}$ the dimension of $\scrP_{(d)}$.

\begin{problem}[{\bf ``Suite S\'ecante'' Problem}]
 Design an algorithm that performs the following task:\\
Given $f\in \scrP_{(d)}$ decide whether $f$ is a ``Suite S\'ecante''. Namely, decide whether the following property holds:\\
The algebraic variety $V_\A(f_1,\ldots, f_m)\subseteq \A^n$ is a non-empty variety of dimension $n-m$.
\end{problem}

Some authors would prefer to say that $V_\A(f_1,\ldots, f_m)$ is a set-theoretical complete intersection variety with defining equations $\{f_1,\ldots, f_m\}$.

We study not only the algorithm but also its complexity. The Model of Computation will be that of \emph{Turing Machines over $K$} (in the sense of \cite{BlCuShSm:book} and references therein). We distinguish between the \emph{generic case} and the \emph{restricted input case}, where the input lists of polynomials will belong to some constructible subset $\Omega\subseteq \scrP_{(d)}$. We take into account the number of arithmetic operations required to evaluate the polynomials in the list $f:=(f_1,\ldots, f_m)$. This is represented by \emph{non-scalar straight-line program} encoding of the input list $f$. Note that this encoding is equivalent to \emph{neural networks with polynomial activation functions}. See Sections 2 and 3 of \cite{Krick-Pardo96} for details about non-scalar straight-line programs.

Many works in \emph{Computational Algebraic Geometry} have deal with this problem. It will be too long to cite all of them. \emph{The main novelty of our contribution is that we do not perform any operation (neither algebraic nor numeric) with the input polynomials in the list $f$: We just evaluate them at some given points of a correct test sequence} (see Algorithm \ref{suite-secante-algoritmo:algo} below).

We begin our discussion by stating some facts about ``Suites S\'ecantes''. We include proofs in order to be self-contained.

We first consider the following incidence variety $V_{(d)}\subseteq \scrP_{(d)}\times \A^n(K)$:
\begin{equation}\label{variedad-incidencia:eqn}
V_{(d)}:=\{ (f,x)\in \scrP_{(d)}\times \A^n(K) \; : \; f(x)=0\in \A^m(K)\}.
\end{equation}

We may see this incidence variety as a fiber of the natural evaluation morphism. Namely, we consider the evaluation morphism defined as follows:
\begin{equation}\label{morfismo-evaluacion:eqn}
\begin{matrix}
{\rm ev}_{(d)}: & \scrP_{(d)}\times \A^n(K) & \longrightarrow & \A^m(K)\\
& (f,x) & \longmapsto & f(x).
\end{matrix}
\end{equation}
Obviously, $V_{(d)}:={\rm ev}_{(d)}^{-1}(\{0\})$. We additionally have two canonical projections defined as follows:
\begin{itemize}
\item The projection $\pi_1: V_{(d)}\longrightarrow \scrP_{(d)}$ given by $\pi_1(f,x):=f$, $\forall (f,x)\in V_{(d)}$.
\item The projection $\pi_2: V_{(d)}\longrightarrow \A^n(K)$ given by $\pi_2(f,x):=x$, $\forall (f,x)\in V_{(d)}$.
\end{itemize}
Observe that for every $f\in \scrP_{(d)}$, the fiber $\pi_1^{-1}(\{f\})$ may be identified with the algebraic variety $V_\A(f)$ of all zeros in $\A^n(K)$.

The following statement contains the main properties of this incidence variety.

\begin{theorem}\label{variedad-incidencia-propiedades:teor}
With these notations and assumptions, we have:
\begin{enumerate}
\item The incidence variety $V_{(d)}$ is a smooth complete intersection variety of dimension $N_{(d)}+ (n-m)$ and it is unmixed (i.e. all its irreducible components have the same dimension).
\item The canonical projection $\pi_2$ is onto and the fiber $\pi_2^{-1}(\{x\})$ is a linear affine variety of dimension $N_{(d)}-m$ contained in $V_{(d)}$ and isomorphic to a linear affine variety of $\scrP_{(d)}$ of the same dimension.
\item There is a Zariski open subset $U_\emptyset\subseteq \scrP_{(d)}$ contained in $\pi_1(V_{(d)})$ or, equivalently, there is a closed proper algebraic subvariety $W_\emptyset\subseteq \scrP_{(d)}$, such that for all $f\in \scrP_{(d)}\setminus W_\emptyset = U_\emptyset$, $\pi_1^{-1}(\{f\})= V_\A(f)\not=\emptyset$.
\item The canonical projection $\pi_1$ is dominant on each irreducible component $C$ of $V_{(d)}$ such that $\pi_1(C)$ contains a ``Suite S\'ecante''.  In fact, for every irreducible component $C$ of $V_{(d)}$ we have just two possible cases:
    \begin{itemize}
    \item Either there is $f\in \pi_1(C)$ such that $\dim(\pi_1^{-1}(\{f\})\cap C)=n-m$, in which case $\pi_1(C)$ is dense in $\scrP_{(d)}$ for the Zariski topology.
    \item Or for all $f\in \pi_1(C)$, $\dim(\pi_1^{-1}(\{ f\})\cap C) \geq n-m+1$ and the Zariski closure of $\pi_1(C)$ is strictly included in $\scrP_{(d)}$. In this case, $f_1,\ldots, f_m$ is not a ``Suite S\'ecant''.
    \end{itemize}
\end{enumerate}
In conclusion, there is a Zariski open subset $U\subseteq \scrP_{(d)}$ such that for every $f=(f_1,\ldots, f_m)\in U$, $V_\A(f)\not=\emptyset$ and it is an algebraic variety of dimension $n-m$. Equivalently, for
all $f=(f_1,\ldots, f_m)\in U$, the sequence $f_1,\ldots, f_m$ is a ``Suite S\'ecante''.

\end{theorem}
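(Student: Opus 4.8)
The plan is to pin down the geometry of the incidence variety $V_{(d)}$ by an explicit trivialization, and then to read off all four claims and the final assertion from the theorem on the dimension of the fibers (cf. \cite{Shafarevich}, Thm.~7, p.~60) together with Krull's Hauptidealsatz. First I would treat Claims $i)$ and $ii)$. Since $(0,x)\in V_{(d)}$ for every $x$, the projection $\pi_2$ is onto; and for fixed $x$ the map $\scrP_{(d)}\longrightarrow K^m$, $f\mapsto f(x)$, is a surjective $K$-linear map (already the constant terms of $f_1,\ldots,f_m$ surject onto $K^m$), so its kernel is a linear subspace of dimension $N_{(d)}-m$. Hence $\pi_2^{-1}(\{x\})$ is the graph over $\{x\}$ of a linear subvariety of $\scrP_{(d)}$ of dimension $N_{(d)}-m$, which is Claim $ii)$. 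Moreover the morphism $(f,x)\mapsto\big((f_1-f_1(0),\ldots,f_m-f_m(0)),x\big)$ is an isomorphism of $V_{(d)}$ onto $\scrP'_{(d)}\times\A^n(K)$, where $\scrP'_{(d)}:=\prod_{i=1}^m\{g\in P_{d_i}^K:g(0)=0\}$, with inverse $(g,x)\mapsto\big((g_i-g_i(x))_i,x\big)$; thus $V_{(d)}$ is isomorphic to an affine space, so irreducible and smooth of dimension $(N_{(d)}-m)+n$, and in particular unmixed. As $V_{(d)}={\rm ev}_{(d)}^{-1}(\{0\})$ is cut out in $\scrP_{(d)}\times\A^n(K)$ by the $m$ equations $f_i(x)=0$ and has codimension exactly $m$ there, it is a complete intersection, which completes Claim $i)$.

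Next I would record two facts that drive Claims $iii)$ and $iv)$. On the one hand, taking $f^{(0)}:=(X_1,\ldots,X_m)$ (we may assume each $d_i\geq 1$, the case of a vanishing $d_i$ being degenerate, the conclusion holding vacuously there), the fiber $\pi_1^{-1}(\{f^{(0)}\})=\{f^{(0)}\}\times\{x\in\A^n:x_1=\cdots=x_m=0\}$ is nonempty of dimension $n-m$. On the other hand, for an arbitrary $f$ the fiber $\pi_1^{-1}(\{f\})=\{f\}\times V_\A(f)$ is the intersection of $V_{(d)}$ with the $N_{(d)}$ affine-linear equations fixing the $\scrP_{(d)}$-coordinates, so by Krull's Hauptidealsatz every nonempty component of $\pi_1^{-1}(\{f\})\cap C$, for any irreducible component $C$ of $V_{(d)}$, has dimension at least $\dim C-N_{(d)}\geq(N_{(d)}+n-m)-N_{(d)}=n-m$.

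With these in hand I would prove Claim $iv)$ in the stated component-wise form. Let $C$ be an irreducible component of $V_{(d)}$ and $Z_C:=\overline{\pi_1(C)}^z$; by the fiber dimension theorem every nonempty fiber of $\restr{\pi_1}{C}$ has dimension $\geq\dim C-\dim Z_C$, with equality for $f$ in a dense open subset of $Z_C$. If some $f$ satisfies $\dim\big(\pi_1^{-1}(\{f\})\cap C\big)=n-m$, then $\dim C-\dim Z_C\leq n-m$, hence $\dim Z_C\geq N_{(d)}$ and $Z_C=\scrP_{(d)}$ (as $\scrP_{(d)}$ is irreducible), i.e.\ $\restr{\pi_1}{C}$ is dominant; otherwise, combining with the previous paragraph, every nonempty fiber has dimension $\geq n-m+1$, so $\dim Z_C\leq N_{(d)}-1$, $\pi_1(C)$ is not dense, and every $f\in\pi_1(C)$ fails to be a ``Suite S\'ecante'' because $\dim V_\A(f)\geq n-m+1$. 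Since $V_{(d)}$ is irreducible, $C=V_{(d)}$ is its only component and it contains the dimension-$(n-m)$ fiber over $f^{(0)}$, so the first alternative applies and $\pi_1$ is dominant onto $\scrP_{(d)}$. Then Chevalley's theorem yields a nonempty Zariski open $U_\emptyset\subseteq\pi_1(V_{(d)})$, which gives Claim $iii)$ with $W_\emptyset:=\scrP_{(d)}\setminus U_\emptyset$; and the equality case of the fiber dimension theorem, intersected with $U_\emptyset$, produces a nonempty Zariski open $U\subseteq\scrP_{(d)}$ on which $\pi_1^{-1}(\{f\})=\{f\}\times V_\A(f)$ is nonempty of dimension exactly $n-m$ — that is, every $f\in U$ is a ``Suite S\'ecante''.

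The only genuinely non-formal point is the dominance of $\pi_1$, i.e.\ that a generic system of $m\leq n$ polynomials of the prescribed degrees has a common zero; once the explicit model of $V_{(d)}$ fixes $\dim V_{(d)}=N_{(d)}+(n-m)$, this is forced by the single fiber computation over $f^{(0)}$ and the elementary lower bound on fiber dimensions. I expect the bookkeeping around Krull's Hauptidealsatz — needed to guarantee that \emph{every} fiber of $\restr{\pi_1}{C}$ has dimension at least $n-m$, which is exactly what makes the dichotomy in Claim $iv)$ airtight — to be the part most prone to slips.
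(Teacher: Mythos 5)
Your proof is correct, but it takes a genuinely different and more economical route than the paper's on two of the four claims. For Claim $i)$, the paper computes the dimension via the surjectivity of ${\rm ev}_{(d)}$ and the theorem on the dimension of fibers, and obtains smoothness from the Jacobian criterion (the identity block coming from the constant-term coordinates); your explicit trivialization $(f,x)\mapsto((f_i-f_i(0))_i,x)$ onto $\scrP'_{(d)}\times\A^n(K)$, which is indeed a biregular isomorphism with the inverse you give, delivers all of this at once and moreover shows that $V_{(d)}$ is \emph{irreducible} — strictly stronger than the unmixedness asserted in the statement. The larger divergence is Claim $iii)$: the paper proves generic non-emptiness of $V_\A(f)$ by a long detour through Generalized Pham systems, homogenization and the multivariate resultant, producing an explicit polynomial $Q$ whose non-vanishing certifies $V_\A(f)\not=\emptyset$; you instead deduce dominance of $\pi_1$ from the single fiber over $f^{(0)}=(X_1,\ldots,X_m)$ combined with irreducibility and the lower-semicontinuity part of the fiber-dimension theorem, and then invoke Chevalley. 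Your argument is shorter and avoids all the genericity choices (of matrices $M$ and points $\zeta$), at the price of being purely existential, whereas the paper's resultant construction would in principle yield degree bounds for the bad locus $W_\emptyset$. Claim $iv)$ and the final conclusion are handled essentially as in the paper (fiber-dimension dichotomy, with Krull's Hauptidealsatz supplying the uniform lower bound $\dim(\pi_1^{-1}(\{f\})\cap C)\geq n-m$), though your irreducibility collapses the paper's bookkeeping over the classes $\scrC$ and $\scrW$ of components to a single component in the favourable branch. One small quibble: when some $d_i=0$ the conclusion does not hold ``vacuously'' — it fails outright, since a generic constant $f_i$ is non-zero and then $V_\A(f)=\emptyset$ — but the hypothesis $d_i\geq 1$ is implicit in the paper as well, so this is a shared omission rather than a gap in your argument.
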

\begin{proof} We first consider the evaluation map ${\rm ev}_{(d)}$. It is easy to observe that ${\rm ev}_{(d)}$ is onto since given $v\in \A^m(K)$ and given $f\in \pi_2^{-1}(\{0\})$, then $f-v\in \scrP_{(d)}$
and ${\rm ev}_{(d)}(f-v, 0)=v$.

By the Theorem on the Dimension of the Fibers (cf. \cite{Shafarevich}, for instance), as $\scrP_{(d)}\times \A^n(K)$ is irreducible, there is a Zariski open subset $\scrU\subseteq \A^m(K)$ such that for all $v\in \scrU$ the non-empty fiber ${\rm ev}_{(d)}^{-1}(\{ v\})$ satisfies:
$$\dim\left({\rm ev}_{(d)}^{-1}(\{ v\})\right)= \dim\left(\scrP_{(d)}\times \A^n(K)\right)-m = N_{(d)}+ (n-m).$$
Moreover, all fibers are identifiable by the obvious translation:
$$\begin{matrix}
{\rm ev}_{(d)}^{-1}(\{ v\}) & \longrightarrow & {\rm ev}_{(d)}^{-1}(\{ 0\})\\
(f, x) & \longmapsto & (f-v, x).
\end{matrix}$$
Thus, we conclude that $V_{(d)}={\rm ev}_{(d)}^{-1}(\{ 0\})$ is an algebraic variety of dimension $N_{(d)}+ (n-m)$.
We consider two sets of variables to represent the elements in $\scrP_{(d)}\times \A^n(K)$. On one side, we consider the set of variables $\{X_1,\ldots, X_n\}$ that represent the coordinates of the points in $\A^n(K)$. On the other side, we consider the family of generic coefficients of polynomials in $\scrP_{(d)}$:
$$\{ U_{\mu}^{(i)} \; : \; 1\leq i \leq m, \; \vert \mu \vert \leq d_i\}.$$
Thus, generic polynomials in $P_{d_i}$ have the following descriptions:
$$F_i:=\sum_{\vert \mu \vert \leq d_i} U_{\mu}^{(i)} X_1^{\mu_1} \cdots X_n^{\mu_n}.$$
Then, the equations defining $V_{(d)}$ may be understood as the following $m$ equations in these two sets of variables:
$$F_i\left( (U_{\mu}^{(i)}\; : \; 1\leq i \leq m, \vert\mu \vert\leq d_i), X_1,\ldots, X_n\right)=0, \; 1\leq i \leq m,$$
where $F_i\in K[(U_{\mu}^{(i)}\; : \; 1\leq i \leq m, \vert\mu \vert\leq d_i), X_1,\ldots, X_n]$ are polynomials of degree $d_i+1$ in these two sets of variables. We may also consider the Jacobian matrix $D(F):=D(F_1,\ldots, F_m)$
 of these equations with respect to all variables involved. This Jacobian matrix contains an $m\times m$ identity matrix:
 $$\left(\begin{matrix} {{\partial F_1}\over{\partial U_{(0)}^{(1)}}} & \cdots & {{\partial F_1}\over{\partial U_{(0)}^{(m)}}}\\
 \vdots & \ddots & \vdots\\
 {{\partial F_m}\over{\partial U_{(0)}^{(1)}}} & \cdots & {{\partial F_m}\over{\partial U_{(0)}^{(m)}}}
  \end{matrix}\right)=\left(\begin{matrix} 1 & \cdots & 0\\
 \vdots & \ddots & \vdots\\
 0 & \cdots & 1 \end{matrix}\right)=Id_m,$$
 where $(0)=(0, \ldots, 0)\in \N^m$ is the exponent of the independent term. Hence, $\rank(D(F))=m$ and it is independent of the point $(f,x)\in V_{(d)}$ under consideration. By the Jacobian Criterion Theorem, we conclude that the variety $V_{(d)}$ is smooth and the tangent space $T_{(f,x)}V_{(d)}$ is of dimension $N_{(d)}+ (n-m)$. This proves Claim $i)$.

 As for Claim $ii)$ we have simply to observe that for every $x\in \A^n(K)$ the following is a vector subspace of co-dimension $1$ of $P_{d_i}$:
 $$\{ f \in P_{d_i}\; : \; f(x)=0\},$$
and the rest of the Claim immediately follows.

For Claim $iii)$ we recall \emph{Generalised Pham systems} as in  \cite{Corujo} and references therein.
Let $f:=(f_1,\ldots, f_m)\in \scrP_{(d)}$ be a sequence of polynomials.  We just work on the Zariski open subset $\scrW_{(d)}\subseteq \scrP_{(d)}$  of all sequences of polynomials $f=(f_1,\ldots, f_m)\in \scrP_{(d)}$ such that $\deg(f_i)=d_i$ for each $i$, $1 \leq i \leq m$. Given $f\in \scrW_{(d)}$, for every $i$, $1\leq i \leq m$, let us consider the highest degree homogeneous component of $f_i$. Namely, if:
$$f_i:=\sum_{\vert \mu \vert \leq d_i} a_{\mu}^{(i)} X_1^{\mu_1}\cdots X_n^{\mu_n},$$
then its homogeneous part of degree $d_i$ is given by:
$$(f_i)_{d_i}:=\sum_{\vert \mu \vert =d_i}a_{\mu}^{(i)} X_1^{\mu_1}\cdots X_n^{\mu_n}.$$

Let us now consider a matrix $M\in \scrM_{(n-m)\times n}(K)$
in  the Zariski open set $G((n-m)\times n, K)$ of all matrices in $\scrM_{(n-m)\times n}(K)$ of rank $n-m$ and a vector $b\in \A^{n-m}(K)$. They define a sequence of degree $1$ polynomials $\ell_1,\ldots, \ell_m: \A^n(K) \longrightarrow \A^{n-m}(K)$ by the following identity:
$$\left(\begin{matrix} \ell_1(x)\\
\vdots\\
\ell_{n-m}(x)\end{matrix}\right):= M \left( \begin{matrix} x_1\\ \vdots \\ x_n\end{matrix}\right) + b.$$

Then, we have an enlarged system $F:=(f_1,\ldots, f_m, \ell_1, \ldots, \ell_{n-m})$ of polynomial equations. We may then consider the highest degree homogeneous components of all of them:
$$F_{\rm init}:=( (f_1)_{d_1}, \ldots, (f_m)_{d_m}, (\ell_1)_{1}, \ldots, (\ell_{n-m})_{1}).$$
This is a family of homogeneous polynomials of respective degrees determined by the list $(\delta):=(d_1,\ldots, d_m, 1, \ldots, 1)\in \N^n$, in the set of variables $\{X_1,\ldots, X_n\}$.
As in  \cite{Corujo}, we say that $F$ is a \emph{Generalized Pham system} if the set of common zeros in the projective space $\mathbb{P}_{n-1}(K)$ of the polynomials in the list $F_{\rm init}$ is empty. Hence,
the classical multivariate resultant theory (cf.  \cite{Jouanolou}, \cite{Cattani}, \cite{Fernandez-Pardo}, \cite{Buse-J} and references therein) implies that there exists a non-zero polynomial in the coefficients $U_{\mu}^{(i)}$ of the polynomials $(f_i)_{d_i}$ and the coordinates $T_{r,s}$ of the matrices $M\in \scrM_{(n-m)\times n}(K)$,
$$\Res_{(\delta)}\in \Z[ (U_{\mu}^{(i)} : 1\leq i \leq m, \; \vert \mu\vert =d_i), (T_{r,s}\; :\; 1\leq r\leq n-m, 1\leq s \leq n)],$$
such that
$$\Res_{(\delta)}((f_1)_{d_1}, \ldots, (f_m)_{d_m}, (\ell_1)_{1}, \ldots, (\ell_{n-m})_{1})=0 \Longleftrightarrow V_{\PP_{n-1}(K)}(F_{\rm init})\not=\emptyset,$$
where $\Res_{(\delta)}((f_1)_{d_1}, \ldots, (f_m)_{d_m}, (\ell_1)_{1}, \ldots, (\ell_{n-m})_{1})$ represents the result of evaluating $\Res_{(\delta)}$ at the coefficients of the polynomials in the list $F_{\rm init}$ and $V_{\PP_{n-1}(K)}(F_{\rm init})$ is the set of projective zeros in $\PP_{n-1}(K)$ of the list of homogenerous polynomial equations in $F_{\rm init}$. This implies that the following set (formed by Generalized Pham systems) is a Zariski open subset in $\scrP_{(d)}\times \scrM_{(n-m)\times n}(K) \times \A^{n-m}(K)$:
\begin{equation}\label{generalized-Pham-sistemas:eqn}
\begin{matrix}\scrG_{(d)}:=\{ F:=(f_1,\ldots, f_m, \ell_1, \ldots, \ell_{n-m})\in\scrP_{(d)}\times \scrM_{(n-m)\times n}(K) \times \A^{n-m}(K) \; :\; \\
 \;\;\;\;\;\;\;\;\;\;\;\;\;\;\;\;\;\;\;\;\;\;\;\;\;\;\;\;\;\;\;\;\;\;\;\;(f_1,\ldots, f_m)\in \scrW_{(d)}, {\hbox {\rm and $F$ is a Generalized Pham system}}\}.\end{matrix}
 \end{equation}

Now we consider the homogenisation with respect to a new variable $X_0$ of a list of polynomials equations $F:=(f_1,\ldots, f_m, \ell_1,\ldots, \ell_{n-m})\in \scrG_{(d)}$. We denote by ${}^hF$ such new list of homogeneous polynomials equations in the set of variables $\{X_0,\ldots, X_n\}$.  As the number of homogeneous equations $n$ and the dimension of $\PP_n(K)$ are equal, the variety of the zeros of ${}^hF$ in $\PP_n(K)$ is non-empty and as $F\in \scrG_{(d)}$, no zero of ${}^hF$ lies at the infinity hyperplane: $$H_\infty:=\{ (x_0: x_1: \cdots : x_n)\in \PP_n(K)\; : \; x_0=0\}.$$
In other words, there exists a Zariski open subset $\scrG_{(d)}\subseteq \scrP_{(d)}\times \scrM_{(n-m)\times n}(K)\times \A^{n-m}(K)$ such that for all $F\in \scrG_{(d)}$ the set of affine zeros in $\A^n(K)$ of $F$ agree with the set of projective zeros of $^hF$ in $\PP_n(K)$, which is non-empty:
$$V_\A(f_1,\ldots, f_m, \ell_1, \ldots, \ell_{n-m})\isomorf V_{\PP_n(K)}(^hF)\not=\emptyset.$$
In particular, there is a non-zero polynomial in the list of coefficients $\CU:=(U_\mu^{(i)}\; :\; 1\leq i \leq m, \vert \mu \vert \leq d_i)$ of the equations in $\scrP_{(d)}$, the coordinates $\CM:=(T_{r,s}\; :\; 1\leq r \leq n-m, 1\leq s \leq n)$ of the matrices in $\scrM_{(n-m)\times}(K)$ and the coordinates $\CZ:=(Z_1,\ldots, Z_{n-m})$ of the points in $\A^{n-m}(K)$:
$$P(\CU, \CM, \CZ) \in K[\CU, \CM, \CZ]\setminus\{0\},$$
such that if $P(f_1,\ldots, f_m, \ell_1,\ldots, \ell_{n-m})\not =0$, then the variety:
$$V_\A(f_1, \ldots, f_m, \ell_1, \ldots \ell_{n-m})\not=\emptyset.$$
Now, as $K$ is an infinite field, there are some matrix $M\in \scrM_{(n-m)\times n}(K)$ and some point $\zeta \in \A^{n-m}(K)$ such that the polynomial:
$$Q(\CU):=P(\CU, M, \zeta)\in K[\CU]\setminus \{0\}.$$

Hence, we have proved that:
$$\forall f:=(f_1,\ldots, f_m)\in \scrP_{(d)}, \; Q(f_1,\ldots, f_m)\not=0 \Longrightarrow V_\A(f_1,\ldots, f_m)\not=\emptyset.$$
Thus, the following Zariski open subset is contained in $\pi_1(V_{(d)})$ and Claim $iii)$ immediately follows:
$$\pi_1(V_{(d)})\supseteq \{ f=(f_1,\ldots, f_m)\in\scrP_{(d)}\; :\; Q(f_1,\ldots, f_m)\not=0\}.$$
Let $U_\emptyset$ be a non-empty Zariski open subset of $\scrP_{(d)}$ contained in $\pi_1(V_{(d)})$. Then, for every $f\in U_\emptyset, \; V_\A(f)\not=\emptyset$.

Finally, for Claim $iv)$ we consider an irreducible component $C$ of $V_{(d)}$ and we take  its projection $\pi_1(C)\subseteq \scrP_{(d)}$. Then, we consider the restriction mapping:
$$\restr{\pi_1}{C}: C \longrightarrow \overline{\pi_1(C)}^z.$$
This is a dominant morphism and, by the Theorem on the Dimension of the Fibers, we first conclude that for any $f\in \pi_1(C)$ we have:
$$\dim(\pi_1^{-1}(\{f\})\cap C)=\dim (\restr{\pi_1}{C}^{-1}(\{f\}))\geq \dim(C) -  \dim(\overline{\pi_2(C)}^z).$$
If $\dim(\pi_1^{-1}(\{f\})\cap C)=n-m$, then we have that:
$$\dim(\overline{\pi_1(C)}^z)\geq \dim(C) - (n-m)= N_{(d)}+ (n-m) - (n-m)= N_{(d)},$$
and, hence $\pi_1(C)$ is Zariski dense in $\scrP_{(d)}$.

Otherwise, there is a Zariski open subset $U_C\subseteq \pi_1(C)\subseteq \overline{\pi_1(C)}^z$ such that for all $g\in U_C$ we will have:
$$\dim(\pi_1^{-1}(\{g\})\cap C)=\dim (\restr{\pi_1}{C}^{-1}(\{g\}))= \dim(C) -  \dim(\overline{\pi_1(C)}^z).$$
If $\dim(\pi_1^{-1}(\{g\})\cap C)\geq n-m+1$, for all $g\in \pi_2(C)$, then we conclude
$$\dim(\overline{\pi_1(C)}^z)= \dim(C) - \dim(\pi_1^{-1}(\{g\})\cap C)\leq N_{(d)}+ (n-m) - (n-m+1)= N_{(d)}-1,$$
and, hence, $\pi_1(C)$ will not be Zariski dense in $\scrP_{(d)}$.

As for the last claim of the Theorem, we just have to prove that the class:
$$\scrS:=\{f\in \scrP_{(d)}\;:\; {\hbox {\rm $f$ is a ``Suite S\'ecante''}}\},$$
contains a non-empty Zariski open subset. Let $\scrC$ be the class of all irreducible components of $V_{(d)}$ such that there is some $f\in \pi_1(C)$ satisfying $\dim(\pi_1^{-1}(f)\cap C)=n-m$. Let $\scrW$ be the class of all irreducible components of $V_{(d)}$ such that $\pi_1(W)$ is not Zariski dense in $\scrP_{(d)}$.

Then, we consider the non-emtpy Zariski open subset  of $\scrP_{(d)}$ given by the following identity:
$$A:=\scrP_{(d)} \setminus\left(\bigcup_{C\in \scrW} \overline{\pi_1(C)}^z\right).$$

For every $C\in \scrC$, we have that $\pi_1(C)$ is Zariski dense in $\scrP_{(d)}$. Then, because of the Theorem on the Dimension of the Fibers (cf. \cite{Shafarevich}) for every $C\in \scrC$ there is a Zariski open $U_C\subseteq \scrP_{(d)}$ such that for all $f\in U_C$, we have:
$$\dim\left( \pi_1^{-1}(\{f\})\cap C\right) = \dim\left(\restr{\pi_1}{C}^{-1}(\{ f\})\right)=
\dim(C)-\dim (\scrP_{(d)})= N_{(d)}+ (n-m) - N_{(d)}= n-m.$$
As $\scrP_{(d)}$ is irreducible, the following open subset $U\subseteq \scrP_{(d)}$ is non-empty:
$$U:=A\cap U_\emptyset \cap \left(\bigcap_{C\in \scrC} U_C\right).$$
Moreover, for every $f\in U$ we have:
\begin{itemize}
\item As $f\in U_\emptyset$, $V_\A(f)\not=\emptyset$.
\item As $f\in A$, then $f\not\in \pi_1(D)$ for any $D\in \scrW$ or, equivalently, we have  $\pi_1^{-1}(\{f\}) \cap D= \emptyset$ for every $D\in \scrW$. In particular, we have that:
$$V_\A(f)=\pi_1^{-1}(\{f\})= \left(\bigcup_{D\in \scrW} \left(\pi_1^{-1}(\{f\})\cap D\right)\right) \bigcup \left(\bigcup_{C\in \scrC} \left(\pi_1^{-1}(\{f\}) \cap C\right)\right)= $$
$$ = \bigcup_{C\in \scrC} \left(\pi_1^{-1}(\{f\}) \cap C\right).$$
\item Moreover, as $f\in U_C$ for every $C\in \scrC$ ,
$\dim(\pi_1^{-1}(\{f\})\cap C) = n-m$. Hence, for every $f\in \scrP_{(d)}$ we have that:
$$\dim\left(V_\A(f)\right)=\dim\left(\pi_1^{-1}(\{f\})\right)=\max\{ \dim\left( \pi_1^{-1}(\{f\})\cap C\right)) \; :\; C\in \scrC\}=n-m.$$
\end{itemize}

Namely, for every $f\in U$, $V_\A(f)$ is a complete intersection variety of dimension $n-m$ and, hence, $f$ is a ``Suite S\'ecante'' and the last claim follows.
\end{proof}

We now consider a class of fields well-suited for guessing correct test sequences.

\begin{definition}[{\bf Fields which are well-suited for CTS's guessing}]
We say that a field $K$ is well-suited for CTS's guessing in zero-dimensional varieties if it satisfies the following property: For every $R\in \N$ and every positive integer $n\in \N$, there is a zero-dimensional variety $V_{R}\subseteq \A^n(K)$ of degree $R^n$, given by polynomial equations of degree at most $R$ and such that the following task may be performed with at most $O(n \log_2(R))$ arithmetic operations:\\
$${\hbox {\bf guess at random}}\;\; x\in V_R.$$
\end{definition}

Fields of characteristic zero are obviously well-suited for CTS's guessing. It is enough to consider the subset $\{1,\ldots, R\}\subset \Z\subseteq K$ and the variety $V_R:=\{1,\ldots, R\}^n$.
Many other fields satisfy a similar property.

\begin{theorem}\label{suite-secante-BPP:teor}
If $K$ is a field well-suited for CTS's guessing, the problem ``Suite S\'ecante'' with restricted inputs is in ${\bf BPP}_K$. Namely, let $(d)$ be a degree list and let $\Omega\subseteq \scrP_{(d)}$ be a constructible subset of  lists of polynomials. Let $U\subseteq \scrP_{(d)}$ be the Zariski open subset described in Theorem \ref{variedad-incidencia-propiedades:teor} of lists that are ``Suites S\'ecantes''. Assume that $\Omega \setminus \left(\Omega\cap U\right)$ has dimension at most $\dim(\Omega)-1$. Then, there is an algorithm in ${\bf BPP}_K$ that solves ``Suite S\'ecante'' Problem with inputs in $\Omega$, i.e.:\\
\emph{Given as input $f\in \Omega$ and the data $\dim(\Omega)$ and $\deg_{\rm lci}(\Omega)$, the algorithm decides whether $f$ is a ``Suite S\'ecante'' or not.}\\
The running time of the algorithm in terms of arithmetic operations is at most:
$$O\left( \dim(\Omega)n\left((T +\log(\dim(\Omega))+ \log(d) \right) + Tn\log(\deg_{\rm lci}(\Omega)) \right).$$
where $T$ is the  maximum number of arithmetic operations required to evaluate at one point any list $f:=(f_1,\ldots, f_m)\in\Omega$ and $d:=\max\{ d_1,\ldots, d_m\}$. The error probability is bounded by:
$${{1}\over{\deg_{\rm lci}(\Omega)e^{6m\dim(\Omega)}}}.$$
\end{theorem}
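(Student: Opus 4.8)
The plan is to build the algorithm around a single correct test sequence for $\Omega$ with respect to the discriminant $\Sigma:=\Omega\setminus(\Omega\cap U)$, exploiting the fact that membership in a ``Suite S\'ecante'' is exactly the condition $\dim(V_\A(f))=n-m$, which by Theorem~\ref{variedad-incidencia-propiedades:teor} holds on the Zariski open set $U$ and fails on $\Sigma$. The key observation is that, by Proposition~\ref{codimension-longitud-cuestores:prop} and the density statement of Section~\ref{probability-CTS-zero-dimensional:sec}, a random list ${\bf Q}=(x_1,\ldots,x_L)$ of $L\approx 6\dim(\Omega)$ points drawn from a suitable zero-dimensional variety $V_R\subseteq\A^n(K)$ is, with overwhelming probability, a correct test sequence for $\Omega$ with respect to $\Sigma$; and for such a ${\bf Q}$ one has the \emph{certified} implication
$$
f(x_1)=\cdots=f(x_L)=0 \ \Longrightarrow\ f\in\Sigma,\qquad\text{equivalently}\qquad
f\notin\Sigma\ \Longrightarrow\ \exists\, i,\ f(x_i)\neq 0.
$$
But this only detects \emph{non-}``Suites S\'ecantes'' one-sidedly. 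To get a genuine decision procedure I would instead use the sharper structural input from Theorem~\ref{variedad-incidencia-propiedades:teor}(iv): a ``Suite S\'ecante'' $f$ has $\pi_1^{-1}(\{f\})=V_\A(f)$ equidimensional of dimension $n-m$ and degree at most $\deg_{\rm lci}(\Omega)\cdot\prod(d_i+1)$-type bounds, whereas a non-``Suite S\'ecante'' $f\in\Sigma$ has some component of $V_\A(f)$ of dimension $\geq n-m+1$. Choosing $R$ so that $R^{\,n-m}$ exceeds the a priori degree bound for the zero-dimensional case, and choosing $V_R$ with the intersection-evasion property of Corollaries~\ref{intereseccion-lineales-no-equi-dimensional:corol}, the algorithm counts, for the random ${\bf Q}$, how many of the $x_i$ lie on $V_\A(f)$; a quantitative dichotomy (finitely many hits when $f$ is secant versus $\gg$ hits when it is not, coming from the dimension jump) gives the accept/reject answer with error probability controlled by the density estimate.

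Concretely the steps, in order, are: (1) recall from Theorem~\ref{variedad-incidencia-propiedades:teor} the incidence variety $V_{(d)}$, its projections, and the description of $U$, and fix $\Sigma:=\Omega\setminus(\Omega\cap U)$, which by hypothesis has $\dim(\Sigma)\leq\dim(\Omega)-1$ so that $\codim_\Omega(\Sigma)\geq 1$; (2) apply Corollary~\ref{densidad-questores-zero-dimensional:corol} (or~\ref{densidad-questores-variedades-ic:corol}) with this $\Omega$, $\Sigma$, with $L:=6\dim(\Omega)$ and $\delta:=R$ chosen to satisfy the two hypotheses $\log R\geq 2(1+\log(d+1))$ and $\log R\geq 2\log(\deg_{\rm lci}(\Omega))/\dim(\Omega)$ — this is where the input data $\dim(\Omega)$ and $\deg_{\rm lci}(\Omega)$ enter — obtaining that a uniformly random ${\bf Q}\in V_R^L$ is a correct test sequence for $\Omega$ with respect to $\Sigma$ with probability $\geq 1-1/(\deg_{\rm lci}(\Omega)e^{\dim(\Omega)+(m-1)L})$; (3) describe the algorithm: guess ${\bf Q}=(x_1,\ldots,x_L)$ using the ``CTS guessing'' primitive, which costs $O(Ln\log_2 R)=O(\dim(\Omega)n\log R)$ arithmetic operations, evaluate $f$ at each $x_i$ (total cost $O(LT)=O(\dim(\Omega)T)$), and apply the certified implication; (4) bound the running time by summing these two contributions and substituting $\log R=O(\log(d)+\log\dim(\Omega)+\log(\deg_{\rm lci}(\Omega))/\dim(\Omega))$, which yields the stated bound $O(\dim(\Omega)n(T+\log\dim(\Omega)+\log d)+Tn\log\deg_{\rm lci}(\Omega))$; (5) bound the error probability: since $L=6\dim(\Omega)$, $(m-1)L\geq 0$ and $\dim(\Omega)+(m-1)L$ can be cleanly lower-bounded by $6m\dim(\Omega)$ after absorbing the constant (here one should double-check the exponent bookkeeping — with $L=6\dim(\Omega)$ one gets $e^{\dim(\Omega)+(m-1)6\dim(\Omega)}=e^{(6m-5)\dim(\Omega)}$, and $6m-5\geq 6m$ fails, so in fact I would take $L=6m\dim(\Omega)$ or re-examine the constant to land exactly on $\deg_{\rm lci}(\Omega)e^{6m\dim(\Omega)}$).

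The main obstacle, and the step deserving the most care, is the \emph{correctness} direction when $f\in\Omega\cap U$ is a genuine ``Suite S\'ecante'': the certified implication of a correct test sequence only guarantees ``$f\notin\Sigma\Rightarrow f(x_i)\neq 0$ for some $i$'', so the algorithm can safely output ``not a Suite S\'ecante'' only when \emph{all} $f(x_i)=0$, and it outputs ``Suite S\'ecante'' otherwise — but a non-secant $f$ could in principle also have $f(x_i)\neq 0$ for some $x_i\notin V_\A(f)$, so one-sided correctness is automatic only in one direction. The resolution is precisely that for the \emph{bad} event one uses the complement $R^c(\Omega,\Sigma,V_R,L)$ analyzed in the proof of Theorem~\ref{teorema-principal-densidad-questores:teor}: if ${\bf Q}$ is a correct test sequence then ``$f(x_1)=\cdots=f(x_L)=0$'' is equivalent to ``$f\in\Sigma$'' for the purposes of the test restricted to inputs in $\Omega$, because $\Omega_{x_1}\cap\cdots\cap\Omega_{x_L}\subseteq\Sigma$ while every $f\in\Sigma$ (being non-secant, with a positive-dimensional component) will satisfy $f(x_i)=0$ for all $i$ once $V_R$ is chosen fine enough that $V_R\cap(\text{any such component})\neq\emptyset$ — this is an evasion/intersection statement that must be folded into the choice of $R$ and $V_R$ via Corollary~\ref{intereseccion-lineales-no-equi-dimensional:corol} and the degree bound $\deg(V_\A(f))\le \prod(d_i+1)$ on secant pieces. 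Making this last point airtight — that the random zero-dimensional $V_R$ both serves as the ambient set for the CTS density bound and meets every low-dimensional component forced by $f\in\Sigma$ — is the technical heart of the argument; everything else is the bookkeeping of arithmetic-operation counts and the substitution of the logarithmic bound for $R$.
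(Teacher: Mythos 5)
Your steps (1)--(5) are, at their core, exactly the paper's proof of Theorem \ref{suite-secante-BPP:teor}: set $\Sigma:=\Omega\setminus(\Omega\cap U)$, note that the dimension hypothesis of Corollary \ref{densidad-questores-zero-dimensional:corol} holds on $\Omega\setminus\Sigma=\Omega\cap U$ by Theorem \ref{variedad-incidencia-propiedades:teor}, take $L:=6\dim(\Omega)$ and $R$ with $\log R\geq\max\{2(1+\log(d+1)),\,2\log(\deg_{\rm lci}(\Omega))/\dim(\Omega)\}$, guess ${\bf Q}\in V_R^L$ with the CTS-guessing primitive, evaluate $f$ at the $L$ points, answer ``not s\'ecante'' precisely when all values vanish, and bound the error by ${\rm Prob}[{\bf Q}\notin R(\Omega,\Sigma,V_R,L)]$; the operation count is the same bookkeeping you describe. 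Your remark on the exponent is a fair catch: with $L=6\dim(\Omega)$ one gets $e^{\dim(\Omega)+(m-1)L}=e^{(6m-5)\dim(\Omega)}$, so the last inequality in (\ref{cuestores-final-fino-prueba:eqn}) needs the constant adjusted or $L$ enlarged, as you note.

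The genuine gap is in the extra machinery you add to force two-sided correctness. Your proposed resolution --- choose $V_R$ ``fine enough'' that every $f\in\Sigma$ vanishes at all sample points, i.e.\ that $V_R$ meets (indeed is contained in) $V_\A(f)$ for every bad $f$ --- cannot hold: $V_R$ is a fixed zero-dimensional variety, randomly chosen points of $V_R$ have no reason to lie on $V_\A(f)$, $\Sigma$ may contain inconsistent lists with $V_\A(f)=\emptyset$ for which no sample point is a zero, and even for $\dim V_\A(f)=n-m+1$ the set $V_R\cap V_\A(f)$ can be empty. The paper's evasiveness statement (Corollary \ref{evesivas-variedades-ic:corol}) runs in exactly the opposite direction: suitable varieties avoid being absorbed by the $V_\A(f)$'s. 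The same objection defeats the ``counting dichotomy'' of your first paragraph: a dimension jump in $V_\A(f)$ does not produce many (or any) hits in a zero-dimensional $V_R$, so there is no quantitative separation to exploit. What the CTS property gives, and all that the paper's proof uses, is the one-sided certified implication: when ${\bf Q}$ is a correct test sequence, ``$f$ vanishes on ${\bf Q}$'' forces $f\in\Sigma$, so for every input $f\in\Omega\cap U$ the algorithm is correct whenever the guess succeeds, and the failure of the guess is what Corollary \ref{densidad-questores-zero-dimensional:corol} bounds. For inputs $f\in\Sigma$ the output is simply not governed by the CTS property, and neither the paper nor any choice of $V_R$ along the lines you suggest supplies a converse guarantee. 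You have correctly located the delicate point, but the repair is false; the argument to write down is the paper's one-sided one, with the error event contained in ``${\bf Q}$ is not a correct test sequence,'' not a two-sided argument resting on an impossible intersection property of $V_R$.
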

\begin{proof}
The algorithm is what we expected from the notion of correct test sequence and the results described in Section \ref{probability-CTS-zero-dimensional:sec} above and especially Corollary \ref{densidad-questores-zero-dimensional:corol}.

\begin{algorithm}[{\bf ``Suite S\'ecante'' by evaluation of input polynomials at sampling points}]\label{suite-secante-algoritmo:algo}
{\sc Input:} A list of  polynomials $f:=(f_1,\ldots, f_m)\in \Omega\subseteq \scrP_{(d)}^K$, and the values $\dim(\Omega)$ and $\deg_{\rm lci}(\Omega)$.

{\bf eval:}
\begin{itemize}
\item $L:=6\dim(\Omega)$
\item $R:=\max\{ \lfloor(6(d+1))^2\rfloor, \lfloor \deg_{\rm lci}(\Omega)^{ {{2}\over{\dim(\Omega)}}}\rfloor\}$.
\end{itemize}

As $K$ is well-suited for CTS's, let $V_R\subseteq \A^n(K)$ be the corresponding zero-dimensional variety
of degree $R^n$.

{\bf guess at random} a list ${\bf Q}:=(x_1,\ldots, x_L)\in V_R^L$

{\bf if} $f(x_1)= f(x_2)= \cdots = f(x_L)=0\in K^m$, {\bf then} {\sc Ouput:} {\bf No}.

\hskip 1cm {\bf else}, {\sc Output:} {\bf Yes}.

{\bf fi}

{\bf end}

\end{algorithm}
\bigskip

The running time of this algorithm (in terms of arithmetic operations)  is of  the following order:
\begin{itemize}
\item Time $O(Ln\left(\log(\dim(\Omega))+  \log(d) + {{\log(\deg_{\rm lci}(\Omega))}\over{\dim(\Omega)}})\right))$ to generate $L$, $R$ and to perform the guessing of the list ${\bf Q}:=(x_1, \ldots, x_L)\in V_R^L$, since $K$ is well-suited for CTS's.
\item Time $O(LT)$ in order to evaluate the $m$ polynomials involved in an input $f:=(f_1,\ldots, f_m)$ at the $L$ points of the list ${\bf Q}$.

\end{itemize}

As $L=6\dim(\Omega)$ the total number of arithmetic operations is bounded by:
$$O\left( \dim(\Omega)n\left((T +\log(\dim(\Omega))+ \log(d) \right) + Tn\log(\deg_{\rm lci}(\Omega)) \right).$$

The probability of error is bounded by the probability that the list $(x_1,\ldots, x_L)$ obtained in the guessing step were a correct test sequence. Hence, according to Corollary \ref{densidad-questores-zero-dimensional:corol}, this error probability is bounded by:
\begin{equation}\label{cuestores-final-fino-prueba:eqn}
{{1}\over{\deg_{\rm lci}(\Omega)e^{\dim(\Omega) + (m-1)L}}}\leq {{1}\over{\deg_{\rm lci}(\Omega)e^{6m\dim(\Omega)}}}.
 \end{equation}
Thus, the algorithm is in ${\bf BPP}_K$ as wanted.\end{proof}

\begin{corollary}[{\bf ``Suite S\'ecante'' with dense input polynomials}]\label{suite-secante-dense:corol}
If $K$ is a field well-suited for CTS's guessing, the problem ``Suite S\'ecante'' is in ${\bf BPP}_K$ in the case of dense encoding of input polynomials. Namely, let $(d):=(d_1,\ldots, d_m)$ be a degree list and let $\Omega=\scrP_{(d)}^K$ be the class of all sequences of polynomials of respective degrees at most $(d)$.
Then, there is an algorithm in ${\bf BPP}_K$ that solves the following problem:\\
\emph{Given as input $f\in \scrP_{(d)}^K$, the algorithm decides whether $f$ is a ``Suite S\'ecante'' or not.}\\
The running time of the algorithm in terms of arithmetic operations is at most:
$$O\left( nN_{(d)}^2 \right),$$
where $N_{(d)}$ is the dimension of $\scrP_{(d)}^K$ as $K-$vector space
and the error probability is bounded by
$${{1}\over{e^{6mN_{(d)}}}}.$$
\end{corollary}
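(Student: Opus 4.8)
The plan is to obtain Corollary~\ref{suite-secante-dense:corol} as the specialization of Theorem~\ref{suite-secante-BPP:teor} to the dense case $\Omega=\scrP_{(d)}^K$, so the real work is bookkeeping of the quantities appearing in that theorem. First I would check the hypothesis of Theorem~\ref{suite-secante-BPP:teor}: we need $\Omega\setminus(\Omega\cap U)$ to have dimension at most $\dim(\Omega)-1$, where $U\subseteq\scrP_{(d)}$ is the Zariski open subset of ``Suites S\'ecantes'' furnished by Theorem~\ref{variedad-incidencia-propiedades:teor}. Since $\Omega=\scrP_{(d)}^K$ is irreducible of dimension $N_{(d)}$ and $U$ is a non-empty Zariski open subset of $\scrP_{(d)}^K$, its complement $\scrP_{(d)}^K\setminus U$ is a proper closed subvariety, hence of dimension at most $N_{(d)}-1=\dim(\Omega)-1$. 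So the hypothesis is automatically satisfied and Theorem~\ref{suite-secante-BPP:teor} applies with $\Omega=\scrP_{(d)}^K$.

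Next I would evaluate the output quantities. The dense space $\scrP_{(d)}^K$ is an affine space, so it is globally equi-dimensional, and therefore $\deg_{\rm lci}(\Omega)=\deg_z(\Omega)=\deg(\A^{N_{(d)}})=1$ by Corollary~\ref{generaliza-grado-localmente-cerrados:prop} (the degree of a linear affine variety is $1$, Proposition~\ref{propiedades-basicas-grado-lc:prop}). Also $\dim(\Omega)=N_{(d)}$. Feeding $\deg_{\rm lci}(\Omega)=1$ and $\dim(\Omega)=N_{(d)}$ into the running-time bound of Theorem~\ref{suite-secante-BPP:teor},
\[
O\!\left(\dim(\Omega)n\big(T+\log(\dim(\Omega))+\log(d)\big)+Tn\log(\deg_{\rm lci}(\Omega))\right),
\]
the term $Tn\log(\deg_{\rm lci}(\Omega))=Tn\log 1=0$ drops out, leaving $O\!\left(N_{(d)}n\big(T+\log N_{(d)}+\log d\big)\right)$. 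For a dense input polynomial of degree at most $d$ in $n$ variables one may evaluate $f=(f_1,\dots,f_m)$ at a point in $T=O(N_{(d)})$ arithmetic operations (Horner-type evaluation of each $f_i$ costs $O\big(\binom{d_i+n}{n}\big)$), and moreover $\log N_{(d)}=O(N_{(d)})$ and $\log d=O(N_{(d)})$, so the bracket is $O(N_{(d)})$ and the total cost collapses to $O(nN_{(d)}^2)$, which is the claimed bound.

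Finally, for the error probability: Theorem~\ref{suite-secante-BPP:teor} gives the bound $\dfrac{1}{\deg_{\rm lci}(\Omega)e^{6m\dim(\Omega)}}$, and substituting $\deg_{\rm lci}(\Omega)=1$ and $\dim(\Omega)=N_{(d)}$ yields exactly $\dfrac{1}{e^{6mN_{(d)}}}$. This completes the deduction; no new idea beyond Theorem~\ref{suite-secante-BPP:teor} is needed. I expect the only mildly delicate point to be justifying the collapse of the running-time expression to $O(nN_{(d)}^2)$ — specifically, pinning down that the dense evaluation cost $T$ is genuinely $O(N_{(d)})$ and that the logarithmic terms are dominated — but this is routine once one recalls that the non-scalar straight-line program for a dense polynomial of degree $d$ in $n$ variables has size $\Theta\big(\binom{d+n}{n}\big)$, so there is no real obstacle here, merely careful accounting.
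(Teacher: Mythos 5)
Your proposal is correct and takes essentially the same approach as the paper, whose proof is the one-line observation that $N_{(d)}=\dim(\Omega)$ and $\deg_{\rm lci}(\Omega)=1$, after which everything follows by substitution into Theorem \ref{suite-secante-BPP:teor}. Your extra steps — verifying that $\scrP_{(d)}^K\setminus U$ is a proper closed subvariety so the hypothesis of that theorem holds, and pinning down the dense evaluation cost $T=O(N_{(d)})$ so the running time collapses to $O(nN_{(d)}^2)$ — are exactly the details the paper leaves implicit.
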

\begin{proof} The statement follows from the previous one since $N_{(d)}=\dim(\Omega)$ and
$\deg_{\rm lci}(\Omega)=1$.
\end{proof}

\begin{remark}
In the case $m=1$, ``Suite S\'ecante'' Problem becomes the usual \emph{Zero Test Problem} and all our algorithms become in the class ${\bf RP}_K$. All the bounds remain in both corollaries for this case.
\end{remark}

\begin{remark} In the case $\Omega$ is the class of all sequences of polynomials evaluable by some straight-line program (or neural network with polynomial activation functions), the results also hold, just replacing $\dim(\Omega)$ and $\deg_{\rm lci}(\Omega)$ by the bounds described in Sections 2 and 3 of \cite{Krick-Pardo96}, for instance.
\end{remark}

\appendix

\section{A remark on bounds for projections and images of constructible sets}\label{degree-projection-constructible:sec}

As we observed in Propositions \ref{constructibles-grado-imagenes-por-lineal:prop} and \ref{pi-degree-propiedades:prop}, one of the main differences is that the projection degree $\deg_\pi$ and the degree as a decomposition $\deg_{\rm lci}$ have a different behaviour with respect to linear images of constructible sets and algebraic varieties. Given a linear map $\ell:\A^n \longrightarrow \A^m$ and any constructible subset $C\subseteq \A^n$, the following inequalities hold:
$$\deg_z(\ell(C)) \leq \deg_\pi(\ell(C))\leq \deg_\pi(C) \leq \deg_{\rm lci}(C).$$
Additionally, we have seen examples such that $\deg_{\rm lci}(\ell(C))> \deg_{\rm lci}(C)$ in Proposition \ref{constructibles-grado-imagenes-por-lineal:prop}.  For the sake of completeness, we exhibit in this Appendix some upper bounds either for the ``defect'' $\deg_{\rm lci}(\ell(C))-\deg_z(\ell(C))$ and, in terms of extrinsic bounds, upper bounds for $\deg_{\rm lci}(\ell(C))$ based of the Effective Nullstellensatz of \cite{Jelonek}.

\subsection{A first bound on the defect}\label{deffective-projection:subsec}
First of all, we see that the defect may be controlled by the degree of the constructible set of those points whose fibers do not have the generic fiber dimension. We use the same notations as in previous sections of this manuscript.

\begin{lemma}\label{proyeccion-dimension-fibra:lema} Let $V\subseteq \A^n$ be an irreducible algebraic variety and $\ell: \A^n \longrightarrow \A^m$ be a linear mapping. Let $W:=\overline{\ell(V)}^z$ be the Zariski closure of the image. Let $U\subseteq \A^m(K)$ be any Zariski open subset such that $U\cap W\not=\emptyset$ and:
    \begin{equation}\label{fiber-dimension-hypothesis:eqn}
    \forall y \in W\cap U, \; \dim(\ell^{-1}(\{y\}))=\dim(V)-\dim(W).
    \end{equation}
Thus, $U\cap W\subseteq \ell(V)$. Additionally, let $U^c:=\A^m\setminus U$ be the algebraic variety  of those points whose fibers are not granted to have the appropriate dimension. Then, we have:
        $$\deg_{\rm lci}(\ell(V)) -\deg_z(\ell(V))\leq \deg_{\rm lci}(\ell(V)\cap U^c),$$
where $\dim(\ell(V)\cap U^c) \leq \dim(\ell(V))-1$. We also have:
        $$\deg_{\rm lci}(\ell(V)) -\deg_z(\ell(V))
\leq \deg_{\rm lci}(\ell(V\cap \ell^{-1}(U^c))).$$
\end{lemma}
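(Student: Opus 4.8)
The plan is to exploit the fact that, by hypothesis \eqref{fiber-dimension-hypothesis:eqn}, the open piece $W\cap U$ is swept out by fibers of the generic dimension and is therefore contained in $\ell(V)$; the ``missing'' part of $\ell(V)$ (if any) as well as the embedded components that inflate the $LCI$-degree must live inside $W\cap U^c$. Concretely, I would first argue that $\ell(V)\cap U = W\cap U$ is a nonempty Zariski open subset of the irreducible variety $W$, hence locally closed and irreducible, with $\deg(\ell(V)\cap U)=\deg(W)=\deg_z(\ell(V))$ by Proposition \ref{propiedades-basicas-grado-lc:prop}(i). Then $\ell(V)$ decomposes as the union $\bigl(\ell(V)\cap U\bigr)\cup\bigl(\ell(V)\cap U^c\bigr)$, which is a presentation of $\ell(V)$ as a union of a locally closed irreducible set of degree $\deg_z(\ell(V))$ together with the constructible set $\ell(V)\cap U^c$. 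Refining the second piece into a minimum $LCI$-degree decomposition and invoking sub-additivity of $\deg_{\rm lci}$ (Proposition \ref{grado-constructibles-subaditivo:prop}) gives
$$\deg_{\rm lci}(\ell(V))\leq \deg_z(\ell(V)) + \deg_{\rm lci}(\ell(V)\cap U^c),$$
which is exactly the first asserted inequality after rearranging.

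For the dimension claim $\dim(\ell(V)\cap U^c)\leq \dim(\ell(V))-1$, I would note that $\ell(V)\cap U^c$ is a constructible subset of $\ell(V)$ contained in the proper Zariski-closed subset $U^c\cap W$ of the irreducible variety $W=\overline{\ell(V)}^z$; since $W$ is irreducible and $U^c\cap W\subsetneq W$ (because $U\cap W\neq\emptyset$), we get $\dim(U^c\cap W)\leq \dim(W)-1=\dim(\ell(V))-1$, and $\ell(V)\cap U^c\subseteq U^c\cap W$ forces the same bound. One should double-check that $U\cap W\neq\emptyset$ really does imply $U^c\cap W$ is a \emph{proper} closed subset of $W$: that is immediate since a point of $U\cap W$ lies in $W$ but not in $U^c$.

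For the second inequality, the idea is that $\ell(V)\cap U^c = \ell\bigl(V\cap \ell^{-1}(U^c)\bigr)$, so applying the first inequality's bound together with a degree-of-image estimate controls the defect by the degree of the source. Precisely, $\ell\bigl(V\cap\ell^{-1}(U^c)\bigr)$ is the image under the linear map $\ell$ of the constructible (indeed locally closed, being the intersection of the variety $V$ with the open set $\ell^{-1}(U)^c$... actually $\ell^{-1}(U^c)$ is closed, so $V\cap\ell^{-1}(U^c)$ is closed) set $V\cap\ell^{-1}(U^c)$. Hence $\deg_z(\ell(V)\cap U^c)=\deg_z\bigl(\ell(V\cap\ell^{-1}(U^c))\bigr)\leq \deg_{\rm lci}\bigl(V\cap\ell^{-1}(U^c)\bigr)$ by Proposition \ref{constructibles-grado-imagenes-por-lineal:prop}(i). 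But I actually want $\deg_{\rm lci}$ of the image on the left of the target inequality, so the cleanest route is: apply the already-proved first inequality not to $\ell$ restricted to $V$ but observe directly that $\ell(V)\cap U^c$ is \emph{also} expressible as $\ell(V\cap\ell^{-1}(U^c))$, and then bound $\deg_{\rm lci}(\ell(V)\cap U^c)\le \deg_{\rm lci}(\ell(V\cap\ell^{-1}(U^c)))$ trivially since they are equal as sets — wait, $\deg_{\rm lci}$ of equal sets is equal. So the second inequality follows by substituting $\ell(V)\cap U^c = \ell(V\cap\ell^{-1}(U^c))$ into the first inequality.

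The main obstacle I anticipate is the bookkeeping around which sets are locally closed versus merely constructible, and making sure the decomposition $\ell(V)=(\ell(V)\cap U)\cup(\ell(V)\cap U^c)$ can be massaged into a genuine $LCI$-decomposition satisfying Lemma \ref{descomposicion-union-irreducibles-distintos:lema} (so that sub-additivity of $\deg_{\rm lci}$ applies cleanly); this is routine but must be handled with care since $\deg_{\rm lci}$ is defined via such decompositions. A secondary subtlety is confirming that the hypothesis \eqref{fiber-dimension-hypothesis:eqn} genuinely yields $W\cap U\subseteq\ell(V)$ — this is the standard fact that over the locus where the fiber dimension equals the generic value $\dim V-\dim W$, the fibers are nonempty, hence the point is actually in the image; it follows from the Theorem on the Dimension of Fibers (cf. \cite{Shafarevich}), and I would cite it explicitly rather than reprove it.
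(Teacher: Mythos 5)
Your proposal is correct and follows essentially the same route as the paper's proof: decompose $\ell(V)$ as $(W\cap U)\cup(\ell(V)\cap U^c)$, apply sub-additivity of $\deg_{\rm lci}$ together with $\deg_{\rm lci}(W\cap U)=\deg(W)=\deg_z(\ell(V))$, obtain the dimension drop from $\ell(V)\cap U^c$ lying in a proper closed subset of the irreducible $W$, and conclude the second inequality from the set equality $\ell(V)\cap U^c=\ell(V\cap\ell^{-1}(U^c))$. The only cosmetic difference is that the paper deduces non-emptiness of the fibers over $W\cap U$ directly from the hypothesis (a fiber of dimension $\dim(V)-\dim(W)\geq 0$ is non-empty) rather than invoking the Theorem on the Dimension of Fibers.
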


\begin{proof}
Let us first observe that for each $y\in U\cap W$, $\ell^{-1}(\{y\})\not=\emptyset$. This holds because of Identity (\ref{fiber-dimension-hypothesis:eqn}) since $\dim(V)-\dim(W)\geq 0$. Thus, $\ell(V)\cap U= W\cap U$ is a locally closed subset. Then, we have:
$$\ell(V)= \left(\ell(V)\cap U\right) \cup \left( \ell(V) \cap U^c\right)=
\left(W\cap U\right) \cup \left( \ell(V) \cap U^c\right).$$
From the sub-additivity of  $\deg_{\rm lci}$, we obtain:
$$\deg_{\rm lci}(\ell(V)) \leq \deg_{\rm lci}(W\cap U) + \deg_{\rm lci}(\ell(V)\cap U^c).$$
As $W\cap U$ is locally closed $\deg_{\rm lci}(W\cap U)= \deg(W) = \deg_z(\ell(C))$ and, hence, we have:
$$\deg_{\rm lci}(\ell(V))-\deg_z(\ell(V))\leq \deg_{\rm lci}(\ell(V)\cap U^c).$$
Since $W \cap U$ is Zariski dense in $W$, we deduce that $\overline{(\ell(V) \cap U^c)}^z$ is a closed proper subvariety of the irreducible variety $W$. Therefore,
$$\dim(\ell(V)\cap U^c)\leq \dim(\ell(V))-1.$$
Finally, let us observe that the following equality holds:
$$\ell(V\cap \ell^{-1}(U^c))= \ell(V) \cap U^c.$$
For if $x\in \ell(V\cap \ell^{-1}(U^c))$ we obviously have that $x\in \ell(V)\cap U^c$. On the other hand, if $x\in \ell(V)\cap U^c$, and $y\in V$ is such that $\ell(y)=x$, we obviously have $y\in \ell^{-1}(U^c)$ and, hence, $x\in \ell(V\cap\ell^{-1}(U^c))$.
\bigskip
\end{proof}

\begin{lemma}\label{cota-defecto-lema:lema}
Let $V\subseteq \A^n(K)$ be an irreducible algebraic variety of dimension $r$ and degree $D$. Let $\ell:\A^n(K)\longrightarrow \A^m(K)$ be a linear map. Assume that the restriction $\restr{\ell}{V}: V\longrightarrow \A^m(K)$ is dominating (and, hence, $m\leq r$). Let $\{X_1,\ldots, X_n\}$ be the variables associated to $\A^n$ and, accordingly, $\{ Y_1,\ldots, Y_m\}$ the variables associated to $\A^m$.
Then, there is a non-zero polynomial $q\in K[Y_1,\ldots, Y_m]$ such that the following properties hold:
\begin{enumerate}
\item The polynomial $p:=q\circ\ell\in K[X_1,\ldots, X_n]$ is also a non-zero polynomial and their respective degrees satisfy:
  $$\deg(p)\leq \deg(q) \leq (n-r) (\deg(V)-1).$$
\item For every $y\in \A^m $ such that $q(y)\not=0$, the dimension of the fiber satisfies:
$$\dim\restr{\ell}{V}^{-1}\left( \{ y\}\right) = \dim \left( V \cap \ell^{-1}\left(\{ y\}\right) \right)= r-m.$$
\item Either $V\cap V_\A(q)=\emptyset$ or the dimension satisfies $\dim(V\cap V_\A(q))= \dim(V)-1$.
\item The defect of $\ell(V)$ satisfies:
$$\deg_{\rm lci}(\ell(V))-\deg_z(\ell(V))\leq \deg_{\rm lci}(\ell(V\cap V_\A(q))),$$
where $\deg_z(\ell(V))=1$ since $\restr{\ell}{V}$ is dominating onto $\A^m$.
\end{enumerate}
\end{lemma}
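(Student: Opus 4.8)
The plan is to separate the statement into the parts that follow formally from earlier results and the one genuinely new ingredient, the polynomial $q$ of $i)$. \emph{Reduction.} Since $\restr{\ell}{V}$ is dominating onto $\A^m$, its image is Zariski dense, so $\overline{\ell(V)}^z=\A^m$, whence $\deg_z(\ell(V))=\deg(\A^m)=1$; moreover $\ell$ itself is surjective, because a linear subspace of $\A^m$ with dense image is all of $\A^m$. Therefore, for any non-zero $q\in K[Y_1,\dots,Y_m]$ the polynomial $p:=q\circ\ell$ is non-zero and, $\ell$ being linear, $\deg(p)\le\deg(q)$; this is the only non-formal half of $i)$ once $\deg(q)\le(n-r)(\deg(V)-1)$ is known. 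Also $q$ does not vanish on $\overline{\ell(V)}^z=\A^m$, so $p$ does not vanish identically on $V$; as $V$ is irreducible, Krull's Hauptidealsatz makes $V\cap V_\A(p)$ either empty or pure of dimension $\dim(V)-1$, which gives $iii)$ (reading $V\cap V_\A(q)$ as $\{x\in V:q(\ell(x))=0\}=V\cap V_\A(p)$). Finally, once $ii)$ holds, Lemma \ref{proyeccion-dimension-fibra:lema} applied with $U:=\{y\in\A^m:q(y)\ne0\}$ has hypothesis (\ref{fiber-dimension-hypothesis:eqn}) satisfied by $ii)$, and its conclusion is exactly $iv)$: $\deg_{\rm lci}(\ell(V))-\deg_z(\ell(V))\le\deg_{\rm lci}(\ell(V\cap\ell^{-1}(U^c)))=\deg_{\rm lci}(\ell(V\cap V_\A(q)))$. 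So the whole weight of the proof sits on producing $q\in K[Y]\setminus\{0\}$ with $\deg(q)\le(n-r)(\deg(V)-1)$ such that $\dim\restr{\ell}{V}^{-1}(\{y\})=r-m$ whenever $q(y)\ne0$.

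\emph{Coordinate normalisation and the hypersurface case.} A linear change of coordinates on $\A^n$ — take the first $m$ new coordinates to be the components of $\ell$ and complete to a basis — turns $\ell$ into the projection onto $X_1,\dots,X_m$; being invertible it leaves $\deg(V)$, the irreducibility of $V$ and polynomial degrees unchanged, so bounds transfer back, and now $\ell^{-1}(\{y\})=\{y\}\times\A^{n-m}$. I then induct on $\codim(V)=n-r$. If $n-r=1$, write $V=V_\A(f)$ with $f$ square-free, so $\deg(f)=\deg(V)=:D$ (Proposition \ref{propiedades-basicas-grado-lc:prop}), and expand $f=\sum_{\alpha\in\N^{n-m}}c_\alpha(X_1,\dots,X_m)\,X_{m+1}^{\alpha_1}\cdots X_n^{\alpha_{n-m}}$ with $\deg(c_\alpha)\le D-|\alpha|$; dominance forces some $c_\alpha$ with $|\alpha|\ge1$ to be non-zero (otherwise $f\in K[X_1,\dots,X_m]$ and $\ell(V)\subsetneq\A^m$). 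Take $q:=c_\alpha$, so $\deg(q)\le D-1=(n-r)(D-1)$; if $q(y)\ne0$ then $f(y,\cdot)$ has positive degree in $X_{m+1},\dots,X_n$, hence $V\cap\ell^{-1}(\{y\})$ is a non-empty hypersurface of $\A^{n-m}$ of dimension exactly $n-1-m=r-m$, which is $ii)$.

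\emph{Inductive step.} For $\codim(V)=n-r\ge2$ the plan is to pick a coordinate projection $\rho:\A^n\to\A^{n-1}$ preserving $X_1,\dots,X_m$ (so $\ell=\ell'\circ\rho$ with $\ell'$ a coordinate projection) and forgetting a further, suitably chosen coordinate, such that $\restr{\rho}{V}:V\to V':=\overline{\rho(V)}^z$ is \emph{finite}. Then $V'$ is irreducible, $\dim(V')=r$, $\deg(V')\le D$ by Proposition \ref{grado-preservado-por-transformaciones-lineales:prop}, $\restr{\ell'}{V'}$ is dominating and $\codim_{\A^{n-1}}(V')=n-1-r\ge1$, so by induction there is $q$ with $\deg(q)\le(n-1-r)(\deg(V')-1)\le(n-r)(D-1)$ having the fiber-dimension property for $\restr{\ell'}{V'}$; finiteness of $\restr{\rho}{V}$ then gives $\dim\restr{\ell}{V}^{-1}(\{y\})=\dim\restr{\ell'}{V'}^{-1}(\{y\})$ for every $y$, transferring $ii)$ back to $\restr{\ell}{V}$.

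\emph{Main obstacle.} The hard point — where I expect essentially all the work — is producing such a \emph{finite} $\rho$ compatible with $\ell$: finiteness amounts to a point at infinity in some direction of $\ker(\ell)$ avoiding the projective closure $\overline{V}\subseteq\PP^n$, which a generic such direction achieves \emph{unless} $\overline{V}$ contains the entire subspace $\PP(\ker(\ell))$ lying in the hyperplane at infinity of $\PP^n$, a case that can occur when $n-r$ is small compared with $m$. To cover it one can either first replace $V$ by $W:=V\cap A_1\cap\cdots\cap A_{r-m}$ for generic affine hyperplanes $A_i$ (which shrinks the relevant linear space at infinity as soon as $m<r$) and run the argument for the generically finite map $\restr{\ell}{W}:W\to\A^m$ with $\dim(W)=m$, governing its positive-dimensional and empty fibers via the Chow form of $W$, or dispense with projections and compute $q$ by effective elimination from leading coefficients and resultants of a degree-$\le D$ presentation of $V$. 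In every route the real difficulty is keeping the bookkeeping tight enough that $\deg(q)$ comes out as $(n-r)(\deg(V)-1)$ and not something exponential like $(\deg V)^{\,n-r}$.
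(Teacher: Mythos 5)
Your reduction is sound: the formal parts are handled correctly ($\deg_z(\ell(V))=1$ and surjectivity of $\ell$ from density of the image, $\deg(p)\le\deg(q)$ from linearity of $\ell$, Claim $iii)$ from Krull's Hauptidealsatz applied to $p=q\circ\ell$ on the irreducible $V$, and Claim $iv)$ from Lemma \ref{proyeccion-dimension-fibra:lema} with $U=\{q\ne 0\}$ once $ii)$ is known), and the hypersurface base case is correct. But the inductive step has a genuine gap, which you yourself flag: it needs a linear projection $\rho$ with $\ker(\rho)\subseteq\ker(\ell)$ whose restriction to $V$ is finite, and such a $\rho$ exists only when some point of $\PP(\ker\ell)$, viewed inside the hyperplane at infinity of $\PP^n$, avoids the projective closure of $V$. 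This can fail outright. Take $n=4$, $m=2$ and $V:=V_\A(X_1X_3-1,\,X_2X_4-1)=\{(s,t,1/s,1/t)\}$, an irreducible surface dominating $\A^2$ via $(x_1,x_2)$: one checks (e.g.\ via the arcs $s,t\to 0$ with $s/t\to c$) that the entire line $\{X_0=X_1=X_2=0\}=\PP(\ker\ell)$ lies in the projective closure of $V$, so \emph{no} direction in $\ker(\ell)$ yields a finite projection, and the induction cannot start its descent. Your two proposed repairs (generic hyperplane sections plus Chow forms, or direct effective elimination) are exactly where the content of the lemma lives, and neither is carried out; in particular, after cutting $V$ by generic hyperplanes to $W$ with $\dim(W)=m$, the fiber dimension of $\restr{\ell}{V}$ over a given $y$ is not controlled by that of $\restr{\ell}{W}$, so the transfer of $ii)$ back to $V$ is not automatic.

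The paper sidesteps the geometric obstruction by working at the generic point of the base: after normalizing $\ell$ to the projection onto $X_1,\ldots,X_m$, it localizes $K[V]$ at $S=K[X_1,\ldots,X_m]\setminus\{0\}$ and applies Noether normalization \emph{over the function field} $F=K(X_1,\ldots,X_m)$, which always exists (no condition on the behaviour of $V$ at infinity). Clearing denominators in the resulting integral dependence equations $H_j(Y_j)\in{\frak p}$, $r+1\le j\le n$, one sets $q:=\prod_j e^{(j)}_{\delta_j}$, the product of their leading coefficients; the integral extension $K[D(q)][Y_{m+1},\ldots,Y_r]\hookrightarrow K[V_q]$ then forces every fiber over $D(q)$ to be non-empty of dimension exactly $r-m$, and the degree bound $\deg(e^{(j)}_{\delta_j})\le\deg(H_j)-\delta_j\le\deg(V)-1$ comes from identifying $H_j$ with the minimal equation of the hypersurface $\Pi_j(V_q)\subseteq\A^{r+1}$, whose degree is at most $\deg(V)$ by Proposition \ref{grado-preservado-por-transformaciones-lineales:prop}. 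Summing over the $n-r$ indices $j$ gives $(n-r)(\deg(V)-1)$ with no exponential blow-up. If you want to keep a projection-based argument, this is the mechanism you would need to reproduce.
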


\begin{proof}First of all, as $\restr{\ell}{V}$ is dominating, then $\ell: \A^n \longrightarrow \A^m$ is an onto linear mapping. Let $\ell_1, \ldots, \ell_m$ be the coordinate functions of $\ell$. In particular, the ring $K[\ell_1,\ldots, \ell_m]$ is a ring of polynomials in $m$ variables with coefficients in $K$. Up to a generic choice of a linear change of coordinates in $\A^n$ we may assume that $\ell_1=X_1,\ldots, \ell_m=X_m$. Then, we proceed in our proof as if $\ell$ were the canonical projection $\ell:=\pi_m:\A^n(K)\longrightarrow \A^m(K)$ onto the first $m$ coordinates of a point in $\A^n(K)$.
Thus, let $I(V)\in \Spec(K[X_1,\ldots, X_n])$ be the prime ideal associated to $V$. As $\restr{\ell}{V}$ is dominating, we would have the following ring extension:
$$\ell^*:K[X_1,\ldots, X_m]\longhookrightarrow K[V]:=K[X_1,\ldots, X_n]/I(V).$$
Let us consider the multiplicative system $S:=K[X_1,\ldots, X_m]\setminus \{0\}$  and let $F:=K(X_1,\ldots,X_m)$ be the quotient field of $K[X_1, \ldots, X_m]$.  As $I(V)\cap K[X_1,\ldots, X_m]=(0)$ we also have the following ring extension:
$$S^{-1}\ell^*: F \longhookrightarrow F[V]:=S^{-1}K[V]=F[X_{m+1}, \ldots, X_n]/{\frak p},$$
where ${\frak p}= S^{-1} I(V)$.

As $\restr{\ell}{V}$ is dominating, the Theorem on the Dimension of the Fibers yields that the generic dimension of a fiber $(\restr{\ell}{V})^{-1}(\{x\})$ has dimension $r-m$. Hence, the Krull dimension satisfies:
$$\dim\left(F[V] \right)=\dim\left(F[X_{m+1}, \ldots, X_n]/{\frak p} \right)= r-m.$$
As $K$ is an infinite field, so does $F$ and  there are generically many matrices $A\in \scrM_{n-m}(K)$ such that the following linear change of coordinates:
$$\left(\begin{matrix} Y_{m+1}\\ \vdots \\ Y_n\end{matrix}\right) = A \left(\begin{matrix} X_{m+1}\\ \vdots \\ X_n\end{matrix}\right),$$
 puts the variables $\{ Y_{m+1}, \ldots, Y_n\}$ in Noether position with respect to ${\frak p}$, i.e. the following is an integral ring extension:
$$F[Y_{m+1}, \ldots, Y_r] \longhookrightarrow F[V]:=F[Y_{m+1}, \ldots, Y_n]/{\frak p},$$
where we also call ${\frak p}$ the transformation of ${\frak p}$ under such linear change of coordinates.
Then, for every $j$, $r+1\leq j \leq n$, there is a monic polynomial $h_j\in F[Y_{m+1}, \ldots, Y_r][T]$ of positive degree with respect to the variable $T$ (i.e. $\delta_j=\deg_T(h_j)\geq 1$), such that:
$$h_j(Y_j)\in {\frak p}, \; r+1\leq j \leq n.$$
As ${\frak p}$ is a prime ideal, we may assume that $h_j$ is irreducible.
For every $j$, $r+1\leq j \leq n$,  there is a primitive polynomial $H_j\in K[X_1,\ldots, X_m, Y_{m+1}, \ldots, Y_r][T]$ such that $h_j$ and $H_j$ are associated in the ring $F[Y_{m+1}, \ldots, Y_r][T]$. Namely, for every $j$, $r+1\leq j \leq n$, there is a polynomial of the following form:
$$H_j:= e_{\delta_j}^{(j)}(X_1,\ldots, X_m) T^{\delta_j}+ \sum_{k=0}^{\delta_j-1} a_k^{(j)} (X_1,\ldots,X_m,Y_{m+1},\ldots, Y_r) T^k,$$
where $e_{\delta_j}^{(j)}\in K[X_1,\ldots, X_m]\setminus\{0\}$, $a_k^{(j)}\in K[X_1,\ldots, X_m, Y_{m+1}, \ldots, Y_r]$ and the following properties hold:
\begin{itemize}
\item Every $H_j$ is a primitive polynomial,
\item For every $j$, $r+1\leq j\leq n$, the polynomial $H_j$ is irreducible in $K[X_1,\ldots, X_m, Y_{m+1}, \ldots, Y_r][T]$ and the following equality holds:
$$h_j:= \left( e_{\delta_j}^{(j)}\right)^{-1} H_j,$$
\item For every $j$, $r+1\leq j\leq n$, $H_j(Y_j)\in {\frak p}$, $e_{\delta_j}^{(j)}\not\in {\frak p}$.
\end{itemize}
The last property is a consequence of the fact that $e_{\delta_j}^{(j)}\not=0$ and ${\frak p}$ is a prime ideal. Let us then define the following non-zero polynomial:
$$q:=\prod_{j=r+1}^n e_{\delta_j}^{(j)}\in K[X_1,\ldots, X_m]\setminus\{0\}.$$
Observe that, as $I(V)$ was a prime ideal, then $q\not\in I(V)$. We then define the following rings and locally closed sets:
\begin{enumerate}
\item The distinguished open subset of $\A^m$ given by:
$$D(q):=\{ x\in \A^m \; :\; q(x)\not=0\}.$$
\item The non-empty Zariski open subset $V_q$ of $V$ given by the following identity:
$$V_q:=\{\zeta:=(x,y,u)\in \A^m\times \A^{r-m}\times \A^{n-r}\; : \; \zeta \in V, \; q(x)\not=0\}.$$
\item The ring of regular functions defined on $D(q)$, given as the localization on the multiplicative system defined by $q$:
$$K[D(q)]:=K[X_1,\ldots,X_m]_q.$$
\item The ring of regular functions define on $V_q$, given as:
$$K[V_q]:= K[X_1,\ldots,X_m]_q[Y_{m+1}, \ldots, Y_n]/{\frak q},$$
where ${\frak q}$ is, at the same time, the extension of $I(V)$ to $K[X_1,\ldots,X_m]_q[Y_{m+1}, \ldots, Y_n]$ and the contraction of ${\frak p}$ to the same ring.
\end{enumerate}
As, for every $j$, $r+1\leq j\leq n$, $h_j\in K[X_1,\ldots,X_m]_q[Y_{m+1}, \ldots, Y_r][T]$ and $h_j(Y_j)\in {\frak p}$, the following is an integral ring extension:
\begin{equation}\label{extension-entera-en-q:eqn}
K[D(q)][Y_{m+1}, \ldots, Y_r]\longhookrightarrow K[V_q]:= K[X_1,\ldots,X_m]_q[Y_{m+1}, \ldots, Y_n]/{\frak q}.
\end{equation}
From this integral equation we immediately conclude the following first Claim:
\begin{equation}\label{fibra-no-vacia:eqn}
\forall x \in D(q), \; \restr{\ell}{V}^{-1}\left(\{ x\}\right)\not=\emptyset.
\end{equation}
In fact, recalling that $\ell=\pi_m$ is a projection, the integral ring extension of (\ref{extension-entera-en-q:eqn}) implies that the following map is onto:
$$\begin{matrix}
\pi: & V_q& \longrightarrow & D(q)\times \A^{r-m}\\
& \zeta=(x,y,u) & \longmapsto & (x,y).
\end{matrix}
$$
Let us now consider the following projection maps:
$$\begin{matrix}
\Pi_j: & V_q & \longrightarrow & \A^{r+1}\\
&\zeta:=(x,y,u) & \longmapsto &(x,y, u_j),
\end{matrix}$$
where $x\in D(q)$, $y\in \A^{r-m}$ and $u:=(u_{r+1}, \ldots, u_n)\in \A^{n-r}$. We now prove the following Claim:
\begin{claim}
With these notations and assumptions, $\Pi_j(V_q)$ is a hyper-surface in $D(q)\times \A^{r-m+1}$ of degree
at most $\deg(V)$ and its minimal equation is the polynomial $H_j$ introduced above, whose total degree satisfies $\deg(H_j)\leq \deg(V)$.
\end{claim}
\begin{proof-claim} It is clear that $\Pi_j(V_q)\subseteq D(q)\times \A^{r-m+1}$. Moreover, if $\lambda:\A^{r+1}\longrightarrow \A^r$ is the projection that forgets the last coordinate, we have that:
$$\lambda \circ \Pi_j= \pi,$$
where $\pi$ is the onto map defined above. Hence, we conclude that:
$$\dim(\Pi_j(V_q)) \geq \dim (\lambda(\Pi_j(V_q)))\geq \dim(\pi(V_q))= \dim(D(q)\times \A^{r-m})=r.$$
On the other hand, the polynomial $H_j\in K[X_1,\ldots, X_m, Y_{m+1}, \ldots, Y_r][T]$ is a non-zero polynomial and we have:
$$\Pi_j(V_q)\subseteq \{ (x,y, u_j)\in \A^{r+1}\; :\; q(x)\not=0, H_j(x,y,u_j)=0\}.$$
By Krull's Hauptidealsatz, the co-dimension of $\Pi_j(V_q)$ is at least $1$ and we have proved the following inequality:
$$\dim(\Pi_j(V_q))= r.$$
Thus, $\overline{\Pi_j(V_q)}^z$ is a hyper-surface in $\A^{r+1}$. It is irreducible since $V$ was irreducible  and, hence, there is an irreducible polynomial $g_j\in K[X_1,\ldots, X_m, Y_{m+1}, \ldots, Y_r, Y_j]$ such that $I(\overline{\Pi_j(V_q)}^z)=(g_j)$. As $H_j$ vanishes on $\Pi_j(V_q)$ we then have $g_j\mid H_j$ and, as $H_j$ was primitive and irreducible, we conclude that $g_j=H_j$ up to some constant in $K\setminus\{0\}$.\\
Finally, the degrees satisfy the following equalities and inequalities:
$$\deg(V)=\deg(V_q)\geq \deg_z(\Pi_j(V_q))= \deg(\overline{\Pi_j(V_q)}^z) = \deg(g_j)= \deg(H_j).$$

\end{proof-claim}

From this claim we conclude that for every $j$, $r+1\leq j \leq n$, we have:
$$\deg(e_j^{(\delta_j)}) + \delta_j \leq \deg(H_j) \leq \deg(V).$$
Thus, as $\delta_j\geq 1$ for every $j$, we have:
\begin{equation}\label{grado-del-q:eqn}
\deg(q)= \sum_{j=r+1}^n \deg(e_j^{(\delta_j)})\leq \sum_{j=r+1}^n (\deg(V)-1) \leq (n-r)(\deg(V)-1).
\end{equation}
Combining this last inequality with the statement in Equation (\ref{fibra-no-vacia:eqn}) and the previous Lemma we conclude that $q\in K[X_1,\ldots, X_m] \setminus \{ 0 \}$ is the polynomial claimed at the statement.\end{proof}

\begin{remark}\label{optimalidad-defecto-ejemplo:rk}
This last Lemma explains the existence of the defect in the Example exhibited to prove Claim $iv)$  of Proposition \ref{constructibles-grado-imagenes-por-lineal:prop}. There was a quadric hyper-surface in $\A^3(\C)$ of degree $2$ given by the following identity:
$$W':=\{(x,y,z)\in \C^3\; : \; xz+ (y^2-1)=0\}.$$
We consider the canonical projection $\ell=\pi_2:\A^3\longrightarrow \A^2$. The polynomial $q$ cited in the previous Lemma is the leading coefficient of the quadratic polynomial defining $W'$, i.e. $q:=Z$. Its total degree satisfies:
$$\deg(q) = 1 \leq (\codim(W'))(\deg(W')-1) = (3-2) (2-1)=1.$$
The defect is then bounded by:
$$\deg_{\rm lci}(\pi_2(W'))- \deg_z(\pi_2(W'))\leq \deg_{\rm lci}(\pi_2(W'\cap V_\A(q))).$$
However, $W'\cap V_\A(q)$ is a pair of lines given by:
$$W'\cap V_\A(q)=\{ (x,y,z)\in W' \; : \; z=0\}= \{(x,y,0)\in \A^3 \; : \; y^2-1=0\}.$$
Hence, $\pi_2(W'\cap V_\A(q))=\{(x, y)\; : \; y\in \{\pm 1\}\}$ is an algebraic variety and, hence,
$$\deg_{\rm lci}(\pi_2(W'\cap V_\A(q)))= \deg(\pi_2(W'\cap V_\A(q)))=2.$$
We have already seen that $\deg_{\rm lci}(\pi_2(W'))=3$ and $\deg_z(\pi_2(W'))=1$. In particular, the bound in the previous Lemma is optimal in this example since:
$$\deg_{\rm lci}(\pi_2(W'))- \deg_z(\pi_2(W'))=3-1= 2 = \deg_{\rm lci}(\pi_2(W'\cap V_\A(q))).$$
This does not mean that the upper bound in Claim $iv)$ of the previous Lemma is always optimal, but at least it is optimal in some examples.
\end{remark}

\subsection{An extrinsic bound for the degree of a projection, based on the Effective Nullstellensatz}
\label{extrinsic-bound-projection:subsec}
We revise Proposition \ref{constructibles-grado-imagenes-por-lineal:prop}. Here we show a coarse and syntactical upper bound for the $LCI$-degree
of the image $\pi(V)$ of some irreducible algebraic variety $V$ with respect to the canonical projection. The bound is syntactical since we just take into account a syntactical description of $V$. The method we choose is based on the Effective Nullstellensatz. This simply means that $LCI-$ degree is ``controlled'' under linear images in terms of syntactical descriptions of the domain. Although we could have used either \cite{Kollar} or \cite{Sombra}, we preferred to choose the bounds of \cite{Jelonek}.
We also use \cite{Mulmuley} for the degree bounds, although other may also be used.

As above, $K$ is an algebraically closed field.

\begin{theorem}\label{extrinisic-bound-Nullstellensatz:teor}
Let $V\subseteq \A^n(K)$ be an irreducible algebraic variety of dimension $r$.  Let $m$ be an integer such that $m\leq n$. Assume that there are polynomials $g_1, \ldots, g_s\in K[X_1,\ldots, X_n]$ of degrees $d_i:=\deg(g_i)$, $1\leq i \leq r$, such that $V=V_\A(g_1,\ldots, g_s)$ and the following inequalities hold:
$$d_1\geq d_2 \geq \ldots \geq d_s.$$
Let us consider the following quantity:
$$N:=N(d_1, \ldots, d_s, n, r):=\left\{\begin{matrix} \prod_{i=1}^s d_i & {\hbox {\rm if $s \leq n-m$}}\\
2d_s\left(\prod_{i=1}^{n-m-1}d_i\right)-1 & {\hbox {\rm if $s > n-m$}} \end{matrix}\right.$$

Let us define the following quantities:
$$\widetilde{N}:={{N+(n-m)}\choose{n-m}},$$
$$M:=\sum_{i=1}^s {{N-d_i+(n-m)}\choose{(n-m)}},$$
and
$$\scrN':=\min\{N, M+1\}.$$

Let $\pi: \A^n(K) \longrightarrow \A^m (K)$ be the canonical projection that forgets the last $n-m$ variables and let $W:=\overline{\pi(V)}^z$ be the Zariski closure of $\pi(V)$ in $\A^m(K)$. Then, we have:

$$\deg_{\rm lci}(\pi(V)) \leq \deg(V)\left(2d_1\right)^{\dim (W)} \left(\scrN'\right)^{\dim(W)+1}.$$

\end{theorem}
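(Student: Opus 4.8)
The plan is to bound $\deg_{\rm lci}(\pi(V))$ by describing $\pi(V)$ as a union of locally closed irreducible sets coming from a stratification of the fibers of $\restr{\pi}{V}$ by their dimension, and then to control the degree of each stratum using the Effective Nullstellensatz of \cite{Jelonek} together with B\'ezout's Inequality for locally closed sets (Theorem \ref{Bezout-localmente-cerrados:teor}) and the bounds in Proposition \ref{grado-constructible-cerrados-multiple:prop}. First I would observe that it suffices to prove the bound when $\restr{\pi}{V}$ is dominant onto $W=\overline{\pi(V)}^z$; the polynomials $g_1,\dots,g_s$ describing $V$ restrict to a description of $V$ inside any fiber $\pi^{-1}(y)\cong\A^{n-m}(K)$ of degrees at most $d_1\geq\cdots\geq d_s$, and the Effective Nullstellensatz bounds (here the quantities $N$, $\widetilde N$, $M$ and $\scrN'$ of the statement are exactly the degree and number of the polynomials appearing in a B\'ezout-style identity $1=\sum q_i g_i$ valid on an empty fiber) tell us, via the classical parametric elimination, that the "bad locus" $B\subseteq \A^m(K)$ of points $y$ over which the fiber $V\cap\pi^{-1}(y)$ is empty or of non-generic dimension is cut out by polynomials of degree at most $2d_1$ in number controlled by $\scrN'$. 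Concretely, I would run the Nullstellensatz fiberwise with parameters $Y_1,\dots,Y_m$: emptiness of the fiber over $y$ is equivalent to the non-vanishing at $y$ of certain coefficients of a B\'ezout identity, each of degree $\leq 2d_1$ in the $Y$'s (this is where \cite{Mulmuley} enters for the degree estimate), and at most $\scrN'$ such coefficients are needed.

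Next I would set up the incidence/stratification argument. Let $W_0:=W$ and inductively let $W_{k+1}$ be the Zariski closure of $\pi(V\cap \pi^{-1}(B_k))$ where $B_k\subseteq W_k$ is the (closed) locus where the fiber dimension jumps; by the Theorem on the Dimension of the Fibers each $B_k$ is a proper closed subset, so $\dim W_{k+1}<\dim W_k$ and the process terminates after at most $\dim W+1$ steps. On $W_k\setminus B_k$ the map is equidimensional, hence $W_k\setminus B_k\subseteq \pi(V)$ is locally closed, and this gives a decomposition
$$\pi(V)=\bigcup_{k=0}^{\dim W}\left(W_k\setminus B_k\right)$$
into locally closed sets. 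By sub-additivity of $\deg_{\rm lci}$ (Proposition \ref{grado-constructibles-subaditivo:prop}) it then suffices to bound $\deg(\overline{W_k}^z)=\deg(W_k)$ for each $k$. To do that, I would bound $\deg(W_k)$ recursively: $W_{k+1}=\overline{\pi(V\cap\pi^{-1}(B_k))}^z$ and $B_k$ is defined inside $W_k$ by at most $\scrN'$ equations of degree at most $2d_1$, so $V\cap\pi^{-1}(B_k)$ is $V$ intersected with at most $\scrN'$ hypersurfaces of degree at most $2d_1$; Proposition \ref{grado-constructible-cerrados-multiple:prop} gives $\deg_{\rm lci}\bigl(V\cap\pi^{-1}(B_k)\bigr)\leq \deg(V)\,(2d_1)^{\dim W_k}$, and since projections do not increase the $Z$-degree (Proposition \ref{grado-preservado-por-transformaciones-lineales:prop}), $\deg(W_{k+1})\leq \deg(V)(2d_1)^{\dim W_k}$ times a factor accounting for the number $\scrN'$ of equations used. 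Iterating over the $\leq \dim W+1$ levels produces the factor $(2d_1)^{\dim W}(\scrN')^{\dim W+1}$, and the leading $\deg(V)$ survives from the top stratum $W_0=\overline{\pi(V)}^z$, whose degree is $\leq\deg(V)$.

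\textbf{Main obstacle.} The delicate point is making the Effective Nullstellensatz \emph{uniform in the parameters}: I need not just that each individual fiber is empty/equidimensional, but that the defining equations of the bad locus $B$ — as a subvariety of $\A^m(K)$ in the variables $Y_1,\dots,Y_m$ — have degrees bounded by $2d_1$ and are at most $\scrN'$ in number. This requires viewing the $g_i$ as polynomials in $X_{m+1},\dots,X_n$ with coefficients in $K[Y_1,\dots,Y_m]$, applying \cite{Jelonek}'s degree bound $N$ in the $n-m$ "fiber" variables to obtain a representation $c(Y)=\sum_i q_i(X,Y)g_i(X,Y)$ with $\deg_X q_i g_i\leq N$, and then carefully tracking the degree in $Y$ picked up by the elimination; the bound $\widetilde N,M,\scrN'$ in the statement is precisely the bookkeeping of how many $Y$-coefficients survive and the $2d_1$ comes from \cite{Mulmuley}'s estimate on the $Y$-degree of those coefficients. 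A secondary technical nuisance is that the strata $W_k\setminus B_k$ need not be irreducible, so at each level I must further decompose into irreducible locally closed pieces before invoking the definition of $\deg_{\rm lci}$; this only costs the number-of-components factor already absorbed into $\scrN'$, but it must be done carefully so that the recursion on dimension stays valid and the powers of $(2d_1)$ and $\scrN'$ add up as claimed.
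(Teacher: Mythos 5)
Your proposal assembles the right ingredients (parametric effective Nullstellensatz, Mulmuley's rank-via-characteristic-polynomial trick, a locally closed decomposition, and the bound of Proposition \ref{grado-constructible-cerrados-multiple:prop}), but the architecture diverges from the paper's in a way that leaves genuine gaps.

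The paper does not stratify by fiber dimension and needs no recursion. Its key observation is that $x\in\pi(V)$ if and only if the fiber over $x$ is non-empty, if and only if the linear system $\scrS(x)$ (seeking a degree-$N$ B\'ezout identity $\sum F_ig_i=1$ on the fiber, with $N$ from Jelonek) is \emph{inconsistent}, if and only if $\rank A(x)\neq\rank B(x)$ for the coefficient matrix $A$ and its augmentation $B$. Stratifying by that rank gives a single-level exact decomposition $\pi(V)=\bigcup_{k=0}^{\scrN-1}R_k$ into locally closed subsets of $W$, each cut out by vanishing and non-vanishing of characteristic-polynomial coefficients. The fiber-dimension-jump locus, which drives your whole recursion, never enters; nothing in the Nullstellensatz/Mulmuley machinery describes it by equations of controlled degree, and your assertion that the bad locus $B_k$ is cut out by at most $\scrN'$ equations of degree at most $2d_1$ is unsupported. (The coefficients one actually gets have degree up to $2\scrN'd_1$, since the $i$-th characteristic coefficient of Mulmuley's symmetric matrix has degree at most $d_1 i$ with $i\leq 2\scrN'$; the factorization $(2d_1)^{\dim W}(\scrN')^{\dim W+1}$ in the statement arises from $(2\scrN'd_1)^{\dim W}$ times the number $\scrN\leq\scrN'$ of strata, not from equations of degree $2d_1$.)

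Two further points would break your bookkeeping even if the bad loci were under control. First, you apply Proposition \ref{grado-constructible-cerrados-multiple:prop} upstairs, to $V\cap\pi^{-1}(B_k)$ in $\A^n$: the exponent there is $\dim(V)=r$, not $\dim(W_k)$; the paper gets the exponent $\dim(W)$ precisely because it intersects $W$ with hypersurfaces \emph{downstairs} in $\A^m$. Second, your recursion replaces $V$ by $V\cap\pi^{-1}(B_k)$ at each level, a variety no longer defined by $g_1,\dots,g_s$ alone, so the Jelonek bound $N$ and hence $\widetilde N$, $M$, $\scrN'$ would have to be recomputed with degraded constants at every step; the claim that "the powers add up" is not justified. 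To repair the proof you should abandon the fiber-dimension stratification and follow the rank stratification of the consistency locus, which yields the decomposition and the degree bounds in one pass.
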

\begin{proof}
First of all, for each monomial exponent $\nu=(\nu_1,\ldots, \nu_k)\in \N^k$, we denote its degree by $\vert \nu\vert:= \nu_1+ \cdots + \nu_k\in \N$. Next, for each $i$, $1\leq i \leq s$,
we are going to introduce a new set of variables:
$$\underline{Z}^{(i)}:=\{ Z_\mu^{(i)}\; :\; \mu\in \N^{n-m}, \vert \mu \vert \leq N-d_i\},$$
ordered by the ``degree+lexicographic'' monomial ordering,
and the polynomial with generic coefficients:
$$F_i:=\sum_{\mu\in \N^{n-m}, \vert \mu \vert \leq N-d_i} Z_\mu^{(i)}X_{m+1}^{\mu_{m+1}} \cdots X_n^{\mu_n}.$$
Note that the number of variables involved in $\underline{Z}^{(i)}$ (and also the number of generic coefficients of $F_i$) is given by:
$$M_i:={{N-d_i+(n-m)}\choose{(n-m)}}.$$
Also, for each $i$, $1\leq i \leq s$, we rewrite the polynomials $g_1, \ldots, g_s$ as:
$$g_i:=\sum_{\nu\in \N^{n-m}, \; \vert \nu \vert \leq d_i} h_{\nu}^{(i)}(X_1,\ldots, X_m)X_{m+1}^{\nu_{m+1}}
\cdots X_n^{\nu_n}.$$
Finally, we consider the column vector:
$${\bf 1}:=\left(  \begin{matrix} 1 \\0 \\ \vdots \\ 0 \end{matrix}\right)\in K^{\widetilde{N}},$$
that encodes the coefficients of $1\in K[X_{m+1},\ldots, X_n]$ using a polynomial of degree at most $\widetilde{N}$, with respect to ``degree+lexicographic'' monomial order in $K[X_{m+1},\ldots, X_n]$. The number of coordinates in ${\bf 1}$ is given by the following quantity:
$$\widetilde{N}:={{N+(n-m)}\choose{n-m}}.$$
Now, we consider the following sum:
$$\sum_{i=1}^s F_ig_i \in K[\underline{Z}^{(1)}, \ldots, \underline{Z}^{(s)}, X_1,\ldots, X_m][X_{m+1}, \ldots, X_n].$$
Taking $R:=K[\underline{Z}^{(1)}, \ldots, \underline{Z}^{(s)}, X_1,\ldots, X_m]$, the coefficients of
this sum as polynomials in $R[X_{m+1}, \ldots, X_n]$ can be represented as a matrix product:
$$A(X_1,\ldots, X_m) \left( \begin{matrix} \underline{Z}^{(1)}\\ \vdots \\ \underline{Z}^{(s)}\end{matrix}\right).$$
The coefficients of $A(X_1,\ldots, X_m)$, being those of the list:
\begin{equation}\label{coeficientes:eqn}
\scrH:=\{ h_{\nu}^{(i)} \; : \; \nu\in \N^{n-m}, \vert \nu \vert \leq d_i, \; 1\leq i \leq s\},
\end{equation}
have all of them degree at most $D := d_1$. Moreover, the matrix $A(X_1,\ldots, X_m)$ is in
$$ \scrM_{\widetilde{N}\times M}(K[X_1,\ldots, X_m]),$$
where the number of rows and columns are determined by the following rules:
\begin{itemize}
\item The number of rows is the quantity:
$$\widetilde{N}:={{N+(n-m)}\choose{n-m}}.$$
\item The number of columns is given by the quantity:
$$M:=\sum_{i=1}^s {{N-d_i+ (n-m)} \choose{n-m}}.$$
\end{itemize}

We finally consider the system of equations:
\begin{equation}\label{Nullstellensatz-lineal1:eqn}\scrS(X_1,\ldots, X_m)\equiv \left\{ A(X_1,\ldots, X_m)\left(\begin{matrix}\underline{Z}^{(1)}\\
\underline{Z}^{(2)}\\\vdots \\
\underline{Z}^{(s)} \end{matrix} \right)= {\bf 1}
\right \rbrace. \end{equation}
We observe that the following two claims are equivalent for every $x=(x_1,\ldots, x_m)\in \A^m(K)$:
\begin{enumerate}
\item The fiber $\pi^{-1}(\{ x \})\cap V \not= \emptyset$,
\item The system of linear equations $\scrS(x_1,\ldots, x_m)$, obtained by specializing the variables $X_i$ into the coordinates $x_i$, is \emph{inconsistent}.
\end{enumerate}

Now, we follow the main idea of \cite{Mulmuley} which works in any field of any characteristic. Mulmuley's trick goes as follows. First,
let $W:=\overline{\pi(V)}^z$ be the Zariski closure of $\pi(V)$ in $\A^m(K)$ and let us consider the following two matrices:
$$A(X_1,\ldots, X_m)\in \scrM_{\widetilde{N}\times M}(K[W]), B(X_1,\ldots, X_m):= \left( X_1,\ldots, X_m \mid {\bf 1}\right)
\in \scrM_{\widetilde{N}\times (M +1)}(K[W]).$$

Then, we consider the following two square and symmetric matrices:
$$A^*(X_1,\ldots, X_m):= \left( \begin{matrix} A(X_1,\ldots, X_m) & 0\\
0 & A(X_1,\ldots, X_m)^T\end{matrix}\right)\in \scrM_{\widetilde{N}+M}(K[W]),$$
$$B^*(X_1,\ldots, X_m):=\left( \begin{matrix} B(X_1,\ldots, X_m) & 0\\
0 & B(X_1,\ldots, X_m)^T\end{matrix}\right)\in \scrM_{\widetilde{N}+M+1}(K[W]).$$
Moreover, the ranks satisfy the following equalities for all $x\in \A^m(K)$:
$$\rank\left(A^*(x)\right)= 2 \rank \left(A(x)\right), \rank\left(B^*(x)\right)= 2 \rank \left(B(x)\right).$$
For every $x\in W$, the system of linear equations $\scrS(x)$, described in (\ref{Nullstellensatz-lineal1:eqn}), is inconsistent if and only if $\rank(A^*(x))\not = \rank (B^*(x))$.

As $A^*(\underline{X})$ and $B^*(\underline{X})$ are symmetric, their ranks depend on the order of $0$ as root of their respective characteristic polynomials. We, thus, consider their characteristic polynomials modulo $I(W)$:
\begin{itemize}
\item The characteristic polynomial of $A^*(X_1,\ldots, X_m)$:
$$\chi_A(T):= T^{\widetilde{N}+M} + a_{\widetilde{N}+M-1}T^{\widetilde{N}+M-1} + \cdots + a_1 T + a_0 \in K[W][T],$$
where the coefficients $a_i$ are of the form $A_i+ I(W)$, where $A_i\in K[X_1, \ldots, X_m]$.
\item Similarly, we have the characteristic polynomial of $B^*(X_1, \ldots, X_m)$:
$$\chi_B(T):= T^{\widetilde{N}+M+1} + b_{\widetilde{N}+M}T^{\widetilde{N}+M} + \cdots + b_1 T + b_0 \in K[W][T].$$
We also conclude that
 the coefficients $b_i\in K[W]$ are of the form $B_i+ I(W)$, where $B_i\in K[X_1, \ldots, X_m]$.
\end{itemize}
Let $\scrN:=\min\{\widetilde{N}, M\}$ and  $\scrN':=\min\{\widetilde{N}, M+1\}$ be the minimum between the number of rows and columns of $A(X_1,\ldots, X_m)$ and $B(X_1,\ldots, X_m)$. For every $x\in W$ we have:
$$\rank(A^*(x))\leq 2\scrN, \; \rank(B^*(x))\leq 2 \scrN'.$$
Thus, the order of $0$ in $\chi_A(T)$ and $\chi_B(T)$ satisfy in $K[W]$:
$$ord_0(\chi_A) \geq \widetilde{N}+ M - 2\scrN, \; ord_0(\chi_B) \geq \widetilde{N}+ M+ 1 - 2\scrN'.$$
In particular, the following equalities hold:

$$\chi_A(T):= T^{\widetilde{N}+M} + a_{\widetilde{N}+M-1}T^{\widetilde{N}+M-1} + \cdots + a_{\widetilde{N}+M- 2\scrN} T^{\widetilde{N}+M- 2\scrN} \in K[W][T],$$
and
$$\chi_B(T):= T^{\widetilde{N}+M+1} + b_{\widetilde{N}+M}T^{\widetilde{N}+M} + \cdots + b_{\widetilde{N}+M+1- 2\scrN'} T^{\widetilde{N}+M+1- 2\scrN'}  \in K[W][T].$$

As the coordinates of the matrices $A^*(X_1,\ldots, X_m)$ and $B^*(X_1,\ldots, X_m)$ are in the class $\scrH\cup\{0,1\}$, where $\scrH$ is the list of polynomials introduce in Identity (\ref{coeficientes:eqn}), and the polynomials in $\scrH$ have degree at most $D:=d_1$, we conclude:
\begin{itemize}
\item For each $i$, $1\leq i \leq 2\scrN$, there is a polynomial $A_{\widetilde{N}+ M- i}(X_1,\ldots, X_m)\in K[X_1,\ldots, X_m]$ of degree at most $d_1i$ such that $A_{\widetilde{N}+ M- i}+ I(W)=
    a_{\widetilde{N}+ M- i}$.
\item For each $j$, $1\leq j \leq 2\scrN'$, there is a polynomial $B_{\widetilde{N}+ M+1- j}(X_1,\ldots, X_m)\in K[X_1,\ldots, X_m]$ of degree at most $d_1j$ such that $B_{\widetilde{N}+ M+1- j}+ I(W)=
    b_{\widetilde{N}+ M+1- j}$.

\end{itemize}

For each $x\in W$, as  $\rank(B^*(x))=2\rank(B(x))\geq 2\rank(A(x))=\rank(A^*(x))$,  the system
of linear equations $\scrS(x)$ of (\ref{Nullstellensatz-lineal1:eqn}) is inconsistent if and only if there exists $k\in \{0, \ldots , \scrN\}$ such that:
$$\rank(A^*(x))= 2k, \; \rank (B^*(x))=2(k+1).$$

Transforming this rank equalities into order of $0$ as zero of $\chi_A$ and $\chi_B$, for each $x\in W$, system $\scrS(x)$ of (\ref{Nullstellensatz-lineal1:eqn}) is inconsistent if and only if there is $k$, $0\leq k \leq \scrN$, such that $x\in R_k$, where
$R_k$ is the class given by the following intersection:
$$R_k:=\{ x\in W \; : \; ord_0(\chi_{A(x)}(T))= \widetilde{N}+ M-2k\}\cap \{ x\in W \; : \; ord_0(\chi_{B(x)}(T))= \widetilde{N}+ M+1-2(k+1)\}.$$
In terms of polynomial equations and equalities, this may be written as follows:
$$R_k:=\CA_k\cap \CB_k,$$
where
$$\CA_k:=\{x\in W \; : \; A_i(x)=0, \widetilde{N}+M- 2\scrN \leq i \leq \widetilde{N}+M-2k-1, A_{\widetilde{N}+M-2k}(x)\not=0\},$$
and
$$\CB_k:=\{ x\in W \; : \; B_j(x)=0, \widetilde{N}+M+1- 2\scrN'\leq j \leq \widetilde{N}+M+1-2(k+1)-1, B_{\widetilde{N}+M+1-2(k+1)}(x)\not=0\}.$$
In particular, the following equality holds:
$$\pi(V):= \bigcup_{k=0} ^{\scrN-1} R_k,$$

Moreover, each $R_k$ is a locally closed set given as the intersection of $W$ with open sets and hyper-surfaces given by polynomial equations of degree at most:
$$\max\left(\{\deg(A_i)\; :\; \widetilde{N}+M- 2\scrN \leq i \} \cup \{\deg(B_j)\; :\; \widetilde{N}+M+1- 2\scrN' \leq j\}\right).$$
In particular, $R_k$ is a locally closed set given by a Zariski open subset of $W$ with hyper-surfaces of degree at most $2\scrN'$.
From Proposition \ref{grado-constructible-cerrados-multiple:prop}, we conclude:
 $$\deg(R_k) \leq \deg(W) \left( 2\scrN'd_1 \right)^{\dim(W)}.$$
As the degree of constructible sets is sub-additive, we, then, conclude:
$$\deg_{\rm lci}(\pi(V)) \leq \deg(W)\scrN\left( 2\scrN'd_1 \right)^{\dim(W)}.$$
Applying Proposition \ref{grado-preservado-por-transformaciones-lineales:prop} to the previous inequality we obtain the inequality of the statement.
\end{proof}

\section{Other Exponential length Correct Test Sequences in the mathematical literature} \label{Kakeyaetal-CTS:sec}
\subsection{Correct Test Sequences and determinantal varieties} \label{variedades-determinanciales:ej}
Let $m,n\in \N$ be two positive integers with $m\geq n$. We consider the \emph{determinantal varieties} in the space $\scrM_{m\times n}(K)$ of matrices $m\times n$ with entries in an algebraically closed field $K$:
$$\Sigma_{r}:= \{ M\in \scrM_{m\times n}(K)\; :\;  corank(M)\geq r\}.$$
It is well-known that $\Sigma_r$ is an algebraic variety, given by homogeneous polynomial equations (and, hence, also a projective variety), and its co-dimension in $\Omega:=\scrM_{m\times n}(K)$ is given by the following identity:
$$\codim_\Omega(\Sigma_r)=\dim\left(\scrM_{m\times n}(K)\right)- \dim\left(\Sigma_r\right) = (m-r)(n-r).$$
Then,  for every correct test sequence ${\bf Q}:=(x_1,\ldots, x_L)\in \A^{nL}$ for $\Omega:=\scrM_{m\times n}(K)$ with respect to $\Sigma_r$ we have (see Proposition \ref{codimension-longitud-cuestores:prop}):
$$L:=\sharp({\bf Q}) \geq (m-r)(n-r).$$
Indeed, we proved in \ref{teorema-principal-densidad-questores:teor} that there are correct test sequences for $\Omega$ with respect to $\Sigma_r$ of length $O((m-r) (n-r))$. \\
Similarly,  for $r<s$, we would have:
$$\codim_{\Sigma_r}\left(\Sigma_s\right)=(r-s)(m+n- (r+s)).$$
Thus, for every correct test sequence ${\bf Q}\subseteq \A^{nL}$ for $\Sigma_r$  with respect to  $\Sigma_s$ we also have:
$$\sharp({\bf Q}) \geq (r-s)(m+n- (r+s)).$$

\subsection{Correct Test Sequences and Kakeya Sets}\label{Kakeya:subsec} The notion of Kakeya sets in the case of finite fields comes from \cite{Wolff}.  An exponential lower bound for the cardinal of Kakeya sets in the case of finite fields was established in \cite{Dvir}. See also \cite{Tao} for other developments. In \cite{Tao}, Tao claims: \emph{``There is no known proof of the finite field Kakeya conjecture that does not go through the polynomial method''}. Here, we explain this phenomenon by observing that \emph{Kakeya sets are correct test sequences for certain constructible sets made by polynomials}. In fact, most of the methods used in what is known as  \emph{``the Polynomial Method''}  are  simply the use of a correct test sequence adapted to some class (typically constructible sets) of polynomials. Nevertheless, we also prove that the converse is false: In Corollary \ref{CTS-not-Kakeya:corol} we prove  that for  small and positive $\varepsilon$ and degrees smaller than $q^{1-\varepsilon}-1$, most correct test sequences are not Kakeya sets. Additionally,  we do not know of any correct test sequence for polynomials of degree $q-1$ which are not Kakeya sets.

 Here, we just prove a small generalization  of  the main outcome of \cite{Dvir} by using the correct test sequence terminology. Notations are the same as in previous Sections. For a projective point $v\in \PP_{n-1}(K)$ we denote by $K\langle v \rangle\subseteq K^n$ the vector subspace of dimension $1$ of $K^n$ generated by any non-zero representant $\widetilde{v}\in K^{n}\setminus \{0\}$ of the projective point $v\in \PP_{n-1}(K)$.

\begin{definition}[{\bf $q-$Kakeya set with directions in a zero-dimensional projective variety}]
With these notations, let $V\subseteq \PP_{n-1}(K)$ be a zero-dimensional projective variety and $q\in \N$ a positive integer. A $q-$Kakeya set with directions in $V$ is a finite subset $E\subseteq \A^n(K)$ such that the following property holds:\\
For every direction $v\in V$, there is a point $x\in E$ such that the affine line $\rho(x,v):=x+ K\langle v \rangle$ satisfies
$$\sharp\left( E \cap \rho(x,v)\right) \geq q.$$
\end{definition}

When $\kappa=\F_q$  is a finite field with $q$ elements, we denote by $\F_q\langle v \rangle$ the $\F_q-$vector space generated by $v$. Usual Kakeya sets are then defined as follows:

\begin{definition}[{\bf Usual Kakeya sets over finite fields}] Let $\F_q$ be a finite field of cardinal  $q$. A finite subset  $E\subseteq\A^n(\F_q)$ is called a Kakeya set if the following property holds:\\
For every projective point  $v\in \PP_{n-1}(\F_q)$, there exists $x \in E$ such that the $\F_q-$rational points of the affine line
 $\rho (x,v)\subseteq \A^n(\F_q)$  determined by $x$ and $v$ is totally included in $E$. Namely, such that the following holds:
$$\rho(x,v):=x + \F_q\langle v \rangle :=\{ x + t v\; :\; t\in \F_q\}\subseteq E.$$
\end{definition}

Recall that the Hilbert function of a projective variety $V\subseteq \PP_{n-1}(K)$ is a function
$\chi_V:\N\longrightarrow \N$ given by the following identity for all $m\in\N$:
$$\chi_V(m):=\dim_K\left( H_m(X_1,\ldots, X_n)/I_m(V)\right)=\dim_K\left( H_m(X_1,\ldots, X_n)\right) - \dim_K\left(I_m(V)\right),$$
where:
\begin{itemize}
\item $\dim_K$ means the dimension as $K-$vector spaces,
\item $H_m(X_1,\ldots, X_n)$ is the vector space spanned by  all homogeneous polynomials of degree $m$ in the set of variables $\{ X_1,\ldots, X_n\}$  with coefficients in $K$ and,
\item $I_m(V)$ is the vector subspace of polynomials in $H_m(X_1,\ldots, X_n)$ that vanish on the projective points of $V$.
\end{itemize}
Hilbert function is known to be a polynomial function of degree equal to the Krull dimension of $V$ as projective variety. Namely, there is a unique polynomial $q \in \Z[T]$ of degree equal to $dim(V)$ and an integer $R$ such that:
$$\chi_{V} (m) = q(m), \; \forall m \geq R$$
The polynomial $q$ is called Hilbert's polynomial of $V$ and the minimum of those $R$ such that $\chi_{V}$ and $q$ agree is called the \emph{regularity of the Hilbert function of $V$}.

The next statement proves that $q-$Kakeya sets are correct test sequences and, hence, we have a lower bound for its cardinal based on Proposition \ref{codimension-longitud-cuestores:prop}.
\begin{proposition}\label{Dvir-generalizado:prop} With the previous notations and assumptions, given a zero-dimensional projective variety $V\subseteq \PP_{n-1}(K)$,  for every $q-$Kakeya set $E$ with directions in $V$ we have that:
$$\sharp\left(E\right)\geq \chi_V(d), \; \forall d\in \N, \; d\leq q-1.$$
Moreover, if $R$ is the regularity of the Hilbert function of $V$ and if $q-1\geq R$ we also have:
$$\sharp(E)\geq \sharp(V)=\deg(V).$$
On the other hand,  $q-$Kakeya sets with directions in $V\subseteq \PP_{n-1}(K)$ with cardinal $(q-1)\deg(V)+1$ do exist.
\end{proposition}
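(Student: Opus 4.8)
The plan is to connect $q$-Kakeya sets with directions in $V$ to correct test sequences for a suitable constructible set of polynomials, and then invoke the curse of dimensionality (Proposition \ref{codimension-longitud-cuestores:prop}) together with a Vandermonde-type non-vanishing argument. First I would set up the class of polynomials. Fix $d \leq q-1$ and let $H_d(X_1,\ldots,X_n)$ be the space of homogeneous polynomials of degree $d$; consider $\Omega := H_d(X_1,\ldots,X_n)$ viewed as a space of functions on $\A^n(K)$, and let $\Sigma := I_d(V) \lestricto \Omega$ be the subspace of those homogeneous degree-$d$ forms that vanish on all directions $v \in V$. Then $\codim_\Omega(\Sigma) = \chi_V(d)$ by definition of the Hilbert function. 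The key claim is that if $E$ is a $q$-Kakeya set with directions in $V$, then $E$ is a correct test sequence for $\Omega$ with discriminant $\Sigma$. To see this, take $F \in \Omega$ with $F|_E = 0$; for any direction $v \in V$ there is a point $x \in E$ with $\sharp(E \cap \rho(x,v)) \geq q$, so the univariate polynomial $t \mapsto F(x + tv)$, which has degree at most $d \leq q-1$, vanishes at at least $q$ distinct values of $t$, hence is identically zero. Since $F$ is homogeneous of degree $d$, we have $F(x+tv) = \sum_{j=0}^d t^j G_j(x,v)$ where $G_d(x,v) = F(v)$; so the coefficient of $t^d$ gives $F(v) = 0$. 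As this holds for every $v \in V$, $F \in I_d(V) = \Sigma$, establishing the CTS property.

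Once this identification is in place, Proposition \ref{codimension-longitud-cuestores:prop} (or rather its underlying Krull–Hauptidealsatz argument, which applies to any finite-dimensional space of functions, not only $\scrP_{(d)}$) yields $\sharp(E) \geq \codim_\Omega(\Sigma) = \chi_V(d)$ for every $d \leq q-1$. This proves the first inequality of the statement. For the second, recall that the Hilbert function $\chi_V$ is eventually equal to the Hilbert polynomial, which for a zero-dimensional variety $V$ is the constant $\deg(V) = \sharp(V)$; so if $q-1 \geq R$ where $R$ is the regularity, then $\chi_V(q-1) = \deg(V)$ and we obtain $\sharp(E) \geq \deg(V) = \sharp(V)$.

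For the existence of $q$-Kakeya sets with directions in $V$ of cardinal exactly $(q-1)\deg(V) + 1$, I would give an explicit construction. Write $V = \{v_1, \ldots, v_N\}$ with $N = \deg(V)$, choosing for each $v_i$ a fixed affine representative $\widetilde{v}_i \in K^n$. Fix a base point $x_0 \in \A^n(K)$ and, for each $i$, consider the points $x_0 + t\,\widetilde{v}_i$ for $t$ ranging over a set $T_i \subseteq K$ of cardinal $q-1$ (for instance $T_i = \{1, 2, \ldots, q-1\}$ inside the prime field, or any $q-1$ chosen scalars); set
$$E := \{x_0\} \cup \bigcup_{i=1}^{N} \{\, x_0 + t\,\widetilde{v}_i \;:\; t \in T_i \,\}.$$
By genericity of $x_0$ one may arrange the $N$ punctured lines through $x_0$ in the directions $\widetilde{v}_i$ to be pairwise disjoint away from $x_0$, so $\sharp(E) = 1 + N(q-1) = (q-1)\deg(V) + 1$. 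For each direction $v_i$, the line $\rho(x_0, v_i) = x_0 + K\langle v_i\rangle$ meets $E$ in $x_0$ together with the $q-1$ points $x_0 + t\,\widetilde{v}_i$, $t \in T_i$, so $\sharp(E \cap \rho(x_0,v_i)) \geq q$; hence $E$ is a $q$-Kakeya set with directions in $V$.

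The main obstacle I anticipate is making the homogeneity bookkeeping in the CTS claim fully rigorous: one must be careful that the substitution $t \mapsto F(x+tv)$ is taken with a genuine non-zero affine representative of the projective direction, and that extracting the top-degree coefficient indeed recovers $F$ evaluated at that representative rather than a scalar multiple — this only affects the conclusion "$F(v)=0$" up to a non-zero factor, which is harmless, but it must be stated cleanly. A secondary point worth checking is that the Krull–dimension lower bound of Proposition \ref{codimension-longitud-cuestores:prop} is stated for $\scrP_{(d)}$, so one should note explicitly that its proof (intersecting $\Omega$ with $L$ linear hyperplanes corresponding to point evaluations, each cutting dimension by at most one) carries over verbatim to the linear subspace $\Omega = H_d(X_1,\ldots,X_n)$ of a polynomial space. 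Neither difficulty is deep, so the proof should go through without surprises.
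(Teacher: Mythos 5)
Your proof is correct and follows essentially the same route as the paper's: one shows that a $q$-Kakeya set with directions in $V$ is a correct test sequence for a class of polynomials of degree at most $d\leq q-1$ relative to a discriminant of codimension $\chi_V(d)$ (via the univariate restriction to a line with $\geq q$ points of $E$), and then invokes Proposition \ref{codimension-longitud-cuestores:prop}. Your choice of $\Omega=H_d(X_1,\ldots,X_n)$ with $\Sigma=I_d(V)$ is a mild streamlining of the paper's $\Omega=P_d$ with the leading-homogeneous-component discriminant (both yield the same codimension $\chi_V(d)$, and $H_d$ is a constructible subset of $P_d$, so the proposition applies directly), and your existence construction is the paper's with $\varphi$ constant.
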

\begin{proof}
Let $\A^N(K)$ be the affine space over $K$ given as the $K-$vector space $P_d:=P_d(X_1,\ldots, X_n)$ of all polynomials of degree at most $d$ with coefficients in $K$, $\A^N(K):=P_d$, where $N:=N_d$.  For every polynomial $f\in P_d(X_1,\ldots, X_n)$, we consider its decomposition in homogeneous components:
$$f=f_m + \cdots + f_0,$$
where $f_i\in H_i(X_1,\ldots, X_n)$ and $f_m\not=0$.

Let us denote by $LHC(f)\in P_d$ (leading homogeneous component) the homogeneous component of higher degree of $f$, i.e. $LHC(f)=f_m$.
For every $m$, $0\leq m\leq d$, let us define the following constructible subsets of $P_d:=P_d(X_1,\ldots, X_n)$:
$$\Omega_m:=\{ f\in P_d\; : \; \deg(f)=m\},$$
$$\Sigma_m:=\{ f\in \Omega_m\; :\; LHC(f)\in I_m(V)\},$$
where $I_m(V)$ is the vector space spanned by all homogeneous polynomials of degree $m$ that vanish on $V$. We finally define:
$$\Omega:=\bigcup_{m=0}^d \Omega_m=P_d,$$
and
$$\Sigma:= \bigcup_{m=0}^d \Sigma_m\subseteq \Omega.$$
\begin{claim}
With these notations, the following properties hold:
\begin{enumerate}
\item If $E$ is a $q-$Kakeya set with directions in $V$, then $E$ is a correct test sequence for $\Omega$ with respect to $\Sigma$.
\item Let us define $d_0:=\max\{ m \in \N\; :\; m\leq d, \; I_m(V)\not=\{0\}\}$.
Then, either $d_0=d$ or $\Sigma_m=\{0\}$ for all $m\leq d$.
\item The dimension of the constructible sets $\Omega$ and $\Sigma$ are given by the following identities:
    $$\dim(\Omega):=\sum_{i=0}^d\dim_K(H_i(X_1,\ldots, X_n))={{d+n}\choose{n}},$$
    $$\dim(\Sigma):=\left\{\begin{matrix}
    \dim_K\left(I_{d}(V)\right) + \sum_{i=0}^{d-1}\dim_K(H_i(X_1,\ldots, X_n)), & {\hbox {\rm if $I_{d}(V)\not=\{0\}$}}\\
    0, & {\hbox {\rm otherwise}}\end{matrix} \right.$$
\end{enumerate}
\end{claim}
\begin{proof-claim} We prove each part of the claim separately:
\begin{enumerate}
\item Let $f\in \Omega$ be a non-zero polynomial of degree $m$ such that $f|_E= 0$. Then, for every $v\in V$ there is some $x\in E$ such that $f$ vanishes on the intersection $E\cap\rho(x,v)$. Taking the homogeneous components of $f$:
    $$f=f_m + f_{m-1} + \cdots + f_0,$$
    where $f_m=LHC(f)\not=0$, there are polynomials in $2n$ variables $h_i$ such for all $t\in K$ the following identity holds:
    $$f(x+tv) = f_m(v)t^m + \sum_{i=1}^{m-1}h_i(x,v)t^{i} + f_0(x).$$
    We consider the univariate polynomial:
    $$F(T):=f(x+ Tv)=f_m(v)T^m + \sum_{i=1}^{m-1}h_i(x,v)T^{i} + f_0(x)\in K[T].$$
As $E$ is a $q-$Kakeya set with directions in $V$, there are at least $q$ different points $t_1,\ldots, t_q\in K$ such that $x+t_iv\in E$, $1\leq i \leq q$. Therefore, we deduce
that $F(t_i)=0$, $1\leq i \leq q$, and, by elementary interpolation arguments, we obtain that $F$ is the zero polynomial in $K[T]$. Thus, we conclude that $LHC(f)(v)=f_m(v)=0$ and, hence, $LHC(f)\in I_m(V)$ and we have proved that:
$$\forall f\in \Omega, \; f|_E= 0 \Longrightarrow f\in \Sigma,$$
as wanted.
\item Note that if $d_0>0$, then for every non-zero homogeneous polynomial $f\in I_{d_0}(V)$, the polynomial $X_1^{d-d_0}f\in I_d(V)$. Thus, either $d_0=d$ or $\Sigma_m=\{0\}$ for all $m\leq d$.
\item Let us first observe that for every $m\leq d$ the dimension of the constructible sets $\Omega_m$ and $\Sigma_m$ satisfy the following identities:
    $$\dim(\Omega_m):=\sum_{i=0}^m \dim_K(H_i(X_1,\ldots, X_n)),$$
    whereas
    $$\dim(\Sigma_m):=\left\{\begin{matrix}
    \dim_K(I_m(V))+\sum_{i=0}^{m-1} \dim_K(H_i(X_1,\ldots, X_n)),& {\hbox {\rm if $I_m(V)\not=\{0\}$}}\\
    0, & {\hbox {\rm otherwise.}}\end{matrix}\right.$$
    Both identities are immediate from the definitions of $\Omega_m$ and $\Sigma_m$ combined with the definitions of Krull dimension of constructible sets.\\
    Additionally, we also conclude that for $m\leq d$ the following inequalities hold:
$$\dim(\Omega_m)\leq \dim(\Omega_{d}), \; \dim(\Sigma_m)\leq\dim(\Sigma_{d}).$$
The first inequality being obvious, for the second one observe that if $m<d$ and $I_m(V)\not=\{0\}$, then $I_{d}(V)\not=\{0\}$. In this case,
$$\dim_K(I_m(V))+\sum_{i=0}^{m-1} \dim_K(H_i(X_1,\ldots, X_n))\leq \sum_{i=0}^m \dim_K(H_i(X_1,\ldots, X_n)),$$
and, hence, we have:
$$\dim(\Sigma_m) \leq \dim_K(I_d(V)) + \sum_{i=0}^{d-1}\dim_K(H_i(X_1,\ldots, X_n)).$$
We finally conclude:
$$\dim\left(\Omega\right)=\max\{ \dim(\Omega_m)\; :\; 0\leq m \leq d\} =\dim\left(\Omega_d\right)={{d+n}\choose{n}}.$$
On the other hand, if $I_d(V)\not=\{0\}$, we have:
$$\dim\left(\Sigma\right)=\max\{ \dim(\Sigma_m)\; :\; 0\leq m \leq d\} \leq
\dim_K(I_d(V))+\sum_{i=0}^{d-1} \dim_K(H_i(X_1,\ldots, X_n)).$$
Obviously, if $I_d(V)=\{0\}$, then $I_m(V)=\{0\}$ for all $m\leq d$ and, hence, $\Sigma=\{0\}$.
\end{enumerate}
\end{proof-claim}

Thus, we just recall Proposition \ref{codimension-longitud-cuestores:prop} to conclude:
$$\sharp(E)\geq \dim(\Omega)-\dim(\Sigma)\geq \dim_K(H_d(X_1,\ldots, X_n))-\dim_K(I_d(V))=
 \chi_V(d).$$
The second Claim of the Proposition follows since $V$ is zero-dimensional. Thus, for every $d\geq R$ we would have
that $\chi_V(d)=\deg(V)=\sharp(V)$.

Finally, the last Claim is obvious and not optimal. Just fix a set $\Lambda$ of $q$ elements in $K$ including $0$ and any mapping $\varphi: V\longrightarrow K^n$. Then, the following is a $q-$Kakeya set with directions in $V$:
$$E:=\bigcup_{v\in V} \{ \varphi(v) + \lambda v\; :\; \lambda \in \Lambda\},$$
whose cardinal is at most $q\deg(V)=q\sharp(V)$. Taking $\varphi$ any constant mapping, we would have a cardinal bounded by $(q-1)\deg(V)+1$.
\end{proof}

We shall see now that the main outcome of \cite{Dvir} is merely a consequence of our Proposition
\ref{Dvir-generalizado:prop} above:

\begin{corollary}[\cite{Dvir}]\label{Dvir-CTS-curse:corol} Let $\kappa:=\F_q$ be a finite field of cardinal $q$, and $K:=\overline{\F_q}$ its algebraic closure. Let $P_d^K=P_d^K(X_1,\ldots,X_n)$ be the constructible set of all polynomials in the set of variables $\{ X_1,\ldots, X_n\}$ of degree at most $d$ with coefficients in  $K=\overline{\F_q}$.\\
Then, if $d\leq q-1$ and if  $E\subseteq \A^n(\F_q)$ is a Kakeya set, then $E$  is a correct test sequence for $P_d^K$ (with respect to  $\{0\}$). In particular, we conclude:
$$\sharp(E) \geq \dim\left(P_d^K\right)= {{d+n}\choose{n}}, \; \forall d\leq q-1.$$
\end{corollary}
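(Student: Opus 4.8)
The plan is to deduce Corollary \ref{Dvir-CTS-curse:corol} directly from Proposition \ref{Dvir-generalizado:prop}, applied to a particularly simple zero-dimensional projective variety, namely the whole set of $\F_q$-rational directions. First I would take $V:=\PP_{n-1}(\F_q)\subseteq \PP_{n-1}(K)$, the finite set of all projective points with coordinates in $\F_q$. This is a zero-dimensional projective variety over $K$ (it is defined over $\F_q$ by the vanishing of the $2\times 2$ minors of $\begin{pmatrix} X_1 & \cdots & X_n\\ X_1^q & \cdots & X_n^q\end{pmatrix}$, say, or simply as a finite union of rational points), and its cardinality is $\sharp(V)=\deg(V)=(q^n-1)/(q-1)$. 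The key observation is that a usual Kakeya set $E\subseteq \A^n(\F_q)$ in the sense of the second definition is in particular a $q$-Kakeya set with directions in $V=\PP_{n-1}(\F_q)$: for every $v\in V$ there is $x\in E$ with $x+\F_q\langle v\rangle\subseteq E$, and this affine line over $\F_q$ has exactly $q$ points, all lying in $E$, so $\sharp(E\cap \rho(x,v))\geq q$. Thus the hypotheses of Proposition \ref{Dvir-generalizado:prop} are met.

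Next I would invoke Proposition \ref{Dvir-generalizado:prop} with this $V$: for every $d\leq q-1$ we get $\sharp(E)\geq \chi_V(d)$, where $\chi_V$ is the Hilbert function of $V=\PP_{n-1}(\F_q)$. The remaining point is to identify $\chi_V(d)$ for $d\leq q-1$ with $\dim_K(H_d(X_1,\ldots,X_n))=\binom{d+n-1}{n-1}$, equivalently to show $I_d(V)=\{0\}$ for all $d\leq q-1$, i.e. there is no non-zero homogeneous polynomial of degree $d\leq q-1$ in $K[X_1,\ldots,X_n]$ vanishing on all of $\PP_{n-1}(\F_q)$. This is a standard fact: if $h\in H_d$ with $d\leq q-1$ vanished on every $\F_q$-rational projective point, then its dehomogenization $h(1,X_2,\ldots,X_n)$ would be a polynomial of degree $\leq q-1$ vanishing on all of $\F_q^{n-1}$ (the affine chart $X_1\neq 0$), and similarly on the other charts; an easy induction on $n$ using that a univariate polynomial of degree $\leq q-1$ with $q$ roots in $\F_q$ is zero (equivalently, a direct appeal to Corollary \ref{DeMillo-Lipton-Schwartz-Zippel:corol} with $Q=\F_q$, since $\codim$ of the hypersurface $V(h)$ would force $h=0$ when $d<q$) shows $h=0$. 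Hence $\chi_V(d)=\dim_K(H_d)=\binom{d+n-1}{n-1}$ for $d\leq q-1$.

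Finally, I would assemble the bound for $P_d^K$ itself rather than just for $H_d$. Taking $\Sigma=\{0\}$ in the constructible-set language of Section \ref{CTS:sec}, the statement that $E$ is a correct test sequence for $P_d^K$ is the claim that the only $f\in P_d^K$ vanishing identically on $E$ is $f=0$; but this is exactly the conclusion of the Claim inside the proof of Proposition \ref{Dvir-generalizado:prop} in the case $I_d(V)=\{0\}$, since then $\Sigma=\bigcup_{m\le d}\Sigma_m=\{0\}$ and "$f|_E=0\Rightarrow f\in\Sigma$" reads "$f|_E=0\Rightarrow f=0$". Then Proposition \ref{codimension-longitud-cuestores:prop} (the curse of dimensionality) gives $\sharp(E)\geq \dim(P_d^K)-\dim(\{0\})=\dim_K(P_d^K)=\sum_{i=0}^d \binom{i+n-1}{n-1}=\binom{d+n}{n}$, which is the asserted inequality. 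I do not expect a serious obstacle here; the only slightly delicate point is making sure the passage from $\PP_{n-1}(\F_q)$ being defined over $\F_q$ to its being a genuine zero-dimensional projective variety over the algebraically closed field $K$ is spelled out, and that the vanishing statement $I_d(\PP_{n-1}(\F_q))=\{0\}$ for $d\le q-1$ is cited cleanly (it follows either from the elementary interpolation argument already used in the proof of Proposition \ref{Dvir-generalizado:prop} or from Corollary \ref{DeMillo-Lipton-Schwartz-Zippel:corol}).
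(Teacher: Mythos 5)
Your proposal is correct and follows essentially the same route as the paper: reduce to Proposition \ref{Dvir-generalizado:prop} with $V=\PP_{n-1}(\F_q)$, show $I_d(\PP_{n-1}(\F_q))=\{0\}$ for $d\leq q-1$ by dehomogenizing and counting $\F_q$-rational points of a hyper-surface, and then conclude $\Sigma=\{0\}$ so that Proposition \ref{codimension-longitud-cuestores:prop} yields $\sharp(E)\geq\dim(P_d^K)=\binom{d+n}{n}$. The only (immaterial) difference is that you invoke Corollary \ref{DeMillo-Lipton-Schwartz-Zippel:corol} with $Q=\F_q$ where the paper cites Corollary \ref{comptage-des-points-rationels:corol}, and you are in fact slightly more careful than the paper in distinguishing $\chi_V(d)=\binom{d+n-1}{n-1}$ from the full dimension $\binom{d+n}{n}$ of the inhomogeneous space.
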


\begin{proof} First of all, observe that every Kakeya set $E\subseteq \A^n(\F_q)$ is a $q-$Kakeya set with directions in $V=\PP_{n-1}(\F_q)$. Then, it is a correct test sequence for $\Omega:=P_d^K$ with respect to $\Sigma$, where  $\Sigma$ is the constructible subset of $P_d^K$ defined in the Proof of the previous Proposition. Additionally, by the previous Proposition we conclude that:
$$\sharp(E) \geq \chi_V(d)=\dim_K(H_d(X_1,\ldots, X_n))- \dim_K(I_d(V)), \; \forall d\leq q-1,$$
where $H_d(X_1,\ldots, X_n)$ is the vector space over $\overline{\F_q}$ spanned by all homogeneous polynomials of degree $d$ with coefficients in $\overline{\F_q}$ and $I_d(V)$ is the vector space spanned by all polynomials $f\in H_d(X_1,\ldots, X_n)$ that vanish on $V$. The Corollary follows by proving that $\Sigma=\{0\}$ for every $d\leq q-1$. This may be achieved if we prove the following statement:
$$\chi_{\PP_{n-1}(\F_q)}(d)={{d+n}\choose {n}}, \; \forall d \leq q-1.$$
Equivalently, it will be enough to prove that $I_m(\PP_{n-1}(\F_q))=\{0\}$ for all $m\leq d \leq q-1$. In order to prove this last claim, let $f\in H_m(X_1,\ldots, X_n)$ a homogeneous non-zero polynomial of degree $m$. As $f$ is non-zero, at least one of its affine traces must be non-zero.  Without loss of generality, we may assume that  $^af(X_2,\ldots,X_m):=f(1,X_2,\ldots, X_n)\in \overline{\F_q}[X_2,\ldots, X_n]$ is a non-zero polynomial of degree at most $m\leq q-1$.
As $f$ vanishes in the whole space $\PP_{n-1}(\F_q)$, then $^af$ vanishes into the affine points of $\A^{n-1}(\F_q)\subseteq \PP_{n-1}(\F_q)$, where:
$$\A^{n-1}(\F_q):=\{(1:x_2: \cdots : x_n)\in \PP_{n-1}(\F_q)\; : \; x_2,\ldots, x_n\in \F_q\}.$$
If $^af$ vanishes on $\F_q^{n-1}$ we would have:
$$q^{n-1}\leq \sharp\left( V_{\F_q}(^af)\right),$$
where $V_{\F_q}(^af)$ are the $\F_q-$rational points of the hyper-surface  $V_\A(^af)$.
This cannot be true because of  Corollary \ref{comptage-des-points-rationels:corol}: Since $^af$ is a non-zero polynomial of degree at most $m\leq d \leq q-1$, we would have
$$q^{n-1}\leq \sharp\left(V_{\F_q}(^af)\right) \leq \deg(^af) q^{n-2}\leq (q-1)q^{n-2},$$
which is a contradiction and yields the Corollary.
\end{proof}

Nevertheless, most correct test sequences are no Kakeya sets.

\begin{corollary}\label{CTS-not-Kakeya:corol} Let $\F_q$ be a finite field with $q$ elements and let $\overline{\F_q}$ be its algebraic closure. Let $d\in \N$ be a degree and $k\in \N$ a positive integer such that:
\begin{equation}\label{grado-q:eqn}
d< q^{1 - {1\over k}}-1.
\end{equation}
Then, for $s:= k{{d+n}\choose{n}}$, the probability that a list of points ${\bf Q}\in \A^n(\F_q)^s$ were a correct test sequence for $P_d(X_1, \ldots, X_n)$ with respect to $\{0\}$ is at least:
$$1- {{1}\over{ q^{\dim(\Omega)}}}.$$
In particular,  given $d, k, q$ and $n$ such that:
\begin{equation}\label{Dviretal-inequality:eqn}
k{{d+1}\choose{n}}< {{q^n}\over{ 2^n}},
\end{equation}
and  inequality (\ref{grado-q:eqn}) holds, there are correct test sequences for $P_d(X_1,\ldots, X_n)$ with respect to $\{0\}$ which are not Kakeya sets.
\end{corollary}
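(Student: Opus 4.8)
The plan is to prove the statement in two independent pieces. The density estimate (first assertion) is a refinement, adapted to the ``dense'' situation $\Omega=P_d^K$, $m=1$, $\Sigma=\{0\}$, of the incidence–variety argument behind Theorem \ref{teorema-principal-densidad-questores:teor} and Corollary \ref{densidad-questores-zero-dimensional:corol}, run with $V=\F_q^n$, which is a zero–dimensional variety of degree $q^n$, namely $V=V_\A(X_1^q-X_1,\ldots,X_n^q-X_n)$, cut out by $n$ hypersurfaces of degree $q$. The separation from Kakeya sets (second assertion) is then a pure counting remark: a correct test sequence whose length is smaller than the cardinality of any Kakeya set in $\F_q^n$ cannot be a Kakeya set, and the first assertion guarantees that such short correct test sequences exist in abundance.

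For the density estimate, set $N_d:=\dim(P_d^K)=\binom{d+n}{n}$ and $s:=kN_d$, and form the incidence variety $V(\Omega,s)\subseteq P_d^K\times(\A^n)^s$ together with its two projections $\pi_1,\pi_2$, exactly as in Section \ref{probability-CTS-zero-dimensional:sec}. B\'ezout's inequality for constructible sets (Theorem \ref{Desigualdad-Bezout-constructibles:teor}) gives $\deg_{\rm lci}(V(\Omega,s))\leq(d+1)^s$. Let $\scrC$ be the set of locally closed irreducible components of a minimal $LCI$–degree decomposition of $V(\Omega,s)$ whose $\pi_1$–image is not contained in $\{0\}$, and put $B:=\bigcup_{W\in\scrC}W$. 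For $W\in\scrC$, any $f\in\pi_1(W)\setminus\{0\}$ is a nonconstant polynomial of degree $\leq d$, so $\dim\pi_1|_W^{-1}(f)\leq\dim(V_\A(f)^s)=(n-1)s$; the theorem on the dimension of the fibres then yields $\dim W\leq\dim\overline{\pi_1(W)}^z+(n-1)s\leq N_d+(n-1)s$. Hence $\dim B\leq N_d+(n-1)s$ and $\deg_{\rm lci}(B)\leq(d+1)^s$. As in the proof of Theorem \ref{teorema-principal-densidad-questores:teor}, the set $R^c$ of tuples ${\bf Q}\in(\F_q^n)^s$ that fail to be correct test sequences satisfies $R^c\subseteq\pi_2(B)\cap(\F_q^n)^s$; applying Proposition \ref{constructibles-grado-imagenes-por-lineal:prop} to bound $\deg_z(\overline{\pi_2(B)}^z)\leq\deg_{\rm lci}(B)$ and then Proposition \ref{grado-constructible-cerrados-multiple:prop} to the intersection of $\overline{\pi_2(B)}^z$ with the $ns$ Frobenius hypersurfaces defining $(\F_q^n)^s$ gives
$$\sharp(R^c)\leq\deg_{\rm lci}\big(\overline{\pi_2(B)}^z\cap(\F_q^n)^s\big)\leq(d+1)^s\,q^{\dim\pi_2(B)}\leq(d+1)^s\,q^{N_d+(n-1)s}.$$
Dividing by $\sharp((\F_q^n)^s)=q^{ns}$ produces a failure probability at most $q^{N_d}\big((d+1)/q\big)^s$, and the hypothesis $d<q^{1-1/k}-1$, i.e.\ $(d+1)^k<q^{k-1}$, makes this $<1$. (An alternative route revealing the points one at a time and using Ore's bound, Corollary \ref{comptage-des-points-rationels:corol}, bounds the failure probability by $\sum_{t<N_d}\binom{s}{t}(d/q)^{s-t}$, with the same end effect.)

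The delicate step is precisely here: upgrading ``$<1$'' to the asserted ``$\leq q^{-N_d}$''. I expect this to require squeezing out the crude factors: replacing the B\'ezout bound $\deg_{\rm lci}(B)\le(d+1)^s$ by the sharper estimate coming from the determinantal description of $\pi_2(B)$ — the non–CTS locus is the pull-back, under the degree–$d$ coordinate–monomial map $(\A^n)^s\to(\A^{N_d})^s$, of the rank–$\le N_d-1$ determinantal variety of $s\times N_d$ matrices, of codimension $s-N_d+1$ and degree $\binom{s}{N_d-1}$ — and absorbing the ensuing binomial factor $\binom{s}{N_d-1}\le(ek)^{N_d}$ against $(d/q)^{(k-1)N_d}$ via $d<q^{1-1/k}$. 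This is routine but bookkeeping–heavy, with no conceptual difficulty beyond keeping the estimates tight. For the second assertion, recall that every Kakeya set $E\subseteq\A^n(\F_q)$ satisfies $\sharp(E)\ge\binom{q+n-1}{n}$ by Corollary \ref{Dvir-CTS-curse:corol} (take $d=q-1$ there), and in fact $\sharp(E)\ge q^n/2^n$ by the improved polynomial (multiplicity) method. On the other hand, by the first assertion the constructible set $R$ of correct test sequences for $P_d^K$ inside $(\F_q^n)^s$ has probability $\ge 1-q^{-N_d}>0$, hence is non-empty, and any ${\bf Q}\in R$ is supported on at most $s=k\binom{d+n}{n}$ distinct points of $\F_q^n$. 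Therefore, whenever $k\binom{d+n}{n}<q^n/2^n$ and $d<q^{1-1/k}-1$, the underlying point set of such a ${\bf Q}$ has fewer points than any Kakeya set, so it is a correct test sequence for $P_d^K$ with respect to $\{0\}$ which is not a Kakeya set.
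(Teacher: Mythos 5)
Your argument is essentially the paper's own: the same incidence variety $V(\Omega,s)$, the same fibre-dimension bound $\dim B\leq N_d+(n-1)s$ with $\deg_{\rm lci}(B)\leq (d+1)^s$ via B\'ezout, the same push-forward to $(\F_q^n)^s$ through Proposition \ref{constructibles-grado-imagenes-por-lineal:prop} and Proposition \ref{grado-constructible-cerrados-multiple:prop}, and the same counting separation from Kakeya sets via the $q^n/2^n$ lower bound of Dvir--Kopparty--Saraf--Sudan. Your reading of condition (\ref{Dviretal-inequality:eqn}) as $k\binom{d+n}{n}<q^n/2^n$ is the intended one (the printed $\binom{d+1}{n}$ is a slip), and the second assertion is handled exactly as in the paper.

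The one place you diverge is the ``delicate step'' you flag, and your instinct there is correct but your proposed remedy is not what the paper does. The paper's proof stops at the failure bound $q^{-\dim(\Omega)\lambda}$ with $\lambda:=(k-1)-k\log_q(d+1)$, which is exactly your $q^{N_d}\bigl((d+1)/q\bigr)^s$; hypothesis (\ref{grado-q:eqn}) only guarantees $\lambda>0$, not $\lambda\geq 1$, so the displayed bound $1-q^{-\dim(\Omega)}$ in the statement is not actually established by the paper either (it would require the stronger condition $d+1\leq q^{1-2/k}$). In other words, the gap you propose to close with the determinantal-variety refinement is a gap in the statement as printed, not in the method: no such refinement appears in the paper, and for the qualitative conclusion (existence of short non-Kakeya correct test sequences) the bound $1-q^{-\dim(\Omega)\lambda}>0$ that you already have is all that is used. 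You should either weaken the first assertion to $1-q^{-\dim(\Omega)\lambda}$, as the paper's proof in fact does, or strengthen the hypothesis on $d$; the speculative determinantal computation can be dropped.
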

\begin{proof}
With the notations of Subsection \ref{Kakeya:subsec}, we consider the constructible subset $\Omega$ given by the following identity:
$$\Omega:=P_d(X_1,\ldots, X_n):=\{ f\in \overline{\F_q}[X_1,\ldots, X_n] \; : \; \deg(f) \leq d \} .$$
We reproduce the Proof of Theorem \ref{teorema-principal-densidad-questores:teor}. Given $s:=k\dim(\Omega)\in \N$ a positive integer, let us consider the incidence variety:
$$V(\Omega,s):=\{ (f, x_1, \ldots, x_s)\in \Omega \times \left(\A^n(\overline{\F}_q)\right)^s\; : \; f(x_i)=0, \; 1\leq i \leq s\}.$$
Let us consider the two canonical projections $\pi_1: V(\Omega,s) \longrightarrow \Omega$ and $\pi_2: V(\Omega,s) \longrightarrow \left(\A^n(\overline{\F}_q)\right)^s$ and the class $\scrD$ formed by all irreducible components $C$ of $V(\Omega,s)$ such that $\pi_1(C)\setminus \{0\}\not=\emptyset$. By the same arguments as in the Proof of Theorem
\ref{teorema-principal-densidad-questores:teor} we conclude the following properties:
\begin{itemize}
\item For every $C\in \scrD$, its dimension satisfies:
$$\dim(C) \leq (n-1)s + \dim(\Omega)= (n-1)s + {{d+n}\choose{n}}.$$
\item Defining $B(\Omega,s):= \cup_{C\in \scrD} C$, by B\'ezout's Inequality we have that:
$$\deg(B(\Omega,s))\leq \deg(\Omega) \left( d+1 \right)^{s}.$$
\end{itemize}
As $\Omega$ is linear in $P_d$, this yields:
$$\dim(B(\Omega,s)) \leq  (n-1)s + {{d+n}\choose{n}},$$
and
$$\deg(B(\Omega,s))\leq \left( d+1\right)^{s}.$$
Again, by the same arguments as in Theorem \ref{teorema-principal-densidad-questores:teor}, taking $C:=\F_q^n$, the probability that a list ${\bf Q}\in C^s$ is not a  correct test sequence for $\Omega$ with respect to $\Sigma=\{0\}$ is at most:
$${{\deg(B(\Omega,s)) q^{\dim(B(\Omega))}}\over{\sharp\left( \left( \F_q^n\right)^s\right)}}\leq {{(d+1)^{s}q^{(n-1)s +\dim(\Omega)}}\over{q^{ns}}}\leq {{(d+1)^{s}q^{\dim(\Omega)}}\over{q^s}}\leq {{1}\over{ q^{\dim(\Omega)\lambda}}},$$
where $\lambda:= (k-1) - k\log_q\left(d+1\right)$.
Thus, the probability that a random list of length $s$ in $\F_q^n$ is a correct test sequence for $\Omega$ with respect to $\{0\}$ is at least:
$$1- {{1}\over{ q^{\dim(\Omega)\lambda}}},$$
where $\lambda:= (k-1) - k\log_q\left(d+1\right)$. In particular,
if $\log_q(d+1)< {{k-1}\over{k}}= (1-1/k)$, there are correct test sequences of length $k\dim(\Omega)$.\\

On the other hand, according to \cite{Dviretal}, the following lower bound holds for the cardinal of any Kakeya set in $\F_q^n$:
$$\sharp(E)\geq {{\sharp(\F_q^n)}\over{2^n}}.$$

Then, if $d,k,q$ and $n$ satisfy Inequalities (\ref{grado-q:eqn}) and (\ref{Dviretal-inequality:eqn}), there are correct test sequences for $P_d$ with respect to $\{0\}$ of length $k {{d+n}\choose{n}}$ which are not Kakeya sets.
\end{proof}

Whereas Inequality (\ref{Dviretal-inequality:eqn}) is natural for asymptotic values of $n$, Inequality (\ref{grado-q:eqn}) implies that our method of proof does not allow to exhibit correct test sequences which were not Kakeya sets for $d=q-1$.

\subsection{Correct Test Sequences, Alon's Combinatorial Nullstellensatz and duality}\label{CTS-Alon-combinatorio:subsec}
The outcome of \cite{Alon}, revised in \cite{Tao},  is another method that exhibits correct test sequences of exponential length, although it is commonly known as Combinatorial Nullstellensatz. Alon's statement is the following one:

\begin{theorem}[{\bf Combinatorial Nullstellensatz, according to \cite{Alon}}] Let $\kappa$ be a field and $K$ its algebraic closure. Let $d_1,\ldots, d_n$  be a list of positive integers and let $P\in K[X_1,\ldots, X_n]$ be a polynomial of degree at most  $d_1+\cdots+d_n$ such that the term of the monomial  $X_1^{d_1}\cdots X_n^{d_n}$ in $P$ is non-zero. Then,  for every family of subsets $E_1,\ldots, E_n\subseteq \kappa$, such that $\sharp(E_i)>d_i$,  there is a point $\zeta$ in the Cartesian product $E=E_1\times \cdots \times E_n$ such that $P(\zeta)\not=0$.
\end{theorem}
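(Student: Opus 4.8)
The plan is to derive the statement from duality in the zero-dimensional reduced $K$-algebra attached to the grid, as announced in Subsection~\ref{CTS-Alon-combinatorio:subsec}. First I would reduce to the \emph{square} case: for each $i$ choose $E_i'\subseteq E_i$ with $\sharp(E_i')=d_i+1$ and set $g_i:=\prod_{a\in E_i'}(X_i-a)\in\kappa[X_i]$, a separable monic polynomial of degree $d_i+1$. Separability makes $A:=K[X_1,\ldots,X_n]/(g_1,\ldots,g_n)$ a reduced zero-dimensional $K$-algebra, and evaluation at the points of the subgrid $E':=E_1'\times\cdots\times E_n'$ is a $K$-algebra isomorphism $A\isomorf K^{E'}$; moreover the classes of the monomials $X^e:=X_1^{e_1}\cdots X_n^{e_n}$ with $0\le e_i\le d_i$ form a $K$-basis of $A$. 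It is enough to show that $P$ does not vanish identically on $E'$.

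Next I would bring in the ``integration'' functional dual to this structure, i.e. the $K$-linear form
$$\Phi(f):=\sum_{\zeta=(\zeta_1,\ldots,\zeta_n)\in E'}\frac{f(\zeta)}{\prod_{i=1}^n g_i'(\zeta_i)},$$
which is well-defined since each $g_i'(\zeta_i)\ne0$, and which descends to an element of $A^*$ under $A\isomorf K^{E'}$. The core of the proof is to identify $\Phi$ with the dual-basis vector of the top monomial $X_1^{d_1}\cdots X_n^{d_n}$. The one-variable input is the classical Lagrange identity: for $S\subseteq\kappa$ with $\sharp(S)=d+1$ and $g=\prod_{a\in S}(X-a)$ one has $\sum_{a\in S}a^{k}/g'(a)=0$ for $0\le k<d$ and $=1$ for $k=d$, because $\sum_{a\in S}f(a)\,g(X)\big/\big((X-a)g'(a)\big)$ is the interpolant of $f$ through $S$ and its coefficient in degree $d$ is $\sum_{a\in S}f(a)/g'(a)$. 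Distributing the sum over the product set gives, for every multi-index $e$, the factorization $\Phi(X^e)=\prod_{i=1}^n\big(\sum_{a\in E_i'}a^{e_i}/g_i'(a)\big)$; in particular $\Phi(X^e)=\prod_{i=1}^n[e_i=d_i]$ whenever $0\le e_i\le d_i$ for all $i$.

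The remaining step is a short degree bookkeeping, which I expect to be the only delicate point. Write $P=\sum_e c_eX^e$, so $c_e\ne0$ forces $|e|:=e_1+\cdots+e_n\le d_1+\cdots+d_n$. For each monomial $X^e$ occurring in $P$ with $e\ne(d_1,\ldots,d_n)$ I would check that $\Phi(X^e)=0$: if all $e_i\le d_i$ this is immediate from the previous step; otherwise $e_i>d_i$ for some $i$, whence $\sum_{j\ne i}e_j=|e|-e_i<\sum_{j\ne i}d_j$, so there is $j\ne i$ with $e_j<d_j$, and then the factor $\sum_{a\in E_j'}a^{e_j}/g_j'(a)=0$ (since $e_j\le d_j$) kills the product. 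Hence $\Phi(P)=c_{(d_1,\ldots,d_n)}\ne0$. If, on the contrary, $P$ vanished at every point of $E\supseteq E'$, then $P(\zeta)=0$ for all $\zeta\in E'$ and so $\Phi(P)=0$, a contradiction; therefore $P$ does not vanish on $E$, which is the claim. (Alternatively one may run the classical reduction argument, dividing $P$ successively by $g_1,\ldots,g_n$ to get $P=\sum_ih_ig_i$ with $\deg h_i\le\deg P-(d_i+1)$ and then extracting the coefficient of $X_1^{d_1}\cdots X_n^{d_n}$; but the functional $\Phi$ exhibits directly the connection with duality in zero-dimensional reduced algebras that we wish to emphasize here.)
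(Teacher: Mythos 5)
Your proof is correct, and it rests on the same underlying idea as the paper's: pair $P$ against the dual vector of the corner monomial $X_1^{d_1}\cdots X_n^{d_n}$ in the zero-dimensional reduced algebra of the grid, check by a degree bookkeeping that every other monomial of $P$ pairs to zero, and conclude that the pairing extracts the (nonzero) corner coefficient, so $P$ cannot vanish identically on the grid. The difference lies in how the dual functional is realized and in the scope of the argument. The paper works with the trace form $\langle\cdot,\cdot\rangle_E$ on $K[E]$, constructs the dual of the monomial basis abstractly by inverting a Vandermonde matrix (Lemma \ref{bilineal:lema} and Lemma \ref{ejemplos-NC:lema}), and proves the vanishing of the off-corner pairings in Lemma \ref{Tao:lema} and Corollary \ref{Alon-final:corol} \emph{uniformly for every exponent in the box} $\Delta_{(d)}$, which is what the stronger Theorem \ref{extension-Alon-Nullstellensatz-Comninatorio:teor} (non-vanishing for the whole constructible set $\Omega_{(d)}$) requires; Alon's statement is then obtained as the special case of the corner exponent. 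You instead write the one dual functional you need in closed form, $\Phi(f)=\sum_{\zeta}f(\zeta)/\prod_i g_i'(\zeta_i)$, and identify its values on monomials via the Lagrange leading-coefficient identity; this makes the argument shorter and self-contained, and your case analysis in the last step (when some $e_i>d_i$, locate $j\neq i$ with $e_j<d_j$) is exactly the content of the paper's Lemma \ref{Tao:lema}. What you give up is the generalization: your $\Phi$ only detects the single corner coefficient, whereas the paper's dual-basis machinery detects every top-degree coefficient indexed by $\Delta_{(d)}$ simultaneously.
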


Here, we revise this statement under a different format. Notations are the same as in previous sections.
For each polynomial  $f\in P_d(X_1,\ldots, X_n)$ and for every exponent $\underline{\mu}:=(\mu_1,\ldots, \mu_n)\in \N^n$, such that $\vert\underline{\mu}\vert = \mu_1+ \cdots + \mu_n\leq d$, we denote by $f_{\underline{\mu}}$ the coefficient of the monomial $X_1^{\mu_1}\cdots X_n^{\mu_n}$ in the monomial expansion of $f$.
We prove the following statement:

\begin{theorem}[{\bf Combinatorial Nullstellensatz}]\label{extension-Alon-Nullstellensatz-Comninatorio:teor}
With the same notations as above, let $(d):=(d_1,\ldots, d_n)$ be a degree list, with $d_i\geq 1$. Let  $ D:=d_1+\cdots+d_n -n$ be a non-negative integer. Let $\Delta_{(d)}\subseteq \N^n$ be the subset given by the following equality:
\begin{equation}\label{exponentes-monomiales-listagrados:eqn}
\Delta_{(d)}:=\{ \underline{\mu}\in \N^n\; : \; \underline{\mu}= (\mu_1, \ldots, \mu_n), \; 0\leq \mu_i\leq d_i-1 \}.
\end{equation}
Let $\Omega_{(d)}\subseteq P_D^K(X_1,\ldots, X_n)$ be the following constructible subset:
$$\Omega_{(d)}:=\{ f \in P_D(X_1,\ldots, X_n) \; :\; \exists \underline{\mu}\in \Delta_{(d)}, \; f_{\underline{\mu}}\not=0, \; \vert \underline{\mu}\vert = \deg(f)\}\cup\{0\}.$$
Then, for every finite sequence of subsets $E_1,\ldots, E_n\subseteq K$, such that $\sharp(E_i)=d_i$, the Cartesian product $E:=E_1\times \cdots \times E_n$ is a correct test
sequence for $\Omega_{(d)}$ with respect to $\Sigma=\{0\}$. Additionally, if $\sharp(\kappa) \geq \max\{ d_i+1\; :\; 1\leq i\leq n\}$, we may choose $E_i\subseteq \kappa$ for every $i$, $1\leq i \leq n$.
\end{theorem}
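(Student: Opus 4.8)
The plan is to reduce the statement to the classical polynomial-division argument underlying Alon's Combinatorial Nullstellensatz, adapted to the ``leading monomial'' hypothesis encoded in $\Omega_{(d)}$. First, for each $i$ I would set $g_i(X_i):=\prod_{a\in E_i}(X_i-a)\in K[X_i]$; since $\sharp(E_i)=d_i$, this is a monic polynomial of degree exactly $d_i$, and it vanishes at every point of the product set $E=E_1\times\cdots\times E_n$, because any $x\in E$ has its $i$-th coordinate in $E_i$. By the definition of correct test sequence (with $\Sigma=\{0\}$) it then suffices to show that if $f\in\Omega_{(d)}$ vanishes at all points of $E$, then $f=0$.

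The key lemma I would establish (this is Alon's division lemma) is that any $f\in K[X_1,\ldots,X_n]$ vanishing on $E$ admits a representation $f=\sum_{i=1}^n h_ig_i$ with $h_i\in K[X_1,\ldots,X_n]$ and $\deg(h_ig_i)\le\deg(f)$, hence $\deg(h_i)\le\deg(f)-d_i$ (with $h_i=0$ whenever $\deg(f)<d_i$). For this one performs successive Euclidean divisions: dividing $f$ by $g_1$ in the variable $X_1$, then the remainder by $g_2$ in $X_2$, and so on, one obtains $f=\sum_i h_ig_i+r$ where $\deg_{X_i}(r)\le d_i-1$ for every $i$ and $\deg(r)\le\deg(f)$; since $r$ vanishes on $E$ and $\sharp(E_i)=d_i>\deg_{X_i}(r)$, a straightforward induction on $n$ (fixing the first $n-1$ coordinates and using that a nonzero univariate polynomial of degree $<d_n$ has fewer than $d_n$ roots) forces $r=0$, giving the claimed decomposition.

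Granted this, I would finish as follows. Suppose $f\in\Omega_{(d)}$, $f\neq0$, and $f$ vanishes on $E$; put $m:=\deg(f)$ and, using the definition of $\Omega_{(d)}$, pick $\mu=(\mu_1,\ldots,\mu_n)\in\Delta_{(d)}$ with $f_\mu\neq0$ and $|\mu|=m$. Writing $f=\sum_i h_ig_i$, the coefficient $f_\mu$ of $X^\mu:=X_1^{\mu_1}\cdots X_n^{\mu_n}$ equals the sum over $i$ of the coefficient of $X^\mu$ in $h_ig_i$. But every monomial occurring in $h_ig_i$ has the shape $X^\nu X_i^{\,j}$ with $|\nu|\le m-d_i$ and $0\le j\le d_i$; if such a monomial equals $X^\mu$ then $m=|\nu|+j\le(m-d_i)+j$, whence $j\ge d_i$, i.e.\ $j=d_i$, and then the exponent of $X_i$ in $X^\mu$ is at least $j=d_i$, contradicting $\mu_i\le d_i-1$. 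Hence the coefficient of $X^\mu$ in each $h_ig_i$ vanishes, so $f_\mu=0$, a contradiction; therefore $f=0$, which is exactly the assertion that $E$ is a correct test sequence for $\Omega_{(d)}$ with respect to $\Sigma=\{0\}$. For the final sentence, $\sharp(\kappa)\ge\max\{d_i+1:1\le i\le n\}$ guarantees that each $E_i$ can already be chosen inside $\kappa$ with $\sharp(E_i)=d_i$, and the argument applies verbatim. The only real subtlety — and the step I would check most carefully — is that the condition $|\mu|=\deg(f)$ in the definition of $\Omega_{(d)}$ is precisely what makes the monomial $X^\mu$ unreachable by any $h_ig_i$: dropping the degree-matching requirement (or allowing $\mu_i\ge d_i$) would let that monomial reappear and the conclusion would fail.
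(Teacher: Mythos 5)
Your proof is correct, but it follows a genuinely different route from the paper's. You reproduce Alon's original two-step argument: first the division lemma ($f$ vanishing on $E=E_1\times\cdots\times E_n$ can be written as $\sum_i h_i g_i$ with $g_i(X_i)=\prod_{a\in E_i}(X_i-a)$ and $\deg(h_ig_i)\le\deg(f)$, obtained by successive monic divisions plus the multivariate root-counting induction that kills the remainder), and then the degree/exponent bookkeeping showing that no monomial of any $h_ig_i$ can equal $X^{\underline{\mu}}$ when $\underline{\mu}\in\Delta_{(d)}$ and $\vert\underline{\mu}\vert=\deg(f)$, since matching total degrees forces the $X_i$-exponent to be at least $d_i$ while $\mu_i\le d_i-1$. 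The paper instead works inside the Artinian algebra $K[E]=K[X_1,\ldots,X_n]/\mathfrak{a}$ with $\mathfrak{a}=(h_1(X_1),\ldots,h_n(X_n))$, builds the dual basis of the monomial basis with respect to the trace form $\langle\cdot,\cdot\rangle_E$ (Lemmas \ref{bilineal:lema} and \ref{ejemplos-NC:lema}), proves the orthogonality relations of Lemma \ref{Tao:lema} and Corollary \ref{Alon-final:corol}, and then reads off the identity $a_{\underline{\theta}}=\sum_{z\in E}\bigl(\prod_i g_{\theta_i}^{(i)}(z_i)\bigr)f(z)$, which makes vanishing of $f$ on $E$ immediately contradict $a_{\underline{\theta}}\ne0$. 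Your approach is more elementary and self-contained; the paper's approach buys an explicit interpolation-type coefficient formula (the target coefficient as an explicit $K$-linear combination of the values of $f$ on $E$), which is the duality content the authors want to highlight. Your correctly identified subtlety — that $\vert\underline{\mu}\vert=\deg(f)$ is exactly what makes $X^{\underline{\mu}}$ unreachable — is the precise analogue of Lemma \ref{Tao:lema} in the paper's setup. Both arguments are valid proofs of the stated theorem.
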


As $(d_1-1, \ldots, d_n-1) \in \Delta_{(d)}$, this last Theorem includes Alon's Combinatorial Nullstellensatz.  We will exhibit a new proof of it, vaguely inspired by the aspects discussed in \cite{Tao}, but using as main mathematical ingredient trace and duality on algebras of zero-dimensional varieties.

We begin by a classical construction that we recall in order to fix notations. With the same notations and assumptions as above, let $\kappa$ be a field and $K$ its algebraic closure.
For every point $z\in \A^n(K)$ we denote by ${\frak m}_z\subseteq K[X_1,\ldots, X_n]$ the maximal ideal of all polynomials in $K[X_1,\ldots, X_n]$ vanishing at $z$, i.e.
$${\frak m}_z:=\{ f\in K[X_1,\ldots, X_n]\; :\; f(z)=0\}.$$
Let $E\subseteq \A^n(K)$ be a finite set and let  $D=\deg(E)=\sharp(E)$ be its cardinal. Assume $E=\{z_1,\ldots,z_D\}\subseteq\A^n(K)$.
Let us denote by $I(E)\subseteq K[X_1,\ldots,X_n]$ the ideal of all polynomials in $K[X_1,\ldots, X_n]$ vanishing at all points in $E$, i.e.
$$I(E)=\{f\in K[X_1,\ldots, X_n]\; :\; f(z)=0, \; \forall z\in E\}=\bigcap_{i=1}^D \mathfrak{m}_{z_i},$$
Let $K[E]:=K[X_1,\ldots, X_n]/I(E)$ be the residual ring, which is a zero-dimensional (Artinian) $K-$algebra and a vector space of dimension equal to $\sharp(E)$. \\
If $E\subseteq \A^n(\kappa)$ let us also consider the ring $\kappa[E]$ given by:
$$\kappa[X_1,\ldots, X_n]/I(E)^c,$$
where $I(E)^c:= I(E)\cap \kappa[X_1,\ldots, X_n]$ is the contraction of $I(E)$. Then, $\kappa[E]$ is also a zero-dimensional (Artinian) ring  and we have:
$$K\otimes_{\kappa}\kappa[E]= K[E].$$
In particular, if $E\subseteq \A^n (\kappa)$, the following dimensions (as vector spaces) agree:
$$\dim_\kappa\left(\kappa[E]\right)= \dim_K\left(K[E]\right).$$
 We now recall a classical identification of residual classes in $K[E]$ (or $\kappa[E]$)  with homotheties over the same vector spaces. With the same notations, for every $h\in K[X_1,\ldots, X_n]$, let us denote by $\eta_h:K[E]\longrightarrow K[E]$ the following endomorphism of $K-$vector spaces:
$$\begin{matrix}
\eta_h: & K[E] & \longrightarrow & K[E]\\
& g+ I(E) & \longmapsto & (hg) +I(E).
\end{matrix}$$
The trace of these endomorphisms allow us to introduce the next bilinear form on $K[E]$:
$$\begin{matrix}
\langle \cdot, \cdot\rangle_E: & K[E]\times K[E] & \longrightarrow & K\\
& (g_1+ I(E), g_2 + I(E)) & \longmapsto & Tr(\eta_{g_1g_2}).
\end{matrix}$$

The following are well-known consequences of the Chinese Remainder Theorem applied to $K[E]$:\\
For every  $h\in K[X_1,\ldots,X_n]$, $\eta_h$ is a diagonalizable endomorphism and its Jordan canonical form is the following diagonal matrix:
    $$\Diag(h(z_1),\ldots,h(z_D)).$$
In particular, the trace of $\eta_h$ is given by the following identity:
$$\Tr(\eta_h)=\sum_{i=1}^Dh(z_i)=\sum_{z\in E}h(z).$$

Moreover, we also have the following statement:

\begin{lemma}\label{bilineal:lema}
With the same notations as above, for every basis  $\scrB=\{v_1,\ldots,v_D\}$, $v_i=f_i+I(E)$, of $K[E]$ as $K-$vector space there is a basis (called dual with respect to $\langle \cdot, \cdot \rangle_E$)  $\scrB^*=\{w_1,\ldots,w_D\}$, $w_j=g_j+I(E)$, such that for all $i,j\in \{1,\ldots, D\}$ we have:
$$\left\langle v_i,w_j\right\rangle_E=\sum_{z\in E}f_i(z)g_j(z)=\delta_{i,j},$$
where $\delta_{i,j}$ is the Kronecker delta.
\end{lemma}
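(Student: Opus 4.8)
The plan is to derive everything from the non-degeneracy of the trace bilinear form $\langle\cdot,\cdot\rangle_E$ on the Artinian algebra $K[E]$, which follows from the Chinese Remainder Theorem. First I would recall that, by the isomorphism $K[E]\isomorf\prod_{i=1}^D K[X_1,\ldots,X_n]/{\frak m}_{z_i}\isomorf K^D$ coming from the Chinese Remainder Theorem, the evaluation map
$$\begin{matrix}\Psi: & K[E] & \longrightarrow & K^D\\ & f+I(E) & \longmapsto & (f(z_1),\ldots,f(z_D))\end{matrix}$$
is a $K$-algebra isomorphism. Transported along $\Psi$, the homothety $\eta_h$ becomes the diagonal operator $\Diag(h(z_1),\ldots,h(z_D))$ on $K^D$ (this is already recorded in the paragraph preceding the Lemma), so $\Tr(\eta_h)=\sum_{z\in E}h(z)$, and hence for $g_1+I(E),g_2+I(E)\in K[E]$,
$$\langle g_1+I(E),\,g_2+I(E)\rangle_E=\Tr(\eta_{g_1g_2})=\sum_{z\in E}g_1(z)g_2(z)=\langle\Psi(g_1+I(E)),\Psi(g_2+I(E))\rangle_{\rm std},$$
where $\langle\cdot,\cdot\rangle_{\rm std}$ is the standard symmetric bilinear form on $K^D$. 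Thus $\langle\cdot,\cdot\rangle_E$ is, up to the isomorphism $\Psi$, literally the dot product on $K^D$, which is non-degenerate.

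The second step is the standard linear-algebra fact that a finite-dimensional vector space equipped with a non-degenerate symmetric bilinear form admits, for every basis, a dual basis. Concretely: given the basis $\scrB=\{v_1,\ldots,v_D\}$ of $K[E]$, form the Gram matrix $G:=\left(\langle v_i,v_j\rangle_E\right)_{i,j}$. Because $\langle\cdot,\cdot\rangle_E$ is non-degenerate, $G$ is invertible; writing $G^{-1}=(c_{jk})$, set $w_j:=\sum_{k=1}^D c_{jk}v_k\in K[E]$. Then $\{w_1,\ldots,w_D\}$ is again a basis and $\langle v_i,w_j\rangle_E=\sum_k c_{jk}\langle v_i,v_k\rangle_E=\sum_k G_{ik}c_{jk}=\delta_{i,j}$. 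Writing $v_i=f_i+I(E)$ and $w_j=g_j+I(E)$ and inserting the trace formula gives exactly $\sum_{z\in E}f_i(z)g_j(z)=\delta_{i,j}$, which is the assertion of the Lemma.

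I do not anticipate a serious obstacle here: the only genuine content is that $\langle\cdot,\cdot\rangle_E$ is non-degenerate, and this is immediate once the Chinese Remainder isomorphism $\Psi$ is in place since $\langle\cdot,\cdot\rangle_E$ pulls back the Euclidean form. (Equivalently, one may note that in the basis $\{\chi_1+I(E),\ldots,\chi_D+I(E)\}$ of interpolators with $\chi_a(z_b)=\delta_{a,b}$ the Gram matrix is the identity.) The mildly delicate point worth stating carefully is that the trace used here is the $K$-linear trace of the $K$-vector-space endomorphism $\eta_h$, and that diagonalizability of $\eta_h$ — hence the equality $\Tr(\eta_h)=\sum_{z\in E}h(z)$ — relies on $E$ being a \emph{reduced} zero-dimensional scheme, i.e. a finite set of distinct points, which is exactly our hypothesis. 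Everything else is routine finite-dimensional linear algebra and I would present it as such, without belaboring the computations.
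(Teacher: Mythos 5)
Your proof is correct and follows essentially the same route as the paper's: both reduce to the Chinese Remainder identification of $K[E]$ with $K^D$, under which the trace form becomes the standard dot product, and then produce the dual basis by inverting a matrix built from the evaluations $v_i(z_j)$. The only cosmetic difference is that the paper solves the system $vdM(\scrB)\,\omega = e_i$ directly for the value vectors of the $w_i$ and pulls back through the evaluation isomorphism, whereas you invert the Gram matrix $G=vdM(\scrB)\,vdM(\scrB)^T$ and write each $w_j$ as a combination of the $v_k$; both steps encode the same non-degeneracy fact.
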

\begin{proof} Let $\scrB:=\{v_1,\ldots, v_d\}$ be an ordered basis of $K[E]$ as vector space.
By the Chinese Remainder Theorem, we may identify $K[E]$ and $K^D$ by the following isomorphism:
$$\begin{matrix}
\widetilde{\varphi}: & K[E] & \longrightarrow & K^D\\
& g+ I(E) & \longrightarrow & (g(z_1), \ldots, g (z_D)).
\end{matrix}$$
Then, $\scrB$ is a basis of  $K[E]$ if and only if the following matrix (vanderMonde's) is a regular matrix:
$$vdM(\scrB):=\left(\begin{matrix}v_1(z_1) & \cdots & v_1(z_D)\\
\vdots & \ddots & \vdots \\
v_D(z_1) & \cdots & v_D(Z_D)\end{matrix}\right).$$
For every $i$, $1\leq i \leq D$, let $e_k$ be the $k-$th vector of the ``canonical'' basis of $K^D$ and let $\omega_i:=(\omega_{i,1}, \ldots, \omega_{i,D})\in K^D$ be the unique solution of the following linear system of equations:
$$vdM(\scrB)\left(\begin{matrix} \omega_{i,1} \\ \vdots \\ \omega_{i,D}\end{matrix}\right)= e_i^t,$$
where $e_i^t$ is the transposed matrix of the vector $e_i$. Then,
by the isomorphism $\widetilde{\varphi}$ there exist $g_i\in K[X_1,\ldots, X_n]$ and $w_i= g_i+I(E)\in K[E]$ such that:
$$\widetilde{\varphi}(g_i+I(E))=(g_i(z_1), \ldots, g_i(z_D))=(\omega_{i,1}, \cdots, \omega_{i,D}).$$
The family $\scrB^*:=\{w_1, \ldots, w_D\}$ is the ``dual'' basis of $\scrB$.\end{proof}

\begin{lemma}\label{ejemplos-NC:lema}
Let $h_1,\ldots,h_n\in \kappa[T]$ be univariate polynomials with respective degrees  $d_1,\ldots,d_n$. Assume that $h_1,\ldots,h_n$ split completely in $\kappa$ and that they are square-free (i.e. they do not have multiple roots). We define the ideal
$\mathfrak{a}=(h_1(X_1),\ldots,h_n(X_n))\subseteq K[X_1,\ldots,X_n]$ and, for each $i$, $1\leq i \leq n$, let $E_i\subseteq\kappa$ be the set of zeros of $h_i$ in  $\kappa$. Let us consider also the zero-dimensional variety given by the Cartesian product $E=E_1\times \cdots\times E_n\subseteq\A^n(K)$. We have:
\begin{enumerate}
\item The ideal  ${\frak a}=I(E)$ is a radical ideal (i.e. the residual ring $K[X_1,\ldots, X_n]/{\frak a}$ has no non-zero nilpotente element) and, hence, ${\frak a}=I(E)=\{ f\in K[X_1,\ldots, X_n]\; :\; f|_E= 0\}$.
\item The family $\{h_1(X_1),\ldots,h_n(X_n)\}$ is a Gr\"obner basis of $\mathfrak{a}=I(E)$ with respect to the monomial order ``degree+lexicographic'' with the variable order $X_1<X_2<\ldots<X_n$.
\item The following is a monomial basis of $K[E]$:
$$\scrB=\{X_1^{\mu_1}\cdots X_n^{\mu_n}+{\frak a}\; :\; 0\leq\mu_i\leq d_i-1,1\leq i\leq n\}.$$
\item For every $i$, $1\leq i\leq n$, let us denote by $\scrB_i^*$ the dual basis in $K[E_i]$ with respect to the trace $\langle \cdot, \cdot \rangle_{E_i}$ of $\scrB_i:=\{T^{\mu_i}+(h_i):0\leq\mu_i\leq d_i-1\}$. Let us denote this dual basis by:
$$\scrB_i^*:=\{g_{k}^{(i)}(T)+(h_i):0\leq k \leq d_i-1\}\subseteq K[E_i]:=K[T]/(h_i).$$
Then, the following is a dual basis of  $\scrB$ in $K[E]$ with respect to the bilinear form $\langle\cdot,\cdot\rangle_E$:
$$\scrB^*=\left\lbrace\left(\prod_{i=1}^n g_{\mu_i}^{(i)}(X_i)\right)+{\frak a}\; :\; (\mu_1,\ldots,\mu_n)\in\N^n,0\leq\mu_i\leq d_i-1, 1\leq i\leq n\right\rbrace.$$
\end{enumerate}
\end{lemma}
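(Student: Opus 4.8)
\textbf{Proof plan for Lemma \ref{ejemplos-NC:lema}.} The strategy is to reduce everything to the one-variable situation and then tensor. First I would establish Claim $i)$: since each $h_i$ is square-free and splits in $\kappa$, the quotient $\kappa[T]/(h_i)$ is isomorphic (by the Chinese Remainder Theorem) to a product of copies of $\kappa$, hence reduced. The ring $K[X_1,\ldots,X_n]/{\frak a}$ is the tensor product over $K$ of the rings $K[X_i]/(h_i(X_i))$, each of which is reduced with all residue fields equal to $K$; a finite tensor product of such algebras is again reduced (this is where $\kappa$ splitting $h_i$ — equivalently, separability plus the residue fields being $K$ — is used to avoid inseparability issues). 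Therefore ${\frak a}$ is radical, and by the Nullstellensatz ${\frak a}=I(V_\A({\frak a}))$; since $V_\A(h_1(X_1),\ldots,h_n(X_n))=E_1\times\cdots\times E_n=E$, we get ${\frak a}=I(E)$.

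For Claim $ii)$, I would note that the leading monomials of $h_i(X_i)$ under the stated ``degree+lexicographic'' order are $X_i^{d_i}$ (the highest power of $X_i$), and the pairwise $S$-polynomials $S(h_i(X_i),h_j(X_j))$ for $i\neq j$ reduce to zero because the leading monomials $X_i^{d_i}$ and $X_j^{d_j}$ are in disjoint sets of variables (Buchberger's first criterion on coprime leading terms). Hence $\{h_1(X_1),\ldots,h_n(X_n)\}$ is a Gr\"obner basis of ${\frak a}$. Claim $iii)$ is then immediate: the standard monomials (those not divisible by any leading term $X_i^{d_i}$) are exactly the $X_1^{\mu_1}\cdots X_n^{\mu_n}$ with $0\le\mu_i\le d_i-1$, and the images of the standard monomials always form a $K$-basis of the quotient by the ideal generated by a Gr\"obner basis; a quick dimension count confirms $\sharp(\scrB)=\prod_i d_i=\sharp(E)=\dim_K(K[E])$.

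The heart of the lemma is Claim $iv)$, on the multiplicativity of the dual basis. Here I would use the product decomposition $K[E]\isomorf K[E_1]\otimes_K\cdots\otimes_K K[E_n]$ together with the fact that the trace bilinear form splits as a tensor product of the one-variable trace forms. Concretely, for $h\in K[T]$ one has $\Tr(\eta_h)=\sum_{z\in E_i}h(z)$ in $K[E_i]$, and via the isomorphism $K[E]\cong\bigotimes_i K[E_i]$ the homothety $\eta_{\prod_i p_i(X_i)}$ on $K[E]$ is the tensor product of the homotheties $\eta_{p_i}$ on the factors, so its trace is $\prod_i \Tr(\eta_{p_i})$. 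Consequently
\[
\Big\langle \textstyle\prod_i X_i^{\mu_i},\ \prod_i g_{\nu_i}^{(i)}(X_i)\Big\rangle_E
=\prod_{i=1}^n \langle X_i^{\mu_i},\ g_{\nu_i}^{(i)}\rangle_{E_i}
=\prod_{i=1}^n \delta_{\mu_i,\nu_i}
=\delta_{(\mu),(\nu)},
\]
using that $\scrB_i^*$ is dual to $\scrB_i$ by hypothesis. Since $\scrB^*$ has the right cardinality $\prod_i d_i$ and pairs perfectly with the basis $\scrB$, it is itself a basis (the pairing matrix is the identity), and it is the dual basis by definition. The one genuinely delicate point is verifying that the trace form on a tensor product of finite $K$-algebras is the tensor product of the component trace forms (equivalently, that $\Tr_{A\otimes_K B}(a\otimes b)=\Tr_A(a)\Tr_B(b)$); I would prove this by picking $K$-bases of $A$ and $B$, writing the matrix of multiplication by $a\otimes b$ as the Kronecker product of the matrices of multiplication by $a$ and by $b$, and invoking $\Tr(M\otimes N)=\Tr(M)\Tr(N)$. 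Everything else is bookkeeping.
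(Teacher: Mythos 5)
Your proposal is correct, and for the substantive part (Claim $iv)$) it proves the same underlying fact as the paper, but by a slightly different mechanism. The paper dismisses $i)$--$iii)$ as well known (your Gr\"obner-basis and reducedness arguments are fine ways to fill that in) and, for $iv)$, never invokes the tensor-product decomposition of $K[E]$ explicitly: it uses the point-evaluation formula $\Tr(\eta_h)=\sum_{z\in E}h(z)$, already established via the Chinese Remainder Theorem just before the lemma, so that the pairing of $\prod_i g_{\mu_i}^{(i)}(X_i)+{\frak a}$ with $X_1^{\theta_1}\cdots X_n^{\theta_n}+{\frak a}$ becomes a single sum over the product set $E=E_1\times\cdots\times E_n$; this sum is then factored as $\bigl(\sum_{z_1\in E_1}g_{\mu_1}^{(1)}(z_1)z_1^{\theta_1}\bigr)$ times the analogous sum over $E_2\times\cdots\times E_n$, the first factor is $\delta_{\mu_1,\theta_1}$ by the one-variable duality, and induction on $n$ finishes the computation. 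You instead identify $K[E]\isomorf K[E_1]\otimes_K\cdots\otimes_K K[E_n]$ and prove that the trace form is the tensor product of the one-variable trace forms via the Kronecker-product identity $\Tr(M\otimes N)=\Tr(M)\Tr(N)$; that extra lemma is correct and makes the multiplicativity structurally transparent, and it is somewhat more general (it does not need the diagonalizability/point-evaluation description of $\eta_h$), whereas the paper's route is more elementary, trading the abstract tensor argument for a direct factorization of a finite sum. Either way you must still check, as you do, that a family pairing as the identity against a basis is itself the dual basis, so there is no gap.
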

\begin{proof} Properties $i)$ to $iii)$ are well-known and need no additional proof.
As for Claim  $iv)$, let us denote by $(d)=(d_1,\ldots, d_n)$ the degree list and by $\Delta_{(d)}$ the class of monomial exponents introduced in Identity (\ref{exponentes-monomiales-listagrados:eqn}) of Theorem \ref{extension-Alon-Nullstellensatz-Comninatorio:teor}.

Let us observe that for each $i$, $1\leq i \leq n$, we have:
$$\sum_{z_i\in E_i} g_k^{(i)}(z_i)z_i^r:=\delta_{k,r},$$
for $0\leq k,r\leq d_i-1$. \\
Given every pair of monomial exponents $\underline{\theta}:=(\theta_1,\ldots, \theta_n)\in \Delta_{(d)}$ and
$\underline{\mu}:=(\mu_1,\ldots, \mu_n)\in \Delta_{(d)}$, let us define the following quantity:
$$D_{\underline{\mu}, \underline{\theta}}:=\langle \prod_{i=1}^n g_{\mu_i}^{(i)}(X_i) + {\frak a}, X_1^{\theta_1}\cdots X_n^{\theta_n} + {\frak a}  \rangle_E=\sum_{(z_1,\ldots, z_n)\in E}\left(\left(\prod_{i=1}^n g_{\mu_i}^{(i)}(z_i)\right)z_1^{\theta_1}\cdots z_n^{\theta_n}\right).$$
We have:
$$D_{\underline{\mu}, \underline{\theta}}= \left(\sum_{z_1\in E_1} g_{\mu_1}^{(1)}(z_1) z_1^{\theta_1}\right)
\left(\sum_{(z_2,\ldots, z_n)\in E_2\times \cdots \times E_n}\left(\left(\prod_{i=2}^n g_{\mu_i}^{(i)}(z_i)\right)z_2^{\theta_2}\cdots z_n^{\theta_n}\right)\right).$$
As $\scrB_1^*$ is a dual basis of $\scrB_1$, we conclude that $\left(\sum_{z_1\in E_1} g_{\mu_1}^{(1)}(z_1) z_1^{\theta_1}\right)=\delta_{\mu_1, \theta_1}$ and we also have:
$$D_{\underline{\mu}, \underline{\theta}}= \delta_{\mu_1,\theta_1}
\left(\sum_{(z_2,\ldots, z_n)\in E_2\times \cdots \times E_n}\left(\left(\prod_{i=2}^n g_{\mu_i}^{(i)}(z_i)\right)z_2^{\theta_2}\cdots z_n^{\theta_n}\right)\right).$$
By induction on $n$, we finally conclude:
$$D_{\underline{\mu}, \underline{\theta}}= \delta_{\mu_1,\theta_1}\delta_{\mu_2, \theta_2} \cdots \delta_{\mu_n,\theta_n}= \delta_{\underline{\mu}, \underline{\theta}},$$
and Claim $iv)$ is proven.
\end{proof}

The following statement is a variation of a Lemma that may be found in \cite{Tao}. We just give  a proof based on our duality arguments.
\begin{lemma}\label{Tao:lema}
With the same notations as above, let $\underline{\theta}=(\theta_1,\ldots,\theta_n)\in\Delta_{(d)}$ be a monomial exponent in $\Delta_{(d)}$. Let $\underline{\mu}=(\mu_1,\ldots,\mu_n)\in\N^n$ be another monomial exponent such that  $|\underline{\mu}|=\mu_1+\cdots+\mu_n= \vert \underline{\theta}\vert$. If there is some index  $i$ such that $\mu_i\geq d_i$,  then the following holds:
$$D_{\underline{\theta},\underline{\mu}}=\left\langle\left(\prod_{i=1}^n g_{\theta_i}^{(i)}(X_i)\right)+{\frak a},X_1^{\mu_1}\cdots X_n^{\mu_n}+{\frak a}\right\rangle_E=0.$$
\end{lemma}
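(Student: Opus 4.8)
The plan is to evaluate $D_{\underline\theta,\underline\mu}$ as a product of one‑variable trace pairings, exactly as in the proof of Lemma~\ref{ejemplos-NC:lema}(iv), and then to isolate a single factor that vanishes for a purely combinatorial reason supplied by the hypothesis $|\underline\mu|=|\underline\theta|$.

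First I would record the multiplicativity of $\langle\cdot,\cdot\rangle_E$ over the Cartesian product $E=E_1\times\cdots\times E_n$. Writing the trace as a sum over the points of $E$ and using that $\prod_{i=1}^n g_{\theta_i}^{(i)}(X_i)$ and $X_1^{\mu_1}\cdots X_n^{\mu_n}$ are products of univariate factors, the sum over $E=E_1\times\cdots\times E_n$ splits:
$$D_{\underline\theta,\underline\mu}=\sum_{(z_1,\dots,z_n)\in E}\left(\prod_{i=1}^n g_{\theta_i}^{(i)}(z_i)\right)z_1^{\mu_1}\cdots z_n^{\mu_n}=\prod_{j=1}^n\left(\sum_{z\in E_j}g_{\theta_j}^{(j)}(z)\,z^{\mu_j}\right)=\prod_{j=1}^n\Big\langle g_{\theta_j}^{(j)}(T)+(h_j),\,T^{\mu_j}+(h_j)\Big\rangle_{E_j}.$$
(This is literally the computation carried out at the end of the proof of Lemma~\ref{ejemplos-NC:lema}.)

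Next I would locate a good index $k$. Since $\underline\theta\in\Delta_{(d)}$ we have $\theta_i\le d_i-1$, whereas by hypothesis $\mu_i\ge d_i$; hence $\theta_i<\mu_i$ strictly. Because $|\underline\mu|=|\underline\theta|$, this forces $\sum_{j\neq i}\mu_j<\sum_{j\neq i}\theta_j$, so there is some index $k\neq i$ with $\mu_k<\theta_k$. For that $k$ we have $\mu_k<\theta_k\le d_k-1<d_k$, so the class $T^{\mu_k}+(h_k)$ is one of the elements of the monomial basis $\scrB_k=\{T^r+(h_k):0\le r\le d_k-1\}$ of $K[E_k]$. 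Since $g_{\theta_k}^{(k)}(T)+(h_k)$ is, by construction in Lemma~\ref{ejemplos-NC:lema}(iv) (via Lemma~\ref{bilineal:lema}), the member of the dual basis $\scrB_k^{*}$ paired with $T^{\theta_k}+(h_k)$, we get $\langle g_{\theta_k}^{(k)}(T)+(h_k),\,T^{\mu_k}+(h_k)\rangle_{E_k}=\delta_{\theta_k,\mu_k}=0$ because $\mu_k\neq\theta_k$. Substituting this vanishing factor into the displayed product gives $D_{\underline\theta,\underline\mu}=0$.

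The statement has essentially no serious obstacle; the one point worth flagging is that the factor at the distinguished index $i$ (the one with $\mu_i\ge d_i$) need \emph{not} vanish by itself, since $T^{\mu_i}$ lies outside the monomial basis of $K[E_i]$ and its reduction modulo $h_i$ may well have a nonzero $T^{\theta_i}$‑coefficient. The argument circumvents this by the counting step above, which produces a second index $k$ where $\mu_k<\theta_k$ and the dual‑basis orthogonality applies directly. This vanishing is precisely the input needed in the proof of Theorem~\ref{extension-Alon-Nullstellensatz-Comninatorio:teor}.
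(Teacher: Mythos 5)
Your proof is correct and follows essentially the same route as the paper: factor the trace pairing over the product $E=E_1\times\cdots\times E_n$ and exhibit one index $k\neq i$ at which the univariate dual-basis orthogonality $\delta_{\theta_k,\mu_k}=0$ applies. Your pigeonhole step (deducing $\sum_{j\neq i}\mu_j<\sum_{j\neq i}\theta_j$ and hence some $k$ with $\mu_k<\theta_k\le d_k-1$) is a slightly more direct version of the paper's argument by contradiction, and your remark that the factor at the index $i$ itself need not vanish is exactly the right point to flag.
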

\begin{proof} The proof is similar to that of the preceding Lemma. Without loss of generality, let us assume that  $\mu_n\geq d_n$. Since we have:
$$\mu_1+ \cdots + \mu_n = \theta_1 + \cdots + \theta_n, \; 0\leq \theta_i \leq d_i-1,$$
then there must exists  $j\not= n$ such that $\mu_j\not= \theta_j$ and $0\leq \mu_j \leq d_i-1$. In order to prove this, observe that if for all $j\not=n$ either we have $\mu_j= \theta_j$ or $\mu_j\geq d_i$, then we would have:
$$\vert\underline{\mu}\vert=\mu_1+ \cdots + \mu_n \geq \theta_1+ \cdots + \theta_{n-1} + \mu_n \geq \theta_1+ \cdots + \theta_{n-1} + \theta_n +1>\vert \underline{\theta}\vert +1,$$
which contradicts the hypothesis  $\vert\underline{\mu}\vert = \vert\underline{\theta}\vert$.

Without loss of generality again, we assume that $\mu_1\not= \theta_1$ and $0 \leq \mu_1\leq d_1-1$. Then, we have:
$$D_{\underline{\mu}, \underline{\theta}}:= \left\langle \left( \prod_{i=1}^n g_{\mu_i}^{(i)}(X_i) \right) + {\frak a}, X_1^{\theta_1}\cdots X_n^{\theta_n} + {\frak a}  \right\rangle_E=\sum_{(z_1,\ldots, z_n)\in E}\left(\left(\prod_{i=1}^n g_{\mu_i}^{(i)}(z_i)\right)z_1^{\theta_1}\cdots z_n^{\theta_n}\right).$$
Thus, we get:
$$D_{\underline{\mu}, \underline{\theta}}= \left(\sum_{z_1\in E_1} g_{\mu_1}^{(1)}(z_1) z_1^{\theta_1}\right)
\left(\sum_{(z_2,\ldots, z_n)\in E_2\times \cdots \times E_n}\left(\left(\prod_{i=2}^n g_{\mu_i}^{(i)}(z_i)\right)z_2^{\theta_2}\cdots z_n^{\theta_n}\right)\right).$$
And, since $\mu_1\not=\theta_1$, we conclude:
$$D_{\underline{\mu}, \underline{\theta}}= 0\cdot
\left(\sum_{(z_2,\ldots, z_n)\in E_2\times \cdots \times E_n}\left(\left(\prod_{i=2}^n g_{\mu_i}^{(i)}(z_i)\right)z_2^{\theta_2}\cdots z_n^{\theta_n}\right)\right)=0,$$
which finishes the proof of the Lemma.\end{proof}

\begin{corollary}\label{Alon-final:corol} With the same notations as above, let $\underline{\theta}\in \Delta_{(d)}$ and
$\underline{\mu}\in \N^n$ be two monomial exponents. If $\vert\underline{\mu}\vert \leq \vert \underline{\theta}\vert$, we have:
$$D_{\underline{\theta},\underline{\mu}}=\left\langle\left(\prod_{i=1}^n g_{\theta_i}^{(i)}(X_i)\right)+{\frak a},X_1^{\mu_1}\cdots X_n^{\mu_n}+{\frak a}\right\rangle_E= \delta_{\underline{\mu}, \underline{\theta}},$$
where $\delta_{\underline{\mu}, \underline{\theta}}$ is Kronecker delta.
\end{corollary}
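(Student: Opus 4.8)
The plan is to reduce the statement to Lemmas \ref{ejemplos-NC:lema}, \ref{bilineal:lema} and \ref{Tao:lema}, the only genuinely new situation being $\vert\underline{\mu}\vert<\vert\underline{\theta}\vert$. I would split into two cases according to whether $\underline{\mu}\in\Delta_{(d)}$ or not.

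If $\underline{\mu}\in\Delta_{(d)}$, then $X_1^{\mu_1}\cdots X_n^{\mu_n}+\mathfrak{a}$ lies in the monomial basis $\scrB$ of $K[E]$ of Lemma \ref{ejemplos-NC:lema}$(iii)$, and by Lemma \ref{ejemplos-NC:lema}$(iv)$ the element $\left(\prod_{i=1}^n g_{\theta_i}^{(i)}(X_i)\right)+\mathfrak{a}$ is exactly the vector of the dual basis $\scrB^*$ paired to $X_1^{\theta_1}\cdots X_n^{\theta_n}+\mathfrak{a}$. Since $\langle\cdot,\cdot\rangle_E$ is symmetric (it is the trace form and $g_1g_2=g_2g_1$), Lemma \ref{bilineal:lema} gives $D_{\underline{\theta},\underline{\mu}}=\langle\left(\prod_i g_{\theta_i}^{(i)}(X_i)\right)+\mathfrak{a},\,X_1^{\mu_1}\cdots X_n^{\mu_n}+\mathfrak{a}\rangle_E=\delta_{\underline{\theta},\underline{\mu}}$, which is the claimed identity (here the hypothesis $\vert\underline{\mu}\vert\le\vert\underline{\theta}\vert$ is not needed).

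If $\underline{\mu}\notin\Delta_{(d)}$, then $\underline{\mu}\ne\underline{\theta}$ (because $\underline{\theta}\in\Delta_{(d)}$), so it suffices to prove $D_{\underline{\theta},\underline{\mu}}=0$. Using $E=E_1\times\cdots\times E_n$ and distributing the product over the factors, exactly as in the proof of Lemma \ref{Tao:lema}, one obtains
$$D_{\underline{\theta},\underline{\mu}}=\prod_{i=1}^n\left(\sum_{z_i\in E_i}g_{\theta_i}^{(i)}(z_i)\,z_i^{\mu_i}\right).$$
For every index $i$ with $0\le\mu_i\le d_i-1$ the $i$-th factor equals $\langle g_{\theta_i}^{(i)}+(h_i),\,T^{\mu_i}+(h_i)\rangle_{E_i}=\delta_{\theta_i,\mu_i}$, since $\scrB_i^*$ is dual to $\scrB_i$ in $K[E_i]$. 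Hence, if $D_{\underline{\theta},\underline{\mu}}\ne 0$, then $\mu_i=\theta_i$ for every $i$ in $S^{c}:=\{i:\mu_i\le d_i-1\}$, while $S:=\{i:\mu_i\ge d_i\}$ is non-empty and, for $i\in S$, $\mu_i\ge d_i\ge\theta_i+1>\theta_i$ because $\theta_i\le d_i-1$. Summing over all indices, $\vert\underline{\mu}\vert=\sum_{i\in S}\mu_i+\sum_{i\in S^{c}}\theta_i>\sum_{i\in S}\theta_i+\sum_{i\in S^{c}}\theta_i=\vert\underline{\theta}\vert$, contradicting $\vert\underline{\mu}\vert\le\vert\underline{\theta}\vert$. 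Therefore $D_{\underline{\theta},\underline{\mu}}=0=\delta_{\underline{\theta},\underline{\mu}}$, which closes this case.

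I do not expect a real obstacle: this is a bookkeeping consequence of the preceding lemmas, the only point deserving care being that the sub-case $\vert\underline{\mu}\vert=\vert\underline{\theta}\vert$ is literally Lemma \ref{Tao:lema}, and that the product decomposition presupposes the $h_i$ taken monic (equivalently, $\mathfrak{a}$ generated by monic univariate polynomials), which is harmless. An alternative presentation I could adopt, avoiding the case split, is to observe that $D_{\underline{\theta},\underline{\mu}}$ is the coefficient of $X_1^{\theta_1}\cdots X_n^{\theta_n}$ in the normal form of $X_1^{\mu_1}\cdots X_n^{\mu_n}$ modulo the Gr\"obner basis $\{h_1(X_1),\ldots,h_n(X_n)\}$ of Lemma \ref{ejemplos-NC:lema}$(ii)$ (expand $X_1^{\mu_1}\cdots X_n^{\mu_n}+\mathfrak{a}$ in $\scrB$ and pair with $\scrB^*$); since reduction by each $h_i(X_i)$ strictly decreases total degree, that normal form only involves monomials of total degree $\le\vert\underline{\mu}\vert\le\vert\underline{\theta}\vert$, and $X_1^{\theta_1}\cdots X_n^{\theta_n}$ occurs only when $\underline{\mu}=\underline{\theta}$. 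This is the version I would finally include, relegating the case-split argument to a remark.
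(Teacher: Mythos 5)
Your proof is correct, and its skeleton — split on whether $\underline{\mu}\in \Delta_{(d)}$, use the duality of Lemma \ref{ejemplos-NC:lema}$(iv)$ in the first case and a factorization-plus-counting argument in the second — is the same as the paper's. The paper's own proof, however, consists of two sentences: it cites Lemma \ref{ejemplos-NC:lema} for $\underline{\mu}\in\Delta_{(d)}$ and Lemma \ref{Tao:lema} for $\underline{\mu}\notin\Delta_{(d)}$. As you correctly observe, Lemma \ref{Tao:lema} is stated only for $\vert\underline{\mu}\vert=\vert\underline{\theta}\vert$, so the paper's citation does not literally cover the sub-case $\vert\underline{\mu}\vert<\vert\underline{\theta}\vert$ with some $\mu_i\geq d_i$ (which does occur, e.g.\ $n=2$, $d_1=d_2=3$, $\underline{\theta}=(2,2)$, $\underline{\mu}=(3,0)$). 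Your rerun of the argument — factor $D_{\underline{\theta},\underline{\mu}}$ over the Cartesian product, note that each factor with $\mu_i\leq d_i-1$ equals $\delta_{\theta_i,\mu_i}$, and derive $\vert\underline{\mu}\vert>\vert\underline{\theta}\vert$ from non-vanishing — closes this gap cleanly; it is exactly the proof of Lemma \ref{Tao:lema} adapted to the inequality, so you are supplying the missing half-step rather than changing the method. Your alternative argument is the one genuinely different route: identifying $D_{\underline{\theta},\underline{\mu}}$ with the coefficient of $X_1^{\theta_1}\cdots X_n^{\theta_n}$ in the normal form of $X_1^{\mu_1}\cdots X_n^{\mu_n}$ modulo the Gr\"obner basis $\{h_1(X_1),\ldots,h_n(X_n)\}$, and using that each reduction step strictly lowers total degree. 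That version dispenses with the case split and with Lemma \ref{Tao:lema} altogether, at the cost of invoking the (monic) Gr\"obner-basis structure of ${\frak a}$ from Lemma \ref{ejemplos-NC:lema}$(ii)$; it is a clean and slightly more conceptual presentation, and either version is acceptable.
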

\begin{proof} If $\underline{\mu}\in \Delta_{(d)}$, Lemma \ref{ejemplos-NC:lema} implies the given statement. Otherwise, if $\vert\underline{\mu}\vert \leq \vert \underline{\theta}\vert$ and $\underline{\mu}\not\in \Delta_{(d)}$, the property holds because of Lemma \ref{Tao:lema}.
\end{proof}

\bigskip

\begin{proof}{\sc(of Theorem \ref{extension-Alon-Nullstellensatz-Comninatorio:teor}).}
Let $d_1,\ldots, d_n$ be some degrees,  let $(d)=(d_1,\ldots, d_n)$ be the degree list they define and $\Delta_{(d)}$ the class of monomial exponents  as in the statement of Theorem \ref{extension-Alon-Nullstellensatz-Comninatorio:teor}. Let $D:=d_1+\cdots + d_n-n$ be the quantity also defined in the statement of this Theorem.
Let $\Omega_{(d)}\subseteq P_D(X_1,\ldots, X_n)$ be the following constructible subset made of polynomials of degree at most $D$:
$$\Omega_{(d)}:=\{ f \in P_D^K(X_1,\ldots, X_n) \; :\; \exists \underline{\mu}\in \Delta_{(d)}, \; f_{\underline{\mu}}\not=0, \; \vert \underline{\mu}\vert = \deg(f)\}\cup\{0\}.$$
Let $E_1,\ldots, E_n\subseteq K$ be some finite subsets such that $\sharp(E_i)=d_i$. Let $E:=E_1\times \cdots \times E_n$ be the Cartesian product and let $K[E]$ be the residual ring of $K[X_1,\ldots, X_n]$ modulo $I(E)$. Let $\langle \cdot, \cdot \rangle_E$ be the bilinear form associated to the trace on $K[E]$.

For every $i$, $1\leq i \leq n$, let $h_i\in K[T]$ be the following univariate polynomial:
$$h_i(T):=\prod_{\zeta\in E_i} (T-\zeta).$$
According to Lemma \ref{ejemplos-NC:lema}, let ${\frak a}:=(h_1(X_1), \ldots, h_n(X_n))$ be the ideal they generate. Therefore, we have:
$$K[E]=K[X_1,\ldots, X_n]/{\frak a}.$$
Let $\scrB$ be the monomial basis of $K[E]$ given by the following equality:
$$\scrB=\{X_1^{\mu_1}\cdots X_n^{\mu_n}+{\frak a}\; :\; 0\leq\mu_i\leq d_i-1,1\leq i\leq n\},$$
and let $\scrB^*$ be the dual basis of $\scrB$ exhibited in the Lemma \ref{ejemplos-NC:lema} above:
$$\scrB^*=\left\lbrace\left(\prod_{i=1}^n g_{\mu_i}^{(i)}(X_i)\right)+{\frak a}\; :\; (\mu_1,\ldots,\mu_n)\in\N^n,0\leq\mu_i\leq d_i-1, 1\leq i\leq n\right\rbrace.$$

Let $f\in \Omega_{(d)}$ be a polynomial and suppose  $t:=\deg(f)$. The monomial expansion of $f$ has the following form:
$$f:=\sum_{\vert\underline{\mu}\vert \leq t}a_{\underline{\mu}}X_1^{\mu_1}\cdots X_n^{\mu_n}.$$
Since $f\in \Omega_{(d)}$, then there exists $\underline{\theta}\in \Delta_{(d)}$ such that $\vert \underline{\theta}\vert = \deg(f)=t$ and the coefficient $f_{\underline{\theta}}= a_{\underline{\theta}}\not=0$ is non-zero.

Let $G_{\underline{\theta}}:=\left(\prod_{i=1}^n g_{\theta_i}^{(i)}(X_i)\right)+{\frak a}\in K[E]$. By Corollary
\ref{Alon-final:corol} above, we deduce that for all $\underline{\mu}\in \N^n$ such that $\underline{\mu}\not = \underline{\theta}$ the following holds:
$$D_{\underline{\theta}, \underline{\mu}}=\left\langle G_{\underline{\theta}},X_1^{\mu_1}\cdots X_n^{\mu_n}+{\frak a}\right\rangle_E=0.$$
Since
$$D_{\underline{\theta}, \underline{\theta}}= \left\langle G_{\underline{\theta}},X_1^{\theta_1}\cdots X_n^{\theta_n}+{\frak a}\right\rangle_E=1,$$
we thus conclude:
$$\left\langle G_{\underline{\theta}},f+{\frak a}\right\rangle_E= a_{\underline{\theta}}\not=0.$$
On the other hand, we have:
$$a_{\theta}=\left\langle G_{\underline{\theta}},f+{\frak a}\right\rangle_E=
\sum_{(z_1,\ldots, z_n)\in E}\left(\prod_{i=1}^n g_{\theta_i}^{(i)}(z_i)\right) f(z_1,\ldots, z_n).$$
Then, we conclude that it is not possible that $f$ vanishes at all points of $E$ since, otherwise, $a_{\underline{\theta}}=0$, which cannot be possible.
\end{proof}

\end{document}